\newtheorem{theorem}{Theorem}[section]
\newtheorem{lemma}[theorem]{Lemma}
\newtheorem{proposition}[theorem]{Proposition}
\newtheorem{corollary}[theorem]{Corollary}
\newtheorem{question}[theorem]{Question}
\theoremstyle{definition}
\newtheorem{definition}[theorem]{Definition}
\theoremstyle{remark}
\newtheorem{remark}[theorem]{Remark}
\numberwithin{equation}{section}
\begin{document}

\title[One-point extensions of a locally compact space]{On order structure of the set of one-point Tychonoff
extensions of a locally compact space}

\author{M.R. Koushesh}
\address{Department of Mathematical Sciences, Isfahan University of Technology, Isfahan 84156--83111, Iran}
\email{koushesh@cc.iut.ac.ir}

\subjclass[2000]{54D35, 54D40, 54A25}

\keywords{One-point extension; One-point compactification; Stone-\v{C}ech compactification; $\beta X\backslash X$; Rings of continuous functions.}

\begin{abstract}
If a Tychonoff  space $X$ is dense in a Tychonoff space $Y$, then $Y$ is called  a {\em Tychonoff extension} of $X$. Two Tychonoff extensions $Y_1$ and $Y_2$ of $X$ are said to be {\em equivalent}, if there exists a homeomorphism $f:Y_1\rightarrow Y_2$ which keeps $X$ pointwise fixed. This defines an equivalence relation on the class of all  Tychonoff extensions of $X$. We identify those extensions of $X$ which belong to the same equivalence classes. For two Tychonoff extensions $Y_1$ and $Y_2$ of $X$, we write $Y_2\leq Y_1$, if there exists a continuous function $f:Y_1\rightarrow Y_2$ which keeps $X$ pointwise fixed. This is a  partial order on the set of all (equivalence classes of) Tychonoff extensions of $X$. If a Tychonoff extension $Y$ of $X$ is such that $Y\backslash X$ is a singleton, then $Y$ is called a {\em one-point extension} of $X$. Let ${\mathcal T}(X)$ denote the set of all one-point extensions of
$X$. Our purpose is to study the order structure of  the partially
ordered set $({\mathcal T}(X), \leq)$. For a locally compact space
$X$, we define an order-anti-isomorphism from ${\mathcal T}(X)$ onto
the set of all non-empty closed subsets of $\beta X\backslash X$. We
consider various sets of one-point extensions, including the set of
all
        one-point locally compact  extensions of $X$, the set of all
        one-point  Lindel\"{o}f extensions of $X$, the  set of all
        one-point  pseudocompact extensions of $X$, and the set of all
        one-point  \v{C}ech-complete extensions of $X$, among
        others. We study how these sets of one-point extensions are
related, and
        investigate the relation between their
order structure,  and the topology of subspaces of $\beta
X\backslash X$. We find some lower bounds for cardinalities of some
of these sets of one-point extensions, and in a concluding section,
we show how some of our results may be applied to obtain relations
between the order structure of certain subfamilies of ideals of
$C^*(X)$, partially ordered with inclusion, and the topology of
subspaces of $\beta X\backslash X$. We leave some problems open.
\end{abstract}

\maketitle

\section{Introduction}

Let $X$ be a Tychonoff  space. If a Tychonoff space $Y$ contains $X$
as a dense subspace, we call $Y$ a {\em Tychonoff extension} of $X$.
Two Tychonoff extensions $Y_1$ and $Y_2$ of $X$ are said to be {\em
equivalent}, if there exists a homeomorphism $f:Y_1\rightarrow Y_2$
which keeps $X$ pointwise fixed. This indeed defines an equivalence
relation which splits the class of all Tychonoff extensions of $X$
into equivalence classes. We identify the equivalence classes with
individuals whenever no confusion arises. For two Tychonoff
extensions $Y_1$ and $Y_2$ of $X$, we write $Y_2\leq Y_1$, if there
exists a continuous function $f:Y_1\rightarrow Y_2$ which keeps $X$
pointwise fixed. This defines a partial order on the set of all
(equivalence classes of) Tychonoff extensions of $X$. A detailed
study of this partial order can be found in Section \ref{YROEJ} of [12]. If
an extension $Y$ of $X$ is such that $Y\backslash X$ consists of a
single element, then $Y$ is called a {\em one-point extension} of
$X$. Let ${\mathcal T}(X)$ denote the set of all one-point
extensions of $X$. Our purpose here is to study the order structure
of  the partially ordered set $({\mathcal T}(X), \leq)$.

First we define some notations and terminologies we will use. For a
Tychonoff space $X$, we let $\beta X$ and $\upsilon X$ denote the
Stone-\v{C}ech compactification of $X$ and the Hewitt
realcompactification of $X$, respectively. For a subset $A$ of $X$,
we let $A^*=(\mbox {cl}_{\beta X}A)\backslash X$. In particular,
$X^*=\beta X\backslash X$. For a space $X$, we denote the set of all
closed subset of $X$, the set of all zero-sets of $X$, and the set
of all clopen (open-closed) subsets of $X$, by ${\mathcal C}(X)$,
${\mathcal Z}(X)$ and ${\mathcal B}(X)$, respectively. A space $X$
is called {\em zero-dimensional}, if the set  ${\mathcal B}(X)$ is
an open base for $X$. For two spaces $X$ and $Y$, $C(X, Y)$ denotes
the set of all continuous functions from $X$ to $Y$. The letters
$\mathbf{I}$ and $\mathbf{R}$, denote the closed unit interval and
the real line, respectively. We also let $C(X)=C(X, \mathbf{R})$,
and we denote by $C^*(X)$ the set of all bounded elements of $C(X)$.

We denote by $\omega$ the first countably infinite ordinal number,
and we denote by $\aleph_0$ the cardinality of $\omega$. By [CH] and
[MA] we mean the Continuum Hypothesis and the Martin's Axiom, and
whenever they appear at  the beginning of the statement of a
theorem, indicate that they have been assumed in the proof of that
theorem.

In a partially ordered set $P$, the two symbols $\bigvee$ and
$\bigwedge$ are used to denote the least upper bound and the
greatest lower bound (provided that they exist) respectively. The
elements $\bigvee P$ and $\bigwedge P$  are called the {\em maximum}
and the {\em minimum} of $P$, respectively. An element $p\in P$ is
called a {\em maximal} ({\em minimal}, respectively) element of $P$,
provided that for every $x\in P$, if $x\geq p$ ($x\leq p$,
respectively) then $x=p$. If $P$ and $Q$ are partially ordered sets,
a function $f:P\rightarrow Q$ is called an {\em order-homomorphism}
({\em order-anti-homomorphism}, respectively) if $f(a)\leq f(b)$
($f(a)\geq f(b)$, respectively) whenever $a\leq  b$. The function
$f$ is called  an {\em order-isomorphism} ({\em
order-anti-isomorphism}, respectively) if it is moreover bijective,
and $f^{-1}:Q\rightarrow P$ also is an order-homomorphism
(order-anti-homomorphism, respectively). The partially ordered sets
$P$ and $Q$ are called {\em order-isomorphic} ({\em
order-anti-isomorphic}, respectively) if there is an
order-isomorphism (order-anti-isomorphism, respectively) between
them.

For terms and notations not defined here we follow the standard text
of [4]. In particular, compact and paracompact spaces are assumed to
be Hausdorff, and perfect mappings are assumed to be continuous. By
a neighborhood of a point $x$ in a space $X$, we mean a subset of
$X$ which contains an open subset containing $x$.

In 1924, P. Alexandroff proved that a locally compact non-compact
space $X$ has a one-point extension which is compact. This is now
known as the Alexandroff compactification of $X$, or the one-point
compactification of $X$. Since then, one-point extensions have been
studied extensively by various authors (for some results as well as
some bibliographies on the subject see [10] and [11]). The majority
of these works,
however, deals with conditions under which if a space locally
possesses a topological
  property $\mathcal{P}$, then it has a one-point extension which has
  $\mathcal{P}$. Recently, M. Henriksen, L. Janos and R.G. Woods have
studied the
  partially ordered set of all one-point metrizable extensions of a
  locally compact metrizable space, by relating it to the topology of
subspaces of
  $X^*$. Here is a brief summary of the method they applied. Let $X$
  be a (non-compact)  metrizable space. We call a sequence
  $\{U_n\}_{n<\omega}$ of non-empty open subsets of $X$ an {\em
extension
  trace} in $X$, if for each $n<\omega$ we have
  $\mbox{cl}_XU_{n+1}\subseteq U_n$ and
$\bigcap_{n<\omega}U_n=\emptyset$.
  To every one-point metrizable extension $Y=X\cup\{p\}$ of $X$, we can
  correspond an extension trace of $X$, namely, $U_n=B(p,1/n)\cap
  X$. Conversely, if $\{U_n\}_{n<\omega}$ is an extension trace in
$X$,
  let $Y=X\cup\{p\}$, where $p\notin X$, and define a topology on $Y$
  consisting of sets of the form $V\cup\{p\}$, where $V$ is open in
$X$ and is such that
    $V\supseteq U_n$, for some $n<\omega$. By Theorem 2 of [1] (or
Theorem 3.4 of [2]) the space
    $Y$ thus defined is metrizable, and therefore it is a one-point
metrizable
      extension of $X$.
    It may happen, however, that
    different extension traces in $X$ give rise to the same
    one-point extension of $X$. To fix this problem, we define
    an equivalence relation on the set of all extension traces of
    $X$. Two extension traces $\{U_n\}_{n<\omega}$ and
    $\{V_n\}_{n<\omega}$ of $X$ are said to be {\em equivalent}, if
for
    each $n<\omega$, there exist $k_n,l_n<\omega$, such that
    $U_n\supseteq V_{k_n}$ and $V_n\supseteq U_{l_n}$. This makes a
(one-one) correspondence between the set of all equivalence
    classes of extension traces of $X$, and the set of all one-point
    metrizable extensions ${\mathcal E}(X)$ of $X$.  Using this, we
can define
      a function $\lambda: {\mathcal E}(X)\rightarrow{\mathcal Z}(X^*)$
      by $\lambda(Y)=\bigcap_{n<\omega}U_n^*$, where
$\{U_n\}_{n<\omega}$
      is an extension trace in $X$ which generates $Y$. It is proved in
[7]
      that the function
      $\lambda$ is well-defined, and it is an order-anti-isomorphism
      onto its image (in the case when $X$ is moreover separable, it is
proved in [7] that $\lambda$ maps ${\mathcal E}(X)$
      onto ${\mathcal Z}(X^*)\backslash\{\emptyset\}$). Using the
      function $\lambda$, and the fact that
      the topology of any compact space determines and is
      determined by the order structure of the set of its all
zero-sets,
      the authors of [7] have studied  the order
      structure of sets of one-point metrizable
      extensions of a locally compact metrizable space $X$, by
      relating it to the topology of certain subspaces of $X^*$.
      Motivated by the results of [7] and the author's earlier work
[8]
      (which is in fact a continuation of the work
        of Henriksen, Janos and Woods, in which we
        generalized most of the results of [7] from
        the separable case to the non-separable case)
        we  define an order-anti-isomorphism
        $\mu: {\mathcal T}(X)\rightarrow{\mathcal
        C}(X^*)\backslash\{\emptyset\}$. Using the mapping $\mu$ we
will be able
        to relate the topology of certain subspaces of $X^*$ to the
        order structure of various sets of one-point extensions of
$X$.
        These sets of one-point extensions include, the set of all
        one-point locally compact  extensions of $X$, the set of all
        one-point  Lindel\"{o}f extensions of $X$, the  set of all
        one-point  pseudocompact extensions of $X$, and the set of all
        one-point  \v{C}ech-complete extensions of $X$, among
        others.

In Section 2, we define certain sets of one-point extensions. We
then establish the order-isomorphism $\mu$, mentioned above,  from
the set of all one-point extensions of the  locally compact space
$X$ onto the set of all non-empty closed subsets of $X^*$. We find
the image under $\mu$ of some of the sets of one-point extensions
introduced before, and as a result, we show that the order structure
of some of them determines and is determined by the topology of the
space $X^*$. In Section 3, we obtain some results relating the order
structure of certain sets of one-point extensions of $X$ and the
topology of subspaces of $X^*$, under the extra assumption of
paracompactness of $X$. Section 4 deals with the order theoretic
relations between various sets of one-point extensions the space
$X$. In Section 5, we find sufficient conditions that some of the
sets of one-point extensions admit maximal or minimal elements. In
section 6, we find a lower bound for cardinalities of two of the
sets of one-point extensions introduced before. And finally, in
Section 7, we define an order-isomorphism from the set of all
Tychonoff extensions of a Tychonoff space  $X$ into the set of all
ideals of $C^*(X)$, partially ordered with inclusion. Using this, we
show how some of our previous results may be applied to obtain
relations between the order structure of certain subfamilies of
ideals of $C^*(X)$, partially ordered with inclusion, and the
topology of subspaces of $X^*$.

\section{The partially ordered set of one-point Tychonoff extensions of
a locally compact space}

The purpose of this article is to study the order structure of
various sets of one-point extensions. To make reference to these
sets easier, we list them all in the following definition.

\begin{definition}\label{FGA}
For a Tychonoff space $X$, let $ {\mathcal T}(X)$ denote the set of all one-point Tychonoff extensions of $X$. We define
\begin{itemize}
  \item ${\mathcal T}_C(X)=\{Y\in {\mathcal T}(X): Y \mbox{ is \v{C}ech-complete}\}$
  \item ${\mathcal T}_K(X)=\{Y\in {\mathcal T}(X): Y \mbox{ is locally compact}\}$
  \item ${\mathcal T}_D(X)=\{Y=X\cup\{p\}\in {\mathcal T}(X):\mbox {$p\notin\mbox{cl}_Y A$ for any  closed Lindel\"{o}f $A\subseteq X$}\}$
  \item ${\mathcal T}_L(X)=\{Y\in {\mathcal T}(X): Y \mbox{ is Lindel\"{o}f}\}$
  \item ${\mathcal T}_S(X)=\{Y=X\cup\{p\}\in {\mathcal T}(X):\\U\backslash\{p\}\mbox { is  $\sigma$-compact for some neighborhood $U$ of $p$ in $Y$}\}$
  \item ${\mathcal T}_P(X)=\{Y\in {\mathcal T}(X): Y \mbox{ is pseudocompact}\}$
  \item $ {\mathcal T}^*(X)=\{Y=X\cup\{p\}\in {\mathcal T}(X): Y \mbox{ is first countable at }p\}$
  \item ${\mathcal T}^*_{C}(X)={\mathcal T}^*(X)\cap{\mathcal T}_C(X)$, ${\mathcal T}^*_{K}(X)={\mathcal T}^*(X)\cap{\mathcal T}_K(X)$, ${\mathcal T}^*_{S}(X)={\mathcal T}^*(X)\cap{\mathcal T}_S(X)$, ${\mathcal T}_{CL}(X)={\mathcal T}_C(X)\cap{\mathcal T}_L(X)$ and ${\mathcal T}_{KL}(X)={\mathcal T}_K(X)\cap{\mathcal T}_L(X)$.
\end{itemize}
\end{definition}

For a space $X$, let ${\mathcal C}(X)$ denote the set of all closed subsets of $X$. Suppose that $X$ is a locally compact space. For each
$Y=X\cup\{p\}\in{\mathcal T}(X)$ let $F_Y:\beta X\rightarrow \beta
Y$ be the unique continuous function such that $F_Y|X=\mbox {id}_X$.
Define a function
\[\mu:\big({\mathcal T}(X), \leq\big)\rightarrow \big({\mathcal C}(X^*)\backslash\{\emptyset\},\subseteq\big)\]
by $\mu(Y)=F_Y^{-1}(p)$. In the following theorem we show
that the function $\mu$ so defined is an order-anti-isomorphism.
This is in fact a special case of Theorem \ref{YPPJ} of [10]. We give a
direct proof in here for the sake of completeness.

\begin{theorem}\label{YUS}
The function $\mu$ is an order-anti-isomorphism.
\end{theorem}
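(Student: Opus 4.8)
The plan is to verify three things in sequence: that $\mu$ is well-defined (i.e., $F_Y^{-1}(p)$ is a non-empty closed subset of $X^*$), that $\mu$ is an order-anti-homomorphism, and finally that $\mu$ is a bijection whose inverse is also an order-anti-homomorphism. For well-definedness, fix $Y=X\cup\{p\}\in{\mathcal T}(X)$ and let $F_Y:\beta X\to\beta Y$ be the continuous extension of $\mathrm{id}_X$. Since $Y$ is a one-point extension, $\beta Y$ contains $Y$ densely, and since $X$ is locally compact it is open in $Y$, hence open in $\beta Y$; therefore $\beta Y\setminus X$ is closed in $\beta Y$ and $F_Y^{-1}(p)\subseteq\beta Y\setminus X$ pulls back into $\beta X\setminus X=X^*$. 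Closedness of $F_Y^{-1}(p)$ is immediate from continuity of $F_Y$ and closedness of $\{p\}$ in $\beta Y$. Non-emptiness follows because $F_Y$ is surjective (its image is compact and dense in $\beta Y$), so some point of $\beta X$ maps to $p$, and by the previous remark that point lies in $X^*$.

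Next I would prove $\mu$ reverses order. Suppose $Y_2\leq Y_1$, witnessed by a continuous $f:Y_1\to Y_2$ fixing $X$ pointwise; write $Y_i=X\cup\{p_i\}$, so necessarily $f(p_1)=p_2$ (a continuous image of $Y_1$ fixing the dense set $X$ must send the remaining point to the remaining point, since $f(Y_1)$ is dense and $f$ must be a function). Let $f^\beta:\beta Y_1\to\beta Y_2$ be the continuous extension of $f$. Then $f^\beta\circ F_{Y_1}$ and $F_{Y_2}$ are both continuous maps $\beta X\to\beta Y_2$ agreeing with $\mathrm{id}_X$ on the dense set $X$, hence they are equal. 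Therefore if $x\in\mu(Y_1)=F_{Y_1}^{-1}(p_1)$, then $F_{Y_2}(x)=f^\beta(F_{Y_1}(x))=f^\beta(p_1)=p_2$, so $x\in\mu(Y_2)$; that is, $\mu(Y_1)\subseteq\mu(Y_2)$, which is exactly order-reversal.

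For bijectivity and the reverse implication, I would construct an explicit inverse. Given a non-empty closed $C\subseteq X^*$, form the quotient space $q_C:\beta X\to\beta X/C$ collapsing $C$ to a single point $p$; since $\beta X$ is compact Hausdorff and $C$ is closed, this quotient is compact Hausdorff. The restriction of $q_C$ to $X$ is a homeomorphic embedding (because $C\cap X=\emptyset$ and $q_C$ is injective off $C$), so letting $Y_C=X\cup\{p\}$ with the subspace topology from $\beta X/C$ gives a one-point extension of $X$; one checks $X$ is dense in $Y_C$ since $X$ is dense in $\beta X$. I claim $\mu(Y_C)=C$: the map $q_C$ itself, viewed as a map $\beta X\to\beta Y_C$ (using that $\beta X/C$, being compact and containing $Y_C$ densely, is a compactification of $Y_C$, in fact $\beta Y_C$ by the universal property since every bounded continuous function on $Y_C$ extends — this uses local compactness of $X$ to see $Y_C$ is Tychonoff and that $\beta X/C$ dominates all such extensions), agrees with $\mathrm{id}_X$ on $X$, so it equals $F_{Y_C}$, whence $\mu(Y_C)=F_{Y_C}^{-1}(p)=q_C^{-1}(p)=C$. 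This shows $\mu$ is onto. For injectivity, if $\mu(Y_1)=\mu(Y_2)=C$, then from the description above $\beta Y_1\cong\beta X/C\cong\beta Y_2$ via homeomorphisms fixing $X$, and restricting to $Y_i$ gives an equivalence $Y_1\sim Y_2$. Finally, for the reverse order implication: if $\mu(Y_1)\subseteq\mu(Y_2)$, the inclusion induces a continuous surjection $\beta X/\mu(Y_1)\to\beta X/\mu(Y_2)$ fixing $X$, which restricts to a continuous map $Y_1\to Y_2$ fixing $X$, giving $Y_2\leq Y_1$; thus $\mu^{-1}$ is also order-reversing.

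The main obstacle I expect is the identification $\beta Y_C=\beta X/C$, i.e., verifying that the natural compactification $\beta X/C$ of $Y_C$ is actually the Stone--Čech compactification — equivalently, that every bounded continuous real-valued function on $Y_C$ extends continuously over $\beta X/C$. This is where local compactness of $X$ is genuinely used (to ensure $Y_C$ is Tychonoff and that $X$ sits nicely), and it requires the standard fact that for locally compact $X$, compactifications of one-point extensions correspond to closed subsets of $X^*$ via exactly this collapsing construction. Everything else — the density checks, the uniqueness-of-extension arguments forcing the diagrams to commute, and the order-reversal bookkeeping — is routine once that correspondence is in hand.
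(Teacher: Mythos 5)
Your overall architecture (quotient construction for surjectivity, extension of the comparison map for the forward order implication, comparison of quotients for the converse) is the same as the paper's, and the forward direction and the construction of $Y_C\subseteq\beta X/C$ with $\mu(Y_C)=C$ match the paper's proof essentially step for step. The problem is in how you handle injectivity and the converse implication. You assert that if $\mu(Y)=C$ for an \emph{arbitrary} $Y\in{\mathcal T}(X)$, then ``from the description above'' $\beta Y\cong\beta X/C$ via a homeomorphism fixing $X$, and you then run the converse implication entirely inside the canonical models $\beta X/C_i$. But the description above only establishes $\beta Y_C=\beta X/C$ for the \emph{canonically constructed} extension $Y_C$; for an arbitrary $Y$ with $F_Y^{-1}(p)=C$ you know only that $F_Y$ factors as $\beta X\xrightarrow{q}\beta X/C\xrightarrow{r}\beta Y$, i.e.\ $\beta X/C\geq\beta Y$ as compactifications of $X$. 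The reverse domination --- equivalently, that the fibres of $F_Y$ over points of $\beta Y\backslash Y$ are singletons, or that the traces on $X$ of neighborhoods of $p$ in $Y$ are exactly the sets $O\cap X$ with $O$ an open subset of $\beta X$ containing $C$ --- is precisely the nontrivial content of injectivity and of the implication $\mu(Y_1)\subseteq\mu(Y_2)\Rightarrow Y_1\geq Y_2$, and your argument for it is circular: the converse implication is derived from the identification $Y_i\sim Y_{C_i}$, and that identification is itself an instance of the converse implication.

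The missing ingredient is exactly what the paper supplies with the set $U=(Z_1\backslash q_1(\beta X\backslash q_2^{-1}(V)))\cap Y_1$: one uses that $F_{Y}$ is a \emph{closed} map of compact Hausdorff spaces, together with the fact (this is where local compactness enters) that $F_Y^{-1}(x)=\{x\}$ for $x\in X$, to show that for every open $O\supseteq C$ in $\beta X$ the set $W=\beta Y\backslash F_Y(\beta X\backslash O)$ is an open neighborhood of $p$ in $\beta Y$ with $W\cap X=O\cap X$. This shows the neighborhood trace of $p$ on $X$ is determined by $C$ alone, which gives $Y\sim Y_C$ and then makes your quotient-comparison argument for the converse legitimate. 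A smaller remark of the same flavor: your justification that $f(p_1)=p_2$ (``since $f(Y_1)$ is dense'') does not actually prove it --- $f(Y_1)\supseteq X$ is dense regardless of where $p_1$ goes --- and the correct argument again uses local compactness (if $f(p_1)=x_0\in X$, pull back a compact neighborhood of $x_0$ to trap $p_1$ in the closure of a compact subset of $X$). The paper asserts this point without proof as well, but your parenthetical reason should not be presented as the reason.
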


\begin{proof}
First we show that $\mu$ is onto. So  suppose that
$C\in{\mathcal C}(X^*)\backslash\{\emptyset\}$ and let $Z$ be the
space obtained from $\beta X$ by contracting $C$ to a point $p$. Let
$q:\beta X\rightarrow Z$ be the natural quotient mapping. Consider
$Y=X\cup\{p\}\subseteq Z$. Then since $Z$ is Tychonoff,
$Y\in{\mathcal T}(X)$.  We verify that $\mu(Y)=C$. First we note
that $Z=\beta Y$. This is because $Z$ is a compactification of $Y$
and every continuous function from $Y$ to ${\bf I}$  is continuously
extendable over $Z$. For if $h\in C(Y, {\bf I})$, let $g=hq:X\cup
C\rightarrow {\bf I}$  and let $G\in C(\beta X, {\bf I})$ be the
extension of $g$.  Let the function $H:Z\rightarrow {\bf I}$ be
defined by $H|(\beta X\backslash C)=G$ and $H(p)=h(p)$. Then $H$ is
a continuous extension of $h$. Now since $q|X=\mbox {id}_X=F_Y|X$,
we have $q=F_Y$, and therefore $\mu(Y)=F_Y^{-1}(p)=q^{-1}(p)=C$.

Now suppose that $Y_i=X\cup\{p_i\}\in{\mathcal T}(X)$, for $i=1,2$.
Suppose that $Y_1\geq
  Y_2$, and let $k:Y_1\rightarrow Y_2$ be a continuous function which
leaves $X$ pointwise
  fixed. Let $K :\beta Y_1\rightarrow \beta Y_2$ be the continuous
  extension of $k$. We denote $F_i=F_{Y_i}$, and we let $L=KF_1 :\beta
X\rightarrow \beta
  Y_2$. Then since $L|X=F_2|X$, we have $L=F_2$ and therefore since
  $K(p_1)=k(p_1)=p_2$ we obtain
\[ \mu (Y_2)= F_2^{-1}(p_2)= L^{-1}(p_2)=F_1^{-1}K^{-1}(p_2)\supseteq F_1^{-1}(p_1)=\mu (Y_1).\]

Next suppose that  $\mu (Y_2)\supseteq \mu (Y_1)$ and let
$ C_i=\mu (Y_i)$, for $i=1,2$. Let $Z_i$ be the quotient space
obtained from $\beta X$ by identifying each fiber  of $F_i=F_{Y_i}$
to a point and let $q_i:\beta X\rightarrow Z_i$ denote its
corresponding natural quotient mapping. Let the function
$G_i:Z_i\rightarrow \beta Y_i$ be defined by $G_i(F_i^{-1}(y))=y$.
Then $G_i$ is a continuous bijection, and since $Z_i$ is compact, it
is a homeomorphism which since $F_i(X^*)=\beta Y_i\backslash X$ (see
Theorem 3.\ref{PPG} of [4]) keeps $X$ pointwise fixed and $G_i(C_i)=p_i$.
We identify $Y_i$ with a subspace  of $Z_i$ under this
homeomorphism. Let $f:Y_1\rightarrow Y_2$ be a function such that
$f|X=\mbox {id}_X $ and $f(p_1)=p_2$. We verify that $f$ is
continuous. So suppose that $V$ is an open neighborhood of $p_2$ in
$Z_2$. Let $U=(Z_1\backslash q_1(\beta X\backslash q_2^{-1}(V)))\cap
Y_1$, which is an open subset of $Y_1$. Since
$q_2(C_2)=\{p_2\}\subseteq V$ and $C_1\subseteq C_2$, we have
$q_1^{-1}(p_1)=C_1\subseteq q_2^{-1}(V)$, and thus $p_1\notin
q_1(\beta X\backslash q_2^{-1}(V))$. Therefore $U$ is an open
neighborhood of $p_1$ in $Y_1$. Now since $U\cap X\subseteq V\cap
X$, we have $f(U)\subseteq V$, and thus $f$ is continuous at $p_1$.
Clearly $f$ is continuous on $X$ and therefore $Y_1\geq Y_2$. This
now completes the proof.
\end{proof}

Let $X$ be a locally compact space and let $Y\in {\mathcal T}(X)$
and $C=\mu (Y)$. If $Z$ is the space which is obtained from $\beta
X$ by contracting $C$ to a point $p$ and $q:\beta X\rightarrow Z$ is
its natural quotient mapping, then as the proof of Theorem \ref{YUS}
shows, we have $Y=X\cup\{p\}\subseteq Z$, $Z=\beta Y$ and $q=F_Y$.

\begin{remark}\label{GGJ}
If $X$ is a locally compact metrizable space then,
using the notations introduced in the introduction, we have
${\mathcal T}^*(X)={\mathcal E}(X)$ and $\mu|{\mathcal
T}^*(X)=\lambda$. The first assertion follows from Theorem 2 of [1].
We verify that $\mu|{\mathcal T}^*(X)=\lambda$.

Suppose that $Y=X\cup\left\{p\right\}\in{\mathcal E}(X)$ and let
$\left\{U_n\right\}_{n<\omega}$ be an extension trace in $X$ which
generates $Y$. Let $C=\mu(Y)$. We show that
$C=\bigcap_{n<\omega}U_n^*=\lambda(Y)$. First we verify that
$C\subseteq\bigcap_{n<\omega}U_n^*$. Suppose to the contrary that
there exists an $x\in C$ such that $x\notin \mbox{cl}_{\beta X}U_n$,
for some $n<\omega$. Now $U_n\cup\left\{p\right\}$ is an open
neighborhood of $p$ in $Y$. Let $V$ be an open subset of $\beta Y$
such that $U_n\cup\left\{p\right\}=V\cap Y$, and let $U$ be an open
neighborhood of $x$ in $\beta X$ such that $F_Y(U)\subseteq V$. Now
since $U\backslash  \mbox{cl}_{\beta X}U_n$ contains $x$, it is
non-empty. Let $t\in (U\backslash \mbox{cl}_{\beta X}U_n)\cap X$.
Then $t=F_Y(t)\in V$ and thus $t\in U_n$. But this contradicts the
choice of $t$. This shows that $x\in \bigcap_{n<\omega}U_n^*$, and
therefore $C\subseteq\bigcap_{n<\omega}U_n^*$.

To show the reverse inclusion, let $x\in \bigcap_{n<\omega}U_n^*$.
Suppose that $x\notin C$. Let $U$ and $V$ be open subsets of $\beta
X$ such that $x\in U$, $C\subseteq V$ and $\mbox{cl}_{\beta
X}U\cap\mbox{cl}_{\beta X}V=\emptyset$. Now since $(V\backslash
C)\cup\left\{p\right\}$ is an open neighborhood of $p$ in $\beta Y$,
there exists a $k<\omega $ such that
$U_k\cup\left\{p\right\}\subseteq (V\backslash
C)\cup\left\{p\right\}$. Therefore  $\mbox{cl}_{X}U_k\subseteq
V\backslash C$ and thus $\mbox{cl}_{X}U_k\cap \mbox{cl}_{X}(U\cap
X)=\emptyset$, as
\[\mbox{cl}_{X}U_k\cap \mbox{cl}_{X}(U\cap X)\subseteq
\mbox{cl}_{\beta X}U_k\cap \mbox{cl}_{\beta X}U.\]
But since $X$ is metrizable, this implies that
\[\mbox{cl}_{\beta X}U_k\cap \mbox{cl}_{\beta X}U= \mbox{cl}_{\beta
X}U_k\cap \mbox{cl}_{\beta X}(U\cap X)=\emptyset\]
which is a contradiction,  as  $x\in \mbox{cl}_{\beta
X}U_k\cap \mbox{cl}_{\beta X}U$. Therefore $x\in C$ and thus
$\bigcap_{n<\omega}U_n^*\subseteq C$. This together with the first
part shows that equality holds in the latter, which proves our
assertion.
\end{remark}

\begin{theorem}\label{DDS}
Let $X$ be a locally compact space. Then
\[\mu\big({\mathcal T}_C(X)\big)={\mathcal Z}(X^*)\backslash\{\emptyset\}.\]
\end{theorem}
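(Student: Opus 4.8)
The plan is to characterize, via the order-anti-isomorphism $\mu$ of Theorem~\ref{YUS}, exactly when the one-point extension $Y$ corresponding to a closed set $C=\mu(Y)\subseteq X^*$ is \v{C}ech-complete. Recall that for $Y=X\cup\{p\}$ we have $\beta Y$ equal to the quotient $Z$ of $\beta X$ obtained by collapsing $C$ to $p$, and $q=F_Y:\beta X\to\beta Y$ is the quotient map. Since $Y$ is locally compact (being dense and open in $X$, which is locally compact) at every point of $X$, and since a locally compact space is \v{C}ech-complete, the only obstruction to \v{C}ech-completeness of $Y$ is at the point $p$: indeed $Y$ is \v{C}ech-complete iff $Y$ is a $G_\delta$ in $\beta Y$, and $Y\supseteq X$ which is already a $G_\delta$-containing dense set won't help directly, so I should instead use that $Y$ is \v{C}ech-complete iff $\beta Y\backslash Y$ is $\sigma$-compact? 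No — the clean criterion is: a Tychonoff space $Y$ is \v{C}ech-complete iff it is a $G_\delta$ subset of $\beta Y$ (equivalently of any, hence some, compactification).

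So the first step is to compute $\beta Y\backslash Y$. From $Z=\beta Y$ and $q=F_Y$ with $q^{-1}(p)=C$ and $q|X=\mathrm{id}_X$, together with the fact (quoted in the excerpt, Theorem~3.\ref{PPG} of~[4]) that $F_Y(X^*)=\beta Y\backslash X$, one gets $\beta Y\backslash Y = q(X^*)\backslash\{p\} = q(X^*\backslash C)$, and since $q$ is injective off $C$, this is homeomorphic to $X^*\backslash C$. Hence $Y$ is a $G_\delta$ in $\beta Y$ iff the single point $p$ is a $G_\delta$ in $\beta Y$ (because $X$ is already a $G_\delta$ in $\beta X$... wait, that's false for general locally compact $X$; $X$ is open in $\beta X$ iff $X$ is locally compact, and $X$ locally compact does give $X$ open, hence certainly $G_\delta$, in $\beta X$). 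Since $X$ is locally compact, $X$ is open in $\beta X$, so $X^*$ is closed in $\beta X$ and $X$ is a cozero... at least a $G_\delta$... actually open sets need not be $G_\delta$, but $X$ open in $\beta X$ means $X^* $ closed; I'd want: $Y$ is $G_\delta$ in $\beta Y$ $\iff$ $\beta Y\backslash Y$ is an $F_\sigma$ $\iff$ $X^*\backslash C$ is $\sigma$-compact... hmm, but I should reduce to $C$ being a zero-set. The cleanest route: $Y=X\cup\{p\}$ is \v{C}ech-complete iff $p$ is a $G_\delta$-point of $\beta Y$ AND $X$ is $G_\delta$ in $\beta Y$; the latter holds automatically since $X$ is open in $\beta Y$ (as $X$ is open in $\beta X$ and $q$ is a closed map that is a homeomorphism on the open set $X$, so $q(X)=X$ is open in $Z=\beta Y$). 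Thus $Y$ is \v{C}ech-complete $\iff$ $\{p\}$ is a $G_\delta$ in $\beta Y$.

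The second step translates ``$\{p\}$ is a $G_\delta$ in $\beta Y$'' back to a statement about $C$ in $\beta X$. Since $q$ is a perfect (closed, compact fibers) continuous surjection and $q^{-1}(p)=C$, a standard fact about perfect maps gives: $\{p\}$ is a $G_\delta$ in $Z$ iff $C=q^{-1}(p)$ is a $G_\delta$ in $\beta X$. (One direction: pull back a countable family of open sets shrinking to $p$; the other: if $C=\bigcap_n G_n$ with $G_n$ open, use closedness of $q$ to replace each $G_n$ by a saturated open set, namely $Z\backslash q(\beta X\backslash G_n)$ pulled back, and note these shrink to $C$ since $C$ is saturated.) Finally, a closed $G_\delta$ subset of a compact (hence normal) space is precisely a zero-set: closed $G_\delta$ in a normal space $\iff$ zero-set. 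Therefore $C=\mu(Y)$ is a zero-set of $\beta X$ lying in $X^*$. It remains to check that $C\in{\mathcal Z}(X^*)$, i.e. a zero-set of the subspace $X^*$, is equivalent to: $C$ is closed in $X^*$ and $C$ is a zero-set of $\beta X$. Since $X^*$ is closed in $\beta X$ (as $X$ is open), a zero-set of $\beta X$ contained in $X^*$ is a zero-set of $X^*$; conversely a zero-set $C$ of $X^*$, being the zero-set of some $f\in C(X^*)$, extends (as $X^*$ is compact) — actually one uses that $X^*$ is $C$-embedded... more simply, $X^*$ compact is $C^*$-embedded in $\beta X$, so $f$ extends to $\beta X$ and its zero-set $Z'$ satisfies $Z'\cap X^* = C$, and replacing $Z'$ by $Z'\cap X^*=C$ itself, which is closed $G_\delta$ in $\beta X$ (intersection of the closed $G_\delta$ sets $Z'$ and $X^*$, the latter being $G_\delta$ since... $X$ open need not be $F_\sigma$). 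I will instead argue directly that $C$ is a zero-set of $X^*$ iff $C=\mu(Y)$ for some $Y\in{\mathcal T}_C(X)$ by running the chain of equivalences within $X^*$: $\{p\}$ is $G_\delta$ in $Z$ iff $C$ is $G_\delta$ in $\overline{X^*}$-relative sense, and then closed $+$ $G_\delta$ in the normal space $X^*$ (compact) $=$ zero-set of $X^*$.

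The main obstacle I anticipate is the perfect-map lemma of the second step — getting the equivalence ``$q^{-1}(p)$ is $G_\delta$ in $\beta X$ $\iff$ $\{p\}$ is $G_\delta$ in $Z$'' cleanly, together with making sure I keep the $G_\delta$-ness inside $X^*$ rather than $\beta X$ so that ``zero-set of $X^*$'' (the object in ${\mathcal Z}(X^*)$) comes out correctly. Once that lemma is in hand, the theorem is just: $Y\in{\mathcal T}_C(X)$ $\iff$ $\{p\}$ is $G_\delta$ in $\beta Y$ $\iff$ $C=\mu(Y)$ is $G_\delta$ (hence, being closed, a zero-set) in $X^*$ $\iff$ $\mu(Y)\in{\mathcal Z}(X^*)\backslash\{\emptyset\}$, and combined with surjectivity of $\mu$ this yields $\mu({\mathcal T}_C(X))={\mathcal Z}(X^*)\backslash\{\emptyset\}$.
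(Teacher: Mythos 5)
Your central reduction --- ``$Y$ is \v{C}ech-complete if and only if $\{p\}$ is a $G_\delta$-point of $\beta Y$'' --- is false, and this is exactly where the proof breaks. Only the ``if'' direction holds: if $\{p\}$ is a $G_\delta$ and $X$ is open in $\beta Y$, then $Y=X\cup\{p\}$ is a $G_\delta$. The converse fails because $\{p\}=Y\cap(\beta Y\setminus X)$ and $\beta Y\setminus X$ need not be a $G_\delta$-set of $\beta Y$. Concretely, let $X$ be an uncountable discrete space and $Y=\omega X$ its one-point compactification: then $Y=\beta Y$ is compact, hence \v{C}ech-complete, but every $G_\delta$-set of $\omega X$ containing $\Omega$ contains all but countably many points of $X$, so $\{\Omega\}$ is not a $G_\delta$. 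Correspondingly, your chain would force $\mu(Y)=C$ to be a $G_\delta$ (hence a zero-set) of $\beta X$, whereas the theorem only asserts $C\in{\mathcal Z}(X^*)$; in this example $C=X^*\in{\mathcal Z}(X^*)$ trivially, yet $X^*\notin{\mathcal Z}(\beta X)$ because $X$ is not $\sigma$-compact (1B of [13]). In fact ``$C\in{\mathcal Z}(\beta X)$ and $C\cap X=\emptyset$'' characterizes $\mu({\mathcal T}^*(X))$ (Theorem \ref{AADJ}), which is in general a proper subfamily of $\mu({\mathcal T}_C(X))$, so your route conflates the present theorem with Theorem \ref{AADJ}. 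Your closing remark about ``running the chain within $X^*$'' does not repair this, because you still assert ``$\{p\}$ is $G_\delta$ in $Z$,'' i.e.\ in all of $\beta Y$ rather than in the compact subspace $\beta Y\setminus X$, and the absolute and relative statements are genuinely different.

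Ironically, you wrote down the correct argument and then discarded it: ``$Y$ is $G_\delta$ in $\beta Y$ iff $\beta Y\setminus Y$ is $F_\sigma$ iff $X^*\setminus C$ is $\sigma$-compact'' is exactly right, and since $X^*$ is compact (hence normal), an open subset of $X^*$ is $\sigma$-compact iff it is a cozero-set of $X^*$, i.e.\ iff $C\in{\mathcal Z}(X^*)$. This is essentially the paper's proof: for the forward inclusion, $F_Y^{-1}(Y)=X\cup C$ is a $G_\delta$ in $\beta X$, so $C$ is a closed $G_\delta$ of $X^*$ and hence a zero-set of $X^*$; for the reverse, write $X^*\setminus D=\bigcup_{n<\omega}K_n$ with each $K_n$ compact and observe that $Y=\bigcap_{n<\omega}\big(\beta Y\setminus F_Y(K_n)\big)$ is a $G_\delta$ in $\beta Y$. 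Your perfect-map lemma itself is sound, and if you keep every $G_\delta$ statement relative to $X^*$ and $\beta Y\setminus X$ (using that $F_Y$ restricts to a perfect surjection $X^*\to\beta Y\setminus X$) the argument can be salvaged; as written, it does not go through.
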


\begin{proof}
Suppose that $Y \in{\mathcal T}_C(X)$. Then since $Y$
is \v{C}ech-complete, $Y$ is a $G_\delta$-set in $\beta Y$.
Therefore $X\cup \mu (Y)=F_Y^{-1}(Y)$ is a $G_\delta$-set in $\beta
X$ and thus $\mu (Y)$ is a closed $G_\delta$-set in $X^*$. Therefore
$\mu (Y)$ is a zero-set in $X^*$.

For
the reverse inclusion, suppose that $D\in {\mathcal
Z}(X^*)\backslash \{\emptyset\}$.  By the previous theorem
$D=\mu(Y)$, for some $Y \in{\mathcal T}(X)$. Let $X^*\backslash
D=\bigcup_{n<\omega} K_n$, where each $K_n$ is compact. Then we have
\[Y=F_Y\Big(\beta X\backslash\bigcup_{n<\omega}
K_n\Big)=\bigcap_{n<\omega}\big(\beta Y\backslash F_Y(K_n)\big)\]
which is a $G_\delta$-set in $\beta Y$. Thus $Y$ is
\v{C}ech-complete.
\end{proof}

Since for a compact space $X$, the order structure of either of the
sets ${\mathcal C}(X)$ or ${\mathcal Z}(X)$ determine the topology
of $X$, from Theorems \ref{YUS} and 2.3 we obtain the following result.

\begin{theorem}\label{TDJ}
For locally compact spaces $X$ and $Y$ the following conditions are equivalent.
\begin{itemize}
\item[\rm(1)] ${\mathcal T}(X)$ and ${\mathcal T}(Y)$ are order-isomorphic;
\item[\rm(2)] ${\mathcal T}_C(X)$  and  ${\mathcal T}_C(Y)$ are order-isomorphic;
\item[\rm(3)] $X^*$ and $Y^*$  are homeomorphic.
\end{itemize}
\end{theorem}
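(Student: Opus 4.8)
The plan is to reduce both (1) and (2) to (3) by transporting everything through the order-anti-isomorphism $\mu$ of Theorem~\ref{YUS}, using Theorem~\ref{DDS} to keep track of ${\mathcal T}_C$. First I would dispose of the degenerate case: if $X$ is compact then $X^*=\emptyset$ and, since a compact dense subspace of a Hausdorff space is the whole space, ${\mathcal T}(X)={\mathcal T}_C(X)=\emptyset$; so each of (1), (2), (3) is then equivalent to the corresponding object attached to $Y$ being empty, i.e.\ to $Y$ being compact, and all three hold simultaneously. Thus I may assume $X$ and $Y$ are non-compact, so that $X^*$ and $Y^*$ are non-empty compact spaces and $\mu_X\colon{\mathcal T}(X)\to{\mathcal C}(X^*)\backslash\{\emptyset\}$, $\mu_Y\colon{\mathcal T}(Y)\to{\mathcal C}(Y^*)\backslash\{\emptyset\}$ are order-anti-isomorphisms.

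For (3)$\Rightarrow$(1) and (3)$\Rightarrow$(2): given a homeomorphism $h\colon X^*\to Y^*$, the assignment $F\mapsto h(F)$ is an order-isomorphism of $({\mathcal C}(X^*),\subseteq)$ onto $({\mathcal C}(Y^*),\subseteq)$ which, because a homeomorphism carries zero-sets to zero-sets, restricts to an order-isomorphism of $({\mathcal Z}(X^*),\subseteq)$ onto $({\mathcal Z}(Y^*),\subseteq)$; both restrict further to the versions with $\emptyset$ removed. Since pre- and post-composing an order-isomorphism with order-anti-isomorphisms again yields an order-isomorphism, the map $\mu_Y^{-1}\circ(F\mapsto h(F))\circ\mu_X$ is an order-isomorphism of ${\mathcal T}(X)$ onto ${\mathcal T}(Y)$, giving (1). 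For (2), restrict this map to ${\mathcal T}_C(X)$: by Theorem~\ref{DDS} its image is $\mu_Y^{-1}(h({\mathcal Z}(X^*)\backslash\{\emptyset\}))=\mu_Y^{-1}({\mathcal Z}(Y^*)\backslash\{\emptyset\})={\mathcal T}_C(Y)$, so the restriction is an order-isomorphism of ${\mathcal T}_C(X)$ onto ${\mathcal T}_C(Y)$.

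For (1)$\Rightarrow$(3): if $\psi\colon{\mathcal T}(X)\to{\mathcal T}(Y)$ is an order-isomorphism then $\mu_Y\circ\psi\circ\mu_X^{-1}$ is an order-isomorphism of $({\mathcal C}(X^*)\backslash\{\emptyset\},\subseteq)$ onto $({\mathcal C}(Y^*)\backslash\{\emptyset\},\subseteq)$; adjoining to each side its new least element — which in the full lattice is $\emptyset$, lying below every closed set — extends it canonically to an order-isomorphism $({\mathcal C}(X^*),\subseteq)\to({\mathcal C}(Y^*),\subseteq)$, whence $X^*$ and $Y^*$ are homeomorphic by the cited fact that the order structure of the closed-set lattice of a compact space determines its topology. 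The implication (2)$\Rightarrow$(3) is the same argument with ${\mathcal C}$ replaced throughout by ${\mathcal Z}$: one uses Theorem~\ref{DDS} to identify $\mu_X({\mathcal T}_C(X))$ with ${\mathcal Z}(X^*)\backslash\{\emptyset\}$, again adjoins $\emptyset$ as a bottom element, and invokes the corresponding fact for zero-sets. The genuinely substantive ingredient — that a compact space is recovered from the abstract order of ${\mathcal C}$ or ${\mathcal Z}$ of it (for ${\mathcal C}$ this is the observation that the singletons are exactly the atoms and that a family $S$ of atoms is closed iff the atoms below $\bigvee S$ are precisely those in $S$) — is exactly what is quoted in the sentence preceding the statement, so the only points internal to this proof requiring any care are the harmless adjunction of the bottom element and the degenerate compact case disposed of above; I do not expect a serious obstacle.
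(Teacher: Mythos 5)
Your proposal is correct and follows exactly the route the paper intends: transport everything through the order-anti-isomorphism $\mu$ of Theorem \ref{YUS}, use Theorem \ref{DDS} to identify $\mu({\mathcal T}_C(X))$ with ${\mathcal Z}(X^*)\backslash\{\emptyset\}$, and invoke the quoted fact that the order structure of ${\mathcal C}$ or ${\mathcal Z}$ of a compact space determines its topology. The paper leaves all of this implicit (it states the theorem as an immediate consequence), and your handling of the compact degenerate case and the adjunction of $\emptyset$ as a bottom element supplies the routine details correctly.
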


\begin{theorem}\label{TODJ}
Let $X$ be a locally compact
space. Then
\[ \mu\big({\mathcal T}_K(X)\big)={\mathcal B}(X^*)\backslash\{\emptyset\}.\]
\end{theorem}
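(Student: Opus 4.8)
The plan is to prove the two inclusions $\mu(\mathcal{T}_K(X)) \subseteq \mathcal{B}(X^*)\backslash\{\emptyset\}$ and $\mathcal{B}(X^*)\backslash\{\emptyset\} \subseteq \mu(\mathcal{T}_K(X))$ separately, in both directions exploiting the classical characterization that a Tychonoff space $Y$ is locally compact if and only if it is open in $\beta Y$ (equivalently, open in each of its Hausdorff compactifications), together with the explicit description of $\beta Y$ and $F_Y$ supplied by Theorem \ref{YUS} and the paragraph immediately following it.

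For the first inclusion, I would start with an arbitrary $Y = X\cup\{p\} \in \mathcal{T}_K(X)$. Local compactness of $Y$ together with its density in $\beta Y$ makes $Y$ open in $\beta Y$, so $F_Y^{-1}(Y)$ is open in $\beta X$. Since $F_Y|X = \mathrm{id}_X$, no point of $X$ is carried to $p$, hence $\mu(Y) = F_Y^{-1}(p)$ is disjoint from $X$ and $F_Y^{-1}(Y) = X\cup\mu(Y)$; intersecting with $X^*$ then shows that $\mu(Y)$ is open in $X^*$. On the other hand $\mu(Y) = F_Y^{-1}(p)$ is closed in $\beta X$ and is contained in $X^*$, so it is closed in $X^*$ as well. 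Therefore $\mu(Y)$ is a non-empty clopen subset of $X^*$.

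For the reverse inclusion, I would take a non-empty $C \in \mathcal{B}(X^*)$ and invoke Theorem \ref{YUS} to write $C = \mu(Y)$ for some $Y = X\cup\{p\} \in \mathcal{T}(X)$; by the remark following that theorem, $\beta Y$ is the quotient $Z$ of $\beta X$ obtained by contracting $C$ to the point $p$, with natural quotient map $q = F_Y$. Choosing an open $V \subseteq \beta X$ with $C = V \cap X^*$, one computes $q^{-1}(Y) = X \cup C = X \cup V$, which is open in $\beta X$; since $q$ is a quotient map, this forces $Y$ to be open in $Z = \beta Y$, whence $Y$ is locally compact. Thus $Y \in \mathcal{T}_K(X)$ and $\mu(Y) = C$, completing the proof.

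I do not anticipate a real obstacle. The two places calling for a little care are: noticing that $\mu(Y)$ is automatically closed in $X^*$ (so that ``open in $X^*$'' already yields ``clopen in $X^*$''), and using the defining property of the quotient topology to pass from ``$X\cup C$ open in $\beta X$'' to ``$Y$ open in $\beta Y$'' in the second half; everything else is routine.
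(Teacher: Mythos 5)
Your proof is correct, and the first inclusion is exactly the paper's argument. Two small points of rigor are worth flagging. First, the identity $F_Y^{-1}(Y)=X\cup\mu(Y)$ does not follow merely from $F_Y|X=\mbox{id}_X$ (that only gives $\mu(Y)\cap X=\emptyset$); you also need $F_Y^{-1}(X)=X$, i.e.\ that $F_Y$ carries $X^*$ into $\beta Y\backslash X$. This is the standard fact about the induced map between compactifications that the paper explicitly invokes (citing [4]); without it a point of $X^*$ could a priori land in $X$ and spoil the displayed equality. Second, in the reverse inclusion the openness of $q^{-1}(Y)=X\cup V$ in $\beta X$ uses that $X$ itself is open in $\beta X$, which is where the local compactness of $X$ enters and should be said. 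Granting these, your reverse inclusion is a genuine, and slightly cleaner, variant of the paper's: the paper shrinks $V$ to an open $V'$ with $\mbox{cl}_{\beta X}V'\subseteq V$ and exhibits $(V'\cap X)\cup\{p\}$ as a neighborhood of $p$ with compact closure, verifying local compactness at $p$ by hand, whereas you observe that $q^{-1}(Y)$ is open and use the quotient topology to conclude that $Y$ is open in $\beta Y$, invoking the characterization of local compactness by openness in $\beta Y$ a second time. Your route avoids the shrinking step entirely; both arguments are equally short and rest on the same quotient construction from Theorem \ref{YUS}.
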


\begin{proof}
Suppose that $Y=X\cup\{p\} \in{\mathcal T}_K(X)$. Then
since $Y$ is locally compact, $Y$ is open in $\beta Y$, and thus
$F^{-1}_Y(Y)$ is open in  $\beta X$. But since $F_Y( X^*)=\beta
Y\backslash X$, we have  $F_Y^{-1}(Y)=X\cup F_Y^{-1}(p)$, and
therefore $\mu (Y)= F^{-1}_Y(Y)\backslash X$ is open in $X^*$. Thus
$\mu (Y)\in{\mathcal B}(X^*)\backslash\{\emptyset\}$.

To show the reverse inclusion, let $C$ be a non-empty clopen subset
of $X^*$. Let $C=\mu(Y)$, for some $Y=X\cup\{p\}\subseteq Z$, where
$Z$ is obtained from $\beta X$ by contracting $C$ to a point $p$.
Let $q:\beta X\rightarrow Z$ denote its natural quotient mapping.
Let $U$ be an open subset of $\beta X$ such that $U\cap X^*=C$, and
let $V$ be an open neighborhood of $C$ in $\beta X$ with
$\mbox{cl}_{\beta X}V\subseteq U$. Consider the set $W=(V\cap
X)\cup\{p\}$. Then since  $q^{-1}(W)=V$ is open in $\beta X$, the
set $W$ is an open neighborhood of $p$ in $Y$, and since $W\subseteq
q(\mbox{cl}_{\beta X}V)$, its closure $\mbox{cl}_{Y}W$ is compact.
This shows that $Y$ is locally compact at $p$, and thus
$Y\in{\mathcal T}_K(X)$.
\end{proof}

For a compact zero-dimensional space $X$, the order structure of
${\mathcal B}(X)$ determines the topology of $X$. The following is
now immediate.

\begin{theorem}\label{KHDJ}
For strongly zero-dimensional
locally compact spaces $X$ and $Y$ the following conditions are
equivalent.
\begin{itemize}
\item[\rm(1)] ${\mathcal T}_K(X)$  and  ${\mathcal T}_K(Y)$ are order-isomorphic;
\item[\rm(2)] $X^*$ and $Y^*$  are homeomorphic.
\end{itemize}
\end{theorem}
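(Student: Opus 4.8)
The plan is to derive this equivalence from the order-anti-isomorphism $\mu$ of Theorem \ref{YUS} together with the description $\mu\big({\mathcal T}_K(X)\big)={\mathcal B}(X^*)\backslash\{\emptyset\}$ of Theorem \ref{TODJ}, plus the classical fact (Stone duality, in the form cited just before the statement) that for a compact zero-dimensional space the poset of clopen sets ordered by inclusion determines the space up to homeomorphism. The one genuine extra input needed is that $X^*$ is compact and zero-dimensional whenever $X$ is strongly zero-dimensional locally compact; this is where the hypothesis ``strongly zero-dimensional'' (rather than merely zero-dimensional) is used, since it is exactly the condition guaranteeing that $\beta X$ — and hence its closed subspace $X^*$ — is zero-dimensional, and $X^*$ is compact because it is closed in the compact space $\beta X$.

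First I would prove the implication $(2)\Rightarrow(1)$. Given a homeomorphism $h:X^*\to Y^*$, it induces an order-isomorphism $h_\sharp:\big({\mathcal B}(X^*)\backslash\{\emptyset\},\subseteq\big)\to\big({\mathcal B}(Y^*)\backslash\{\emptyset\},\subseteq\big)$ by $C\mapsto h(C)$, since a homeomorphism carries clopen sets to clopen sets and preserves inclusions in both directions. By Theorem \ref{TODJ} the map $\mu$ restricts to an order-anti-isomorphism ${\mathcal T}_K(X)\to{\mathcal B}(X^*)\backslash\{\emptyset\}$, and likewise for $Y$; call these $\mu_X$ and $\mu_Y$. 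Then $\mu_Y^{-1}\circ h_\sharp\circ\mu_X:{\mathcal T}_K(X)\to{\mathcal T}_K(Y)$ is a composition of one order-isomorphism between two order-anti-isomorphisms, hence an order-isomorphism. This direction is routine bookkeeping.

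The substantive direction is $(1)\Rightarrow(2)$. Suppose $\Phi:{\mathcal T}_K(X)\to{\mathcal T}_K(Y)$ is an order-isomorphism. Conjugating by $\mu_X$ and $\mu_Y$ as above, $\mu_Y\circ\Phi\circ\mu_X^{-1}$ is an order-\emph{anti}-isomorphism from $\big({\mathcal B}(X^*)\backslash\{\emptyset\},\subseteq\big)$ onto $\big({\mathcal B}(Y^*)\backslash\{\emptyset\},\subseteq\big)$ (an order-isomorphism sandwiched between two anti-isomorphisms). Since $X^*$ and $Y^*$ are compact, adjoining a bottom element recovers the full Boolean lattices ${\mathcal B}(X^*)$ and ${\mathcal B}(Y^*)$, and our map extends to an anti-isomorphism of these lattices; composing with the self-anti-isomorphism $C\mapsto X^*\backslash C$ of ${\mathcal B}(X^*)$ yields a Boolean-lattice isomorphism ${\mathcal B}(X^*)\to{\mathcal B}(Y^*)$. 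By Stone duality for the compact zero-dimensional spaces $X^*$ and $Y^*$ — each being homeomorphic to the Stone space of its own Boolean algebra of clopen sets — this lattice isomorphism is induced by a homeomorphism $X^*\to Y^*$.

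The main obstacle, such as it is, is not any deep argument but making sure the topological prerequisite is in place: one must confirm that $X^*$ really is a compact zero-dimensional space so that the quoted fact about ${\mathcal B}$ determining the topology applies, and for this the passage from ``strongly zero-dimensional $X$'' to ``zero-dimensional $\beta X$'' to ``zero-dimensional $X^*$'' should be invoked explicitly (citing the standard reference [4]). Everything else is the formal manipulation of composing $\mu_X$, $\mu_Y$, the given order-isomorphism, and complementation, tracking carefully that an even number of order-reversals produces an order-isomorphism and an odd number an anti-isomorphism.
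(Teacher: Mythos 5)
Your overall strategy is exactly the one the paper intends: the paper gives no details, simply remarking that the result is ``immediate'' from Theorem \ref{YUS}, Theorem \ref{TODJ}, and the fact that the clopen algebra of a compact zero-dimensional space determines its topology; your identification of where strong zero-dimensionality enters (to make $\beta X$, hence $X^*$, zero-dimensional) is also the right one. The $(2)\Rightarrow(1)$ direction is correct as written.

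There is, however, a parity slip in your $(1)\Rightarrow(2)$ direction. You assert that $\mu_Y\circ\Phi\circ\mu_X^{-1}$ is an order-\emph{anti}-isomorphism because it is ``an order-isomorphism sandwiched between two anti-isomorphisms''; but two order reversals cancel, so this composite is in fact an order-isomorphism of $\big({\mathcal B}(X^*)\backslash\{\emptyset\},\subseteq\big)$ onto $\big({\mathcal B}(Y^*)\backslash\{\emptyset\},\subseteq\big)$ --- the same parity count you applied correctly in the other direction. The claim as literally stated is not just a notational issue: an anti-isomorphism would have to carry the maximum $X^*$ of the first poset to a minimum of the second, and ${\mathcal B}(Y^*)\backslash\{\emptyset\}$ has no minimum unless $Y^*$ is connected, so such a map generally cannot exist. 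Consequently the subsequent composition with $C\mapsto X^*\backslash C$ is both unnecessary and harmful: applied to the (actual) isomorphism it would produce an anti-isomorphism of Boolean lattices, not an isomorphism. The repair is immediate and makes the argument shorter: extend the order-isomorphism $\mu_Y\circ\Phi\circ\mu_X^{-1}$ by $\emptyset\mapsto\emptyset$ to an order-isomorphism of the full lattices ${\mathcal B}(X^*)\to{\mathcal B}(Y^*)$; since meets, joins and complements in a Boolean lattice are order-theoretically definable, this is a Boolean algebra isomorphism, and Stone duality then yields the homeomorphism $X^*\to Y^*$. With that correction your proof is complete and coincides with the paper's intended argument.
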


\begin{theorem}\label{KFDJ}
Let  $X$ be  a locally compact
space and let $Y\in {\mathcal T}(X)$. Then
\[ Y=\bigvee\big\{S\in {\mathcal T}_C(X):Y\geq S\big\}.\]
\end{theorem}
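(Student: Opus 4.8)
The plan is to transfer everything to $X^*$ by means of the order-anti-isomorphism $\mu$ of Theorem~\ref{YUS}, and then to invoke the identification $\mu(\mathcal{T}_C(X))=\mathcal{Z}(X^*)\backslash\{\emptyset\}$ of Theorem~\ref{DDS}. Write $C=\mu(Y)$, which is a non-empty closed subset of $X^*$. Since $\mu$ reverses order, for $S\in\mathcal{T}(X)$ one has $Y\geq S$ if and only if $C\subseteq\mu(S)$; combining this with Theorem~\ref{DDS} will give
\[\big\{\mu(S):S\in\mathcal{T}_C(X),\ Y\geq S\big\}=\big\{Z\in\mathcal{Z}(X^*)\backslash\{\emptyset\}:C\subseteq Z\big\}=:\mathcal{F}.\]
So the whole assertion reduces to proving that $\bigcap\mathcal{F}=C$, i.e.\ that every point of $X^*$ lying outside $C$ is excluded by some zero-set of $X^*$ containing $C$.

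This last step is the only real content, and it is where I expect the (modest) work to lie. Given $x\in X^*\backslash C$, I would use that $X^*$ is compact Hausdorff, hence Tychonoff, to choose $g\in C(X^*,\mathbf{I})$ with $g(x)=0$ and $g\equiv1$ on $C$; then $Z=g^{-1}(1)$ is a zero-set of $X^*$ with $C\subseteq Z$ (so $Z\neq\emptyset$, hence $Z\in\mathcal{F}$) and $x\notin Z$. This shows $\bigcap\mathcal{F}\subseteq C$, and the reverse inclusion is trivial, so $\bigcap\mathcal{F}=C$. The point to be careful about is simply that a non-empty closed subset of a Tychonoff space is the intersection of the (automatically non-empty) zero-sets containing it; no normality or $G_\delta$ hypothesis is needed, only complete regularity.

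Finally I would assemble these facts into the supremum statement. The element $Y$ is trivially an upper bound of $\{S\in\mathcal{T}_C(X):Y\geq S\}$. Conversely, if $W\in\mathcal{T}(X)$ is any upper bound of this set, then $\mu(W)\subseteq\mu(S)$ for every $S\in\mathcal{T}_C(X)$ with $Y\geq S$, that is, $\mu(W)\subseteq Z$ for every $Z\in\mathcal{F}$, whence $\mu(W)\subseteq\bigcap\mathcal{F}=C=\mu(Y)$; since $\mu^{-1}$ is an order-anti-homomorphism, this gives $W\geq Y$. Hence $Y$ is the least upper bound, i.e.\ $Y=\bigvee\{S\in\mathcal{T}_C(X):Y\geq S\}$. (Equivalently, one may phrase the conclusion abstractly: $\mathcal{F}$ has infimum $\bigcap\mathcal{F}=C$ in the poset $(\mathcal{C}(X^*)\backslash\{\emptyset\},\subseteq)$, and an order-anti-isomorphism carries infima to suprema, so $\mu^{-1}(C)=Y$ is the supremum of $\mu^{-1}(\mathcal{F})$.)
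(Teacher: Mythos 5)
Your proof is correct and follows exactly the paper's (one-line) argument: transfer via $\mu$, identify $\mu(\mathcal{T}_C(X))$ with $\mathcal{Z}(X^*)\backslash\{\emptyset\}$ by Theorem~\ref{DDS}, and use that zero-sets form a base for the closed sets of the compact space $X^*$, so that $C$ is the intersection of the non-empty zero-sets containing it. You have merely written out the details the paper leaves implicit; nothing is missing.
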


\begin{proof}
This follows from Theorem \ref{DDS} and the fact that
${\mathcal Z}(X^*)$ is a base for closed subsets of
$X^*$.
\end{proof}

The next few result will have applications in the following
sections.

\begin{theorem}\label{AADJ}
Let $X$ be a locally compact
space. Then
\[ \mu\big({\mathcal T}^*(X)\big)=\big\{C\in {\mathcal Z}(\beta X): C\cap
X=\emptyset\big\}\backslash\{\emptyset\}.\]
\end{theorem}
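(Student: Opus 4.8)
The plan is to prove the two inclusions separately, using the description of $\beta Y$ as a quotient of $\beta X$ that was recorded after Theorem~\ref{YUS}. Throughout, given $Y=X\cup\{p\}\in{\mathcal T}(X)$, I write $C=\mu(Y)=F_Y^{-1}(p)$, and I identify $\beta Y$ with the quotient $Z$ of $\beta X$ obtained by contracting $C$, so that $q=F_Y$ and the point $p$ of $\beta Y$ corresponds to the contracted set $C\subseteq X^*$.

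First I would handle $\mu({\mathcal T}^*(X))\subseteq\{C\in{\mathcal Z}(\beta X):C\cap X=\emptyset\}\setminus\{\emptyset\}$. So suppose $Y=X\cup\{p\}$ is first countable at $p$, and let $\{U_n\cup\{p\}\}_{n<\omega}$ be a countable open neighborhood base at $p$ in $Y$, where each $U_n$ is open in $X$; we may assume $\mbox{cl}_Y U_{n+1}\cup\{p\}\subseteq U_n\cup\{p\}$. For each $n$, let $V_n$ be an open subset of $\beta Y$ with $V_n\cap Y=U_n\cup\{p\}$. The key claim is that $C=\bigcap_{n<\omega}\mbox{cl}_{\beta Y}V_n$. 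Indeed $p\in\mbox{cl}_{\beta Y}V_n$ for each $n$, and since $Y$ is dense in $\beta Y$, first countability at $p$ in $Y$ forces the $V_n$ (or their closures) to form a neighborhood base at $p$ in $\beta Y$ as well — here one uses that any open $W\ni p$ in $\beta Y$ meets $Y$ in an open set containing $p$, hence contains some $U_n\cup\{p\}$, hence $\mbox{cl}_{\beta Y}V_{n+1}\subseteq W$ by the shrinking. Thus $\{p\}=\bigcap_n\mbox{cl}_{\beta Y}V_n$ is a zero-set of $\beta Y$. Pulling back under $F_Y$: $X\cup C=F_Y^{-1}(Y)$ and $F_Y^{-1}(\{p\})=C$ is the preimage of a zero-set, hence a zero-set of $\beta X$; and $C\subseteq X^*$ gives $C\cap X=\emptyset$. (One must be a little careful that $F_Y^{-1}(\bigcap_n\mbox{cl}_{\beta Y}V_n)=\bigcap_n F_Y^{-1}(\mbox{cl}_{\beta Y}V_n)$ is genuinely a zero-set; since a zero-set is a countable intersection of cozero sets, and preimages of cozero sets under a continuous map are cozero, this is routine.)

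For the reverse inclusion, let $C\in{\mathcal Z}(\beta X)$ be non-empty with $C\cap X=\emptyset$, so $C\in{\mathcal C}(X^*)\setminus\{\emptyset\}$; by Theorem~\ref{YUS} there is $Y=X\cup\{p\}\in{\mathcal T}(X)$ with $\mu(Y)=C$, realized inside $Z=\beta Y$, $q=F_Y$. I must show $Y$ is first countable at $p$. Write $C=\bigcap_{n<\omega}G_n$ where each $G_n$ is a cozero (equivalently open) subset of $\beta X$; shrinking, we may take $G_n$ open with $C\subseteq G_{n+1}\subseteq\mbox{cl}_{\beta X}G_{n+1}\subseteq G_n$ (possible since $C$ is a zero-set, hence a regular $G_\delta$, and $\beta X$ is normal). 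Since $C$ is saturated with respect to $q$ (it is a single fiber) and $C$ is closed, the sets $q(G_n)$ need not be open; instead I would use the saturated open sets $G_n^{\flat}=\beta X\setminus\mbox{cl}_{\beta X}(\beta X\setminus q^{-1}(q(G_n)))$ — but more simply, since each $G_n\supseteq C$ and $C$ is a fiber while all other fibers are singletons, $G_n$ \emph{is} already saturated, so $q(G_n)$ is open in $Z$ and contains $p$. Then $W_n:=q(G_n)\cap Y$ is an open neighborhood of $p$ in $Y$. I claim $\{W_n\}_{n<\omega}$ is a neighborhood base at $p$ in $Y$. Given an open $W\ni p$ in $Y$, extend to open $\widetilde W\ni p$ in $Z$ with $\widetilde W\cap Y=W$; then $q^{-1}(\widetilde W)$ is open in $\beta X$ and contains $C$, and by compactness of $C$ together with $C=\bigcap_n\mbox{cl}_{\beta X}G_{n+1}$ and the nesting, some $\mbox{cl}_{\beta X}G_{n+1}\subseteq q^{-1}(\widetilde W)$ (a standard compactness argument: the closed sets $\mbox{cl}_{\beta X}G_{n+1}\setminus q^{-1}(\widetilde W)$ are decreasing with empty intersection in the compact space $\beta X$, so one is empty). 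Hence $G_{n+1}\subseteq q^{-1}(\widetilde W)$, so $W_{n+1}=q(G_{n+1})\cap Y\subseteq\widetilde W\cap Y=W$. This proves first countability at $p$, so $Y\in{\mathcal T}^*(X)$ and $C=\mu(Y)\in\mu({\mathcal T}^*(X))$.

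The main obstacle I anticipate is the bookkeeping around saturation and quotients — specifically, being sure that an open set of $\beta X$ containing the single non-degenerate fiber $C$ really does descend to an open set of $Z=\beta Y$, and conversely translating "neighborhood base at $p$ in $Y$" correctly between $Y$, $\beta Y$, and the $F_Y$-preimage in $\beta X$. Once the dictionary ($Y$ first countable at $p$ $\Leftrightarrow$ $\{p\}$ has a countable nbhd base in $\beta Y$ $\Leftrightarrow$ $C=F_Y^{-1}(p)$ is a $G_\delta$, equivalently a zero-set, in $\beta X$) is set up cleanly, both directions reduce to the compactness argument above plus the elementary fact that in a compact (normal) space a closed $G_\delta$ is a zero-set. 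Note this also gives a quick alternative to the forward direction via Theorem~\ref{DDS}: if $Y\in{\mathcal T}^*(X)$ then certainly $Y\in{\mathcal T}_C(X)$ (first countable at the unique non-isolated-type point, together with local compactness of $X$, makes $Y$ Čech-complete), so $\mu(Y)\in{\mathcal Z}(X^*)$; but one still needs the finer statement that it is a zero-set of $\beta X$ itself and not merely of $X^*$, which is exactly what the neighborhood-base argument supplies.
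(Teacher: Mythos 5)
Your proposal is correct and follows essentially the same route as the paper: in the forward direction you show $\{p\}$ is a closed $G_\delta$, hence a zero-set, of $\beta Y$ and pull it back along $F_Y$ (the paper does the same thing with explicit Urysohn functions $f_n$ and $\bigcap_{n<\omega}Z(f_n)=\{p\}$), and in the reverse direction your neighborhoods $q(G_n)\cap Y$ with $G_n=f^{-1}([0,1/n))$ are exactly the paper's base $\big(\big(f^{-1}\big([0,1/n)\big)\backslash T\big)\cup\{p\}\big)\cap Y$, verified by the same compactness argument. One minor slip: from $U_n\cup\{p\}\subseteq W\cap Y$ you may only conclude $\mbox{cl}_{\beta Y}V_{n+1}\subseteq\mbox{cl}_{\beta Y}W$ rather than $\mbox{cl}_{\beta Y}V_{n+1}\subseteq W$, but the intended conclusion $\bigcap_{n<\omega}\mbox{cl}_{\beta Y}V_n=\{p\}$ is recovered at once by choosing, for a given $y\neq p$, an open $W\ni p$ with $y\notin\mbox{cl}_{\beta Y}W$.
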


\begin{proof}
Suppose that $Y=X\cup\{p\} \in{\mathcal T}^*(X)$. Let
$\{U_n\}_{n<\omega} $ be a base at $p$ in $Y$ and let $V_n$'s be
open subsets  in $\beta Y$ such that $U_n=V_n\cap Y$. Let for each
$n<\omega$, $f_n\in C( \beta Y, {\bf I})$ be such that $f_n(p)=0$
and $f_n(\beta Y\backslash V_n)\subseteq \{1\}$, and let
$S=\bigcap_{n<\omega} Z(f_n)$. We verify that $S=\{p\}$. For if
$y\in S$ and $y\neq p$, let $U$ and $V$ be disjoint open
neighborhoods of $y$ and $p$ in $\beta Y$, respectively. Let
$k<\omega$ be such that $U_k\subseteq V\cap Y$. Then $\mbox
{cl}_{\beta Y}V_k= \mbox {cl}_{\beta Y}U_k \subseteq \mbox
{cl}_{\beta Y}V$, and therefore since $y\in Z(f_k)$, we have $y\in
\mbox {cl}_{\beta Y}V$.  But this is a contradiction as $y\in U$ and
$U\cap V=\emptyset$. Therefore since $\{p\}\in{\mathcal Z}(\beta
Y)$, we have $\mu(Y)=F_Y^{-1}(p)\in{\mathcal Z}(\beta X)$.

To show the reverse inclusion, let  $\emptyset\neq T\in{\mathcal
Z}(\beta X)$ be such that $T\cap X=\emptyset$, and let $Z$ be the
space obtained from $\beta X $ by contracting $T$ to a point $p$.
Let $Y=X\cup\{p\}\subseteq Z$, and let  $q:\beta X\rightarrow Z$ be
the natural quotient mapping. For each $n<\omega$, let
\[U_n=\big(\big(f^{-1}\big([0,1/n)\big)\backslash T\big)\cup\{p\}\big)\cap Y.\]
Then $U_n$ is an open neighborhood of $p$ in $Y$. Suppose that $U$ is an open neighborhood of $p$ in $Y$ and let $U=V\cap Y$, for some open subset $V$ of $\beta Y$. Then since $p\in V$, we have
\[\bigcap_{n<\omega} f^{-1}\big([0,1/n]\big)=T\subseteq
  q^{-1}(V).\]
Therefore  there exists a $k<\omega$ such that
$f^{-1}([0,1/k])\subseteq q^{-1}(V)$, and thus $U_k\subseteq U$. Therefore $\{U_n\}_{n<\omega}
$ is  a base at $p$ in $Y$  and $T=\mu(Y)\in \mu({\mathcal T}^*(X))$.
\end{proof}

\begin{corollary}\label{AWDJ}
Let $X$ be a locally compact
space. Then ${\mathcal T}^*(X)\neq\emptyset$ if and only if $X$ is
not pseudocompact.
\end{corollary}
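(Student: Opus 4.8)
The plan is to use the characterization of $\mu\big({\mathcal T}^*(X)\big)$ furnished by Theorem~\ref{AADJ}, which identifies ${\mathcal T}^*(X)\neq\emptyset$ with the existence of a non-empty zero-set $C$ of $\beta X$ disjoint from $X$. So the corollary reduces to the statement: $X$ is not pseudocompact if and only if there is a non-empty $C\in{\mathcal Z}(\beta X)$ with $C\cap X=\emptyset$. Both directions are then classical facts about the relationship between pseudocompactness and zero-sets in $\beta X$, and I would prove them directly.

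First I would handle the easier direction. Suppose $X$ is not pseudocompact. Then there is an unbounded $f\in C(X)$; replacing $f$ by $1/(1+f^2)$ or a similar modification, one may instead assume $f\in C^*(X)$ with $\inf_X f=0$ but $f(x)>0$ for every $x\in X$ (i.e. $f$ has no zero on $X$ but its infimum is $0$). Let $g\in C(\beta X,{\bf I})$ be the extension of $f$. Then $Z(g)$ is a zero-set of $\beta X$; it is non-empty because $g$ attains its infimum $0$ on the compact space $\beta X$, and it is disjoint from $X$ because $f$ vanishes nowhere on $X$. Thus $Z(g)$ is the required zero-set, and by Theorem~\ref{AADJ} we get ${\mathcal T}^*(X)\neq\emptyset$.

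Conversely, suppose ${\mathcal T}^*(X)\neq\emptyset$. By Theorem~\ref{AADJ} there is a non-empty $C\in{\mathcal Z}(\beta X)$ with $C\cap X=\emptyset$; write $C=Z(g)$ for some $g\in C(\beta X,{\bf I})$. Pick a point $t\in C$. Since $t\notin X$ and $X$ is dense in $\beta X$, the point $t$ is in the closure of $X$ but not in $X$, so $X$ is not compact, and in fact one can produce an unbounded continuous function on $X$ as follows: consider $h=g|X\in C^*(X)$, which is strictly positive on $X$ (as $X\cap Z(g)=\emptyset$) but has infimum $0$ (since $g(t)=0$ and $X$ is dense near $t$). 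Then $1/h\in C(X)$ is unbounded, so $X$ is not pseudocompact.

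The main obstacle, such as it is, lies in the easy direction: passing from "$X$ not pseudocompact" (an unbounded $f\in C(X)$) to a function in $C^*(X)$ whose extension to $\beta X$ has a zero outside $X$ requires a small but careful manipulation — the naive replacement $1/(1+|f|)$ is bounded and has infimum $0$, but one must check its $\beta X$-extension actually attains $0$ (which follows from compactness of $\beta X$) and that this zero cannot lie in $X$ (which follows because $1/(1+|f|)>0$ pointwise on $X$). Once this is set up correctly, both implications are routine, and the corollary follows immediately by combining with Theorem~\ref{AADJ}.
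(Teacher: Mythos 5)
Your proof is correct. Both you and the paper begin with the same reduction: by Theorem~\ref{AADJ}, ${\mathcal T}^*(X)\neq\emptyset$ precisely when there is a non-empty zero-set of $\beta X$ disjoint from $X$. Where you diverge is in how this condition is tied to pseudocompactness. The paper's proof is two lines: it recalls that $\upsilon X$ is the intersection of all cozero-sets of $\beta X$ containing $X$ (so such a zero-set exists iff $\upsilon X\neq\beta X$) and that $X$ is pseudocompact iff $\upsilon X=\beta X$, and is done. You instead prove the equivalence from scratch: from an unbounded $f\in C(X)$ you build $h=1/(1+f^2)\in C^*(X)$, strictly positive with infimum $0$, and observe that its extension $g$ to $\beta X$ satisfies $g(\beta X)\supseteq\operatorname{cl}_{\mathbf R}h(X)\ni 0$ while $Z(g)\cap X=\emptyset$; conversely, from a non-empty $Z(g)$ missing $X$ you get $g|X>0$ with infimum $0$, so $1/(g|X)$ is unbounded. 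Both arguments are sound; your version buys self-containedness (no appeal to the Hewitt realcompactification), at the cost of a slightly longer write-up, while the paper's version is shorter but leans on the standard $\upsilon X$ machinery that it uses again later (e.g.\ in Theorems~\ref{SDJ} and~\ref{YOOG}).
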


\begin{proof}
We note that $\upsilon X$ is the intersection of all
cozero-sets of $\beta X$ which contain $X$. But $X$ is pseudocompact
if and only if $\upsilon X=\beta X$. Now Theorem \ref{AADJ} completes the
proof.
\end{proof}

\begin{theorem}\label{SDJ}
Let $X$ be a locally compact
space. Then
\[ \mu\big({\mathcal T}_P(X)\big)=\big\{C\in{\mathcal C}(X^*): C\supseteq\beta
X\backslash\upsilon X\big\}\backslash\{\emptyset\}.\]
\end{theorem}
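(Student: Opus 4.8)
The plan is to characterize, via the anti-isomorphism $\mu$, exactly which non-empty closed subsets $C$ of $X^*$ correspond to pseudocompact one-point extensions $Y=X\cup\{p\}$. Recall that a Tychonoff space $Y$ is pseudocompact if and only if $\upsilon Y=\beta Y$, equivalently if and only if every cozero-set of $\beta Y$ meeting $Y$ actually meets $Y$ — more usefully here, $Y$ is pseudocompact iff $\beta Y\backslash\upsilon Y$ is disjoint from $Y$ fails to be the issue; rather, $Y$ pseudocompact iff no point of $\beta Y\backslash Y$ can be separated from $Y$ by a cozero-set, i.e. iff $\beta Y\backslash Y$ carries no nonempty zero-set of $\beta Y$ missing $Y$. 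I will use the identification from the paragraph after Theorem \ref{YUS}: with $Z=\beta Y$ obtained from $\beta X$ by contracting $C=\mu(Y)$ to the point $p$, and $q=F_Y:\beta X\to\beta Y$ the quotient map, we have $q|X=\mathrm{id}_X$ and $q^{-1}(p)=C$.

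First I would prove the inclusion $\mu({\mathcal T}_P(X))\subseteq\{C\in{\mathcal C}(X^*):C\supseteq\beta X\backslash\upsilon X\}\backslash\{\emptyset\}$. Suppose $Y=X\cup\{p\}$ is pseudocompact and let $C=\mu(Y)$; suppose toward a contradiction that some $x\in(\beta X\backslash\upsilon X)\backslash C$. Then $x\in\beta X\backslash X$ and $x\notin\upsilon X$, so by the description of $\upsilon X$ as the intersection of all cozero-sets of $\beta X$ containing $X$ (used already in the proof of Corollary \ref{AWDJ}), there is $h\in C(\beta X,{\bf I})$ with $h(x)=0$ and $X\subseteq\beta X\backslash Z(h)$, i.e. $Z(h)\cap X=\emptyset$ and $x\in Z(h)$. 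Since $C$ is closed in $\beta X$ and $x\notin C$, I can shrink: choose $g\in C(\beta X,{\bf I})$ with $g(x)=0$, $g(C)\subseteq\{1\}$, and set $h'=\max\{h,1-g\}$ so that $Z(h')=Z(h)\cap Z(1-g)$ contains $x$, misses $X$, and misses $C$. Then $q(Z(h'))$ is a nonempty subset of $\beta Y$ disjoint from $Y$; because $q$ is a closed map and $Z(h')$ is a zero-set of $\beta X$ disjoint from the saturated set $C$, one checks $q(Z(h'))$ is a zero-set of $\beta Y=Z$ (its preimage $q^{-1}(q(Z(h')))=Z(h')$ since $Z(h')\cap C=\emptyset$, and $Z$ is compact so $q$ identifies the quotient algebra appropriately — alternatively compose $h'$ through the quotient). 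Thus $\beta Y$ has a nonempty zero-set missing $Y$, contradicting pseudocompactness of $Y$. Hence $C\supseteq\beta X\backslash\upsilon X$.

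For the reverse inclusion, let $C\in{\mathcal C}(X^*)$ with $C\supseteq\beta X\backslash\upsilon X$ and $C\ne\emptyset$; by Theorem \ref{YUS} write $C=\mu(Y)$ with $Y=X\cup\{p\}\subseteq Z=\beta Y$, $q=F_Y$. To show $Y$ is pseudocompact it suffices to show $\beta Y$ has no nonempty zero-set disjoint from $Y$. Suppose $W\in{\mathcal Z}(\beta Y)$ with $W\cap Y=\emptyset$; then $p\notin W$, so $q^{-1}(W)$ is a zero-set of $\beta X$ (pull back the function) with $q^{-1}(W)\cap X=\emptyset$ and $q^{-1}(W)\cap C=\emptyset$. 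Thus $q^{-1}(W)$ is a zero-set of $\beta X$ contained in $X^*\backslash C\subseteq X^*\backslash(\beta X\backslash\upsilon X)=\upsilon X\backslash X$. But a zero-set of $\beta X$ contained in $\upsilon X\backslash X$ must be empty: since $\upsilon X$ is exactly the set of points of $\beta X$ every zero-set-neighborhood (equivalently, whose every $G_\delta$ containing it) meets $X$ in the appropriate sense — concretely, if $E\in{\mathcal Z}(\beta X)$, $E\cap X=\emptyset$, and $E\ne\emptyset$, pick $x\in E$; then $x\in\beta X\backslash\upsilon X$ because the cozero-set $\beta X\backslash E$ contains $X$ but not $x$. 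This forces $E\cap(\beta X\backslash\upsilon X)\ne\emptyset$, contradicting $E=q^{-1}(W)\subseteq\upsilon X$. Hence $W=\emptyset$, $Y$ is pseudocompact, and $C\in\mu({\mathcal T}_P(X))$.

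The main obstacle I anticipate is the bookkeeping in showing that images and preimages of zero-sets under the quotient map $q$ behave correctly — specifically that when a zero-set of $\beta X$ is disjoint from the contracted fiber $C$, its image is a zero-set of $\beta Y$, and dually for pullbacks through $p$. This is where one must use carefully that $q$ is a perfect (closed, compact-fibered) map onto a compact space and that $C$ is the unique non-degenerate fiber, so that away from $C$ the map $q$ is a homeomorphism and sets disjoint from $C$ are "saturated enough" for $C(\beta Y)$-functions to detect. Everything else reduces to the standard characterization of $\upsilon X$ as the intersection of cozero-sets of $\beta X$ containing $X$, already invoked in Corollary \ref{AWDJ}.
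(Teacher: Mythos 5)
Your overall strategy is the same as the paper's: test pseudocompactness of $Y$ on $\beta Y$, and translate zero-sets back and forth along the quotient map $q$ that contracts $C=\mu(Y)$ to $p$. The reverse inclusion is correct and essentially identical to the paper's (the paper invokes the lemma that a non-empty zero-set of $\upsilon X$ meets $X$; you reprove that fact directly from the description of $\upsilon X$ as the intersection of cozero-sets of $\beta X$ containing $X$, which is fine). The forward direction, however, contains a concrete slip: with $g(x)=0$ and $g(C)\subseteq\{1\}$ you have $Z(1-g)=g^{-1}(1)$, so $x\notin Z(1-g)$ while $C\subseteq Z(1-g)$ --- exactly the opposite of what you need. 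Consequently $Z(h')=Z(h)\cap Z(1-g)$ with $h'=\max\{h,1-g\}$ does \emph{not} contain $x$ and need not miss $C$. The repair is to set $h'=\max\{h,g\}$, so that $Z(h')=Z(h)\cap Z(g)$ contains $x$, misses $X$, and misses $C$. This repair also rescues your under-justified claim that $q(Z(h'))$ is a zero-set of $\beta Y$: the corrected $h'$ is identically $1$ on $C$, hence constant on every fiber of $q$, so it factors as $h'=\tilde{h}\circ q$ with $\tilde{h}\in C(\beta Y,\mathbf{I})$ and $q(Z(h'))=Z(\tilde{h})$. Your parenthetical ``compose $h'$ through the quotient'' does not work for the $h'$ you actually wrote, since that $h'$ restricts to $h$ on $C$ and $h$ is not controlled there.

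For comparison, the paper's forward direction sidesteps the image-of-a-zero-set question entirely: it picks $S,T\in{\mathcal Z}(\beta X)$ with $x\in S$, $S\cap C=\emptyset$, $x\in T$, $T\cap X=\emptyset$, notes that $D=(S\cap T)\backslash C$ is a non-empty $G_\delta$-set of $\beta X$ missing $C$, hence a $G_\delta$-set of $\beta Y$ (because $q$ maps $\beta X\backslash C$ homeomorphically onto the open set $\beta Y\backslash\{p\}$), and then uses the characterization that a pseudocompact $Y$ meets every non-empty $G_\delta$-set of $\beta Y$. That route is slightly more economical; your zero-set version is equally valid once the sign is fixed.
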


\begin{proof}
Suppose that $Y=X\cup\{p\} \in{\mathcal T}_P(X)$. Let
$C=\mu(Y)$. Assume that $(\beta X\backslash\upsilon X)\backslash C
\neq\emptyset$, and let $x\in(\beta X\backslash\upsilon X)\backslash
C$. Since $x\notin C$, there exists an $S\in {\mathcal Z}(\beta X)$
such that $x\in S$ and $S\cap C=\emptyset$. Since $x\notin \upsilon
X$, there exists a $T\in {\mathcal Z}(\beta X)$ such that $x\in T$
and $T\cap X=\emptyset$. Now since $D=(S\cap T)\backslash C$ is a
non-empty $G_\delta$-set of $\beta X$, it is also a non-empty
$G_\delta$-set of $\beta Y$ (which is obtained from $\beta X$ by
contracting $C$ to the point $p$) and therefore by pseudocompactness
of $Y$ we have $D\cap Y\neq \emptyset$.  But this is a contradiction
as $D\cap X= \emptyset$ and $p\notin D$.

To show the reverse inclusion, suppose that $C\in {\mathcal
C}(X^*)\backslash\{\emptyset\}$ and $C\supseteq\beta
X\backslash\upsilon X$. Let $Y=X\cup\{p\}\in {\mathcal T}(X)$ be
such that $\mu(Y)=C$. Suppose that $Y$ is not pseudocompact.  Then
there exists a non-empty $S\in {\mathcal Z}(\beta Y)$ (note that
$\beta Y$ is obtained from $\beta X$ by contracting $C$ to the point
$p$  and
$q:\beta X\rightarrow \beta Y$ is its corresponding quotient mapping)
such that $S\cap Y=\emptyset$. Now $q^{-1}(S)\in {\mathcal Z}(\beta
X)$ and since $p\notin S$, we have $ q^{-1}(S)\cap C=\emptyset$.
Therefore $q^{-1}(S)\subseteq \beta X\backslash C\subseteq\upsilon
X$. Thus since $q^{-1}(S)\in {\mathcal Z}(\upsilon X)$ and
$q^{-1}(S)\neq\emptyset$, we have $q^{-1}(S)\cap X\neq\emptyset$,
which is contradiction,  as $S\cap X=\emptyset$. This shows that $Y$
is pseudocompact, and thus $C\in\mu({\mathcal T}_P(X))$. This
together with the first part of the proof gives the
result.
\end{proof}

\section{The case when $X$ is locally compact and paracompact}

In this section we study the relation between the order structure of
various sets of one-point extensions of a locally compact
paracompact space $X$, and the topology of a certain subspace of
$X^*$. We make use of the following result in a number of occasions
throughout  (see Theorem \ref{YUGG}.27 and 3.8.C of [4]).

\begin{proposition}\label{RAWDJ}
Let $X$ be a locally compact paracompact non-$\sigma$-compact space.
Then we have
\[X=\bigoplus_{i\in I} X_i,\]
where each $X_i$ is a $\sigma$-compact non-compact subspace.
\end{proposition}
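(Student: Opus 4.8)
The plan is to reduce the statement to the standard structure theorem for locally compact paracompact spaces and then merely regroup the summands that theorem produces. First I would invoke the results quoted in the statement (in [4]): since $X$ is locally compact and paracompact, it can be written as a topological sum $X=\bigoplus_{i\in I}Y_i$ in which each $Y_i$ is a non-empty clopen $\sigma$-compact subspace of $X$. Because a finite or countable topological sum of $\sigma$-compact spaces is again $\sigma$-compact, the hypothesis that $X$ is \emph{not} $\sigma$-compact forces the index set $I$ to be uncountable; in particular $I$ is infinite, which is all that will actually be used.

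Next, since $|I|$ is infinite we have $|I|=|I|\cdot\aleph_0$, so $I$ can be partitioned into a family $\{I_k\}_{k\in K}$ of pairwise disjoint \emph{countably infinite} subsets. Put $X_k=\bigoplus_{i\in I_k}Y_i$ for each $k\in K$. Then $X_k$ is clopen in $X$ (a union of clopen sets whose complement is also such a union), it is $\sigma$-compact as a countable topological sum of $\sigma$-compact spaces, and evidently $X=\bigoplus_{k\in K}X_k$. It then remains only to observe that each $X_k$ is non-compact: the family $\{Y_i:i\in I_k\}$ is a countably infinite cover of $X_k$ by pairwise disjoint non-empty open sets, and since $I_k$ is infinite no finite subfamily covers $X_k$. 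This gives the desired decomposition.

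The point worth emphasising --- and essentially the only subtlety --- is that the cited structure theorem produces $\sigma$-compact summands which may individually be compact (for instance, when $X$ is an uncountable discrete space every $Y_i$ is a singleton), so one cannot simply take $X_i=Y_i$. Bundling infinitely many of the $Y_i$ together at a time repairs this, since an infinite topological sum of non-empty spaces is automatically non-compact, while countability of each bundle preserves $\sigma$-compactness. Beyond this elementary observation and the cardinal identity $|I|=|I|\cdot\aleph_0$ used to split $I$ into countably infinite blocks, I do not anticipate any obstacle.
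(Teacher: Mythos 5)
Your proof is correct, and it matches the paper's approach: the paper offers no proof of this proposition beyond citing the structure theorem for locally compact paracompact spaces (Theorem 5.1.27 and 3.8.C of [4]), and your regrouping of the $\sigma$-compact clopen summands into countably infinite blocks is exactly the routine step needed to pass from that theorem's conclusion to the stated form with non-compact summands. The one subtlety you flag --- that the cited theorem's summands may be compact, so one must bundle infinitely many at a time --- is handled correctly.
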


Following the notations of [7], for a Tychonoff space $X$, we let
\[\sigma X=\bigcup\{\mbox {cl}_{\beta X}A:A\subseteq X \mbox { is
$\sigma$-compact} \}.\]
Using the notations of Proposition \ref{RAWDJ}, it can be shown
that for a locally compact paracompact non-$\sigma$-compact space
$X$, we have
\[\sigma
X=\bigcup\Big\{\mbox {cl}_{\beta X}\Big(\bigcup_{i\in J}X_i\Big):J\subseteq I
\mbox { is countable}\Big\}\]
which is clearly an open subset of $\beta X$, as each
$\bigcup_{i\in J}X_i$ is clopen in $X$.

Here are some examples showing that neither of the implications,
paracompactness implies local compactness, nor its converse hold.
Clearly the hedgehog with an infinite  number of spines provides an
example of a paracompact space which is not locally compact. Now
consider the space $\sigma X$, when $X$ is an uncountable discrete
space. Then $\sigma X$ is locally compact, as it is open in $\beta
X$.  However, the space $\sigma X$ is not paracompact, as it is
countably compact and non-compact.

The following follows from Theorems 2.5 and 2.8.

\begin{lemma}\label{RAJ}
Let $X$ be a locally compact space. Then
\[\mu\big({\mathcal T}_K^*(X)\big)=\big\{Z\in {\mathcal Z}(\beta X):  \mbox {$Z$
is clopen in $X^*$}\big\}\backslash\{\emptyset\}.\]
\end{lemma}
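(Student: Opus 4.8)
The plan is to combine Theorem \ref{TODJ} and Theorem \ref{AADJ} directly, since ${\mathcal T}_K^*(X)={\mathcal T}^*(X)\cap{\mathcal T}_K(X)$ by Definition \ref{FGA}, and $\mu$ is a bijection by Theorem \ref{YUS}. Because $\mu$ is a bijection from ${\mathcal T}(X)$ onto ${\mathcal C}(X^*)\backslash\{\emptyset\}$, it takes intersections to intersections, so $\mu\big({\mathcal T}_K^*(X)\big)=\mu\big({\mathcal T}^*(X)\big)\cap\mu\big({\mathcal T}_K(X)\big)$. By Theorem \ref{TODJ} the second factor is $\big({\mathcal B}(X^*)\backslash\{\emptyset\}\big)$, i.e. the non-empty clopen subsets of $X^*$; by Theorem \ref{AADJ} the first factor is $\big\{C\in{\mathcal Z}(\beta X):C\cap X=\emptyset\big\}\backslash\{\emptyset\}$, i.e. the non-empty zero-sets of $\beta X$ missing $X$. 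Intersecting these two descriptions gives the non-empty sets $Z$ which are simultaneously zero-sets of $\beta X$ disjoint from $X$ and clopen in $X^*$.

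First I would observe that any $Z\in{\mathcal Z}(\beta X)$ with $Z\cap X=\emptyset$ is automatically a subset of $X^*$, so saying ``$Z$ is clopen in $X^*$'' makes sense; and conversely, any non-empty $Z\in{\mathcal Z}(\beta X)$ that is clopen in $X^*$ must satisfy $Z\cap X=\emptyset$ — indeed a zero-set of $\beta X$ is closed in $\beta X$, hence closed in $X$ when intersected with $X$, while its being open in $X^*$ forces $Z$ to be contained in $X^*$ provided $Z$ is non-empty and... actually this needs a moment's care. The cleaner route is: the right-hand side of the claimed identity is exactly $\big\{Z\in{\mathcal Z}(\beta X):Z\text{ is clopen in }X^*\big\}\backslash\{\emptyset\}$. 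A non-empty such $Z$ is closed in $\beta X$ (being a zero-set) and clopen in $X^*$, so in particular $Z\subseteq X^*$ as a set whenever... hmm — a zero-set of $\beta X$ need not lie in $X^*$ in general. But the phrase ``clopen in $X^*$'' presupposes $Z\subseteq X^*$ in the authors' usage (cf. Lemma \ref{RAJ}'s statement and the analogous conventions in Theorem \ref{TODJ}). Granting that convention, ``$Z\subseteq X^*$'' is the same as ``$Z\cap X=\emptyset$'', and ``$Z$ open in $X^*$'' combined with ``$Z$ closed in $\beta X$'' is the same as ``$Z\in{\mathcal B}(X^*)$''. So the two-sided description matches the intersection computed above.

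Putting it together: $\mu\big({\mathcal T}_K^*(X)\big)$ consists of the non-empty $C$ lying in both $\big\{C\in{\mathcal Z}(\beta X):C\cap X=\emptyset\big\}$ and ${\mathcal B}(X^*)$. A set in the first family is a zero-set of $\beta X$ contained in $X^*$; requiring it also to be in the second family is precisely requiring it to be clopen in $X^*$. Hence $\mu\big({\mathcal T}_K^*(X)\big)=\big\{Z\in{\mathcal Z}(\beta X):Z\text{ is clopen in }X^*\big\}\backslash\{\emptyset\}$, as asserted. I do not anticipate a genuine obstacle here — the only thing to be careful about is making the set-theoretic bookkeeping between ``$Z\cap X=\emptyset$'' and ``$Z$ clopen in $X^*$'' explicit, namely that a non-empty zero-set of $\beta X$ which is open in $X^*$ is automatically disjoint from $X$ (otherwise $Z\cap X$ would be a non-empty open subset of $X$ whose $\beta X$-closure, contained in $Z$, would meet $X^*$ in a proper subset of $Z$, contradicting that $Z$ is clopen in $X^*$ and $Z$ meets $X^*$ only in... ), so the two formulations of the right-hand side agree. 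This is routine once the conventions are pinned down.
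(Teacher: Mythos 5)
Your proposal is correct and is exactly the paper's argument: the paper states that Lemma \ref{RAJ} ``follows from Theorems \ref{TODJ} and \ref{AADJ},'' i.e.\ one intersects $\mu\big({\mathcal T}_K(X)\big)={\mathcal B}(X^*)\backslash\{\emptyset\}$ with $\mu\big({\mathcal T}^*(X)\big)=\{C\in{\mathcal Z}(\beta X):C\cap X=\emptyset\}\backslash\{\emptyset\}$ using the bijectivity of $\mu$ from Theorem \ref{YUS}. Your worry about the bookkeeping is harmless, since ``$Z$ is clopen in $X^*$'' already presupposes $Z\subseteq X^*$, which for $Z\subseteq\beta X$ is the same as $Z\cap X=\emptyset$.
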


\begin{lemma}\label{RRAJ}
Let $X$ be a locally compact
paracompact  non-$\sigma$-compact  space and let $Y=X\cup\{p\}\in
{\mathcal T}(X)$. Then the following conditions are equivalent.
\begin{itemize}
\item[\rm(1)] $Y\in {\mathcal T}_K^*(X)$;
\item[\rm(2)] $p$ has a compact neighborhood $U$ in $Y$ such that $U\backslash\{p\}$ is $\sigma$-compact.
\end{itemize}
\end{lemma}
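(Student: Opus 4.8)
The plan is to prove the equivalence by translating both conditions into statements about the closed subset $C=\mu(Y)$ of $X^*$ and then comparing them using the structure theorem for locally compact paracompact non-$\sigma$-compact spaces. Write $X=\bigoplus_{i\in I}X_i$ as in Proposition \ref{RAWDJ}, with each $X_i$ $\sigma$-compact non-compact.

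\smallskip

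For the direction (1)$\Rightarrow$(2): if $Y\in{\mathcal T}_K^*(X)$, then by Lemma \ref{RAJ} the set $C$ is a clopen zero-set of $X^*$, so in particular $C$ is a compact subset of $X^*$ (being clopen, hence closed, in $X^*$, but I should be careful: $X^*$ need not be compact here, so I instead use that $C\in{\mathcal Z}(\beta X)$ and $C\cap X=\emptyset$ via Theorem \ref{AADJ}, together with clopenness in $X^*$). Since $C\in{\mathcal Z}(\beta X)$ and $C\subseteq X^*$, $C$ is compact; and since $C$ is contained in the open set $\sigma X\cap X^*$ is NOT automatic — this is the crux. Actually the right move is: since $C$ is clopen in $X^*$, pick a clopen-in-$\beta X$ neighborhood giving local compactness of $Y$ at $p$ as in the proof of Theorem \ref{TODJ}, and I want the compact neighborhood $\mathrm{cl}_Y W$ to have $\mathrm{cl}_Y W\setminus\{p\}$ $\sigma$-compact. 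By first-countability of $Y$ at $p$ (condition $\in{\mathcal T}^*$), combined with Theorem \ref{AADJ}, $C$ is a zero-set of $\beta X$ disjoint from $X$; a compact zero-set $C$ of $\beta X$ with $C\subseteq X^*$ has a neighborhood base of cozero sets of $\beta X$, and each such cozero set meets only countably many of the clopen summands $\mathrm{cl}_{\beta X}X_i$ — here is where I invoke that a $\sigma$-compact (indeed Lindelöf) subset, such as $\mathrm{cl}_{\beta X}V\setminus C$ can be arranged inside $\mathrm{cl}_{\beta X}(\bigcup_{i\in J}X_i)$ for countable $J$. Then $W=(\bigcup_{i\in J}X_i)\cup\{p\}$ has compact closure in $Y$ and $\mathrm{cl}_Y W\setminus\{p\}\subseteq\bigcup_{i\in J}X_i$ is $\sigma$-compact, giving (2).

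\smallskip

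For (2)$\Rightarrow$(1): given a compact neighborhood $U$ of $p$ in $Y$ with $U\setminus\{p\}$ $\sigma$-compact, first $Y$ is locally compact at $p$, and $Y$ is locally compact on $X$ since $X$ is, so $Y\in{\mathcal T}_K(X)$. It remains to show $Y\in{\mathcal T}^*(X)$, i.e. $Y$ is first countable at $p$, equivalently by Theorem \ref{AADJ} that $C=\mu(Y)$ is a zero-set of $\beta X$. Because $U\setminus\{p\}$ is $\sigma$-compact, $\mathrm{cl}_{\beta X}(U\setminus\{p\})\subseteq\sigma X$ and in fact equals $\mathrm{cl}_{\beta X}A$ for a $\sigma$-compact $A\subseteq X$; since $U$ contains a $Y$-neighborhood of $p$, we get $C\subseteq A^*=\mathrm{cl}_{\beta X}A\setminus X$. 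Now $A$, being $\sigma$-compact and locally compact, is contained in a clopen $\sigma$-compact locally compact subset, and $A^*$ is a zero-set of $\beta X$ (it is the intersection of countably many cozero sets, coming from the compact pieces of $A$, as in the proof of Theorem \ref{DDS}). Moreover $C$ is clopen in $X^*$ by Theorem \ref{TODJ} since $Y\in{\mathcal T}_K(X)$; but I also need $C$ to be a zero-set of $\beta X$. For this, use that $C$ is closed in the zero-set $A^*$ of $\beta X$ and is itself a clopen subset of $X^*$, hence of the compact space $A^*\cap X^*$... — I'd show $C$ is simultaneously closed and a $G_\delta$ in $\beta X$: closed is clear, and $C = A^*\cap(X\cup C)$ expresses $C$ as the intersection of the $G_\delta$ set $A^*$ with the $G_\delta$ set $X\cup C=F_Y^{-1}(Y)$ (which is $G_\delta$ because $Y$ is locally compact, hence open, in $\beta Y$). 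A closed $G_\delta$ in the normal space $\beta X$ is a zero-set, so $C\in{\mathcal Z}(\beta X)$ with $C\cap X=\emptyset$, and Theorem \ref{AADJ} gives $Y\in{\mathcal T}^*(X)$, whence $Y\in{\mathcal T}_K^*(X)$.

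\smallskip

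The main obstacle I anticipate is the bookkeeping in (1)$\Rightarrow$(2): extracting from clopenness of $C$ in $X^*$ together with $C$ being a zero-set of $\beta X$ a \emph{single countable} subfamily $\{X_i:i\in J\}$ whose union's closure contains a full $\beta X$-neighborhood of $C$. The key fact to use is that a cozero-set of $\beta X$ that misses $X$-points only near $C$ can be trimmed, using $\sigma$-compactness of the relevant traces, to lie within $\mathrm{cl}_{\beta X}(\bigcup_{i\in J}X_i)$; this mirrors the argument in [7] and in the discussion of $\sigma X$ above. Once that combinatorial reduction is in place, both implications are short, and the rest is the standard ``closed $G_\delta$ in $\beta X$ equals zero-set'' manipulation already used in Theorems \ref{DDS} and \ref{AADJ}.
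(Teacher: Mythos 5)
Your proof of (1) $\Rightarrow$ (2) breaks at the final step. From $C=\mu(Y)\subseteq\sigma X$ you extract a countable $J\subseteq I$ and claim that $W=(\bigcup_{i\in J}X_i)\cup\{p\}$ has compact closure in $Y$. That is false in general: $\mbox{cl}_YW$ is compact only when $(\bigcup_{i\in J}X_i)^*\subseteq C$, whereas $C$ may be a small clopen piece of that remainder. Concretely, let $X$ be an uncountable discrete space, $X_0$ a countably infinite summand, and $C=A^*$ for an infinite, coinfinite $A\subseteq X_0$; then $C$ is a nonempty clopen subset of $X^*$ lying in ${\mathcal Z}(\beta X)$, so $Y\in{\mathcal T}^*_K(X)$ by Lemma \ref{RAJ}, yet $X_0\cup\{p\}$ contains the infinite closed discrete set $X_0\backslash A$ (note $(X_0\backslash A)^*\cap C=\emptyset$, so $p$ is not a limit point of it) and hence is not compact. (The auxiliary claim that every cozero neighbourhood of $C$ meets only countably many summands is also false as stated --- $\beta X$ is itself a cozero-set --- though a sufficiently small one with that property does exist.) Your surrounding remarks do contain the ingredients of a correct argument: take $V$ open in $\beta X$ with $C\subseteq V$ and $\mbox{cl}_{\beta X}V\cap X^*\subseteq C$ (possible since $C$ is clopen in $X^*$); then $N=(\mbox{cl}_{\beta X}V\cap X)\cup\{p\}$ is a compact neighbourhood of $p$, and $N\backslash\{p\}=\mbox{cl}_{\beta X}V\backslash C$ is $\sigma$-compact because $\beta X\backslash C$ is a cozero-set of $\beta X$ (this is where $C\in{\mathcal Z}(\beta X)$, i.e.\ first countability, is used). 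The paper sidesteps all of this with a purely internal argument: choose a decreasing base $\{V_n\}$ at $p$ with each $\mbox{cl}_YV_n$ compact and $\mbox{cl}_YV_{n+1}\subseteq V_n$, and observe that $\mbox{cl}_YV_1\backslash\{p\}=\bigcup_n(\mbox{cl}_YV_n\backslash V_{n+1})$ is a countable union of compacta.

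Your (2) $\Rightarrow$ (1) has the right skeleton but one unjustified (and generally false) assertion: for a closed $\sigma$-compact $A\subseteq X$ the set $A^*$ need not be a zero-set, nor even a $G_\delta$, of $\beta X$; it is only a zero-set of the closed subspace $\mbox{cl}_{\beta X}A$, which is typically not a $G_\delta$ in $\beta X$, so the intersection $C=A^*\cap(X\cup C)$ does not exhibit $C$ as a $G_\delta$. The repair is the one you half-state yourself: replace $A$ by the clopen $\sigma$-compact set $P=\bigcup_{i\in J}X_i\supseteq A$; then $\mbox{cl}_{\beta X}P$ is clopen and $P$ is locally compact $\sigma$-compact, so $P^*\in{\mathcal Z}(\beta X)$, and $C=P^*\cap(X\cup C)$ with $X\cup C$ open in $\beta X$ shows that $C$ is a closed $G_\delta$, hence a zero-set, and Theorem \ref{AADJ} applies. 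Note that the paper's proof of this direction is again more elementary: it never passes through $\mu$ or $\beta X$, but directly exhibits the countable base $\{\mbox{int}_YU\backslash(D_1\cup\cdots\cup D_n):n<\omega\}$ at $p$, where the $D_n$ enumerate a compact exhaustion of $\bigcup_{i\in J}X_i$.
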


\begin{proof}
{\em (1) implies (2).} Suppose that  $Y\in {\mathcal
T}_K^*(X)$ and let $\{V_n \}_{n< \omega }$  be a base  at $p$ in
$Y$. We may assume that for each $n<\omega $, we have
$V_n\supseteq \mbox {cl}_Y V_{n+1}$ and $\mbox{cl}_Y V_{n}$ is
compact.  Then for each $n<\omega$, the set $\mbox {cl}_Y
V_n\backslash
  V_{n+1}$ is closed in $\mbox {cl}_Y V_1$, and therefore it is
compact. We have $\mbox {cl}_Y
V_1\backslash\{p\}=\bigcup_{n<\omega}(\mbox {cl}_Y V_n\backslash
  V_{n+1})$, and thus  $\mbox {cl}_Y V_1$ is the desired neighborhood
of $p$.

{\em (2) implies (1).} Suppose that $U$ is a compact neighborhood of
$p$ such that  $U\backslash\{p\}$ is $\sigma$-compact. Assume the
notations of Proposition \ref{RAWDJ}. Now since $U\backslash\{p\}$ is
$\sigma$-compact, there exists a countable $J\subseteq I$ such that
$U\backslash\{p\}\subseteq \bigcup_{i\in J} X_i$. By 3.8.C of [4],
for each $i\in J$ we have $X_i=\bigcup_{n<\omega} C^i_n$, where for
each $n<\omega$, the set  $C^i_n$ is  open in $X$ and we have
$\mbox {cl}_X C^i_n\subseteq C^i_{n+1}$ and $\mbox {cl}_X C^i_n$ is
compact. Let
\[\{\mbox {cl}_X C^i_n: i\in J \mbox { and } n<\omega\}=\{D_n
\}_{n<\omega}\]
and consider the family
\[{\mathcal F}=\big\{\mbox {int}_Y U\backslash (D_1\cup\cdots\cup
D_n):n<\omega\big\}\]
of open neighborhoods of $p$ in $Y$. If $V$ is an open
neighborhood of $p$ in $Y$, then since
\[U\subseteq V\cup\bigcup_{i\in J} X_i = V\cup\bigcup \{ C^i_n:i\in
J \mbox{ and } n<\omega \}\]
by compactness of $U$, there exists a $k<\omega$ such
that $U\subseteq V\cup C^{i_1}_{n_1}\cup\cdots\cup C^{i_k}_{n_k}$, and
therefore for some $n<\omega$, $U\backslash (D_1\cup\cdots\cup
D_n)\subseteq V$. This shows that ${\mathcal F}$ is a countable base
at $p$ in $Y$, and since $Y$ is locally compact, it follows that
$Y\in {\mathcal T}_K^*(X)$.
\end{proof}

\begin{lemma}\label{WRRAJ}
For any locally compact
paracompact space $X$, we have ${\mathcal T}_K(X)={\mathcal
T}_K^*(X)$ if and only if $X$ is $\sigma$-compact.
\end{lemma}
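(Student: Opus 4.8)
The plan is to translate the equality into a statement about zero-sets of $\beta X$ by means of the order-anti-isomorphism $\mu$, and then verify that statement directly. Since ${\mathcal T}_K^*(X)\subseteq{\mathcal T}_K(X)$ by definition and $\mu$ is a bijection (Theorem \ref{YUS}), I would first combine Theorem \ref{TODJ} and Lemma \ref{RAJ} to record that ${\mathcal T}_K(X)={\mathcal T}_K^*(X)$ holds if and only if ${\mathcal B}(X^*)\backslash\{\emptyset\}=\{Z\in{\mathcal Z}(\beta X):Z\text{ is clopen in }X^*\}\backslash\{\emptyset\}$; since the right-hand family is automatically contained in the left-hand one, this amounts to the assertion that \emph{every non-empty clopen subset of $X^*$ is a zero-set of $\beta X$}. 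It then remains to show, for locally compact $X$, that this condition is equivalent to $\sigma$-compactness of $X$.

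Suppose first that $X$ is $\sigma$-compact. I would use that a locally compact dense subspace is open, so $X$ is open in $\beta X$, and that $\sigma$-compactness makes $X$ an $F_\sigma$ of $\beta X$; hence $X^*$ is a closed $G_\delta$ of $\beta X$, and therefore, by normality, a zero-set, say $X^*=Z(g)$ with $g\in C(\beta X,\mathbf{I})$. Given a non-empty clopen $C\subseteq X^*$, the sets $C$ and $X^*\backslash C$ are disjoint compact subsets of $\beta X$ — this is where local compactness of $X$ enters, via compactness of $X^*$ — so Urysohn's lemma yields $f\in C(\beta X,\mathbf{I})$ with $f|C\equiv 0$ and $f|(X^*\backslash C)\equiv 1$. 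Then $Z(f)\cap X^*=C$, so $C=Z(f)\cap Z(g)=Z(f^2+g^2)$ is a zero-set of $\beta X$, as wanted.

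For the converse, suppose $X$ is not $\sigma$-compact. Then $X$ is in particular not compact, so $X^*\neq\emptyset$ and $X^*$ is (trivially) clopen in itself; it therefore suffices to show $X^*$ is not a zero-set of $\beta X$. Equivalently, since $X=\beta X\backslash X^*$ is open, $X$ is not a cozero-set of $\beta X$: for if $X=\mathrm{coz}(h)$ with $h\in C(\beta X)$, then $X=\bigcup_{n<\omega}\{x\in\beta X:|h(x)|\geq 1/n\}$ would be a countable union of compact sets, contrary to hypothesis. Hence $X^*$ witnesses the failure of the condition, so ${\mathcal T}_K(X)\neq{\mathcal T}_K^*(X)$.

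The genuinely delicate point is the first direction, and in particular the need to pin the zero-set down to $C$ exactly rather than merely sandwich $C$ inside it; this is precisely where it matters that $X^*$ itself is a zero-set, which by the second direction fails exactly when $X$ is not $\sigma$-compact. An alternative for the converse, closer in spirit to Lemma \ref{RRAJ}, is to take as witness the one-point compactification $\alpha X$: it lies in ${\mathcal T}_K(X)$, while a countable base at its added point would provide a countable cofinal family of compact subsets of $X$, hence a $\sigma$-compact cover of $X$, so $\alpha X\notin{\mathcal T}_K^*(X)$; when $X$ is additionally paracompact this also follows from Lemma \ref{RRAJ}.
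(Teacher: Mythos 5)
Your proof is correct, and it takes a genuinely different route from the paper's. You pull the entire statement back through $\mu$, using Theorem \ref{TODJ} and Lemma \ref{RAJ} to reduce it to the purely Stone--\v{C}ech-theoretic assertion that every non-empty clopen subset of $X^*$ belongs to ${\mathcal Z}(\beta X)$, and you then verify that this holds exactly when $X$ is $\sigma$-compact --- in effect reproving the classical fact that a locally compact $X$ is $\sigma$-compact iff $X^*\in {\mathcal Z}(\beta X)$ (1B of [13], which the paper itself cites elsewhere). The paper instead argues inside the extensions themselves: for the forward direction it fixes $Y=X\cup\{p\}\in{\mathcal T}_K(X)$, takes an exhaustion $X=\bigcup_{n<\omega}C_n$ by relatively compact open sets, and exhibits $\{U\backslash \mathrm{cl}_XC_n\}_{n<\omega}$ as a countable base at $p$; for the converse it takes the one-point compactification $\omega X$ as witness and applies Lemma \ref{RRAJ} together with the decomposition $X=\bigoplus_{i\in I}X_i$ of Proposition \ref{RAWDJ}, so the paper's converse genuinely invokes paracompactness. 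Your argument buys two things: it is essentially self-contained modulo results already established (the Urysohn step producing $f$ with $Z(f)\cap X^*=C$, combined with $X^*=Z(g)$, pins the zero-set down to exactly $C$, which is the delicate point you correctly flag), and --- together with your alternative cofinal-family argument for the converse --- it shows that paracompactness is not needed at all, so the lemma holds for every locally compact Hausdorff space. The paper's proof, by contrast, is more elementary in the forward direction (it needs neither Theorem \ref{TODJ} nor Lemma \ref{RAJ}) and displays the concrete topological obstruction in the converse. Both of the small facts your argument leans on (that $X^*$ is a closed $G_\delta$, hence a zero-set, of the normal space $\beta X$ when $X$ is locally compact and $\sigma$-compact, and that $X^*\neq\emptyset$ in the non-$\sigma$-compact case so that $X^*$ itself is an available witness) are addressed, so I see no gap.
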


\begin{proof}
First suppose that $X$ is  $\sigma$-compact and
let
$Y=X\cup\{p\}\in {\mathcal T}_K(X)$. Let $U$ be an open neighborhood
of $p$ in $Y$ such that $\mbox {cl}_Y U$ is compact. Let
$X=\bigcup_{n<\omega}C_n$, where for each $n<\omega$, the set $C_n$
is  open in $X$ and we have $\mbox {cl}_X C_n\subseteq C_{n+1}$ and
$\mbox {cl}_X C_n$ is compact. We show that the family
$\{U\backslash \mbox {cl}_X C_n:n<\omega\}$ forms a base at $p$ in
$Y$. To show  this, suppose that $V$ is an open neighborhood of $p$
in $Y$. Then since $\{V\}\cup\{C_n:n<\omega\}$ is an open cover of
the compact set $\mbox {cl}_YU$, there exists a $k<\omega$ such that
$\mbox {cl}_YU\subseteq V\cup C_k$. Clearly $U\backslash \mbox
{cl}_XC_k\subseteq V$. This shows that $Y$ is first-countable at
$p$, and thus $Y\in {\mathcal T}_K^*(X)$.

Now suppose  that $X$ is not $\sigma$-compact and assume the
notations of Proposition \ref{RAWDJ}. Let $\omega X=X\cup\{\Omega\}$ be the
one-point compactification of $X$. Clearly $\omega X\in {\mathcal
T}_K(X)$. But since every neighborhood $W$ of $\Omega$ contains all
but a finite number of $X_i$'s, the set $W\backslash\{\Omega\}$ is
not $\sigma$-compact, and thus by Lemma \ref{RRAJ} we have $\omega X\notin
{\mathcal T}_K^*(X)$.
\end{proof}

The next three lemmas are
taken from [8]. We include them in here for the sake of
completeness.

\begin{lemma}\label{WRRJ}
Let $X$ be  a locally compact
paracompact  space. If $\emptyset\neq Z\in {\mathcal Z}(\beta X)$
then $Z\cap \sigma X\neq \emptyset$.
\end{lemma}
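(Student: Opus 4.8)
The plan is to exploit the decomposition of $X$ coming from Proposition \ref{RAWDJ} together with the elementary fact that a zero-set of $\beta X$ meeting the closure of a set $A\subseteq X$ must actually meet $A$ (since zero-sets are $G_\delta$'s and $X$ is dense and, being realcompact-ish enough here, the relevant $C$-embedding works). If $X$ is itself $\sigma$-compact, then $\sigma X=\beta X$ and there is nothing to prove, so I would immediately reduce to the case where $X$ is non-$\sigma$-compact and write $X=\bigoplus_{i\in I}X_i$ with each $X_i$ $\sigma$-compact non-compact, as in Proposition \ref{RAWDJ}.

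Now take $\emptyset\neq Z\in{\mathcal Z}(\beta X)$, say $Z=Z(f)$ for some $f\in C(\beta X,\mathbf{I})$, and suppose toward a contradiction that $Z\cap\sigma X=\emptyset$. Since $\sigma X\supseteq\mbox{cl}_{\beta X}X_i$ for each $i\in I$, this forces $Z\cap\mbox{cl}_{\beta X}X_i=\emptyset$ for every $i$, hence $f$ is bounded away from $0$ on each $\mbox{cl}_{\beta X}X_i$; because $\mbox{cl}_{\beta X}X_i$ is compact, there is $\varepsilon_i>0$ with $f\geq\varepsilon_i$ on it. The key step is then to produce a single continuous function on $\beta X$ that is zero on $Z$ yet is bounded away from $0$ — contradicting $Z\neq\emptyset$ — or, more efficiently, to show directly that $Z\subseteq\beta X\backslash X$ leads nowhere. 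Here is the cleaner route I would take: since $Z\cap\mbox{cl}_{\beta X}X_i=\emptyset$ for each $i$ and the $X_i$ are clopen in $X$, the set $Z$ is disjoint from $\mbox{cl}_{\beta X}X=\beta X$ restricted appropriately; more precisely, I would argue that $Z\cap X=\emptyset$ (as $X\subseteq\sigma X$), so $Z\subseteq X^*$, and moreover $Z$ misses every $X_i^*=\mbox{cl}_{\beta X}X_i\backslash X$. The real content is that the $\sigma$-compact clopen pieces exhaust $X^*$ in the sense needed: I would use that for any zero-set $Z$ of $\beta X$ disjoint from $\sigma X$, one can build a countable subfamily $\{X_{i_n}\}_{n<\omega}$ whose union's closure already meets $Z$ — because $f$ restricted to $X$ has infimum $0$ (as $Z=Z(f)\neq\emptyset$ and $X$ is dense in $\beta X$), so there is a sequence $x_n\in X$ with $f(x_n)\to 0$, each $x_n$ lying in some $X_{i_n}$; then $A=\bigcup_{n<\omega}X_{i_n}$ is a $\sigma$-compact subset of $X$, so $\mbox{cl}_{\beta X}A\subseteq\sigma X$, and any cluster point of $\{x_n\}$ in the compact set $\mbox{cl}_{\beta X}A$ lies in $Z(f)=Z$, giving $Z\cap\sigma X\neq\emptyset$, the desired contradiction.

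So the step-by-step outline is: (i) dispose of the $\sigma$-compact case trivially; (ii) decompose $X=\bigoplus_{i\in I}X_i$; (iii) given $\emptyset\neq Z=Z(f)$, use density of $X$ in $\beta X$ to get $x_n\in X$ with $f(x_n)\to 0$; (iv) collect the countably many indices $i_n$ with $x_n\in X_{i_n}$ and set $A=\bigcup_n X_{i_n}$, which is $\sigma$-compact; (v) pass to a cluster point $z$ of $\{x_n\}$ in the compact set $\mbox{cl}_{\beta X}A$, note $f(z)=0$ by continuity so $z\in Z$, and observe $z\in\mbox{cl}_{\beta X}A\subseteq\sigma X$, contradicting $Z\cap\sigma X=\emptyset$. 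The main obstacle — really the only subtlety — is step (iii)–(iv): one must be sure that a sequence witnessing $\inf_X f=0$ exists (this uses $Z\neq\emptyset$ plus density of $X$) and that the resulting countable union of the $X_i$ is genuinely $\sigma$-compact with closure inside $\sigma X$; both follow from the description of $\sigma X$ given after Proposition \ref{RAWDJ}, namely $\sigma X=\bigcup\{\mbox{cl}_{\beta X}(\bigcup_{i\in J}X_i):J\subseteq I\text{ countable}\}$. Everything else is routine compactness and continuity.
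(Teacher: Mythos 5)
Your argument is correct, but it is a genuinely different route from the paper's. The paper first proves that $\sigma X$ is countably compact (using the same countable-subfamily observation that underlies your step (iv)), hence pseudocompact, deduces $\upsilon(\sigma X)=\beta(\sigma X)=\beta X$, and then quotes the standard fact that a non-empty zero-set of $\upsilon T$ meets $T$. You instead argue directly: since $Z=Z(f)\neq\emptyset$ and $X$ is dense in $\beta X$, there is a sequence $x_n\in X$ with $f(x_n)\to 0$; the countably many summands $X_{i_n}$ containing the $x_n$ give a $\sigma$-compact $A$ with $\{x_n\}\subseteq\mbox{cl}_{\beta X}A\subseteq\sigma X$ compact, and a cluster point of $\{x_n\}$ there lies in $Z(f)$ by continuity. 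Your version is more elementary and self-contained, bypassing the realcompactification machinery entirely at the cost of not recording the intermediate facts (countable compactness and pseudocompactness of $\sigma X$, $\upsilon(\sigma X)=\beta X$) that the paper's route makes explicit. One remark: the middle of your write-up (the claim that $f$ is bounded away from $0$ on each $\mbox{cl}_{\beta X}X_i$ and the ``realcompact-ish'' aside) is dispensable and somewhat muddled, but the final outline (i)--(v) is the actual proof and it is sound, including the two points you flag as subtle: the existence of the minimizing sequence follows from density of $X$ plus $Z\neq\emptyset$, and $\mbox{cl}_{\beta X}A\subseteq\sigma X$ is immediate from the definition of $\sigma X$.
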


\begin{proof}
Suppose that $\{x_n\}_{n<\omega}$ is an infinite
sequence in $\sigma X$. Using the notations of Proposition \ref{RAWDJ},
there exists a countable $J\subseteq I$ such that
$\{x_n\}_{n<\omega}\subseteq \mbox {cl}_{\beta X}(\bigcup_{i\in J}
X_i)$, and therefore $\{x_n\}_{n<\omega}$ has a limit point in
$\sigma X$. Thus $\sigma X$ is countably compact, and therefore
pseudocompact, and $\upsilon (\sigma X)=\beta (\sigma X)=\beta X$.
The result now follows as for any Tychonoff space $T$, any non-empty
zero-set of $\upsilon T$ intersects $T$ (see  Lemma \ref{HGH}(f) of
[12]).
\end{proof}

\begin{lemma}\label{WRORJ}
Let  $X$ be  a locally compact
paracompact space. If $\emptyset\neq Z\in {\mathcal Z}(X^*)$ then
$Z\cap \sigma X\neq \emptyset$.
\end{lemma}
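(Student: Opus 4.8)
The plan is to transfer the problem from zero-sets of $X^*$ to zero-sets of $\beta X$, so that Lemma \ref{WRRJ} can be applied. If $X$ is $\sigma$-compact there is nothing to prove, since then $\sigma X=\beta X$ and $Z$ is non-empty; so I would assume $X$ is not $\sigma$-compact and fix a decomposition $X=\bigoplus_{i\in I}X_i$ into $\sigma$-compact non-compact summands as in Proposition \ref{RAWDJ}. Since $X$ is locally compact, $X^*$ is compact and therefore $C^*$-embedded in $\beta X$; so I would write $Z=Z(f)$ with $f\in C(X^*,\mathbf{I})$, take a continuous extension $F\in C(\beta X,\mathbf{I})$ of $f$, and note that $Z(F)\cap X^*=Z$. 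Now $Z(F)$ is a non-empty zero-set of $\beta X$, so Lemma \ref{WRRJ} furnishes a point of $Z(F)$ in $\sigma X$; the only difficulty is that this point may lie in $X$ rather than in $X^*$. In other words, the real obstruction is the set $K:=Z(F)\cap X$, a zero-set of $X$ which in general is non-empty and which prevents $Z$ itself from being a zero-set of $\beta X$.

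I would then split into two cases according to whether $K$ is compact. If $K$ is compact, then $Z(F)=K\cup Z$ is a disjoint union of two compact sets, with $K\subseteq X$ and $Z\subseteq X^*$; by normality of $\beta X$ I would choose $g\in C(\beta X,\mathbf{I})$ with $g|K\equiv 1$ and $g|Z\equiv 0$, so that $Z(F)\cap Z(g)=Z$ exhibits $Z$ as a zero-set of $\beta X$, and Lemma \ref{WRRJ} applies directly. (The subcase $K=\emptyset$ is included here, $Z$ then being equal to $Z(F)$.)

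If $K$ is not compact, the idea is to carve out of $K$ a set $L$ that is at once $\sigma$-compact, non-compact, closed in $X$, and contained in $\bigcup_{i\in J}X_i$ for some countable $J\subseteq I$. Writing $K_i=K\cap X_i$, either some $K_i$ is non-compact --- in which case $L=K_i$ works --- or every $K_i$ is compact, in which case non-compactness of $K$ forces infinitely many $K_i$ to be non-empty, and $L=\bigcup_{i\in J}K_i$ works for a countably infinite set $J$ of such indices. Granting such an $L$, the proof concludes from three facts: $\mathrm{cl}_{\beta X}L\subseteq\mathrm{cl}_{\beta X}(\bigcup_{i\in J}X_i)\subseteq\sigma X$ because $\bigcup_{i\in J}X_i$ is $\sigma$-compact; $\mathrm{cl}_{\beta X}L\subseteq Z(F)$ because $L\subseteq Z(F)$ and $Z(F)$ is closed; and $\emptyset\neq\mathrm{cl}_{\beta X}L\setminus L=\mathrm{cl}_{\beta X}L\cap X^*$ because $L$ is closed in $X$ and non-compact. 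Putting these together, $\mathrm{cl}_{\beta X}L\cap X^*$ is a non-empty subset of $Z(F)\cap X^*\cap\sigma X=Z\cap\sigma X$.

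The main obstacle, then, is the analysis of $K=Z(F)\cap X$: one has to see that the only thing that can go wrong is the part of $Z(F)$ sitting inside $X$, that a continuous extension $F$ cannot in general be chosen to avoid $X$ (otherwise $Z$ would be a zero-set of $\beta X$, which it need not be when $X$ is not $\sigma$-compact), and that a non-compact closed subset of $\bigoplus_{i\in I}X_i$ always contains a non-compact, $\sigma$-compact, closed subset supported on countably many summands --- this last point being precisely what lets the $\sigma$-compactness hypothesis of Lemma \ref{WRRJ} be met.
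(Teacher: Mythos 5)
Your argument is correct. It shares its two main ingredients with the paper's proof --- extending $Z$ to a zero-set $Z(F)$ of $\beta X$ (both proofs tacitly use that $X^*$ is compact, hence $C^*$-embedded in $\beta X$) and analysing the trace of that zero-set on $X$ against the decomposition $X=\bigoplus_{i\in I}X_i$ --- but the logical structure is genuinely different. The paper argues by contradiction: writing $Z=S\backslash X$ with $S\in{\mathcal Z}(\beta X)$ and supposing $S\cap(\sigma X\backslash X)=\emptyset$, it observes that $S$ can then meet only finitely many summands $X_i$ (this is where your ``infinitely many non-empty $K_i$ force a limit point in $\sigma X\backslash X$'' observation is hiding, compressed into the phrase ``clearly $L$ is finite''), intersects $S$ with the zero-set of the characteristic function of $\mbox{cl}_{\beta X}(\bigcup_{i\in L}X_i)$ to obtain a zero-set of $\beta X$ missing $\sigma X$ entirely, and concludes from Lemma \ref{WRRJ} that this set, and hence $Z$, is empty. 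Your version replaces the contradiction by a case split on whether $K=Z(F)\cap X$ is compact: when it is, you correctly reduce to Lemma \ref{WRRJ} by exhibiting $Z$ itself as a zero-set of $\beta X$; when it is not, you produce an explicit point of $Z\cap\sigma X$ as a point of $\mbox{cl}_{\beta X}L\backslash L$ for a closed, $\sigma$-compact, non-compact $L\subseteq K$ supported on countably many summands --- and in that case you do not need Lemma \ref{WRRJ} at all. What your route buys is a constructive witness in the non-compact case and a cleaner separation of where the $\sigma$-compactness hypothesis of Lemma \ref{WRRJ} enters; what the paper's route buys is brevity, since a single contradiction handles both of your cases uniformly. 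All the individual steps of your argument (closedness and $\sigma$-compactness of $K_i$ and of $L$, $\mbox{cl}_{\beta X}L\cap X=L$, non-emptiness of $\mbox{cl}_{\beta X}L\backslash L$, and $\mbox{cl}_{\beta X}L\subseteq\sigma X$) check out.
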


\begin{proof}
Let $S\in Z(\beta X)$ be such that $Z=S\backslash X$.
By the above lemma $S\cap \sigma X\neq\emptyset$. Suppose that
$S\cap (\sigma X\backslash X)=\emptyset$. Then $S\cap \sigma X=S\cap
X$. Let $L=\{i\in I :S\cap X_i\neq\emptyset\}$, where $X_i$'s are as
in Proposition \ref{RAWDJ}. Clearly $L$ is finite. Observe that $\mbox
{cl}_{\beta X}(\bigcup_{i\in L} X_i) $ is clopen in $\beta X$, as
$\bigcup_{i\in L} X_i$ is clopen in $X$. Let $f$ be its
characteristic function which is in $C^*(X)$.  Now since $Z(f)\cap
S\in {\mathcal Z}(\beta X)$ misses $\sigma X$, by the above lemma,
$Z(f)\cap S=\emptyset$. But since $\beta X\backslash \sigma
X\subseteq Z(f)$, we have $Z=S\cap(\beta X\backslash \sigma
X)\subseteq S\cap Z(f)=\emptyset$, which is a contradiction.
Therefore $Z\cap (\sigma X\backslash X)=S\cap (\sigma X\backslash
X)\neq\emptyset$.
\end{proof}

\begin{lemma}\label{WRPJ}
Let $X$ be  a locally compact
paracompact  space and let $ S, T\in {\mathcal Z}(X^*)$. If $S\cap
\sigma X\subseteq T\cap \sigma X$ then $S\subseteq T$.
\end{lemma}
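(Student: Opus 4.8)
The plan is to argue by contradiction, exploiting the fact (Lemma \ref{WRORJ}) that any non-empty zero-set of $X^*$ must meet $\sigma X\backslash X$, together with the compactness-of-$X_i$ structure furnished by Proposition \ref{RAWDJ}. Suppose $S,T\in\mathcal{Z}(X^*)$ satisfy $S\cap\sigma X\subseteq T\cap\sigma X$ but $S\not\subseteq T$. Then $S\backslash T$ is a non-empty zero-set of $X^*$ (zero-sets being closed under the relevant set operations: $S\backslash T = S\cap Z(g)$ for a suitable $g$, or more simply $S\backslash T$ is itself presentable as a cozero complement within $S$). The contradiction I want is that $S\backslash T$ is a non-empty zero-set of $X^*$ that misses $\sigma X$, contradicting Lemma \ref{WRORJ}.

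The subtlety — and what I expect to be the main obstacle — is that $S\backslash T$ need not literally be a zero-set of $X^*$ on the nose; one only knows $S$ and $T$ are zero-sets, and $S\backslash T$ is a zero-set only of the subspace $S$, or is an $F_\sigma$ in general. So the more careful route is: pick $x\in S\backslash T$. Since $x\notin T$ and $T\in\mathcal{Z}(X^*)$, there is $h\in C(X^*,\mathbf{I})$ with $h(x)=0$ and $h(T)\subseteq\{1\}$; then $Z(h)$ is a zero-set of $X^*$ containing $x$ and disjoint from $T$, so $W:=S\cap Z(h)$ is a non-empty zero-set of $X^*$ with $W\cap T=\emptyset$. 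By hypothesis $S\cap\sigma X\subseteq T\cap\sigma X\subseteq T$, so $W\cap\sigma X\subseteq W\cap T=\emptyset$; that is, $W$ is a non-empty zero-set of $X^*$ missing $\sigma X$. This directly contradicts Lemma \ref{WRORJ}. Hence $S\subseteq T$.

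I would double-check one point in writing this up: Lemma \ref{WRORJ} is stated as "$Z\cap\sigma X\neq\emptyset$" for non-empty $Z\in\mathcal{Z}(X^*)$, and what its proof actually delivers (via Lemma \ref{WRRJ}) is the stronger $Z\cap(\sigma X\backslash X)\neq\emptyset$; either form suffices here since $W\subseteq X^*$ is already disjoint from $X$, so $W\cap\sigma X = W\cap(\sigma X\backslash X)$, and the conclusion of Lemma \ref{WRORJ} is violated regardless of which version one cites. The only genuinely nontrivial input is Lemma \ref{WRORJ} itself (which in turn rests on Proposition \ref{RAWDJ} and the countable compactness of $\sigma X$); the rest is a routine separation-of-a-point-from-a-zero-set argument in the Tychonoff space $X^*$. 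No appeal to paracompactness beyond what is already packaged in Lemma \ref{WRORJ} is needed.
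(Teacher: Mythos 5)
Your argument is correct and is essentially the paper's own proof: the author also picks $x\in S\backslash T$, separates it from $T$ by a function (taken in $C(\beta X,\mathbf{I})$ rather than $C(X^*,\mathbf{I})$, an immaterial difference), and applies Lemma \ref{WRORJ} to the non-empty zero-set $Z(f)\cap S$ to reach the same contradiction. Your side remarks about $S\backslash T$ not being a zero-set on the nose and about which form of Lemma \ref{WRORJ} is needed are accurate but do not change the substance.
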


\begin{proof}
Suppose that $S\backslash T\neq\emptyset$. Let $x\in
S\backslash T$. Let $f\in C(\beta X,{\bf I})$  be such that $f(x)=0$
and $f(T)\subseteq\{1\}$. Then $Z(f)\cap S\in {\mathcal Z}(X^*)$ is
non-empty, and therefore by the above lemma, $Z(f)\cap S\cap \sigma
X\neq\emptyset$. But this is impossible as  $Z(f)\cap S\cap \sigma
X\subseteq Z(f)\cap T=\emptyset$.
\end{proof}

\begin{lemma}\label{RWRPJ}
Let  $X$ be a locally compact
paracompact  space. If $Y\in  {\mathcal T}^*_K(X)$ then
$\mu(Y)\subseteq  \sigma X$.
\end{lemma}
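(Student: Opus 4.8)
The plan is to dispose of the $\sigma$-compact case trivially and then exploit the characterization of $\mathcal{T}_K^*(X)$ furnished by Lemma \ref{RRAJ}. If $X$ is $\sigma$-compact, then $\mathrm{cl}_{\beta X}X=\beta X$, so $\sigma X=\beta X$ and the inclusion $\mu(Y)\subseteq\sigma X$ is automatic. So assume henceforth that $X$ is not $\sigma$-compact, write $Y=X\cup\{p\}$ and $C=\mu(Y)=F_Y^{-1}(p)$, and recall that by Lemma \ref{RRAJ} the point $p$ has a compact neighborhood $U$ in $Y$ with $U\backslash\{p\}$ being $\sigma$-compact.

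The core of the argument is to trap $C$ inside the closure in $\beta X$ of the $\sigma$-compact set $U\backslash\{p\}$. First I would choose an open subset $W$ of $\beta Y$ with $p\in W\cap Y\subseteq U$, and set $O=F_Y^{-1}(W)$, which is an open subset of $\beta X$ containing $C$. The key (easy) observation is that $O\cap X\subseteq U\backslash\{p\}$: any $x\in O\cap X$ satisfies $F_Y(x)=x\in W$ and $x\in X\subseteq Y$, hence $x\in W\cap Y\subseteq U$, while $x\neq p$ because $x\in X$.

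Then I would invoke density of $X$ in $\beta X$: since $O$ is open, $O\subseteq\mathrm{cl}_{\beta X}(O\cap X)\subseteq\mathrm{cl}_{\beta X}(U\backslash\{p\})$. As $U\backslash\{p\}$ is a $\sigma$-compact subset of $X$, its closure in $\beta X$ lies in $\sigma X$ by the very definition of $\sigma X$. Consequently $\mu(Y)=C\subseteq O\subseteq\sigma X$, as required.

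I do not anticipate a genuine obstacle once Lemma \ref{RRAJ} is in hand; the only points needing care are the elementary bookkeeping with $F_Y$ (that pulling back an open neighborhood of $p$ gives an open set of $\beta X$ whose trace on $X$ sits inside $U\backslash\{p\}$) and the standard fact that an open subset of $\beta X$ lies in the closure of its trace on the dense subspace $X$. The separate treatment of the $\sigma$-compact case is needed simply because Lemma \ref{RRAJ} is stated only for non-$\sigma$-compact $X$.
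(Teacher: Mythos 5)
Your proof is correct and follows essentially the same route as the paper's: both obtain from Lemma \ref{RRAJ} a compact neighborhood of $p$ with $\sigma$-compact trace on $X$ and then trap $\mu(Y)=F_Y^{-1}(p)$ inside the $\beta X$-closure of that trace, which lies in $\sigma X$ by definition. The only (cosmetic) differences are that you argue directly via ``an open subset of $\beta X$ is contained in the closure of its intersection with the dense subspace $X$'' where the paper runs a pointwise argument by contradiction, and that you explicitly dispose of the $\sigma$-compact case, which is a reasonable precaution since Lemma \ref{RRAJ} is stated only for non-$\sigma$-compact $X$.
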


\begin{proof}
Let $C=\mu (Y)$, for some $Y=X\cup\{p\}\in{\mathcal
T}^*_K(X)$. By Lemma \ref{RRAJ}, there exists a compact neighborhood $W$ of
$p$ in $Y$ such that $W\backslash\{p\}$ is $\sigma$-compact. We
claim that $F_Y^{-1}(p)\subseteq \mbox {cl}_{\beta
X}(W\backslash\{p\})$. So suppose to the contrary that there exists
an $x\in F_Y^{-1}(p)$ such that  $ x\notin \mbox {cl}_{\beta
X}(W\backslash\{p\})$. Let $U$ be an open neighborhood of $x$ in
$\beta X$ which misses $W\backslash\{p\}$. Since $Y$ is locally
compact, $W$ is also a neighborhood of $p$ in $\beta Y$, and
therefore, there exists an open neighborhood  $V$ of $x$ in $\beta
X$ such that $F_Y(V)\subseteq W$. If $t\in U\cap V\cap X$, then
$t=F_Y(t)\in U\cap W$, which is a contradiction. Therefore
$C=F_Y^{-1}(p)\subseteq  \mbox {cl}_{\beta
X}(W\backslash\{p\})\subseteq \sigma
X$.
\end{proof}

The proof of the following is a modification of  the ones we have
given for Theorems \ref{YROEJ} and \ref{YPPJ} of [8]. Note that a space $X$ is
locally compact and $\sigma$-compact if and only if $X^*\in
{\mathcal Z}(\beta X)$ (see 1B of [13]). We use this fact in several
different places.

\begin{theorem}\label{PSDJ}
For  zero-dimensional locally
compact paracompact spaces $X$ and $Y$ the following conditions  are
equivalent.
\begin{itemize}
\item[\rm(1)] ${\mathcal T}^*_K(X)$ and  ${\mathcal T}^*_K(Y)$ are order-isomorphic;
\item[\rm(2)] $\sigma X\backslash X$ and $\sigma Y\backslash Y$ are homeomorphic.
\end{itemize}
\end{theorem}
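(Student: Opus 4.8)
The plan is to establish a bridge between the order structure of $\mathcal{T}^*_K(X)$ and the topology of $\sigma X \backslash X$ by showing that the restriction of $\mu$ to $\mathcal{T}^*_K(X)$ is an order-anti-isomorphism onto a collection of subsets of $\sigma X \backslash X$ whose order structure determines, and is determined by, the topology of $\sigma X \backslash X$. First I would combine Lemma~\ref{RAJ} and Lemma~\ref{RWRPJ}: for a locally compact paracompact space, $\mu(\mathcal{T}^*_K(X))$ consists precisely of the non-empty zero-sets $Z$ of $\beta X$ that are clopen in $X^*$ and satisfy $Z \subseteq \sigma X$. The key observation is that $\sigma X$ is an open subset of $\beta X$ containing $X$, so $\sigma X \backslash X$ is a locally compact space with $\beta(\sigma X \backslash X)$-type behaviour controlled by $\beta X$; more precisely, I expect that for a zero-dimensional locally compact paracompact $X$, the subspace $\sigma X \backslash X$ is zero-dimensional, locally compact, and one can identify $\mu(\mathcal{T}^*_K(X))$ with (a cofinal, topology-determining family inside) the clopen algebra-like structure attached to $\sigma X \backslash X$.

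The heart of the argument is the intersection map $C \mapsto C \cap \sigma X$ (equivalently $C \mapsto C \cap (\sigma X \backslash X)$, since elements of $\mu(\mathcal{T}^*_K(X))$ miss $X$). By Lemma~\ref{WRPJ}, for $S, T \in \mathcal{Z}(X^*)$ we have $S \cap \sigma X \subseteq T \cap \sigma X$ iff $S \subseteq T$; hence this intersection map is an order-embedding on $\mathcal{Z}(X^*)$, and in particular on the sub-poset $\mu(\mathcal{T}^*_K(X))$. So $\mathcal{T}^*_K(X)$ is order-anti-isomorphic (via $C \mapsto C \cap (\sigma X\backslash X)$, composed with $\mu$) onto the family
\[
\mathcal{A}(X) = \big\{ Z \cap (\sigma X \backslash X) : Z \in \mathcal{Z}(\beta X),\ Z\ \text{clopen in}\ X^*,\ \emptyset \neq Z \subseteq \sigma X \big\}
\]
ordered by inclusion. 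I would then argue that $\mathcal{A}(X)$ is exactly the set of non-empty compact open subsets of $\sigma X \backslash X$ (or some such canonically topology-determined family): "clopen in $X^*$ together with $Z \subseteq \sigma X$" should translate, using that $\sigma X$ is open in $\beta X$ and that $Z$ being a zero-set of $\beta X$ contained in the open set $\sigma X$ forces $Z$ to be a compact $G_\delta$, into "$Z\cap(\sigma X\backslash X)$ is a non-empty compact open subset of $\sigma X\backslash X$"; conversely every non-empty compact open subset of $\sigma X \backslash X$ arises this way, using zero-dimensionality of $X$ (hence of $\sigma X$) and the fact (quoted in the excerpt, 1B of [13]) that each $\sigma$-compact clopen piece $\bigcup_{i\in J}X_i$ has its remainder a zero-set of $\beta X$.

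Finally, for (2)$\Rightarrow$(1): a homeomorphism $h: \sigma X \backslash X \to \sigma Y \backslash Y$ carries non-empty compact open sets to non-empty compact open sets, hence induces an order-isomorphism $\mathcal{A}(X) \to \mathcal{A}(Y)$, which via the two order-anti-isomorphisms above yields an order-isomorphism $\mathcal{T}^*_K(X) \to \mathcal{T}^*_K(Y)$. For (1)$\Rightarrow$(2): the non-empty compact open subsets of a zero-dimensional locally compact (paracompact) space form a base, and their inclusion order determines the space up to homeomorphism — this is the analogue, for the locally compact setting, of the statement quoted in the excerpt that the order structure of $\mathcal{B}(K)$ determines a compact zero-dimensional $K$; indeed $\sigma X\backslash X$ is, I expect, realcompact or at least has its one-point-compactification's clopen algebra recoverable from this poset, so an order-isomorphism $\mathcal{T}^*_K(X)\to\mathcal{T}^*_K(Y)$ transports back to an isomorphism of the compact-open posets and thence to a homeomorphism $\sigma X\backslash X \to \sigma Y\backslash Y$. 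The main obstacle I anticipate is the precise identification of $\mathcal{A}(X)$ with the compact open subsets of $\sigma X\backslash X$ and the proof that this lattice of compact open sets determines the topology of a zero-dimensional locally compact space — this requires care about which separation/covering hypotheses on $\sigma X\backslash X$ are actually available (it need not be paracompact, as the excerpt's example with $X$ uncountable discrete shows), and is presumably where the cited modification of Theorems~\ref{YROEJ} and \ref{YPPJ} of [8] does the real work.
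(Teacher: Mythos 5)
Your overall strategy is the same as the paper's: identify $\mu(\mathcal{T}^*_K(X))$ with a topologically canonical family of subsets of $\sigma X\backslash X$ and then invoke Stone-type duality. Your identification of $\mathcal{A}(X)$ with the non-empty compact open subsets of $\sigma X\backslash X$ is correct and is in fact a cleaner, case-free packaging of what the paper does: one direction is Lemma~\ref{RAJ} plus Lemma~\ref{RWRPJ} (note that by Lemma~\ref{RWRPJ} the elements of $\mu(\mathcal{T}^*_K(X))$ already lie inside $\sigma X\backslash X$, so your intersection map is just the identity and the appeal to Lemma~\ref{WRPJ} is superfluous); the converse direction follows because a compact open $K\subseteq\sigma X\backslash X$ is clopen in $X^*$, is contained in $M^*$ for some $\sigma$-compact clopen $M=\bigcup_{i\in J}X_i$ with $J$ countable, and is therefore a zero-set of the clopen set $\mathrm{cl}_{\beta X}M$ and hence of $\beta X$ (this does not even need zero-dimensionality). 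With that identification, your (2)$\Rightarrow$(1) is complete and avoids the paper's separate treatment of the $\sigma$-compact case.

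The genuine gap is exactly the step you flag and postpone: that the inclusion order on the non-empty compact open subsets of the zero-dimensional locally compact space $\sigma X\backslash X$ determines its topology. This is not a citation you can wave at -- it is the substantive content of the paper's proof of (1)$\Rightarrow$(2). Concretely, given an order-isomorphism $g$ between the two posets of non-empty compact open sets, one must extend it to an isomorphism $G$ of the Boolean algebras $\mathcal{B}(\omega\sigma X\backslash X)\rightarrow\mathcal{B}(\omega\sigma Y\backslash Y)$ of the one-point compactifications by setting $G(U)=g(U)$ when $\Omega\notin U$ and $G(U)$ equal to the complement of $g$ of the complement when $\Omega\in U$, and then verify monotonicity in the mixed case ($\Omega\notin U$, $\Omega\in V$, $U\subseteq V$). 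That verification requires showing $g$ reflects disjointness, which the paper gets from the fact that the poset is closed under non-empty binary meets (realized as intersections) and that an order-isomorphism preserves such meets; only after this does Stone duality produce a homeomorphism of the compactifications, which one then checks fixes the points at infinity. Separately, the paper must (and you would also have to) dispose of the degenerate situation where one of the spaces is $\sigma$-compact, where it falls back on the minimum element of $\mathcal{T}_K^*$, Lemma~\ref{WRRAJ} and Theorem~\ref{KHDJ}; in your formulation this is absorbed into the general duality claim, but then that claim must be proved for not-necessarily-compact zero-dimensional locally compact spaces, i.e.\ precisely the generalized Stone duality you have not supplied. Until that lemma is proved, (1)$\Rightarrow$(2) is incomplete.
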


\begin{proof}
{\em (1) implies (2).} Suppose that condition  (1) holds.
Assume that  one of $X$ and $Y$, say $X$, is $\sigma$-compact.
Suppose that $Y$ is not $\sigma$-compact and let $Y=\bigoplus_{i\in
J}Y_i$, with $Y_i$'s being $\sigma$-compact non-compact subspaces.
Since by Lemma \ref{WRRAJ} we have ${\mathcal T}^*_K(X)={\mathcal T}_K(X)$,
and ${\mathcal T}_K(X)$ has a minimum, namely its one-point
compactification,  ${\mathcal T}^*_K(X)$ and thus ${\mathcal
T}^*_K(Y)$ has a minimum. Let $T$ be the minimum of ${\mathcal
T}^*_K(Y)$. Then since for each countable $L\subseteq J$, we have
$(\bigcup_{i\in L}Y_i )^*\in \mu_Y({\mathcal T}^*_K(Y))$, it follows
that  $(\bigcup_{i\in L}Y_i )^*\subseteq \mu_Y(T)$, and thus $\sigma
Y\backslash Y\subseteq\mu_Y(T)$. Now by Lemma \ref{WRPJ}, with $Y^*$ and
$\mu_Y(T)$ being the zero-sets, we have $Y^*\subseteq\mu_Y(T)$. But
by Lemma \ref{RAJ}, we have $\mu_Y(T)\in  {\mathcal Z}(\beta Y)$, and
therefore $Y^*\in {\mathcal Z}(\beta Y)$, which is a contradiction,
as we assumed that $Y$ is not $\sigma$-compact (see 1B of [13]).
Thus $X$ and $Y$ are both $\sigma$-compact, and so by Lemma \ref{WRRAJ} and
condition (1), ${\mathcal T}_K(X)$ and ${\mathcal T}_K(Y)$ are
order-isomorphic. Thus since $X$ and $Y$ are zero-dimensional
locally compact paracompact, each is  strongly zero-dimensional (see
Theorem \ref{EEFR}.10 of [4]). Now  Theorem \ref{KHDJ} implies that $\sigma
X\backslash X=X^*$ and $\sigma Y\backslash Y=Y^*$ are homeomorphic.

Next suppose that $X$ and
$Y$ are both non-$\sigma$-compact and let  $\phi:{\mathcal
T}^*_K(X)\rightarrow{\mathcal T}^*_K(Y)$ be an order-isomorphism.
Let $g=\mu_Y \phi \mu_X^{-1}:\mu_X({\mathcal
T}^*_K(X))\rightarrow\mu_Y({\mathcal T}^*_K(Y))$ and let
$\omega\sigma X=\sigma X\cup\{\Omega\}$ and $\omega\sigma Y=\sigma
Y\cup\{\Omega '\}$ be one-point compactifications.  We define a
function $ G:{\mathcal B}(\omega\sigma X\backslash
X)\rightarrow{\mathcal B}(\omega\sigma Y\backslash Y)$ between the
two Boolean algebras of clopen sets, and verify that it is an
order-isomorphism.

Set $G(\emptyset)=\emptyset$ and $G(\omega\sigma X\backslash
X)=\omega\sigma Y\backslash Y$. Let $U\in {\mathcal B}(\omega\sigma
X\backslash X)$. If $U\neq\emptyset$ and $\Omega\notin U$, then $U$
is an open subset of $\sigma X\backslash X$, and therefore it is an
open subset of $X^*$. There exists a countable $J\subseteq I$ such
that $U\subseteq (\bigcup_{i\in J}X_i)^*$, where $X=\bigoplus_{i\in
I} X_i$, with $X_i$'s being $\sigma$-compact non-compact subspaces,
and thus $U\in \mu_X ({\mathcal T}^*_K(X))$. In this case we let $
G(U)= g(U)$. If $U\neq\omega\sigma X\backslash X$ and $\Omega\in U$,
then $(\omega\sigma X\backslash X)\backslash U\in \mu_X({\mathcal
T}^*_K(X))$, and we let $ G(U)=(\omega\sigma Y\backslash
Y)\backslash g((\omega\sigma X\backslash X)\backslash U)$.

To show that $G$ is an order-isomorphism, let $U,V\in{\mathcal
B}(\omega\sigma X\backslash X)$ with $U\subseteq V$. We may assume
that  $U\neq\emptyset$ and $V\neq\omega\sigma X\backslash X$. We
consider the following cases.

{\em Case 1)} Suppose that $\Omega\notin V$. Then clearly
$G(U)=g(U)\subseteq g(V)=G(V)$.

{\em Case 2)} Suppose that  $\Omega\notin U$ and $\Omega\in V$. If
$G(U)\backslash G(V)\neq\emptyset$, then $T=g(U)\cap g((\omega\sigma
X\backslash X)\backslash V)\neq\emptyset$, and therefore by Lemma \ref{RAJ}, we have  $T\in\mu_Y({\mathcal T}^*_K(Y))$. Let $S\in
\mu_X({\mathcal T}^*_K(X))$ be such that $g(S)=T$. Then since $g$ is
an order-isomorphism, $S\subseteq U\cap((\omega\sigma X\backslash
X)\backslash V)=\emptyset$, which is a contradiction. Therefore
$G(U)\subseteq G(V)$.

{\em Case 3)} Suppose that $\Omega\in U$. Then since $(\omega\sigma
X\backslash X)\backslash V\subseteq(\omega\sigma X\backslash
X)\backslash U$ we have
\[G(U)=(\omega\sigma Y\backslash Y)\backslash g\big((\omega\sigma
X\backslash X)\backslash U\big)\subseteq (\omega\sigma Y\backslash
Y)\backslash g\big((\omega\sigma X\backslash X)\backslash V\big)=G(V).\]
This shows that $G$ is an order-homomorphism.

To complete the proof we note that since $\phi^{-1}:{\mathcal
T}^*_K(Y)\rightarrow {\mathcal T}^*_K(X)$ also is  an
order-isomorphism, if we denote $h=\mu_X \phi^{-1}\mu_Y^{-1}$, then
arguing as above, $h$ induces an order-homomorphism $H:{\mathcal
B}(\omega\sigma Y\backslash Y)\rightarrow{\mathcal B}(\omega\sigma
X\backslash X)$. It is then easy to see that $H=G^{-1}$.

Now since by Theorem \ref{EEFR}.10 of [4] the spaces $X$ and $Y$ are
strongly zero-dimensional, $\sigma X$ and $\sigma Y$, and therefore
their one-point compactifications $\omega\sigma X$ and $\omega\sigma
Y$, also are  zero-dimensional. Thus by Stone Duality, there exists
a homeomorphism $f:\omega\sigma X\backslash X\rightarrow
\omega\sigma Y\backslash Y$ such that $f(U)=G(U)$, for every
$U\in{\mathcal B}(\omega\sigma X\backslash X)$. Now since for every
countable $J\subseteq I$, we have $\Omega'\notin g(Q_J)=
G(Q_J)=f(Q_J)$, where $Q_J=(\bigcup_{i\in J} X_i)^*$, the function
$f|(\sigma X\backslash X):\sigma X\backslash X\rightarrow\sigma
Y\backslash Y$ is a homeomorphism.

{\em (2) implies (1).} Suppose that condition (2) holds. If one of $X$
and $Y$, say $X$, is $\sigma$-compact, then
since $\sigma Y\backslash Y$ and $X^*=\sigma X\backslash X$ are
homeomorphic, $\sigma Y\backslash Y$
  is compact. Suppose that $Y$ is not $\sigma$-compact and let $Y_i$'s
be as in the previous part.
  By compactness of $\sigma Y\backslash Y$, there exists a countable
$L\subseteq J$ such that
  $(\bigcup_{i\in L}Y_i)^*=\sigma Y\backslash Y$, which is clearly
false. Thus $Y$
    also is $\sigma$-compact, and since $X^*$ and $Y^*$ are
homeomorphic, by Theorem \ref{KHDJ} and Lemma \ref{WRRAJ}
    we have  that  ${\mathcal T}^*_K(X)$ and ${\mathcal T}^*_K(Y)$ are
    order-isomorphic.

Next suppose that $X$ and $Y$ are both non-$\sigma$-compact  and
let  $f:\sigma X\backslash X\rightarrow \sigma Y\backslash Y$ be a
homeomorphism. Let $Z\in\mu_X({\mathcal T}^*_K(X))$. Then by Lemma \ref{RWRPJ}, we have $Z\subseteq \sigma X\backslash X$, and thus  there
exists a countable $A\subseteq I$ such that $Z\subseteq \mbox
{cl}_{\beta X} P$, where $P=\bigcup_{i\in A} X_i$.  But since $P^*$
is clopen in $\sigma X\backslash X$, $f(P^*)$ is clopen in $\sigma
Y\backslash Y$, and since it is also compact, there exists a
countable $B\subseteq J$ such that $f(P^*)\subseteq Q^*$, where
$Q=\bigcup_{i\in B} Y_i$ and  $Y=\bigoplus_{i\in J} Y_i$, with each
$Y_i$ being a $\sigma$-compact non-compact subspace. Since by Lemma \ref{RAJ}, the set  $Z$ is clopen in $X^*$, the set  $f(Z)$ is clopen in
$\sigma Y\backslash Y$, and since we have $f(Z)\subseteq Q^*$, it
also is  clopen in $ Q^*$, and thus clopen in $Y^*$, i.e., $f(Z)\in
\mu_Y ({\mathcal T}^*_K(Y))$. Now we define a function $F: \mu_X
({\mathcal T}^*_K(X))\rightarrow\mu_Y ({\mathcal T}^*_K(Y))$ by
$F(Z)=f(Z)$. The function $F$ is clearly well-defined and it is an
order-homomorphism. Since $f^{-1}$ is also a homeomorphism, arguing
as above, we can define a function $G: \mu_Y ({\mathcal
T}^*_K(Y))\rightarrow\mu_X ({\mathcal T}^*_K(X))$ by
$G(Z)=f^{-1}(Z)$, which is clearly the inverse of $F$. Thus $F$ is
an order-isomorphism.
\end{proof}

The following question naturally arises in connection with Theorem \ref{PSDJ} above.

\begin{question}\label{UPSDJ}
Is there any subset of $X^*$ whose
topology determines and is determined by the order structure of
${\mathcal T}^*(X)$? (See Theorem \ref{HJL} for a partial answer to this
question)
\end{question}

We note that for a locally compact space $X$, each  Lindel\"{o}f
subspace  of $X$ is a subset of a $\sigma$-compact  subset of $X$,
and therefore we can describe the elements of ${\mathcal T}_D(X)$ as
those $Y=X\cup\{p\}\in {\mathcal T}(X)$ for which $p\notin \mbox
{cl}_Y A$, for any $\sigma$-compact $A\subseteq X$.

\begin{theorem}\label{EPSDJ}
Let $X$ be a locally compact
paracompact  space. Then
\[\mu\big({\mathcal T}_D(X)\big)={\mathcal C}(\beta X\backslash \sigma
X)\backslash\{\emptyset\}.\]
\end{theorem}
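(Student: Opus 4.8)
The plan is to transport membership in ${\mathcal T}_D(X)$ across the order-anti-isomorphism $\mu$ of Theorem \ref{YUS} and to recognise the resulting condition on $\mu(Y)=F_Y^{-1}(p)$ as exactly ``$\mu(Y)\subseteq\beta X\setminus\sigma X$''. Two preliminary remarks frame the argument. First, $\sigma X$ is an open subset of $\beta X$ containing $X$ (each point of $X$ having a compact, hence $\sigma$-compact, neighborhood), so $\beta X\setminus\sigma X$ is a closed subset of $X^*$ and the right-hand side is meaningful. Second, I would freely use that for $Y=X\cup\{p\}\in{\mathcal T}(X)$ the space $\beta Y$ is the quotient of $\beta X$ obtained by contracting $C=\mu(Y)$ to the point $p$, with quotient map $q=F_Y$ (as recorded after Theorem \ref{YUS}); since $q$ is a continuous surjection of a compact space onto a Hausdorff space it is closed, $q^{-1}(p)=C$, and $q|X=\mathrm{id}_X$. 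If $X$ happens to be $\sigma$-compact, then $X$ itself is a closed Lindel\"{o}f set with $\mathrm{cl}_Y X=Y\ni p$, so ${\mathcal T}_D(X)=\emptyset$, while $\sigma X=\beta X$, so both sides of the claimed identity are empty; thus I may assume $X$ is not $\sigma$-compact and invoke Proposition \ref{RAWDJ}, writing $X=\bigoplus_{i\in I}X_i$ with each $X_i$ $\sigma$-compact.

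For the inclusion $\mu({\mathcal T}_D(X))\subseteq{\mathcal C}(\beta X\setminus\sigma X)\setminus\{\emptyset\}$, I would fix $Y=X\cup\{p\}\in{\mathcal T}_D(X)$ and show $\mu(Y)\cap\sigma X=\emptyset$. Supposing $x\in\mu(Y)\cap\sigma X$, the definition of $\sigma X$ gives a $\sigma$-compact $B\subseteq X$ with $x\in\mathrm{cl}_{\beta X}B$. Since $B$ is Lindel\"{o}f and $\{X_i\}_{i\in I}$ is a disjoint open cover of $X$, $B$ meets only countably many $X_i$, so $B\subseteq A:=\bigcup_{i\in J}X_i$ for some countable $J\subseteq I$; here $A$ is clopen in $X$ and $\sigma$-compact, hence closed and Lindel\"{o}f. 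Now $F_Y$ is continuous and fixes $X$ pointwise, so
\[p=F_Y(x)\in F_Y\big(\mathrm{cl}_{\beta X}B\big)\subseteq\mathrm{cl}_{\beta Y}\big(F_Y(B)\big)=\mathrm{cl}_{\beta Y}B\subseteq\mathrm{cl}_{\beta Y}A,\]
and since $p\in Y$ this yields $p\in\mathrm{cl}_{\beta Y}A\cap Y=\mathrm{cl}_Y A$, contradicting $Y\in{\mathcal T}_D(X)$ as $A$ is closed and Lindel\"{o}f. Hence $\mu(Y)$ is a non-empty closed subset of $\beta X\setminus\sigma X$.

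For the reverse inclusion, I would fix a non-empty $C\in{\mathcal C}(\beta X\setminus\sigma X)$. By Theorem \ref{YUS}, $C=\mu(Y)$ for some $Y=X\cup\{p\}\in{\mathcal T}(X)$, with $q=F_Y$ the closed quotient map collapsing $C$ to $p$. I claim $Y\in{\mathcal T}_D(X)$; otherwise there is a closed Lindel\"{o}f $A\subseteq X$ with $p\in\mathrm{cl}_Y A$, and, $X$ being locally compact, such an $A$ is $\sigma$-compact (cover $A$ by relatively compact open sets, extract a countable subcover, and note $A$ is the union of the corresponding closed pieces of compact sets). Because $q$ is closed and fixes $X$,
\[p\in\mathrm{cl}_Y A\subseteq\mathrm{cl}_{\beta Y}A=q\big(\mathrm{cl}_{\beta X}A\big),\]
so $q(x)=p$ for some $x\in\mathrm{cl}_{\beta X}A$, whence $x\in q^{-1}(p)=C$. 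But $A$ is $\sigma$-compact, so $\mathrm{cl}_{\beta X}A\subseteq\sigma X$ by definition of $\sigma X$, and thus $x\in C\cap\sigma X$, contradicting $C\subseteq\beta X\setminus\sigma X$. Therefore $Y\in{\mathcal T}_D(X)$ and $C=\mu(Y)\in\mu({\mathcal T}_D(X))$, which completes the proof.

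The manipulations are routine: the only topological inputs are that $F_Y$ carries $\beta X$-closures into $\beta Y$-closures, that the compact-to-Hausdorff map $q$ is closed and so carries closures onto closures, and the identification $q^{-1}(p)=C$. The one place the hypotheses genuinely enter is the first inclusion, where an arbitrary $\sigma$-compact set must be replaced by a closed Lindel\"{o}f one with a larger closure in $\beta X$ — this is exactly what Proposition \ref{RAWDJ} supplies — and I expect no obstacle beyond disposing of the degenerate $\sigma$-compact case of $X$ at the outset and being careful with these closure inclusions.
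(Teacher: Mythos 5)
Your proof is correct and follows essentially the same route as the paper's: both directions reduce to the observation that $\mu(Y)$ avoids $\sigma X$ exactly when $p$ avoids the closure in $Y$ of every $\sigma$-compact (equivalently, closed Lindel\"{o}f) subset of $X$. The only cosmetic differences are that you push closures forward along the closed quotient map $F_Y$ where the paper pulls neighborhoods of $p$ back, and that you justify the interchange between $\sigma$-compact and closed Lindel\"{o}f sets directly from the decomposition of Proposition \ref{RAWDJ} (also disposing of the degenerate $\sigma$-compact case explicitly) rather than citing the remark preceding the theorem.
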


\begin{proof}
Suppose that $Y=X\cup\{p\}\in {\mathcal T}_D(X)$ and
let $C=F_Y^{-1}(p)$. Assume that $C\cap\sigma X\neq\emptyset$ and
let $x\in C\cap\sigma X$. Let $A$ be a $\sigma$-compact subset of
$X$ such that $x\in \mbox {cl}_{\beta X}A$. By assumption $p\notin
\mbox {cl}_Y A$. Therefore $U\cap Y\cap A=\emptyset$,  for some open
neighborhood $U$ of $p$ in $\beta Y$. Now since $F_Y^{-1}(U)$ is an
open neighborhood of $x$ in $\beta X$, we have $A\cap
F_Y^{-1}(U)\neq\emptyset$. Let $a\in A\cap F_Y^{-1}(U)$. Then
$a=F_Y(a)\in U\cap A$, which is a contradiction.
Therefore $\mu(Y)=C\subseteq\beta X\backslash \sigma X$.

Conversely, suppose that  $C\in{\mathcal C}(\beta X\backslash \sigma X)\backslash\{\emptyset\}$. Then since $\sigma X$ is open in $\beta X$, we have $C\in{\mathcal C}(X^*)$, and thus $C=\mu (Y)$, for some $Y=X\cup\{p\}\in {\mathcal T}(X)$. Suppose that $A\subseteq X$ is $\sigma$-compact. Then since $\mbox {cl}_{\beta X}A\subseteq \sigma X$, we have  $C\cap\mbox {cl}_{\beta X}A=\emptyset$. Let $U=(\beta X\backslash (C\cup\mbox {cl}_{\beta X}A))\cup\{p\}$. Then $U\cap Y$ is an open
neighborhood of $p$ in $Y$, and $U\cap Y\cap A=\emptyset$. Therefore $p\notin\mbox{cl}_Y A$, which shows that $Y\in {\mathcal T}_D(X)$.
\end{proof}

\begin{lemma}\label{ERWRPJ}
Suppose that  $X$ is a locally
compact paracompact space and let $Y\in {\mathcal T}(X)$. Then $Y\in
{\mathcal T}_L(X)$ if and only if $\mu(Y)\supseteq \beta
X\backslash\sigma X$.
\end{lemma}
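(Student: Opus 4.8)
The plan is to prove both implications using the quotient description of $\beta Y$ and the characterization of $\sigma X$ from Proposition~\ref{RAWDJ}, together with Lemma~\ref{RWRPJ} and Lemma~\ref{WRPJ}.

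For the forward direction, suppose $Y=X\cup\{p\}\in{\mathcal T}_L(X)$ and let $C=\mu(Y)$. Assume for contradiction that $x\in(\beta X\backslash\sigma X)\backslash C$. Since $C$ is closed in $X^*$ and $x\notin C$, there is an open neighborhood $V$ of $x$ in $\beta X$ whose closure misses $C$; passing to the quotient $\beta Y$ (which is obtained from $\beta X$ by contracting $C$ to $p$, as in the remark after Theorem~\ref{YUS}), the image of $V$ is an open set in $\beta Y$ whose closure misses $p$. Now I would use Lindel\"ofness of $Y$ as follows: a Lindel\"of space is realcompact, so $\upsilon Y=Y$, which means $\beta Y\backslash Y$ contains no non-empty zero-set of $\beta Y$ disjoint from $Y$... more directly, since $Y$ is Lindel\"of and $\mathrm{cl}_{\beta Y}(\text{image of }V\cap X)$ is a compact $G_\delta$-ish neighborhood-free region, one covers $Y$ by countably many compact sets, pushes this back to a $\sigma$-compact cover of $X$ missing a neighborhood of $x$, forcing $x\in\sigma X$ --- the contradiction. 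The cleanest route: $Y$ Lindel\"of and locally... no, $Y$ need not be locally compact; instead use that $Y$ Lindel\"of implies $X=Y\backslash\{p\}$ is a cozero-subset, hence Lindel\"of... actually $X$ is open in $Y$ so $X$ is locally compact Lindel\"of, hence $\sigma$-compact, hence $X^*\in{\mathcal Z}(\beta X)$ and in fact $\sigma X=\beta X$, making $\beta X\backslash\sigma X=\emptyset\subseteq C$ trivially. Wait --- but $Y$ need only be Lindel\"of, and an open subset of a Lindel\"of space need not be Lindel\"of. So I must be more careful: the right statement is that if $Y$ is Lindel\"of then any $\sigma$-compact $A\subseteq X$ has $p\in\mathrm{cl}_Y A$ is false; rather, I think the key is: $\beta Y\backslash Y$ has the property that every zero-set of $\beta Y$ meeting it... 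Let me instead argue that $\{p\}$ together with the trace topology: if $x\in\beta X\backslash\sigma X$ and $x\notin C$, separate $x$ from $C$ by a zero-set $Z\in{\mathcal Z}(\beta X)$ with $x\in Z$, $Z\cap C=\emptyset$; then $Z$ maps to a zero-set of $\beta Y$ missing $p$, and $Z\cap X$ is a zero-set of $X$, in fact $Z\cap\sigma X=\emptyset$ would follow if we knew $Z\subseteq\beta X\backslash\sigma X$, which we can arrange by Lemma~\ref{WRORJ}-type reasoning since $\beta X\backslash\sigma X$ is a zero-set when... hmm, it is closed but perhaps not a zero-set. The honest main obstacle is exactly pinning down how Lindel\"ofness of $Y$ interacts with $\sigma X$; I expect the intended argument covers $Y$ by countably many sets each of which is compact or a small neighborhood of $p$, intersects with $X$ to get a $\sigma$-compact $A$ with $\mathrm{cl}_{\beta X}A\supseteq\beta X\backslash\sigma X$ up to the contracted point, forcing $C\supseteq\beta X\backslash\sigma X$.

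For the reverse direction, suppose $\mu(Y)=C\supseteq\beta X\backslash\sigma X$, and show $Y$ is Lindel\"of. Take an open cover $\mathcal{U}$ of $Y$; pick $U_0\in\mathcal{U}$ with $p\in U_0$, and let $V_0$ be open in $\beta Y$ with $V_0\cap Y=U_0$. Then $\beta Y\backslash V_0$ is a compact subset of $\beta Y$ missing $p$, so its preimage under $q:\beta X\to\beta Y$ is a compact subset of $\beta X$ missing $C$, hence missing $\beta X\backslash\sigma X$, hence contained in $\sigma X$; by the characterization of $\sigma X$ in Proposition~\ref{RAWDJ} a compact subset of $\sigma X$ lies inside $\mathrm{cl}_{\beta X}(\bigcup_{i\in J}X_i)$ for some countable $J\subseteq I$, and $\bigcup_{i\in J}X_i$ is $\sigma$-compact. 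So $Y\backslash U_0$ is covered by the $\sigma$-compact set $\bigcup_{i\in J}X_i$ (intersected with $Y\backslash U_0$, which is closed hence the covered part is $\sigma$-compact), and a $\sigma$-compact subspace is Lindel\"of, so countably many members of $\mathcal{U}$ cover it; together with $U_0$ this is a countable subcover. Hence $Y$ is Lindel\"of.

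Thus the reverse direction is routine given Proposition~\ref{RAWDJ}, and the whole weight of the lemma sits in the forward direction, where the subtle point is extracting from "$Y$ is Lindel\"of" a $\sigma$-compact subset of $X$ whose $\beta X$-closure swallows all of $\beta X\backslash\sigma X$ except what has been contracted into $p$ --- I would organize that by taking a countable compact cover of $Y$ witnessing Lindel\"ofness applied to the cover by $\{U_0\}\cup\{$relatively compact open sets in $X\}$ (using local compactness of $X$), noting each compact piece sits in some $\mathrm{cl}_{\beta X}(\bigcup_{i\in J_n}X_i)$, unioning the countably many countable $J_n$, and concluding $\beta X\backslash\sigma X\subseteq\mathrm{cl}_{\beta Y}(p)$'s fiber $=C$.
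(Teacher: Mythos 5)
Your reverse direction is correct and is essentially the paper's own argument: pick the member $U_0$ of the cover containing $p$, pull the complement of its $\beta Y$-extension back to a compact subset of $\beta X$ missing $C$, hence contained in $\sigma X$, hence (by compactness) in $\mbox{cl}_{\beta X}(\bigcup_{i\in J}X_i)$ for some countable $J$, and finish using Lindel\"ofness of the $\sigma$-compact set $\bigcup_{i\in J}X_i$. The only caveat is that the $\sigma$-compact case of $X$ should be dispatched separately (Proposition~\ref{RAWDJ} applies only to non-$\sigma$-compact $X$), but there both sides of the equivalence hold trivially.

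The forward direction, as you yourself concede, is not a proof as written: it is a chain of abandoned attempts ending in a sketch. The sketch in your last paragraph can be completed, but it has a genuine gap at its final step. What Lindel\"ofness applied to the cover $\{U_0\}\cup\{\mbox{relatively compact open subsets of }X\}$ actually gives you is that $X\backslash U_0$ lies in a $\sigma$-compact subset of $X$, whence $\mbox{cl}_{\beta X}(X\backslash U_0)\subseteq\sigma X$ and so $\beta X\backslash\sigma X\subseteq\mbox{cl}_{\beta X}(U_0\cap X)$, for \emph{every} open neighborhood $U_0$ of $p$ in $Y$. To get from there to $\beta X\backslash\sigma X\subseteq C$ you still need the identification $\bigcap_{U_0}\mbox{cl}_{\beta X}(U_0\cap X)=F_Y^{-1}(p)=C$, which you assert (``\ldots $=C$'') but do not prove; equivalently, given $x\notin C$ you must exhibit a neighborhood $U_0$ of $p$ with $x\notin\mbox{cl}_{\beta X}(U_0\cap X)$ --- take an open $V\supseteq C$ in $\beta X$ with $x\notin\mbox{cl}_{\beta X}V$ and set $U_0=(V\cap X)\cup\{p\}$, which is open in $Y$ because $\beta Y$ is the quotient of $\beta X$ contracting $C$ to $p$. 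For comparison, the paper's forward argument is shorter and sidesteps all of this: given $x\in(\beta X\backslash\sigma X)\backslash C$, separate $x$ and $C$ by disjoint open sets $U,V$ in $\beta X$; then $X\backslash V$ is closed in $Y$ (the trace of $q(V)$ on $Y$ is a neighborhood of $p$ missing it), hence Lindel\"of, yet $U$ must meet uncountably many of the clopen summands $X_i$ (otherwise $x\in\mbox{cl}_{\beta X}U\subseteq\sigma X$), so the disjoint clopen cover $\{X_i\cap(X\backslash V)\}_{i\in I}$ of $X\backslash V$ admits no countable subcover --- a contradiction. I would either adopt that route or write out your intersection argument in full.
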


\begin{proof}
Clearly it suffices to consider only the case when $X$ is
non-$\sigma$-compact. Suppose that $Y\in {\mathcal T}_L(X)$ and let
$C=\mu(Y)$. Assume that $ (\beta X\backslash\sigma X)\backslash
C\neq\emptyset$ and let $ x\in (\beta X\backslash\sigma X)\backslash
C\neq\emptyset$. Let $U$ and $V$ be disjoint open neighborhoods of
$x$ and $C$ in $\beta X$, respectively. Assume the notations of
Proposition \ref{RAWDJ} and let
\[J=\{i\in I :X_i\cap U\neq\emptyset\}.\]
Clearly
\[\mbox {cl}_{\beta X}U=\mbox {cl}_{\beta X}(U\cap X)\subseteq
\mbox {cl}_{\beta X}\Big(\bigcup_{i\in J}X_i\Big)\]
and thus since $x\notin \sigma X$, the set $J$ is
uncountable. Then $X\backslash V$, being closed in the Lindel\"{o}f
space $Y$, is Lindel\"{o}f. But this is a contradiction, as since
$U$ intersects uncountably many of $X_i$'s, there  is no countable
subcover of $\{X_i\}_{i\in I}$ covering $X\backslash V$. Therefore
$C\supseteq \beta X\backslash\sigma X$.

To prove the converse, suppose that  $\mu(Y)=C\supseteq \beta
X\backslash\sigma X$. Let ${\mathcal V}$ be an open cover of
$Y=X\cup\{p\}$. Let $V\in{\mathcal V}$ be such that $p\in V$, and
let $W$ be an open set in $\beta Y$ such that $V=W\cap Y$. Then
since $p\in W$, we have  $ \beta X\backslash F_Y^{-1}(W)\subseteq
\sigma X$, and therefore $\beta X\backslash F_Y^{-1}(W)\subseteq
\mbox {cl}_{\beta X}M$, where $M=\bigcup_{i\in J}X_i$ and
$J\subseteq I$ is countable. Clearly $Y\backslash V\subseteq M$. But
$M$, being $\sigma$-compact, can be covered by countably many
subsets of ${\mathcal V}$. Therefore ${\mathcal V}$ has a countable
subcover, which shows that  $Y$ is
Lindel\"{o}f.
\end{proof}

\begin{theorem}\label{EJ}
Let $X$ be a locally compact
paracompact  non-Lindel\"{o}f space. Then the minimum of ${\mathcal
T}_D(X)$ is the unique Lindel\"{o}f element of ${\mathcal T}_D(X)$.
\end{theorem}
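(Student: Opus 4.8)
The idea is to transport the statement through the order-anti-isomorphism $\mu$ (Theorem~\ref{YUS}) into a triviality about closed subsets of $\beta X\backslash\sigma X$, using Theorem~\ref{EPSDJ} and Lemma~\ref{ERWRPJ}; the only genuine point is that $\beta X\backslash\sigma X$ is non-empty. First I would note that a locally compact non-Lindel\"{o}f space is non-$\sigma$-compact, so Proposition~\ref{RAWDJ} applies: $X=\bigoplus_{i\in I}X_i$ with $I$ uncountable and each $X_i$ $\sigma$-compact and non-compact. Since $\sigma X$ is open in $\beta X$, the set $\beta X\backslash\sigma X$ is closed; I claim it is non-empty. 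If not, the sets $\mbox{cl}_{\beta X}(\bigcup_{i\in J}X_i)$, for countable $J\subseteq I$, form an open cover of the compact space $\beta X$, so finitely many of them cover $\beta X$; their union is contained in $\mbox{cl}_{\beta X}(\bigcup_{i\in J}X_i)$ for the (countable) union $J$ of the corresponding index sets, so $\mbox{cl}_{\beta X}(\bigcup_{i\in J}X_i)=\beta X$. As $\bigcup_{i\in J}X_i$ is clopen, hence closed, in $X$, we get $\bigcup_{i\in J}X_i=X\cap\mbox{cl}_{\beta X}(\bigcup_{i\in J}X_i)=X$, which is impossible since each $X_i$ is non-empty and $I$ is uncountable. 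Thus $\beta X\backslash\sigma X$ is a non-empty closed subset of itself, hence the $\subseteq$-maximum of ${\mathcal C}(\beta X\backslash\sigma X)\backslash\{\emptyset\}$.

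By Theorem~\ref{EPSDJ}, $\mu$ carries ${\mathcal T}_D(X)$ bijectively onto ${\mathcal C}(\beta X\backslash\sigma X)\backslash\{\emptyset\}$, and since $\mu$ is an order-anti-isomorphism this restriction is an order-anti-isomorphism onto that partially ordered set. Hence ${\mathcal T}_D(X)$ has a minimum, namely the extension $Y_0\in{\mathcal T}_D(X)$ with $\mu(Y_0)=\beta X\backslash\sigma X$, which is the $\mu$-preimage of the maximum of the image.

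Finally, for $Y\in{\mathcal T}_D(X)$ we have $\mu(Y)\subseteq\beta X\backslash\sigma X$ by Theorem~\ref{EPSDJ}, while by Lemma~\ref{ERWRPJ} the extension $Y$ is Lindel\"{o}f if and only if $\mu(Y)\supseteq\beta X\backslash\sigma X$. So a member $Y$ of ${\mathcal T}_D(X)$ is Lindel\"{o}f precisely when $\mu(Y)=\beta X\backslash\sigma X$, that is, precisely when $Y=Y_0$. Therefore $Y_0$ is the unique Lindel\"{o}f element of ${\mathcal T}_D(X)$, and it is the minimum of ${\mathcal T}_D(X)$, as claimed. The one step that goes beyond bookkeeping with $\mu$ is the non-emptiness of $\beta X\backslash\sigma X$ verified above; note it is exactly what guarantees that ${\mathcal T}_D(X)\neq\emptyset$, so that the statement is not vacuous.
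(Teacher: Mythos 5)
Your proof is correct and follows essentially the same route as the paper's: the minimum is identified as the $\mu$-preimage of $\beta X\backslash\sigma X$ using Theorem~\ref{EPSDJ}, and the Lindel\"{o}f elements of ${\mathcal T}_D(X)$ are characterized via Lemma~\ref{ERWRPJ}, forcing $\mu(Y)=\beta X\backslash\sigma X$ for any Lindel\"{o}f member. The only difference is that you explicitly verify $\beta X\backslash\sigma X\neq\emptyset$ (so the statement is not vacuous), a point the paper leaves implicit; that verification is sound.
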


\begin{proof}
Let $Y\in{\mathcal T}(X)$ be such that $\mu (Y)=\beta
X\backslash\sigma X$. Then by Theorem \ref{EPSDJ} we have  $Y\in{\mathcal
T}_D(X)$, and by Lemma \ref{ERWRPJ} the space $Y$ is Lindel\"{o}f. Suppose
that $S\in {\mathcal T}_D(X)$ is Lindel\"{o}f. Then by the above
lemma we have $\mu (S)\supseteq \beta X\backslash \sigma X$, and
thus $S=Y$.
\end{proof}

\begin{theorem}\label{OEJ}
For  locally compact paracompact
spaces $X$ and $Y$ the following conditions are
equivalent.
\begin{itemize}
\item[\rm(1)] ${\mathcal T}_D(X)$ and ${\mathcal T}_D(Y)$ are order-isomorphic;
\item[\rm(2)] $\beta X\backslash\sigma X$ and $\beta Y\backslash\sigma Y$ are
homeomorphic.
\end{itemize}
\end{theorem}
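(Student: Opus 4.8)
The plan is to run the same machinery that proved Theorems~\ref{TDJ} and \ref{KHDJ}, but now with $\beta X \backslash \sigma X$ playing the role of $X^*$, using Theorem~\ref{EPSDJ} as the bridge between $\mathcal{T}_D(X)$ and the closed subsets of $\beta X \backslash \sigma X$. The implication $(2)\Rightarrow(1)$ is the easy direction: if $f: \beta X \backslash \sigma X \to \beta Y \backslash \sigma Y$ is a homeomorphism, then $f$ induces an order-isomorphism between $\mathcal{C}(\beta X\backslash\sigma X)\backslash\{\emptyset\}$ and $\mathcal{C}(\beta Y\backslash\sigma Y)\backslash\{\emptyset\}$ (a homeomorphism carries nonempty closed sets to nonempty closed sets, bijectively, respecting inclusion). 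Composing with the order-anti-isomorphisms $\mu_X\big(\mathcal{T}_D(X)\big) = \mathcal{C}(\beta X\backslash\sigma X)\backslash\{\emptyset\}$ and $\mu_Y\big(\mathcal{T}_D(Y)\big) = \mathcal{C}(\beta Y\backslash\sigma Y)\backslash\{\emptyset\}$ from Theorem~\ref{EPSDJ} — one anti-isomorphism on each side, so the composite is an order-\emph{isomorphism} — yields the desired order-isomorphism $\mathcal{T}_D(X)\cong\mathcal{T}_D(Y)$.

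For $(1)\Rightarrow(2)$ the key point is that $\beta X\backslash\sigma X$ is a \emph{compact} space (it is closed in $\beta X$, since $\sigma X$ is open in $\beta X$), and the topology of a compact space is determined by the order structure of its lattice $\mathcal{C}(\cdot)$ of closed sets. Concretely: given an order-isomorphism $\phi:\mathcal{T}_D(X)\to\mathcal{T}_D(Y)$, the composite $\psi = \mu_Y\,\phi\,\mu_X^{-1}$ is an order-\emph{anti}-isomorphism from $\mathcal{C}(\beta X\backslash\sigma X)\backslash\{\emptyset\}$ onto $\mathcal{C}(\beta Y\backslash\sigma Y)\backslash\{\emptyset\}$ (anti $\circ$ iso $\circ$ anti $=$ anti). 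Adjoining the empty set and sending $\emptyset\mapsto\emptyset$ extends $\psi$ to an anti-isomorphism of the full closed-set lattices; dually, one obtains a lattice isomorphism $\mathcal{C}(\beta X\backslash\sigma X)\to\mathcal{C}(\beta Y\backslash\sigma Y)$ by composing with complementation, but it is cleaner to invoke directly the standard fact that an order-isomorphism (or anti-isomorphism) of closed-set lattices of compact Hausdorff spaces is induced by a unique homeomorphism (points correspond to the minimal nonempty closed sets, i.e.\ singletons, and the topology is recovered from the lattice operations). This is exactly the principle already used in the excerpt to deduce Theorems~\ref{TDJ} and \ref{KHDJ}.

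The one subtlety worth spelling out — and the main thing to get right — is the passage from $\phi$ being an order-isomorphism of $\mathcal{T}_D(X)$ to $\psi$ being an anti-isomorphism of the \emph{entire} lattice $\mathcal{C}(\beta X\backslash\sigma X)$: Theorem~\ref{EPSDJ} only gives the image of $\mathcal{T}_D(X)$ as the \emph{nonempty} closed subsets, so one must check that singletons in $\beta X\backslash\sigma X$ (needed to recover points) are hit, which they are, since every nonempty closed subset of $X^*$ lying in $\beta X\backslash\sigma X$ is in the image, in particular every singleton $\{x\}$ with $x\in\beta X\backslash\sigma X$. I expect no real obstacle here: once the dictionary ``$\mathcal{T}_D(X) \leftrightarrow$ nonempty closed subsets of the compact space $\beta X\backslash\sigma X$'' is in place via Theorem~\ref{EPSDJ}, the proof is a verbatim repetition of the argument behind Theorem~\ref{TDJ}, reading $\beta X\backslash\sigma X$ for $X^*$ and $\mathcal{T}_D$ for $\mathcal{T}$. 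One should also note the degenerate case where $X$ is $\sigma$-compact: then $\sigma X = \beta X$ (up to the obvious identification), $\beta X\backslash\sigma X = \emptyset$, and $\mathcal{T}_D(X) = \emptyset$ as well, so both conditions hold vacuously and consistently on each side.
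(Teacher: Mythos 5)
Your proposal is correct in substance and follows exactly the route the paper intends: Theorem~\ref{EPSDJ} identifies $\mu\big({\mathcal T}_D(X)\big)$ with ${\mathcal C}(\beta X\backslash\sigma X)\backslash\{\emptyset\}$, the set $\beta X\backslash\sigma X$ is compact (closed in $\beta X$ since $\sigma X$ is open), and the order structure of the closed-set lattice of a compact space determines its topology --- the same principle the paper uses to deduce Theorem~\ref{TDJ} from Theorems~\ref{YUS} and~\ref{DDS}; the paper offers no separate proof of Theorem~\ref{OEJ} precisely because it is this immediate corollary. One slip to correct: the composite $\psi=\mu_Y\,\phi\,\mu_X^{-1}$ is an order-\emph{isomorphism}, not an anti-isomorphism (the composition of two order-reversing maps with an order-preserving map in between is order-preserving), which is consistent with your own extension $\emptyset\mapsto\emptyset$ and is in fact what you need, since the parenthetical claim that an \emph{anti}-isomorphism of closed-set lattices of compact Hausdorff spaces is induced by a homeomorphism is false in general (an anti-isomorphism ${\mathcal C}(X)\to{\mathcal C}(Y)$ amounts to an isomorphism of ${\mathcal C}(X)$ with the open-set lattice of $Y$, which carries no such rigidity); with the parity corrected, the standard fact for order-isomorphisms applies directly and the argument closes. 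Your handling of the degenerate $\sigma$-compact case (both sides empty) is also fine.
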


For a Tychonoff space $X$ and a cardinal number $\alpha$, let
${\mathcal T}_\alpha(X)$ consist of exactly those $Y=X\cup\{p\}\in
{\mathcal T}(X)$ such that $p\notin \mbox {cl}_Y(\bigcup{\mathcal
F})$, for any discrete family ${\mathcal F}$ of compact open subsets
of $X$ with $|{\mathcal F}|=\alpha$. We also let $\tau_\alpha X$ denote the set
\[\!\!\!\!\!\!\!\!X\cup\bigcup\Big\{\mbox {cl}_{\beta X}\Big(\bigcup{\mathcal
F}\Big):\\{\mathcal F} \mbox { is a discrete family of compact open
subsets of $X$ with $|{\mathcal F}|=\alpha$}\Big\}.\]
Clearly, when $X$ is locally compact, $\tau_\alpha X $ is
an open subset of $\beta X$. If $X$ is a zero-dimensional locally
compact paracompact non-$\sigma$-compact space, then $\sigma
X=\tau_\omega X$. This is because, assuming the notations of
Proposition \ref{RAWDJ},  by 3.8.C of [4], for each $i\in I$ we have
$X_i=\bigcup_{n<\omega} C_n^i$, where each $C_n^i$ is open in $X$,
such that $\mbox {cl}_X C_n^i\subseteq C_{n+1}^i$ and $\mbox {cl}_X
C_n^i$ is compact.
Let  $D_n^i$ be a clopen subset of $X$ such that $ C_n^i\subseteq
D_n^i\subseteq C_{n+1}^i$. Then since each $X_i=D_1^i\oplus\bigoplus_{n\geq 1}(D_{n+1}^i\backslash D^i_n)$, the space  $X$ is a sum of compact open subsets, and thus
$\tau_\omega X=\sigma X$.

Now using the same proof as the one we applied for Theorem \ref{EPSDJ} we obtain the
following result.

\begin{theorem}\label{PPOEJ}
et $X$ be a locally compact
space and let $\alpha$ be a cardinal number. Then
\[\mu\big({\mathcal T}_\alpha(X)\big)={\mathcal C}(\beta X\backslash
\tau_\alpha X)\backslash\{\emptyset\}.\]
\end{theorem}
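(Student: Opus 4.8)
The plan is to mimic the proof of Theorem~\ref{EPSDJ} verbatim, replacing $\sigma X$ by $\tau_\alpha X$ and ``$\sigma$-compact subset'' by ``union of a discrete family of $\alpha$ compact open subsets'' throughout. Since the definition of $\tau_\alpha X$ is built precisely so that $\mbox{cl}_{\beta X}(\bigcup\mathcal{F})\subseteq\tau_\alpha X$ for every discrete family $\mathcal{F}$ of $\alpha$ compact open sets, and $\tau_\alpha X$ is an open subset of $\beta X$ when $X$ is locally compact (as noted just before the statement), the two key topological ingredients needed to run the old argument are in place.

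First I would prove the inclusion $\mu({\mathcal T}_\alpha(X))\subseteq{\mathcal C}(\beta X\backslash\tau_\alpha X)\backslash\{\emptyset\}$. Let $Y=X\cup\{p\}\in{\mathcal T}_\alpha(X)$ and put $C=F_Y^{-1}(p)=\mu(Y)$. Suppose toward a contradiction that there is $x\in C\cap\tau_\alpha X$. If $x\in X$ this is absurd since $C\subseteq X^*$; otherwise $x\in\mbox{cl}_{\beta X}(\bigcup\mathcal{F})$ for some discrete family $\mathcal{F}$ of $\alpha$ compact open subsets of $X$. By the defining property of ${\mathcal T}_\alpha(X)$, $p\notin\mbox{cl}_Y(\bigcup\mathcal{F})$, so there is an open neighborhood $U$ of $p$ in $\beta Y$ with $U\cap Y\cap\bigcup\mathcal{F}=\emptyset$. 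But $F_Y^{-1}(U)$ is an open neighborhood of $x$ in $\beta X$, hence it meets $\bigcup\mathcal{F}$ (since $x\in\mbox{cl}_{\beta X}\bigcup\mathcal{F}$), say at a point $a$; then $a=F_Y(a)\in U\cap\bigcup\mathcal{F}\subseteq U\cap Y\cap\bigcup\mathcal{F}$, a contradiction. Hence $C\subseteq\beta X\backslash\tau_\alpha X$, and $C$ is closed and non-empty.

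For the reverse inclusion, take $C\in{\mathcal C}(\beta X\backslash\tau_\alpha X)\backslash\{\emptyset\}$. Since $\tau_\alpha X$ is open in $\beta X$ and $X\subseteq\tau_\alpha X$, we have $C\in{\mathcal C}(X^*)\backslash\{\emptyset\}$, so by Theorem~\ref{YUS} there is $Y=X\cup\{p\}\in{\mathcal T}(X)$ with $\mu(Y)=C$; realize $Y$ inside $Z=\beta Y$ obtained from $\beta X$ by contracting $C$ to $p$. Now let $\mathcal{F}$ be any discrete family of $\alpha$ compact open subsets of $X$; then $\mbox{cl}_{\beta X}(\bigcup\mathcal{F})\subseteq\tau_\alpha X$, so $C\cap\mbox{cl}_{\beta X}(\bigcup\mathcal{F})=\emptyset$. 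Put $U=(\beta X\backslash(C\cup\mbox{cl}_{\beta X}\bigcup\mathcal{F}))\cup\{p\}$; this is $q$-saturated with open preimage, so $U\cap Y$ is an open neighborhood of $p$ in $Y$, and $U\cap Y\cap\bigcup\mathcal{F}=\emptyset$, whence $p\notin\mbox{cl}_Y(\bigcup\mathcal{F})$. As $\mathcal{F}$ was arbitrary, $Y\in{\mathcal T}_\alpha(X)$, so $C\in\mu({\mathcal T}_\alpha(X))$.

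I do not anticipate a genuine obstacle here: the proof of Theorem~\ref{EPSDJ} goes through word-for-word once one observes that the only properties of $\sigma X$ used there were that it is open in $\beta X$, contains $X$, and absorbs the closures of the relevant ``small'' subsets of $X$ — all of which hold for $\tau_\alpha X$ by construction. The one point to be careful about is that the family $\mathcal{F}$ consists of open subsets, so $\bigcup\mathcal{F}$ is open and $\mbox{cl}_{\beta X}(\bigcup\mathcal{F})=\mbox{cl}_{\beta X}(\bigcup\mathcal{F}\cap X)$, which is what makes $U\cap Y$ genuinely open and lets the first-part argument locate a point of $\bigcup\mathcal{F}$ in $F_Y^{-1}(U)$; this is exactly parallel to the role played by $\sigma$-compactness (hence $\mbox{cl}_{\beta X}A=\mbox{cl}_{\beta X}(A\cap X)$) in Theorem~\ref{EPSDJ}. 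Since Proposition~\ref{RAWDJ} and paracompactness are never actually invoked in that proof — only in the surrounding remarks identifying $\sigma X$ — the result holds for arbitrary locally compact $X$, as stated.
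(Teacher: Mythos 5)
Your proposal is correct and is exactly what the paper intends: the paper gives no separate argument for Theorem \ref{PPOEJ}, saying only that it follows ``using the same proof as the one we applied for Theorem \ref{EPSDJ},'' and you have carried out that substitution faithfully, including the two facts that make it work (each $\bigcup\mathcal{F}$ is clopen in $X$, so $\tau_\alpha X$ is open in $\beta X$ without any paracompactness hypothesis, and $\mbox{cl}_{\beta X}(\bigcup\mathcal{F})\subseteq\tau_\alpha X$ by definition). No gaps.
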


\begin{theorem}\label{PPJ}
For  locally compact spaces $X$
and $Y$ and a cardinal number $\alpha$  the following conditions are
equivalent.
\begin{itemize}
\item[\rm(1)] ${\mathcal T}_\alpha(X)$ and ${\mathcal T}_\alpha(Y)$ are order-isomorphic;
\item[\rm(2)] $\beta X\backslash \tau_\alpha X$ and $\beta Y\backslash\tau_\alpha Y$ are homeomorphic.
\end{itemize}
\end{theorem}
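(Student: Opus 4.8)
The plan is to deduce the equivalence from Theorem~\ref{PPOEJ}, the order-anti-isomorphism $\mu$ of Theorem~\ref{YUS}, and the classical fact cited just before Theorem~\ref{TDJ}, namely that the topology of a compact space is recovered from the order structure of its lattice of closed sets. The whole argument is a transport of structure, of the same shape as the one behind Theorem~\ref{OEJ}.

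First I would record the set-up. Since $X$ is locally compact, $\tau_\alpha X$ is open in $\beta X$, so $\beta X\backslash\tau_\alpha X$ is a closed subspace of $\beta X$ and hence compact; the same holds for $\beta Y\backslash\tau_\alpha Y$. By Theorem~\ref{YUS}, $\mu$ is an order-anti-isomorphism of $({\mathcal T}(X),\leq)$ onto $({\mathcal C}(X^*)\backslash\{\emptyset\},\subseteq)$; restricting it to ${\mathcal T}_\alpha(X)$ and using Theorem~\ref{PPOEJ}, the map
\[\mu_X=\mu|{\mathcal T}_\alpha(X):\big({\mathcal T}_\alpha(X),\leq\big)\longrightarrow\big({\mathcal C}(\beta X\backslash\tau_\alpha X)\backslash\{\emptyset\},\subseteq\big)\]
is an order-anti-isomorphism, and likewise for $\mu_Y$. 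If ${\mathcal T}_\alpha(X)=\emptyset$, then $\beta X\backslash\tau_\alpha X=\emptyset$ by Theorem~\ref{PPOEJ}, and both implications hold trivially; so I assume from now on that both ${\mathcal T}_\alpha(X)$ and ${\mathcal T}_\alpha(Y)$ (equivalently, both $\beta X\backslash\tau_\alpha X$ and $\beta Y\backslash\tau_\alpha Y$) are non-empty.

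For (2)$\Rightarrow$(1): a homeomorphism $h:\beta X\backslash\tau_\alpha X\to\beta Y\backslash\tau_\alpha Y$ induces, via $C\mapsto h(C)$, an order-isomorphism of $({\mathcal C}(\beta X\backslash\tau_\alpha X)\backslash\{\emptyset\},\subseteq)$ onto $({\mathcal C}(\beta Y\backslash\tau_\alpha Y)\backslash\{\emptyset\},\subseteq)$, and then $\mu_Y^{-1}\circ\big(h(\cdot)\big)\circ\mu_X$ is a bijection ${\mathcal T}_\alpha(X)\to{\mathcal T}_\alpha(Y)$ which, being a composite of two order-anti-isomorphisms and one order-isomorphism, is an order-isomorphism. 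For (1)$\Rightarrow$(2): given an order-isomorphism $\phi:{\mathcal T}_\alpha(X)\to{\mathcal T}_\alpha(Y)$, the composite $g=\mu_Y\circ\phi\circ\mu_X^{-1}$ is, for the same reason, an order-isomorphism of ${\mathcal C}(\beta X\backslash\tau_\alpha X)\backslash\{\emptyset\}$ onto ${\mathcal C}(\beta Y\backslash\tau_\alpha Y)\backslash\{\emptyset\}$. Since $\emptyset$ is the least element of the lattice of closed subsets of any space, setting $g(\emptyset)=\emptyset$ extends $g$ to an order-isomorphism between the full lattices ${\mathcal C}(\beta X\backslash\tau_\alpha X)$ and ${\mathcal C}(\beta Y\backslash\tau_\alpha Y)$; as these two spaces are compact, the cited fact forces them to be homeomorphic.

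I do not expect a genuine obstacle here. The only points needing care are the bookkeeping of order-reversals when composing $\mu_X$, $\mu_Y$, $\phi$ and $h$ (two reversals cancel, so the composites come out order-preserving), the re-adjoining of the empty set to pass from the punctured poset to the full lattice of closed sets before invoking the compact-space duality, and dispatching the degenerate case in which the extension sets are empty.
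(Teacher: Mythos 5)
Your argument is correct and is exactly the route the paper intends: Theorem~\ref{PPOEJ} identifies $\mu({\mathcal T}_\alpha(X))$ with ${\mathcal C}(\beta X\backslash\tau_\alpha X)\backslash\{\emptyset\}$, and the conclusion then follows from the fact, already invoked before Theorem~\ref{TDJ}, that the order structure of the lattice of closed sets of a compact space determines its topology. The paper states Theorem~\ref{PPJ} without proof precisely because this transport-of-structure argument (including the compactness of $\beta X\backslash\tau_\alpha X$ coming from the openness of $\tau_\alpha X$) is considered immediate.
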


\begin{lemma}\label{EPPJ}
Let $X$ be  a locally compact
paracompact space. Then
\[ \mu\big({\mathcal T}_S(X)\big)=\big\{C\in {\mathcal C}(X^*):C\subseteq \sigma
X\big\}\backslash\{\emptyset\}.\]
\end{lemma}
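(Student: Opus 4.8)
\textbf{Proof proposal for Lemma \ref{EPPJ}.} The plan is to establish the two inclusions separately, using the quotient description of $\beta Y$ (namely $\beta Y$ is obtained from $\beta X$ by collapsing $C = \mu(Y)$ to the point $p$, with $q = F_Y$) together with the structural facts about $\sigma X$ recorded in Proposition \ref{RAWDJ} and the surrounding discussion.

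For the inclusion $\mu({\mathcal T}_S(X)) \subseteq \{C \in {\mathcal C}(X^*) : C \subseteq \sigma X\}$, I would start with $Y = X \cup \{p\} \in {\mathcal T}_S(X)$ and pick a neighborhood $U$ of $p$ in $Y$ such that $U \backslash \{p\}$ is $\sigma$-compact; by shrinking I may assume $U = W \cap Y$ for some open $W$ in $\beta Y$ with $p \in W$. The key observation is that $\mu(Y) = F_Y^{-1}(p) \subseteq \mathrm{cl}_{\beta X}(U \backslash \{p\})$: if some $x \in F_Y^{-1}(p)$ failed to lie in $\mathrm{cl}_{\beta X}(U\backslash\{p\})$, choose an open neighborhood $G$ of $x$ in $\beta X$ missing $U \backslash \{p\}$ and an open neighborhood $G'$ of $x$ with $F_Y(G') \subseteq W$; then any point of $G \cap G' \cap X$ lies in both $U \backslash \{p\}$ and its complement, a contradiction. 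Since $U \backslash \{p\}$ is a $\sigma$-compact subset of $X$, its closure in $\beta X$ is contained in $\sigma X$ by definition of $\sigma X$, so $\mu(Y) \subseteq \sigma X$. (This mirrors the argument of Lemma \ref{RWRPJ}, only without invoking compactness of the neighborhood.)

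For the reverse inclusion, suppose $C \in {\mathcal C}(X^*)$ is non-empty with $C \subseteq \sigma X$. Since $\sigma X$ is open in $\beta X$ and $C$ is compact, by the description of $\sigma X$ in the paragraph after Proposition \ref{RAWDJ} there is a countable $J \subseteq I$ with $C \subseteq \mathrm{cl}_{\beta X}(\bigcup_{i \in J} X_i) =: \mathrm{cl}_{\beta X} M$, where $M = \bigcup_{i\in J} X_i$ is $\sigma$-compact and clopen in $X$. Let $Y = X \cup \{p\} \in {\mathcal T}(X)$ with $\mu(Y) = C$, realized as the quotient of $\beta X$ collapsing $C$ to $p$, with quotient map $q$. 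I claim $W = q(\mathrm{cl}_{\beta X} M)$ meets $Y$ in a neighborhood of $p$ whose punctured version is $\sigma$-compact: indeed $q^{-1}(W \cap Y) \supseteq \mathrm{cl}_{\beta X} M$ contains the open set $\mathrm{int}_{\beta X}(\mathrm{cl}_{\beta X}M) \supseteq M$ which, together with the fact that $M$ is clopen in $X$ hence $\mathrm{cl}_{\beta X}M$ is clopen in $\beta X$, shows $W \cap Y = M \cup \{p\}$ is open in $Y$; and $(W \cap Y) \backslash \{p\} = M$ is $\sigma$-compact. Hence $Y \in {\mathcal T}_S(X)$ and $C \in \mu({\mathcal T}_S(X))$.

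The main obstacle I anticipate is the bookkeeping in the second inclusion: one must check carefully that $\mathrm{cl}_{\beta X}M$ is genuinely clopen in $\beta X$ (this uses that $M$ is clopen in $X$, so its characteristic function is in $C^*(X)$ and extends to a $\{0,1\}$-valued function on $\beta X$), and consequently that its image under the quotient map $q$ is an open set containing $p$, so that intersecting with $Y$ yields an honest open neighborhood of $p$ rather than merely a neighborhood. Everything else is a routine translation between $\beta Y$ and the quotient of $\beta X$, exactly in the style of the proofs of Theorems \ref{TODJ} and \ref{EPSDJ} and Lemma \ref{RWRPJ}.
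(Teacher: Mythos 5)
Your proof is correct and follows essentially the same route as the paper: the forward inclusion is the same density-plus-quotient-map contradiction (the paper phrases it as $C\subseteq G^*$ for a clopen $\sigma$-compact $G=\bigcup_{i\in J}X_i$ containing $U\backslash\{p\}$ rather than as $C\subseteq \mbox{cl}_{\beta X}(U\backslash\{p\})$, but the argument is identical), and the reverse inclusion constructs the very same neighborhood $M\cup\{p\}$ from the clopen set $\mbox{cl}_{\beta X}M$. One small wording issue: after ``shrinking'' $U$ to an open set $W\cap Y$ the punctured neighborhood need no longer be $\sigma$-compact, so you should either keep the original $U$ and merely choose $W$ with $p\in W\cap Y\subseteq U$, or observe that $\mbox{cl}_{\beta X}\big((W\cap Y)\backslash\{p\}\big)\subseteq \mbox{cl}_{\beta X}(U\backslash\{p\})\subseteq\sigma X$ by monotonicity of closure --- either way the conclusion is unaffected.
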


\begin{proof}
Let $Y=X\cup\{p\}\in{\mathcal T}(X)$ and let
$C=\mu(Y)$. Let $Z$ be the space obtained from $\beta X$ by
contracting $C$ to the point  $p$, and let $q:\beta X\rightarrow
Z=\beta Y$ be its natural quotient mapping.  Suppose that
$Y=X\cup\{p\}\in{\mathcal T}_S(X)$. Let $U$ be a neighborhood of $p$
in $Y$ with
$U\backslash\{p\} $ being $\sigma$-compact. Then using the notations
of Proposition \ref{RAWDJ}, we have
$U\backslash\{p\}\subseteq G$, where $G=\bigcup_{i\in J} X_i$ and
$J\subseteq I$ is
countable. We verify that $C\subseteq G^*$. So suppose to the contrary
that there exists
an $x\in C\backslash G^*$. Let $V$ be an open neighborhood of $p$ in
$\beta Y$ with $ V\cap Y\subseteq
U$. Then since $p\in V$, we have $C\subseteq q^{-1}(V)$. Now since
$x\in H=q^{-1}(V)\backslash
\mbox {cl}_{\beta X}G$, we have $H\cap X\neq\emptyset$. If $t\in H\cap
X$, then
$t=q(t)\in V\cap X\subseteq U\backslash\{p\}\subseteq G$, which is a
contradiction, as $G\cap H=\emptyset$.
This shows that $C\subseteq G^*\subseteq \sigma X$.

Conversely,
suppose that $C\in {\mathcal C}(X^*)\backslash\{\emptyset\}$ is such
that $C\subseteq \sigma
X$, and let $\mu(Y)=C$, for some $Y=X\cup\{p\}\in{\mathcal T}(X)$.
Let $G=\bigcup_{i\in J} X_i$ be such that $C\subseteq \mbox
{cl}_{\beta X} G$, where $J\subseteq I$ is
countable. Let $U=((\mbox {cl}_{\beta X} G\backslash C)\cup\{p\})\cap
Y$.
Then $U$ is an open neighborhood of $p$ in $Y$ such that
$U\backslash\{p\}=G$ is $\sigma$-compact. Therefore $Y\in{\mathcal
T}_S(X)$.
\end{proof}

\begin{lemma}\label{UPPJ}
Let $X$ be a locally compact
paracompact  space.  Then
\[{\mathcal T}^*_S(X)=\big\{Y=X\cup\{p\}\in {\mathcal T}^*(X):p \mbox { has
a $\sigma$-compact neighborhood in } Y\big\}\]
and
\[\mu\big({\mathcal T}^*_S(X)\big)=\big\{C\in{\mathcal Z}(\beta X):
C\subseteq\sigma X\backslash X \big\}\backslash\{\emptyset\}.\]
\end{lemma}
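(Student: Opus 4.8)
The plan is to prove the two equalities separately, using the results already established. For the first equality, I would begin with the inclusion $\subseteq$. Suppose $Y=X\cup\{p\}\in{\mathcal T}^*_S(X)={\mathcal T}^*(X)\cap{\mathcal T}_S(X)$. Since $Y\in{\mathcal T}^*(X)$, by Theorem \ref{AADJ} the set $\mu(Y)$ is a zero-set of $\beta X$ disjoint from $X$. Since $Y\in{\mathcal T}_S(X)$, there is a neighborhood $U$ of $p$ in $Y$ with $U\backslash\{p\}$ being $\sigma$-compact; in particular $p$ has a $\sigma$-compact neighborhood in $Y$ (namely $U$ itself, or its interior together with $\{p\}$, which is $\sigma$-compact since adding one point to a $\sigma$-compact space keeps it $\sigma$-compact). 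Thus $Y$ lies in the right-hand set of the first displayed formula. Conversely, if $Y=X\cup\{p\}\in{\mathcal T}^*(X)$ and $p$ has a $\sigma$-compact neighborhood $U$ in $Y$, then since $U\backslash\{p\}$ is a closed subset (in $U\backslash\{p\}$, or we intersect with $X$) of the $\sigma$-compact set $U$, it too is $\sigma$-compact, so $Y\in{\mathcal T}_S(X)$ and hence $Y\in{\mathcal T}^*_S(X)$. This establishes the first equality.

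For the second equality, I would apply $\mu$ and combine Theorem \ref{AADJ} with Lemma \ref{EPPJ}. Indeed, ${\mathcal T}^*_S(X)={\mathcal T}^*(X)\cap{\mathcal T}_S(X)$, and since $\mu$ is a bijection (Theorem \ref{YUS}), we have
\[\mu\big({\mathcal T}^*_S(X)\big)=\mu\big({\mathcal T}^*(X)\big)\cap\mu\big({\mathcal T}_S(X)\big).\]
By Theorem \ref{AADJ}, $\mu({\mathcal T}^*(X))=\{C\in{\mathcal Z}(\beta X):C\cap X=\emptyset\}\backslash\{\emptyset\}$, and by Lemma \ref{EPPJ}, $\mu({\mathcal T}_S(X))=\{C\in{\mathcal C}(X^*):C\subseteq\sigma X\}\backslash\{\emptyset\}$. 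A set $C$ lies in the intersection precisely when $C$ is a non-empty zero-set of $\beta X$ with $C\cap X=\emptyset$ and $C\subseteq\sigma X$; the condition $C\cap X=\emptyset$ together with $C\subseteq\sigma X$ is equivalent to $C\subseteq\sigma X\backslash X$, and any zero-set of $\beta X$ disjoint from $X$ is automatically a closed subset of $X^*$. Hence the intersection is exactly $\{C\in{\mathcal Z}(\beta X):C\subseteq\sigma X\backslash X\}\backslash\{\emptyset\}$, as claimed.

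The only point requiring a little care — and the main (minor) obstacle — is the equivalence, in the first equality, between "$U\backslash\{p\}$ is $\sigma$-compact for some neighborhood $U$ of $p$" and "$p$ has a $\sigma$-compact neighborhood in $Y$." The forward direction is immediate; for the reverse one should observe that a $\sigma$-compact neighborhood $U$ of $p$ has $U\backslash\{p\}$ equal to $U\cap X$, which is a cozero-subset (open) of $U$, but more simply: $U\backslash\{p\}$ is the union of the countably many sets $K_n\backslash\{p\}$ where $U=\bigcup_{n<\omega}K_n$ with each $K_n$ compact, and each $K_n\backslash\{p\}$ is locally compact $\sigma$-compact (being a cozero subset of the compact set $K_n$, hence $\sigma$-compact). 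This confirms $Y\in{\mathcal T}_S(X)$. With that remark the argument is complete, since everything else is a direct bookkeeping of the already-proven descriptions of $\mu({\mathcal T}^*(X))$ and $\mu({\mathcal T}_S(X))$.
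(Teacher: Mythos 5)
Your treatment of the second displayed equality is correct and is essentially the paper's own argument: since $\mu$ is a bijection, $\mu({\mathcal T}^*_S(X))=\mu({\mathcal T}^*(X))\cap\mu({\mathcal T}_S(X))$, and Theorem \ref{AADJ} together with Lemma \ref{EPPJ} identifies this intersection with $\{C\in{\mathcal Z}(\beta X):C\subseteq\sigma X\backslash X\}\backslash\{\emptyset\}$. Your forward inclusion in the first equality is also fine (adjoining the single point $p$ to the $\sigma$-compact set $U\backslash\{p\}$ keeps it $\sigma$-compact).

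The gap is in the reverse inclusion of the first equality, at the assertion that $K_n\backslash\{p\}$ is a cozero subset of the compact set $K_n$. This requires $\{p\}$ to be a zero-set of $K_n$, equivalently a closed $G_\delta$ of $K_n$, and that is not automatic for a point of a compact Hausdorff space. It is exactly here that the hypothesis $Y\in{\mathcal T}^*(X)$ must be used: first countability of $Y$ at $p$ gives $\{p\}=\bigcap_{m<\omega}U_m$ for a countable base $\{U_m\}$ at $p$, so $\{p\}$ is a $G_\delta$ in $Y$ and hence a zero-set of the compact (normal) space $K_n$; only then is $K_n\backslash\{p\}$ a cozero set of $K_n$, hence an $F_\sigma$ in $K_n$ and $\sigma$-compact. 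You never invoke first countability in this step, and without it the implication you are proving is false: for $X$ an uncountable discrete space, the point $\Omega$ of the one-point compactification $\omega X$ has a compact (hence $\sigma$-compact) neighborhood, namely $\omega X$ itself, yet no neighborhood $U$ of $\Omega$ has $U\backslash\{\Omega\}$ $\sigma$-compact. (Your first attempted justification, that $U\backslash\{p\}$ is a \emph{closed} subset of $U$, is also wrong as written: it is open in $U$, since $\{p\}$ is closed.) Once the $G_\delta$ point is supplied, your argument closes. For comparison, the paper never proves this pointwise equivalence directly: it computes $\mu$ of the right-hand set, using a decreasing countable base $\{U_n\}$ at $p$ and the decomposition $\mbox{cl}_YU_k\backslash\{p\}=\bigcup_{n\geq k}(\mbox{cl}_YU_n\backslash U_{n+1})$ to exhibit a neighborhood with $\sigma$-compact trace, and then deduces both equalities from the same intersection formula you use; first countability enters there through that decomposition rather than through a $G_\delta$ argument.
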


\begin{proof}
Let ${\mathcal T}$ denote the set of all
$Y=X\cup\{p\}\in {\mathcal T}^*(X) $ such that $p$ has a
$\sigma$-compact neighborhood in  $Y$. First we find $\mu({\mathcal
T})$.  Suppose that  $Y\in{\mathcal T}$ and let $C=\mu(Y)$.
By Theorem \ref{AADJ}, we have  $C\in {\mathcal Z}(\beta X)$. Suppose that
$\{U_n\}_{n<\omega}$ is a base at $p$ in $Y$. We may assume that
$U_1\supseteq U_2\supseteq\cdots$. For each $n<\omega$, let
$U_n=V_n\cup\{p\}$. Then since $p$ has a $\sigma$-compact
neighborhood in $Y$,  there exists a $k<\omega$ such that $\mbox
{cl}_YU_k $ is
$\sigma$-compact. We have
\[\mbox {cl}_XV_k=\mbox {cl}_YU_k\backslash\{p\}=\bigcup_{n\geq
k}\big((\mbox {cl}_YU_n)\backslash
U_{n+1}\big)\]
where for each $n\geq k$, the set  $\mbox
{cl}_YU_n\backslash
U_{n+1}$, being a closed subset of $\mbox {cl}_YU_k$, is
$\sigma$-compact.
Clearly  $\mbox {cl}_{\beta X}(\mbox {cl}_XV_k)\subseteq\sigma X$. We
verify that
$C\subseteq \mbox {cl}_{\beta X}(\mbox {cl}_XV_k)$. To this end, let
$Z=\beta Y$ be the space obtained from $\beta X$ by
contracting $C$ to the point  $p$, and let $q:\beta X\rightarrow \beta
Y$ be
its  natural quotient mapping. Suppose that  $x\in C$ is such that
$x\notin \mbox {cl}_{\beta X}(\mbox {cl}_XV_k)$, and let $U$ be an
open neighborhood of
$x$ in $\beta X$ such that $U\cap \mbox {cl}_XV_k=\emptyset$.  Let $V$
be
an open set in $\beta Y$ such that $U_k=V\cap Y$, and let $W$ be an
open neighborhood of $x$ in $\beta X$
  with $q(W)\subseteq V$. Let $y\in U\cap W\cap X$.  Then $y=q(y)\in
V$, and thus $y\in U_k\cap X=V_k$. But this is a contradiction as
  $y\in U$. This shows that $C\subseteq \sigma X$. Thus $\mu({\mathcal
T})\subseteq\{C\in  {\mathcal Z}(\beta X): C\subseteq\sigma
X\backslash X \}\backslash\{\emptyset\}$.

Next suppose that $C\in{\mathcal Z}(\beta X)\backslash\{\emptyset\}$
is such that  $C\subseteq \sigma X\backslash X$. Let $C=\mu(Y)$, for
some $Y\in{\mathcal T}^*(X)$. Since $C\subseteq \sigma X$, using the
notations of Proposition \ref{RAWDJ}, there exists a countable $J\subseteq
I$ such that $C\subseteq \mbox {cl}_{\beta X}M$, where
$M=\bigcup_{i\in J} X_i$. Since $A=(\mbox {cl}_{\beta X}M\backslash
C)\cup\{p\}$ is open in $\beta Y$, as $\beta Y$ is the quotient
space of $\beta X$ obtained by contracting $C$ to the point $p$, the
set $M\cup\{p\}=A\cap Y$ is a $\sigma$-compact open neighborhood of
$p$ in $Y$. Therefore $ Y\in{\mathcal T}$, and thus $C\in \mu(
{\mathcal T})$.

To complete the proof we note that combining Theorem \ref{AADJ} and Lemma
\ref{EPPJ} we have $\mu({\mathcal T}^*_S(X))=\mu({\mathcal
T}^*(X))\cap\mu({\mathcal T}_S(X))=\mu({\mathcal T})$, from  which
it follows that ${\mathcal T}^*_S(X) ={\mathcal
T}$.
\end{proof}

In the following we show that the order structure of ${\mathcal
T}^*_S(X)$ can determine the topology of the set $\sigma X\backslash
X$. The proof is a slight modification of the  metric case  we gave
in Theorem \ref{HJL} of [8].

\begin{theorem}\label{ROEJ}
For locally compact paracompact
spaces $X$ and $Y$ the following conditions  are equivalent.
\begin{itemize}
\item[\rm(1)] ${\mathcal T}^*_S(X)$ and ${\mathcal T}^*_S(Y)$ are order-isomorphic;
\item[\rm(2)] $\sigma X\backslash X$ and $\sigma Y\backslash Y$ are homeomorphic.
\end{itemize}
\end{theorem}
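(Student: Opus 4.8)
The plan is to use Lemma~\ref{UPPJ} to transport the order-iso\-morph\-ism question to the combinatorics of zero-sets of $\beta X$ (resp.\ $\beta Y$) that sit inside $\sigma X\backslash X$ (resp.\ $\sigma Y\backslash Y$), and then to realize $\sigma X\backslash X$ as a space whose topology is recoverable from this family of zero-sets viewed as a partially ordered set. More precisely, by Lemma~\ref{UPPJ} the map $\mu_X$ restricts to an order-anti-isomorphism of ${\mathcal T}^*_S(X)$ onto
\[
{\mathcal A}_X=\big\{C\in{\mathcal Z}(\beta X):C\subseteq\sigma X\backslash X\big\}\backslash\{\emptyset\},
\]
partially ordered by inclusion, and similarly for $Y$. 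So condition~(1) is equivalent to ${\mathcal A}_X$ and ${\mathcal A}_Y$ being order-isomorphic, and it suffices to show that the poset ${\mathcal A}_X$ determines, and is determined by, the topology of $\sigma X\backslash X$.

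For the direction $(2)\Rightarrow(1)$: a homeomorphism $f:\sigma X\backslash X\to\sigma Y\backslash Y$ need not extend to $\beta X$, but one can check that $f$ carries ${\mathcal A}_X$ onto ${\mathcal A}_Y$ preserving inclusion. The key point here is that for a locally compact paracompact $X$, a non-empty $Z\in{\mathcal Z}(\beta X)$ with $Z\subseteq\sigma X\backslash X$ is, by Lemma~\ref{WRRJ} and the structure of $\sigma X$ from Proposition~\ref{RAWDJ}, contained in $\mathrm{cl}_{\beta X}M$ for some countable subsum $M=\bigcup_{i\in J}X_i$; since $\mathrm{cl}_{\beta X}M=\beta M$ is the Stone--\v{C}ech compactification of a $\sigma$-compact locally compact space, $M^*$ is itself a zero-set of $\beta X$ and is $\sigma$-compact-complemented, so $Z$ is precisely a zero-set of the (compact) space $M^*\subseteq\sigma X\backslash X$. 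Hence membership in ${\mathcal A}_X$ is detectable entirely inside $\sigma X\backslash X$: $C\in{\mathcal A}_X$ iff $C$ is a non-empty closed subset of $\sigma X\backslash X$ lying in some clopen $\sigma$-compact-generated piece $M^*$ and is a zero-set of that compact piece. A homeomorphism $f$ respects all of this, so $F(C)=f(C)$ gives the required order-isomorphism, with inverse induced by $f^{-1}$.

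For $(1)\Rightarrow(2)$: here is where the main work lies, and I expect it to mirror the argument of Theorem~\ref{PSDJ}. Reduce first to the case where $X$ and $Y$ are both non-$\sigma$-compact (if one is $\sigma$-compact, then ${\mathcal A}_X$ has $X^*$ as its maximum, forcing $Y^*\in{\mathcal Z}(\beta Y)$, hence $Y$ $\sigma$-compact, and then one finishes directly). Given an order-isomorphism of the posets, one recovers the clopen algebra of $\sigma X\backslash X$: the clopen subsets of $\sigma X\backslash X$ that are $\sigma$-compact-generated are exactly the sets of the form $M^*$, and these are detected order-theoretically as the elements $C\in{\mathcal A}_X$ that are \emph{complemented} within the ideal of ${\mathcal A}_X$ below some suitable element. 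One then builds, as in Theorem~\ref{PSDJ}, an order-isomorphism between the Boolean algebras of clopen sets of the one-point compactifications $\omega\sigma X\backslash X$ and $\omega\sigma Y\backslash Y$ (using zero-dimensionality, which holds because $\sigma X$ is strongly zero-dimensional when $X$ is locally compact paracompact --- here one may need the extra hypothesis, present in the metric prototype, that $X$ is zero-dimensional, or else work with the closed-set lattice rather than the clopen algebra), invoke Stone duality to get a homeomorphism $\omega\sigma X\backslash X\to\omega\sigma Y\backslash Y$, and check it sends $\Omega$ to $\Omega'$ (because $\Omega\notin M^*$ for every $\sigma$-compact-generated clopen piece), thereby restricting to the desired homeomorphism $\sigma X\backslash X\to\sigma Y\backslash Y$.

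The hard part is the $(1)\Rightarrow(2)$ direction, and within it the step of showing that the abstract poset structure of ${\mathcal A}_X$ is rich enough to single out the clopen $\sigma$-compact-generated pieces $M^*$ and to reconstruct their Boolean algebra; this is exactly the analogue of the reconstruction carried out in Theorem~\ref{PSDJ}, and I would import that machinery, substituting ``zero-set of a $\sigma$-compact-generated compact piece'' for ``clopen subset'' throughout and using Lemmas~\ref{WRRJ}, \ref{WRORJ}, \ref{WRPJ} to move between zero-sets of $\beta X$, zero-sets of $X^*$, and their traces on $\sigma X$.
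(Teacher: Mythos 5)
Your reduction via Lemma \ref{UPPJ} to the poset ${\mathcal A}_X$ of non-empty zero-sets of $\beta X$ contained in $\sigma X\backslash X$ is correct, and your treatment of $(2)\Rightarrow(1)$ and of the reduction to the non-$\sigma$-compact case agrees in substance with the paper. The gap is in the main construction for $(1)\Rightarrow(2)$. You propose to recover the Boolean algebra of clopen subsets of $\omega\sigma X\backslash X$ and invoke Stone duality, but the theorem carries no zero-dimensionality hypothesis, and $\sigma X\backslash X$ is in general nowhere near zero-dimensional: take $X=\bigoplus_{i\in I}[0,1)$ with $I$ uncountable, so that each $X_i^*$ is a continuum. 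In that generality the clopen algebra does not determine the topology and Stone duality yields nothing; the parenthetical fallback of ``working with the closed-set lattice'' does not rescue the argument either, because the given order-isomorphism $\phi$ is defined only on zero-sets of $\beta X$ lying in $\sigma X\backslash X$, and there is no evident way to extend it to arbitrary closed subsets of $\omega\sigma X\backslash X$. (Your auxiliary step of detecting the pieces $M^*$ order-theoretically as ``complemented'' elements is also asserted without proof, though it turns out not to be needed.)

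What the paper actually does at this point --- and what is missing from your proposal --- is to extend $\phi$ to an order-isomorphism $\psi$ of the lattices of \emph{zero-sets} ${\mathcal Z}(\omega\sigma X\backslash X)\rightarrow{\mathcal Z}(\omega\sigma Y\backslash Y)$. For $Z$ with $\Omega\notin Z$ one has $Z\in{\mathcal A}_X\cup\{\emptyset\}$ and sets $\psi(Z)=\phi(Z)$; for $Z$ with $\Omega\in Z$ one writes the cozero-set $(\omega\sigma X\backslash X)\backslash Z$ as $\bigcup_{n<\omega}Z_n$ with each $Z_n\in{\mathcal A}_X\cup\{\emptyset\}$ and sets $\psi(Z)=(\omega\sigma Y\backslash Y)\backslash\bigcup_{n<\omega}\phi(Z_n)$; one must then prove that this set is again a zero-set, that it is independent of the chosen decomposition, and that the resulting $\psi$ is an order-isomorphism (the key device being that every $\phi(Z_n)$ lies in some $\bigl(\bigcup_{i\in L}Y_i\bigr)^*$, which is itself in the image of $\phi$). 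Since the zero-set lattice of a compact space determines its topology, this produces a homeomorphism $\omega\sigma X\backslash X\rightarrow\omega\sigma Y\backslash Y$ carrying $\Omega$ to $\Omega'$, hence the desired homeomorphism of $\sigma X\backslash X$ onto $\sigma Y\backslash Y$. That well-definedness argument is the technical heart of the proof, and it is precisely the step your proposal leaves unaddressed.
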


\begin{proof}
{\em (1) implies  (2).} Suppose that  only one of $X$
and $Y$, say $X$, is $\sigma$-compact (non-compact). Then clearly
${\mathcal T}^*_S(X)={\mathcal T}^*(X)$. Since by 1B of [13] we have
$X^*\in {\mathcal Z}(\beta X)$, the set  ${\mathcal T}^*_S(X)$ has a
minimum, namely $\omega X$.  Now using the same line of reasoning as
in Theorem \ref{PSDJ} ((1) implies (2)) we get a contradiction, which shows
that $Y$  also is $\sigma$-compact. Therefore since ${\mathcal
T}_C(X)$ and ${\mathcal T}_C(Y)$ are order-isomorphic, by Theorem \ref{TDJ}, the spaces $\sigma X\backslash X=X^*$ and $\sigma Y\backslash
Y=Y^*$ are homeomorphic.

Next suppose that  $X$ and $Y$ are both non-$\sigma$-compact. Since
$\mu_X$ and $\mu_Y$ are both order-anti-isomorphism, by condition
(1), there exists an order-isomorphism $\phi: \mu_X ({\mathcal
T}^*_S(X))\rightarrow\mu _Y({\mathcal T}^*_S(Y))$. We extend $\phi$
by letting $\phi(\emptyset)=\emptyset$. Let $\omega\sigma X=\sigma
X\cup\{\Omega\}$ and $\omega\sigma Y=\sigma Y\cup\{\Omega'\}$ be
one-point compactifications. We define a function
\[ \psi :{\mathcal Z}(\omega\sigma X\backslash
X)\rightarrow{\mathcal Z}(\omega\sigma Y\backslash Y)\]
and verify that it is an order-isomorphism.

For a
$Z\in {\mathcal Z}(\omega\sigma X\backslash X)$, with $\Omega\notin
Z$, assuming the notations of Proposition \ref{RAWDJ}, since $Z\subseteq
\mbox {cl}_{\beta X} (\bigcup_{i\in K}X_i)$, for some countable
$K\subseteq I$, we have $Z\in {\mathcal Z}(\beta X)$, and therefore
$Z\in \mu_X ({\mathcal T}^*_S(X))\cup\{\emptyset\}$. In this case,
let  $\psi(Z)=\phi(Z)$.

Now suppose that $Z\in {\mathcal Z}(\omega\sigma X\backslash X)$ and
$\Omega\in Z$. Then $(\omega\sigma X\backslash X)\backslash Z$,
being a cozero-set in $\omega\sigma X\backslash X$, can be written
as
\[(\omega\sigma X\backslash X)\backslash Z=\bigcup_{n<\omega}Z_n\]
where for each $n<\omega$, we have  $ Z_n\in {\mathcal
Z}(\omega\sigma X\backslash X)$ and $\Omega\notin Z_n$, and thus
$Z_n\in\mu_X ({\mathcal T}^*_S(X))\cup\{\emptyset\}$. We claim that
$ \bigcup_{n<\omega}\phi (Z_n)$ is a cozero-set in $\omega\sigma
Y\backslash Y$.

To show this, let $Y=\bigoplus_{i\in J} Y_i$, with each $Y_i$ being
a  $\sigma$-compact non-compact subspace. Since for each $n<\omega$,
we have $\phi(Z_n)\subseteq\sigma Y\backslash Y$, there exists a
countable $L\subseteq J$ such that
\[\bigcup_{n<\omega}\phi (Z_n)\subseteq\Big(\bigcup_{i\in
L}Y_i\Big)^*=\phi(A)\]
for some $A\in \mu_X ({\mathcal T}^*_S(X))$. We show that
\[\phi(A\cap Z)=\phi(A)\backslash\bigcup_{n<\omega}\phi (Z_n).\]
Since for each $n<\omega$, we have $A\cap Z\cap
Z_n=\emptyset$, it follows that  $\phi(A\cap Z)\cap
\phi(Z_n)=\emptyset$, and therefore $\phi(A\cap
Z)\subseteq\phi(A)\backslash\bigcup_{n<\omega}\phi (Z_n)$.

To show
the converse, let $x\in\phi(A)\backslash\bigcup_{n<\omega}\phi
(Z_n)$. Since for each $n<\omega$, we have  $x\notin \phi (Z_n)$,
there exists a $B\in {\mathcal Z}(\omega\sigma Y\backslash Y)$ such
that $x\in B$, and for each $n<\omega$, we have $B\cap\phi
(Z_n)=\emptyset$. If $x\notin\phi(A\cap Z)$, then there exists a
$C\in {\mathcal Z}(\omega\sigma Y\backslash Y)$ such that $x\in C$
and $C\cap\phi(A\cap Z)=\emptyset$.  Consider $D=\phi(A)\cap B\cap
C\in\mu_Y ({\mathcal T}^*_S(Y))$, and let $E\in\mu_X ({\mathcal
T}^*_S(X))$ be such that $\phi (E)=D$. Then since $\phi(E)\cap
\phi(Z_n)=\emptyset$, for each $n<\omega$ we have $E\cap
Z_n=\emptyset$, and therefore $ E\subseteq Z$. On the other hand
since $\phi(E)\subseteq\phi(A)$, we have $E\subseteq A$ and thus
$E\subseteq A\cap Z$. Therefore $\phi (E)\subseteq \phi (A\cap Z)$,
which implies that $\phi(E)=\emptyset$, as $\phi(E)\subseteq C$.
This contradiction shows that $x\in\phi (A\cap Z)$, and therefore
$\phi(A\cap Z)=\phi(A)\backslash\bigcup_{n<\omega}\phi (Z_n)$.

Now
since $\phi(A)$ is  clopen in $\sigma Y\backslash Y$, by definition
of $A$,  we have
\begin{eqnarray*}
(\omega\sigma Y\backslash Y)\backslash\bigcup_{n<\omega}\phi (Z_n)&=&
\Big(\phi (A)\backslash\bigcup_{n<\omega}\phi (Z_n)\Big)\cup \big((\omega\sigma
Y\backslash Y)\backslash \phi (A)\big)\\&=& \phi (A\cap
Z)\cup\big((\omega\sigma Y\backslash Y)\backslash \phi(A)\big)\in{\mathcal
Z}(\omega\sigma Y\backslash Y)
\end{eqnarray*}
and our claim is verified. In this case we define
\[ \psi (Z)=(\omega\sigma Y\backslash
Y)\backslash\bigcup_{n<\omega}\phi (Z_n).\]

Next we show that $\psi$
is well defined. So assume  that
\[Z=(\omega\sigma X\backslash X)\backslash\bigcup_{n<\omega}S_n\]
with $S_n\in \mu_X ({\mathcal
T}^*_S(X))\cup\{\emptyset\}$ for all $n<\omega$, is another
representation of $Z$. Suppose that $\bigcup_{n<\omega}\phi
(Z_n)\neq\bigcup_{n<\omega}\phi (S_n)$. Without any loss of
generality we may assume that $\bigcup_{n<\omega}\phi
(Z_n)\backslash\bigcup_{n<\omega}\phi (S_n)\neq\emptyset$. Let $x\in
\bigcup_{n<\omega}\phi (Z_n)\backslash\bigcup_{n<\omega}\phi (S_n)$.
Let $ m<\omega$ be such that $x\in \phi(Z_m)$. Then since $x\notin
\bigcup_{n<\omega}\phi (S_n)$, there exists an $A\in {\mathcal
Z}(\omega\sigma Y\backslash Y)$ such that $x\in A$ and
$A\cap\bigcup_{n<\omega}\phi (S_n)=\emptyset$. Consider $A\cap\phi
(Z_m)\in \mu_Y ({\mathcal T}^*_S(Y))$. Let $B\in \mu_X ({\mathcal
T}^*_S(X))$ be such that $\phi(B)=A\cap\phi(Z_m)$.  Since $\phi
(B)\subseteq A$, we have $B\cap S_n=\emptyset$, for all $n<\omega$.
But  $B\subseteq Z_m\subseteq\bigcup_{n<\omega}
Z_n=\bigcup_{n<\omega} S_n$, which implies that $B=\emptyset$, which
is a contradiction. Therefore $\bigcup_{n<\omega}\phi
(Z_n)=\bigcup_{n<\omega}\phi (S_n)$, and $\psi$ is well defined.

To prove that $\psi$ is an order-isomorphism, let $S,Z\in{\mathcal
Z}(\omega\sigma X\backslash X)$ with $S\subseteq Z$. Assume that
$S\neq\emptyset$. We consider the following cases.

{\em Case 1)} Suppose that  $\Omega\notin Z$. Then
$\psi(S)=\phi(S)\subseteq\phi(Z)=\psi(Z)$.

{\em Case 2)} Suppose that $\Omega\notin S$ and $\Omega\in Z$.  Let
$Z=(\omega\sigma X\backslash X)\backslash\bigcup_{n<\omega}Z_n$,
with $Z_n\in \mu_X ({\mathcal T}^*_S(X))\cup\{\emptyset\}$, for all
$n<\omega$.  Then since $S\subseteq Z$, for each $n<\omega$, $S\cap
Z_n=\emptyset$, and therefore  $\phi (S)\cap \phi(Z_n)=\emptyset$.
We have
\[ \psi(S)=\phi(S)\subseteq(\omega\sigma Y\backslash
Y)\backslash\bigcup_{n<\omega}\phi (Z_n)=\psi(Z).\]

{\em Case 3)} Suppose that $\Omega\in S$. Let
\[ Z=(\omega\sigma X\backslash X)\backslash\bigcup_{n<\omega}Z_n
\mbox { and }S=(\omega\sigma X\backslash
X)\backslash\bigcup_{n<\omega}S_n\]
where for each $n<\omega$, the sets $S_n, Z_n\in \mu_X
({\mathcal T}^*_S(X))\cup\{\emptyset\}$. Since $S\subseteq Z$ we
have $\bigcup_{n<\omega}Z_n\subseteq\bigcup_{n<\omega}S_n$, and so
\[S=(\omega\sigma X\backslash
X)\backslash\bigcup_{n<\omega}(S_n\cup Z_n).\]
Therefore
\[\psi(S)=(\omega\sigma Y\backslash
Y)\backslash\bigcup_{n<\omega}\big(\phi (S_n)\cup\phi
(Z_n)\big)\subseteq(\omega\sigma Y\backslash
Y)\backslash\bigcup_{n<\omega}\phi (Z_n)=\psi(Z)\]
and thus $\psi$ is an order-homomorphism.

To show that $\psi$ is an order-isomorphism, we note that
$\phi^{-1}: \mu_Y ({\mathcal T}^*_S(Y))\rightarrow\mu_X({\mathcal
T}^*_S(X))$ is an order-isomorphism. Let
\[\gamma:{\mathcal Z}(\omega\sigma Y\backslash
Y)\rightarrow{\mathcal Z}(\omega\sigma X\backslash X)\]
be its induced order-homomorphism defined as above. Then
it is straightforward to see that $\gamma=\psi ^{-1}$, i.e., $\psi$
is an order-isomorphism, and thus ${\mathcal Z}(\omega\sigma
X\backslash X)$ and ${\mathcal Z}(\omega\sigma Y\backslash Y)$ are
order-isomorphic.  This implies that there exists a homeomorphism
$f: \omega\sigma X\backslash X\rightarrow\omega\sigma Y\backslash Y$
such that $f(Z)=\psi (Z)$, for every $Z\in{\mathcal Z}(\omega\sigma
X\backslash X)$. Therefore for any $M=\bigcup_{i\in L}X_i$ with
$L\subseteq I$ countable, since $M^*\in {\mathcal Z}(\omega\sigma
X\backslash X)$, we have $f(M^*)=\psi (M^*)=\phi(M^*)\subseteq\sigma
Y\backslash Y$. Therefore $f(\sigma X\backslash X)\subseteq\sigma
Y\backslash Y$ and thus $f(\Omega)=\Omega'$.  This shows that
$\sigma X\backslash X$ and $\sigma Y\backslash Y$ are homeomorphic.

{\em (2) implies (1).} If one of $X$ and $Y$,  say $X$, is
$\sigma$-compact, then since $\sigma Y\backslash Y$  is homeomorphic
to $X^*$, it is compact, and therefore as in Theorem \ref{PSDJ} ((2)
implies (1)) it follows that $Y$ also is $\sigma$-compact. Since by
1B of [13] $X^*\in{\mathcal Z}(\beta X)$, we have ${\mathcal
Z}(\beta X)\subseteq{\mathcal Z}(X^*)$, and thus from Theorem \ref{AADJ}
and Lemma \ref{UPPJ}, we have ${\mathcal T}^*_S(X)={\mathcal T}_C(X)$.
Similarly ${\mathcal T}^*_S(Y)={\mathcal T}_C(Y)$. Thus in this
case, the result follows from Theorem \ref{TDJ}.

The case when $X$ and $Y$ are both non-$\sigma$-compact, follows by
a slight modification of the proof we gave in Theorem \ref{PSDJ} ((2)
implies (1)).
\end{proof}

Our next result shows that the topology of $\sigma X\backslash X$
also  can be determined by the order structure of ${\mathcal
T}_{CL}(X)$. The following lemma follows from Theorem  2.3 and Lemma
\ref{ERWRPJ}.

\begin{lemma}\label{UKKPPJ}
For a  locally compact
paracompact
space $X$
\[\mu \big({\mathcal T}_{CL}(X)\big)=\big\{C\in {\mathcal Z}(X^*):C\supseteq \beta
X\backslash \sigma X\big\}\backslash\{\emptyset\}.\]
\end{lemma}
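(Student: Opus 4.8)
The plan is to read off the claimed equality by intersecting two descriptions that are already available. Since $\mu$ is a bijection by Theorem \ref{YUS}, for any families ${\mathcal A},{\mathcal B}\subseteq{\mathcal T}(X)$ one has $\mu({\mathcal A}\cap{\mathcal B})=\mu({\mathcal A})\cap\mu({\mathcal B})$; applying this with ${\mathcal A}={\mathcal T}_C(X)$ and ${\mathcal B}={\mathcal T}_L(X)$ gives $\mu({\mathcal T}_{CL}(X))=\mu({\mathcal T}_C(X))\cap\mu({\mathcal T}_L(X))$. So it suffices to insert the known values of the two factors.

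First I would invoke Theorem \ref{DDS}, which gives $\mu({\mathcal T}_C(X))={\mathcal Z}(X^*)\backslash\{\emptyset\}$. Next, Lemma \ref{ERWRPJ} states that for $Y\in{\mathcal T}(X)$ we have $Y\in{\mathcal T}_L(X)$ if and only if $\mu(Y)\supseteq\beta X\backslash\sigma X$; combining this with the fact (Theorem \ref{YUS}) that $\mu$ maps ${\mathcal T}(X)$ bijectively onto ${\mathcal C}(X^*)\backslash\{\emptyset\}$ yields $\mu({\mathcal T}_L(X))=\{C\in{\mathcal C}(X^*):C\supseteq\beta X\backslash\sigma X\}\backslash\{\emptyset\}$. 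Intersecting the two, a non-empty $C\in{\mathcal C}(X^*)$ lies in $\mu({\mathcal T}_{CL}(X))$ exactly when $C\in{\mathcal Z}(X^*)$ and $C\supseteq\beta X\backslash\sigma X$, which is precisely the asserted right-hand side.

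There is no real obstacle here; the proof is a one-line consequence of Theorem \ref{DDS} and Lemma \ref{ERWRPJ}. The only point worth recording is that $\beta X\backslash\sigma X$ is a closed subset of $X^*$ — it is disjoint from $X$ because $X\subseteq\sigma X$, and it is closed in $\beta X$ because $\sigma X$ is open in $\beta X$ — so the condition $C\supseteq\beta X\backslash\sigma X$ is compatible with $C\in{\mathcal Z}(X^*)$, and the displayed right-hand set is indeed a subset of ${\mathcal Z}(X^*)\backslash\{\emptyset\}$.
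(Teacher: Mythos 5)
Your proof is correct and follows exactly the route the paper itself indicates: the paper derives this lemma by combining Theorem \ref{DDS} (which gives $\mu({\mathcal T}_C(X))={\mathcal Z}(X^*)\backslash\{\emptyset\}$) with Lemma \ref{ERWRPJ} (which characterizes $\mu({\mathcal T}_L(X))$ as the non-empty closed sets containing $\beta X\backslash\sigma X$), using the injectivity of $\mu$ to intersect the two images. Nothing is missing.
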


\begin{theorem}\label{ROEJ}
For  locally compact
paracompact spaces $X$ and $Y$ the following conditions  are
equivalent.
\begin{itemize}
\item[\rm(1)] ${\mathcal T}_{CL}(X)$ and  ${\mathcal T}_{CL}(Y)$ are order-isomorphic;
\item[\rm(2)] $\sigma X\backslash X$ and $\sigma Y\backslash Y$ are homeomorphic.
\end{itemize}
\end{theorem}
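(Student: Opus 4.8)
Throughout we may assume that $X$ and $Y$ are non-compact, the statement being trivial otherwise. Put $K_X=\omega(\sigma X\setminus X)$, the one-point compactification of $\sigma X\setminus X$ with point at infinity $\Omega$; this is legitimate, since $\sigma X\setminus X$, being closed in the locally compact space $\sigma X$, is locally compact, and (as one checks from Proposition \ref{RAWDJ}) $X$ is $\sigma$-compact if and only if $\beta X\setminus\sigma X=\emptyset$, if and only if $\sigma X\setminus X=X^*$, if and only if $\sigma X\setminus X$ is compact; in particular $K_X=\omega\sigma X\setminus X$ when $X$ is not $\sigma$-compact. By Lemma \ref{UKKPPJ}, $\mu_X$ is an order-anti-isomorphism of ${\mathcal T}_{CL}(X)$ onto
\[\mathcal A_X:=\big\{C\in{\mathcal Z}(X^*):C\supseteq\beta X\setminus\sigma X\big\}\setminus\{\emptyset\}.\]
I would first settle the mixed case. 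If $X$ is $\sigma$-compact, then $\mathcal A_X={\mathcal Z}(X^*)\setminus\{\emptyset\}$, so ${\mathcal T}_{CL}(X)={\mathcal T}_C(X)$ and $\sigma X\setminus X=X^*$; moreover this poset is not a meet-semilattice, because the infinite compact space $X^*$ carries two disjoint non-empty zero-sets, which have no common lower bound in $\mathcal A_X$. If $X$ is not $\sigma$-compact, then $\mathcal A_X$ is a meet-semilattice, since the intersection of two of its members is a zero-set containing the non-empty set $\beta X\setminus\sigma X$. As an order-isomorphism between ${\mathcal T}_{CL}(X)$ and ${\mathcal T}_{CL}(Y)$ makes $\mathcal A_X$ and $\mathcal A_Y$ order-isomorphic, in proving that (1) implies (2) we may assume that $X$ and $Y$ are simultaneously $\sigma$-compact --- in which case $X^*$ and $Y^*$ are homeomorphic by Theorem \ref{TDJ} and we are done --- or simultaneously non-$\sigma$-compact; and when (2) holds with $X$ $\sigma$-compact, $\sigma Y\setminus Y$ is homeomorphic to the compact space $X^*$, forcing $Y$ to be $\sigma$-compact, and Theorem \ref{TDJ} again gives (1).

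Assume now that $X$ and $Y$ are non-$\sigma$-compact. The plan is to recode everything in terms of zero-sets of $K_X$. Contracting $\beta X\setminus\sigma X$ to a point turns $X^*$ into $K_X$, and one checks that $C\mapsto\{\Omega\}\cup(C\cap(\sigma X\setminus X))$ is an order-isomorphism of $\mathcal A_X$ onto $\mathcal B_X:=\{E\in{\mathcal Z}(K_X):\Omega\in E\}$: injectivity is Lemma \ref{WRPJ}, while surjectivity follows because a zero-set of $K_X$ through $\Omega$ extends, by the value $0$ on $\beta X\setminus\sigma X$, to a zero-set of $X^*$ containing $\beta X\setminus\sigma X$. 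Thus ${\mathcal T}_{CL}(X)$ is order-anti-isomorphic to $\mathcal B_X$, and condition (1) is equivalent to $\mathcal B_X$ and $\mathcal B_Y$ being order-isomorphic. For the implication (2) implies (1) this already suffices: a homeomorphism from $\sigma X\setminus X$ onto $\sigma Y\setminus Y$ extends to a homeomorphism $K_X\to K_Y$ carrying $\Omega$ to $\Omega'$, which induces an order-isomorphism ${\mathcal Z}(K_X)\to{\mathcal Z}(K_Y)$ restricting to an order-isomorphism of $\mathcal B_X$ onto $\mathcal B_Y$, and therefore ${\mathcal T}_{CL}(X)$ and ${\mathcal T}_{CL}(Y)$ are order-isomorphic.

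It remains to prove that (1) implies (2) when $X$ and $Y$ are non-$\sigma$-compact, that is, that an order-isomorphism $\phi:\mathcal B_X\to\mathcal B_Y$ gives rise to a homeomorphism of $\sigma X\setminus X$ onto $\sigma Y\setminus Y$. Here I would reconstruct the whole lattice ${\mathcal Z}(K_X)$ from $\mathcal B_X$, dually to what was done in the theorem on ${\mathcal T}^*_S$: a zero-set $Z$ of $K_X$ with $\Omega\notin Z$ is compact and contained in $\sigma X\setminus X$, and $K_X\setminus Z$ is an open $F_\sigma$ neighbourhood of $\Omega$, so $K_X\setminus Z=\bigcup_{n<\omega}E_n$ for some increasing sequence $\{E_n\}_{n<\omega}$ in $\mathcal B_X$; one then defines $\psi(Z)=K_Y\setminus\bigcup_{n<\omega}\phi(E_n)$, and $\psi=\phi$ on the zero-sets of $K_X$ that contain $\Omega$. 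I expect the main obstacle --- exactly as in the proof of the corresponding result for ${\mathcal T}^*_S$ --- to be showing that $\bigcup_{n<\omega}\phi(E_n)$ is again open (equivalently, $K_Y$ being normal, a cozero-set), so that $\psi(Z)\in{\mathcal Z}(K_Y)$, and that this value does not depend on the chosen representation of $K_X\setminus Z$; this is where paracompactness enters, through the decomposition $X=\bigoplus_{i\in I}X_i$ of Proposition \ref{RAWDJ}, which localizes the compact set $Z$ inside a clopen piece $(\bigcup_{i\in L}X_i)^*$ with $L\subseteq I$ countable and allows one to run the order-dual of the ``$\phi(A\cap Z)=\phi(A)\setminus\bigcup_{n<\omega}\phi(E_n)$''-type argument used previously. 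Once $\psi$ is verified to be an order-isomorphism of ${\mathcal Z}(K_X)$ onto ${\mathcal Z}(K_Y)$, the fact that the order structure of the zero-set lattice of a compact space determines its topology produces a homeomorphism $h:K_X\to K_Y$; since $h$ maps each member of $\mathcal B_X$ onto a member of $\mathcal B_Y$, it maps $\bigcap\mathcal B_X=\{\Omega\}$ onto $\bigcap\mathcal B_Y=\{\Omega'\}$, and hence restricts to the required homeomorphism of $\sigma X\setminus X$ onto $\sigma Y\setminus Y$.
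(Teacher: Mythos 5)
Your overall strategy coincides with the paper's, repackaged: the mixed ($\sigma$-compact versus non-$\sigma$-compact) case is settled by the same invariant the paper uses (two elements with no common bound, phrased by you as failure of the meet-semilattice property of ${\mathcal A}_X$), and your reduction of ${\mathcal T}_{CL}(X)$ to the family ${\mathcal B}_X$ of zero-sets of $K_X=\omega(\sigma X\backslash X)$ through the point at infinity is a correct and rather clean reformulation of the paper's work with $\{C\in{\mathcal Z}(X^*):C\supseteq\beta X\backslash\sigma X\}$; your version of (2) $\Rightarrow$ (1), extending the homeomorphism to the one-point compactifications and pulling back zero-sets, is in fact tidier than the paper's construction. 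All of that part checks out.

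The gap is in (1) $\Rightarrow$ (2) for non-$\sigma$-compact $X$ and $Y$: after defining $\psi(Z)=K_Y\backslash\bigcup_{n<\omega}\phi(E_n)$ for $Z\in{\mathcal Z}(K_X)$ with $\Omega\notin Z$, you write that you \emph{expect} the main obstacle to be showing that $\bigcup_{n<\omega}\phi(E_n)$ is a cozero-set and that the definition is representation-independent, and you defer both to ``the order-dual of the argument used previously.'' That obstacle is the entire mathematical content of the hard direction, and it is not resolved. Concretely, you still must prove: (i) $\bigcup_{n<\omega}\phi(E_n)$ is open in $K_Y$, so that $\psi(Z)\in{\mathcal Z}(K_Y)$; (ii) $\psi(Z)$ does not depend on the chosen sequence $\{E_n\}_{n<\omega}$; and (iii) $\psi$ and its candidate inverse are order-homomorphisms. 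The paper devotes the bulk of its proof to exactly this: it encloses the cozero-sets $Y^*\backslash\phi(Z_n)$ in one clopen piece $(\bigcup_{i\in L}Y_i)^*$, puts $\phi(A)=Y^*\backslash(\bigcup_{i\in L}Y_i)^*$, and proves the identity $Y^*\backslash\bigcup_{n<\omega}\phi(Z_n)=\phi(A\cup Z)\backslash\phi(A)$ by a two-inclusion argument using $\phi$ and $\phi^{-1}$; this exhibits $\psi(Z)$ as the intersection of a zero-set with the complement of a clopen set, hence a zero-set, and also yields well-definedness. Note that this is not literally the order-dual of the ${\mathcal T}^*_S$ argument you cite: there the generating family consists of the compact zero-sets missing $\Omega$ and one reconstructs the zero-sets through $\Omega$, whereas here the roles are reversed, the key identity involves $\phi(A\cup Z)$ rather than $\phi(A\cap Z)$, and both inclusions have to be re-derived (the paper does so from scratch rather than by dualizing). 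Until (i)--(iii) are written out, your proof of (1) $\Rightarrow$ (2) is a correct plan rather than a complete argument.
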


\begin{proof}
{\em (1) implies (2).} First suppose that one of $X$ and
$Y$, say $X$,  is $\sigma$-compact. Then since ${\mathcal T}(X)
={\mathcal T}_{L}(X)$, we have  ${\mathcal T}_{CL}(X)={\mathcal
T}_{C}(X)$. Suppose that $Y$ is not $\sigma$-compact.  As $|X^*|>1$,
there exists two disjoint non-empty zero-sets of $X^*$, which by
Theorem \ref{DDS} correspond to two elements of ${\mathcal T}_{C}(X)$ with
no common upper bound in ${\mathcal T}_{C}(X)$. But this is not
true, as we are assuming that ${\mathcal T}_{C}(X)$ and ${\mathcal
T}_{CL}(Y)$ are order-isomorphic, and by Lemma \ref{RAJ}0, any two
elements of ${\mathcal T}_{CL}(Y)$ have a common upper bound in
${\mathcal T}_{CL}(Y)$. The case $|X^*|\leq 1$ is not possible, as
$X$ is not pseudocompact, as it is paracompact and non-compact (see
Theorem \ref{YUGG}.20 of [4]). Therefore $Y$ is also $\sigma$-compact  and
${\mathcal T}_{CL}(Y)={\mathcal T}_{C}(Y)$, and thus by Theorem \ref{TDJ},
the spaces $\sigma X\backslash X=X^*$ and $\sigma Y\backslash Y=Y^*$
are homeomorphic.

Next suppose that $X$ and $Y$ are both non-$\sigma$-compact. Then by
condition (1) and the fact that $\mu_X$ and $\mu_Y$ are
order-anti-isomorphism, there exists an  order-isomorphism
$\phi:\mu_X({\mathcal T}_{CL}(X))\rightarrow\mu_Y({\mathcal
T}_{CL}(Y))$. Let $\omega\sigma X=\sigma X\cup\{\Omega\}$ and
$\omega\sigma Y=\sigma Y\cup\{\Omega'\}$ be one-point
compactifications. We define a function
\[\psi:{\mathcal Z}(\omega\sigma X\backslash
X)\rightarrow{\mathcal Z}(\omega\sigma Y\backslash Y)\]
and verify that it is an order-isomorphism.

Let
$X=\bigoplus_{i\in I}X_i$ and $Y=\bigoplus_{i\in J}Y_i$, with each
$X_i$ and $Y_i$ being a $\sigma$-compact non-compact subspace. Let
$Z\in{\mathcal Z}(\omega\sigma X\backslash X)$. Suppose that
$\Omega\in Z$. Then since $P=(\omega\sigma X\backslash X)\backslash
Z$ is a cozero-set in $\omega\sigma X\backslash X$, it is
$\sigma$-compact, and thus since $P\subseteq\sigma X\backslash X$,
we have $P\subseteq (\bigcup_{i\in K}X_i)^*$, for some countable
$K\subseteq I$. Now since $(\bigcup_{i\in K}X_i)^*$ is clopen in
$X^*$, we have $Q=(Z\backslash\{\Omega\})\cup(\beta
X\backslash\sigma X)\in{\mathcal Z}(X^*)$, and thus by Lemma \ref{RAJ}0,
we have  $Q\in \mu_X({\mathcal T}_{CL}(X))$. In this case we let
\[\psi(Z)=\big(\phi\big(\big(Z\backslash\{\Omega\}\big)\cup(\beta X\backslash\sigma
X)\big)\backslash(\beta Y\backslash\sigma Y)\big)\cup\{\Omega'\}.\]

Now suppose that  $\Omega\notin Z$. Then $ Z\subseteq \sigma
X\backslash X$ and therefore  $Z\subseteq (\bigcup_{i\in L} X_i)^*$,
for some countable $L\subseteq I$. Thus we have $Z=X^*\backslash
\bigcup_{n<\omega}Z_n$, where each $Z_n\in {\mathcal Z}(X^*) $
contains $\beta X\backslash\sigma X$. In this case we let
\[\psi(Z)=Y^*\backslash \bigcup_{n<\omega}\phi (Z_n).\]

We check that $\psi$ is well-defined. So suppose that
$Z=X^*\backslash \bigcup_{n<\omega}Z_n$ is a representation for
$Z\in{\mathcal Z}(\omega\sigma X\backslash X)$ with $\Omega\notin
Z$, such that each $Z_n\in {\mathcal Z}(X^*) $ contains $\beta
X\backslash\sigma X$. Since for each $n<\omega$, we have
$Y^*\backslash \phi(Z_n)\subseteq \sigma Y$, there exists a
countable $L\subseteq J$ such that for each $n<\omega$, we have
$Y^*\backslash \phi(Z_n)\subseteq(\bigcup_{i\in L}Y_i)^*$. Let $A$
be such that $\phi (A)=Y^*\backslash(\bigcup_{i\in L}Y_i)^*$. We
claim that
\[Y^*\backslash\bigcup_{n<\omega}\phi (Z_n)=\phi (A\cup
Z)\backslash \phi (A).\]

So suppose that $x\in Y^*\backslash\bigcup_{n<\omega}\phi (Z_n)$. If
$x\notin  \phi (A\cup Z)\backslash \phi (A)$,  then since $x\notin
\phi(Z_1)\supseteq \phi(A)$,  we have $x\notin \phi (A\cup Z)$, and
therefore there exists a $B\in {\mathcal Z}(Y^*)$ containing $x$
such that $B\cap \phi (A\cup Z)=\emptyset$, and $B\cap \phi
(Z_n)=\emptyset$, for each $n<\omega$.  Let $C$ be such that
$\phi(C)=B\cup\phi (A\cup Z)$, and for each $n<\omega$, let  $S_n$
be such that $\phi(S_n)=\phi(C)\cap \phi(Z_n)=\phi (A\cup Z)\cap
\phi(Z_n)$.
Then since for each $n<\omega$, we have $\phi(A)\subseteq \phi(Z_n)$
and $Z\cap
Z_n=\emptyset$, we have $(A\cup Z)\cap Z_n=A$. Clearly, by the way we
defined $S_n$, we have $S_n\subseteq
(A\cup Z)\cap Z_n=A$, and therefore  $\phi(S_n)\subseteq\phi(A)$.
But since $\phi(A)\subseteq\phi(Z_n)$,  we have
$\phi(A)\subseteq\phi(S_n)$, and thus for each $n<\omega$ we have
$\phi(C\cap Z_n)\subseteq \phi (C)\cap\phi
(Z_n)=\phi(S_n)=\phi(A)$.  Therefore $C\cap Z_n\subseteq A$, and thus
$C\backslash Z=C\cap
(\bigcup_{n<\omega}Z_n)\subseteq A$. Therefore $C\subseteq A\cup Z$,
and
we have $B\subseteq\phi (C)\subseteq\phi(A\cup Z)$, which is a
contradiction as $B\cap\phi(A\cup
Z)=\emptyset$. This shows that $Y^*\backslash
\bigcup_{n<\omega}\phi(Z_n)\subseteq\phi (A\cup Z)\backslash \phi
(A)$.

Now suppose that $x\in\phi (A\cup Z)\backslash \phi (A)$. Suppose
that  for some $n<\omega$, we have $x\in \phi(Z_n)$. Then $x\in
\phi(Z_n)\cap \phi(A\cup Z)=\phi(D)$, for some $D$. Clearly
$D\subseteq Z_n\cap( A\cup Z)\subseteq A$, and thus $x\in \phi (A)$,
which is contradiction. This proves our claim that
$Y^*\backslash\bigcup_{n<\omega} \phi(Z_n)=\phi (A\cup Z)\backslash
\phi (A)$.

Now suppose that
\[Z=X^*\backslash \bigcup_{n<\omega}S_n=X^*\backslash
\bigcup_{n<\omega}Z_n\]
are representations for $Z\in{\mathcal Z}(\omega\sigma
X\backslash X)$, with  $\Omega\notin Z$, such that each $S_n, Z_n\in
{\mathcal Z}(X^*) $ contains $\beta X\backslash\sigma X$. Choose a
countable $L\subseteq J$ such that
\[Y^*\backslash\phi (S_n)\subseteq\Big(\bigcup_{i\in L}Y_i\Big)^*\mbox {
and }Y^*\backslash\phi (Z_n)\subseteq\Big(\bigcup_{i\in L}Y_i\Big)^*\]
for each $ n<\omega$. Then by above we have
\[Y^*\backslash\bigcup_{n<\omega} \phi(S_n)=\phi (A\cup
Z)\backslash \phi (A)=Y^*\backslash\bigcup_{n<\omega} \phi(Z_n)\]
where $A$ is such that
$\phi(A)=Y^*\backslash(\bigcup_{i\in L}Y_i)^*$.

Next we show that $\psi$, as defined, is an order-isomorphism. So
suppose that  $S,Z\in{\mathcal Z}(\omega\sigma X\backslash X)$  with
$S\subseteq Z$. We consider the following cases.

{\em Case 1)} Suppose that  $\Omega\in S$. Then $\Omega\in Z$, and
clearly by the way we defined $\psi$, we have $\psi(S)\subseteq\psi(
Z)$.

{\em Case 2)} Suppose that $\Omega\notin S$ but $\Omega\in Z$. Let
$E=\phi((Z\backslash\{\Omega\})\cup(\beta X\backslash \sigma X))$
and let $S=X^*\backslash\bigcup_{n<\omega}S_n$, where for each
$n<\omega$, $S_n\in {\mathcal Z}(X^*)$ contains $\beta
X\backslash\sigma X$. Clearly $Y^*\backslash E\subseteq \sigma Y$.
Let the countable $L\subseteq J$ be such that for each $n<\omega$,
\[Y^*\backslash\phi (S_n)\subseteq\Big(\bigcup_{i\in L}Y_i\Big)^* \mbox {  and
} Y^*\backslash E\subseteq\Big(\bigcup_{i\in L}Y_i\Big)^*.\]
Then by above $\psi(S)=\phi (A\cup S)\backslash \phi
(A)$, where $\phi (A)=Y^*\backslash(\bigcup_{i\in L}Y_i)^*$. Since
$Y^*\backslash(\bigcup_{i\in L}Y_i)^*\subseteq E$, we have
$\phi(A)\subseteq E$, and therefore
$A\subseteq(Z\backslash\{\Omega\})\cup(\beta X\backslash\sigma X)$.
Now we have
\[\psi(S)\subseteq\phi\big(A\cup
S)\subseteq\phi\big(\big(Z\backslash\{\Omega\}\big)\cup(\beta X\backslash\sigma
X)\big)\]
and thus $\psi(S)\subseteq\psi(Z)$.

{\em Case 3)} Suppose that $\Omega\notin Z$, and let
\[S=X^*\backslash\bigcup_{n<\omega}S_n \mbox{ and
}Z=X^*\backslash\bigcup_{n<\omega}Z_n\]
where for each $n<\omega$, each of $S_n, Z_n\in {\mathcal
Z}(X^*)$ contain $\beta X\backslash\sigma X$. Since now
\[S=X^*\backslash\bigcup_{n<\omega}(S_n\cup Z_n)\]
we have
\[\psi(S)=Y^*\backslash\bigcup_{n<\omega}\phi(S_n\cup
Z_n)\subseteq Y^*\backslash\bigcup_{n<\omega}\phi(Z_n)=\psi(Z).\]

This shows that $\psi$ is an order-homomorphism. We note that
since
\[\phi^{-1}:\mu_Y\big({\mathcal T}_{CL}(Y)\big)\rightarrow\mu_X\big({\mathcal T}_{CL}(X)\big)\]
also is  an
order-isomorphism, if we denote by $\gamma:{\mathcal Z}(\omega\sigma
Y\backslash Y)\rightarrow{\mathcal Z}(\omega\sigma X\backslash X)$
its induced order-homomorphism as defined above, then it is easy to
see that $\gamma=\psi^{-1}$ and thus $\psi$ is an order-isomorphism.
Let $f:\omega\sigma X\backslash X\rightarrow\omega\sigma Y\backslash
Y$ be a homeomorphism such that $f(Z)=\psi(Z)$, for any $Z\in
{\mathcal Z}(\omega\sigma X\backslash X)$. Then since for each
countable $L\subseteq J$, we have
\[f\Big(\Big(\bigcup_{i\in L}X_i\Big)^*\Big)=\psi\Big(\Big(\bigcup_{i\in
L}X_i\Big)^*\Big)\subseteq\sigma Y\backslash Y\]
it follows that  $f(\sigma X\backslash X)=\sigma
Y\backslash Y$, and therefore $\sigma X\backslash X$ and $\sigma
Y\backslash Y$ are homeomorphic.

{\em (2) implies (1).} If one of $X$ and $Y$,  say $X$, is
$\sigma$-compact, then $\sigma Y\backslash Y$, being homeomorphic to
$X^* =\sigma X\backslash X$, is compact, and thus $Y$ is also
$\sigma$-compact. Thus by Theorem \ref{TDJ}, condition (1) holds.

Now suppose that both $X$ and $Y$ are non-$\sigma$-compact, and let
$f:\sigma X\backslash X\rightarrow\sigma Y\backslash Y$ be a
homeomorphism. We define an order-isomorphism $\phi:\mu_X({\mathcal
T}_{CL}(X))\rightarrow\mu_Y({\mathcal T}_{CL}(Y))$. Let
$D\in\mu_X({\mathcal T}_{CL}(X))$. By Lemma \ref{RAJ}0 we have $D\in
{\mathcal Z}(X^*)$ and $D\supseteq\beta X\backslash\sigma X$. Then
since $X^*\backslash D\subseteq \sigma X$ is $\sigma$-compact, using
the notations of Proposition \ref{RAWDJ}, there exists a countable
$L\subseteq I$ such that $X^*\backslash D\subseteq(\bigcup_{i\in
L}X_i)^*=A$. Now since $D\cap A\in {\mathcal Z}(A)$, we have
$f(D\cap A)\in {\mathcal Z}(f(A))$. But $A$ is open in $\sigma
X\backslash X$, and therefore $f(A)$ is open in $\sigma Y\backslash
Y$, and thus in $Y^*$, i.e., $f(A)$ is clopen in $Y^*$. Therefore
$B=f(D\cap A)\cup(Y^*\backslash f(A))\in {\mathcal Z}(Y^*)$. Let
$\phi(D)=f(D\cap(\sigma X\backslash X))\cup(\beta Y\backslash Y)$.
It is straightforward to check that $\phi(D)=G$, and thus $\phi$ is
well-defined. The function $\phi$ is clearly an order-homomorphism.
If we let $\psi:\mu_Y({\mathcal
T}_{CL}(Y))\rightarrow\mu_X({\mathcal T}_{CL}(X))$ be defined by
$\psi(D)=f^{-1}(D\cap(\sigma Y\backslash Y))\cup(\beta X\backslash
X)$, then $\psi=\phi^{-1}$, and therefore $\phi$ is  an
order-isomorphism.
\end{proof}

\section{The relation between various subsets of one-point extensions
of a locally compact space}

The order-anti-isomorphism $\mu$ enables us to obtain interesting
relations between the order structure of various sets of Tychonoff
extensions.

The following is a corollary of Theorems 2.3 and 2.8.

\begin{theorem}\label{YROEJ}
For any  locally compact space
$X$ we have
\[ {\mathcal T}_C^*(X)={\mathcal T}^*(X).\]
\end{theorem}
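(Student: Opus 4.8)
The plan is to translate the claimed equality across the order-anti-isomorphism $\mu$ of Theorem \ref{YUS} and then simply read it off from the two descriptions already established. Since ${\mathcal T}_C^*(X)={\mathcal T}^*(X)\cap{\mathcal T}_C(X)$ by definition, the inclusion ${\mathcal T}_C^*(X)\subseteq{\mathcal T}^*(X)$ is automatic, and the statement is equivalent to ${\mathcal T}^*(X)\subseteq{\mathcal T}_C(X)$. Because $\mu$ is a bijection of ${\mathcal T}(X)$ onto ${\mathcal C}(X^*)\backslash\{\emptyset\}$, and for an injective map a set inclusion is equivalent to the inclusion of the images, this reduces to checking that $\mu({\mathcal T}^*(X))\subseteq\mu({\mathcal T}_C(X))$.

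Next I would substitute the known images. By Theorem \ref{AADJ} we have $\mu({\mathcal T}^*(X))=\{C\in{\mathcal Z}(\beta X):C\cap X=\emptyset\}\backslash\{\emptyset\}$, and by Theorem \ref{DDS} we have $\mu({\mathcal T}_C(X))={\mathcal Z}(X^*)\backslash\{\emptyset\}$. So the whole problem collapses to the set-theoretic assertion that every non-empty zero-set $C$ of $\beta X$ with $C\cap X=\emptyset$ is a non-empty zero-set of the subspace $X^*$.

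This reduction is the only step carrying any content, and even it is routine: writing $C=Z(f)$ for some $f\in C(\beta X,{\bf I})$, the hypothesis $C\cap X=\emptyset$ gives $C\subseteq X^*$, whence $C=Z(f)\cap X^*=Z(f|X^*)\in{\mathcal Z}(X^*)$, and $C\neq\emptyset$ by assumption. Feeding this back through the chain of equivalences yields $\mu({\mathcal T}^*(X))\subseteq\mu({\mathcal T}_C(X))$, hence ${\mathcal T}^*(X)\subseteq{\mathcal T}_C(X)$, and therefore ${\mathcal T}_C^*(X)={\mathcal T}^*(X)$. I do not anticipate a genuine obstacle here; the only point requiring care is the legitimate passage, via the bijectivity of $\mu$ (Theorem \ref{YUS}), from an inclusion of subfamilies of ${\mathcal T}(X)$ to the inclusion of their images inside ${\mathcal C}(X^*)$.
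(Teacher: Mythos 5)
Your proof is correct and follows exactly the route the paper intends: the paper states the result as an immediate corollary of Theorems \ref{DDS} and \ref{AADJ}, and your argument simply spells out the one substantive point, namely that a non-empty zero-set of $\beta X$ missing $X$ lies in $X^*$ and hence restricts to a non-empty zero-set of $X^*$, so that $\mu({\mathcal T}^*(X))\subseteq\mu({\mathcal T}_C(X))$ and injectivity of $\mu$ finishes the job.
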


\begin{theorem}\label{YPPJ}
For any locally compact
$\sigma$-compact space $X$ we have
\[{\mathcal T}^*(X)={\mathcal T}_C(X).\]
\end{theorem}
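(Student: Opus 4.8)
The plan is to combine Theorem \ref{YROEJ} with an argument that, for a locally compact $\sigma$-compact space $X$, every \v{C}ech-complete one-point extension is automatically first countable at its added point; together with the reverse inclusion ${\mathcal T}^*(X)={\mathcal T}_C^*(X)\subseteq{\mathcal T}_C(X)$ (which is immediate from Theorem \ref{YROEJ}), this yields ${\mathcal T}^*(X)={\mathcal T}_C(X)$.

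First I would translate everything through the order-anti-isomorphism $\mu$. By Theorem \ref{DDS}, $\mu\big({\mathcal T}_C(X)\big)={\mathcal Z}(X^*)\backslash\{\emptyset\}$, and by Theorem \ref{AADJ}, $\mu\big({\mathcal T}^*(X)\big)=\{C\in{\mathcal Z}(\beta X):C\cap X=\emptyset\}\backslash\{\emptyset\}$. So the statement is equivalent to the set equality
\[{\mathcal Z}(X^*)\backslash\{\emptyset\}=\big\{C\in{\mathcal Z}(\beta X):C\cap X=\emptyset\big\}\backslash\{\emptyset\}.\]
The inclusion $\supseteq$ is trivial, since any zero-set of $\beta X$ missing $X$ is in particular a closed (indeed zero-) subset of $X^*$. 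For $\subseteq$, the key point is that when $X$ is locally compact and $\sigma$-compact, $X^*$ is itself a zero-set of $\beta X$ (this is exactly the fact 1B of [13] quoted just before Theorem \ref{PSDJ}, namely $X^*\in{\mathcal Z}(\beta X)$). Given $Z\in{\mathcal Z}(X^*)$, write $Z=Z(g)$ for some $g\in C(X^*,{\bf I})$; extend $g$ to $G\in C(\beta X,{\bf I})$ (possible since $X^*$ is $C^*$-embedded, being closed in the normal space $\beta X$), and let $h\in C(\beta X,{\bf I})$ have zero-set exactly $X^*$. Then $G^2+h$ (or $\max\{|G|,h\}$) is a continuous function on $\beta X$ whose zero-set is precisely $Z(G)\cap X^*=Z$, and this zero-set misses $X$ since $h$ does not vanish on $X$. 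Hence $Z\in\{C\in{\mathcal Z}(\beta X):C\cap X=\emptyset\}$, giving the desired inclusion.

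Alternatively, and perhaps more in the spirit of the paper, I would argue directly on the extension side: given $Y=X\cup\{p\}\in{\mathcal T}_C(X)$, Theorem \ref{YROEJ} already gives ${\mathcal T}_C^*(X)={\mathcal T}^*(X)$, so it suffices to show $Y\in{\mathcal T}_C^*(X)$, i.e. $Y$ is first countable at $p$. Write $X=\bigcup_{n<\omega}C_n$ with each $C_n$ open, $\mathrm{cl}_X C_n$ compact, and $\mathrm{cl}_X C_n\subseteq C_{n+1}$. Since $Y$ is \v{C}ech-complete it is a $G_\delta$ in $\beta Y$, say $Y=\bigcap_{n<\omega}W_n$ with $W_n$ open in $\beta Y$ and (shrinking) $\mathrm{cl}_{\beta Y}W_{n+1}\subseteq W_n$. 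Then the sets $U_n=\big(W_n\backslash\mathrm{cl}_Y C_n\big)\cup\{p\}$ form a neighborhood base at $p$ in $Y$: each is open and contains $p$ (as $p\notin\mathrm{cl}_Y C_n$, the latter being compact hence closed in $Y$ and not containing $p$), and if $V$ is any open neighborhood of $p$ in $Y$ with $V=V'\cap Y$, $V'$ open in $\beta Y$, then $\mathrm{cl}_{\beta Y}\big(\bigcap_{n<\omega}(W_n\backslash V')\big)$ is a $G_\delta$ of $\beta Y$ disjoint from $Y$ hence empty, so by compactness $W_k\backslash V'\subseteq\bigcup_{i\le k}\mathrm{cl}_Y C_i$ for some $k$, which forces $U_m\subseteq V$ for large $m$. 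So $Y$ is first countable at $p$, whence ${\mathcal T}_C(X)\subseteq{\mathcal T}^*(X)$, and the reverse inclusion is Theorem \ref{YROEJ}.

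The main obstacle is getting the neighborhood-base verification right: one must be careful that the $\sigma$-compact "exhaustion" of $X$ genuinely lets a $G_\delta$-filtration of $Y$ in $\beta Y$ be refined to a \emph{countable} base at the single point $p$, using compactness of the $\mathrm{cl}_Y C_n$ to absorb the part of $Y\backslash V$ that lies inside $X$. The zero-set reformulation via $X^*\in{\mathcal Z}(\beta X)$ is the cleanest route and I would likely present that one, invoking Theorems \ref{AADJ} and \ref{DDS} together with the cited fact 1B of [13], and then note that ${\mathcal T}^*(X)={\mathcal T}_C(X)$ follows.
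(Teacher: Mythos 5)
Your first (zero-set) argument is exactly the paper's proof: the paper likewise observes that, since $X$ is locally compact and $\sigma$-compact, 1B of [13] gives ${\mathcal Z}(X^*)\subseteq{\mathcal Z}(\beta X)$, and then combines Theorems \ref{DDS} and \ref{AADJ} through the order-anti-isomorphism $\mu$, just as you do. (A caution about your alternative ``direct'' argument, which you do not rely on: it is broken as written, since each $W_n$ contains the dense subspace $Y$ of $\beta Y$, so one cannot arrange $\mathrm{cl}_{\beta Y}W_{n+1}\subseteq W_n$, and $\bigcap_{n<\omega}(W_n\backslash V')=Y\backslash V'$ is a subset of $Y$ rather than disjoint from it.)
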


\begin{proof}
Since $X$ is locally compact and $\sigma$-compact, by
1B of [13] we have  ${\mathcal Z}(X^*)\subseteq{\mathcal Z}(\beta
X)$. Now the result follows from Theorems 2.3 and
2.8.
\end{proof}

\begin{theorem}\label{YOPJ}
Let $X$ be a  locally compact
paracompact non-$\sigma$-compact space. Then
\[ {\mathcal T}_K(X)\cap {\mathcal T}_S(X)= {\mathcal T}^*_K(X).\]
\end{theorem}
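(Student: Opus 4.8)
The plan is to prove the equality $\mathcal{T}_K(X)\cap\mathcal{T}_S(X)=\mathcal{T}_K^*(X)$ by translating both sides through the order-anti-isomorphism $\mu$ and comparing the resulting families of closed subsets of $X^*$. Since $X$ is locally compact paracompact, we have at our disposal Lemma~\ref{RAJ} (which gives $\mu(\mathcal{T}_K^*(X))$ as the non-empty members of $\mathcal{Z}(\beta X)$ that are clopen in $X^*$), Theorem~\ref{TODJ} (which gives $\mu(\mathcal{T}_K(X))={\mathcal B}(X^*)\setminus\{\emptyset\}$), and Lemma~\ref{EPPJ} (which gives $\mu(\mathcal{T}_S(X))$ as the non-empty closed subsets of $X^*$ contained in $\sigma X$). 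Intersecting the last two descriptions, $\mu(\mathcal{T}_K(X)\cap\mathcal{T}_S(X))$ consists exactly of the non-empty sets $C$ that are clopen in $X^*$ and satisfy $C\subseteq\sigma X$. So the whole theorem reduces to the set-theoretic identity, for non-empty $C\in{\mathcal C}(X^*)$:
\[
C\in{\mathcal Z}(\beta X)\text{ and }C\text{ clopen in }X^*
\iff
C\text{ clopen in }X^*\text{ and }C\subseteq\sigma X.
\]

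First I would handle the forward inclusion $\mathcal{T}_K^*(X)\subseteq\mathcal{T}_K(X)\cap\mathcal{T}_S(X)$. Trivially $\mathcal{T}_K^*(X)\subseteq\mathcal{T}_K(X)$. For the containment in $\mathcal{T}_S(X)$, take $Y\in\mathcal{T}_K^*(X)$; by Lemma~\ref{RRAJ} the point $p$ has a compact neighborhood $U$ in $Y$ with $U\setminus\{p\}$ $\sigma$-compact, and in particular $U$ is a neighborhood of $p$ whose trace $U\setminus\{p\}$ is $\sigma$-compact, so $Y\in\mathcal{T}_S(X)$ directly from the definition of $\mathcal{T}_S(X)$. (Alternatively, on the $\mu$-side: $Y\in\mathcal{T}_K^*(X)$ gives $\mu(Y)\subseteq\sigma X$ by Lemma~\ref{RWRPJ}, which together with Lemma~\ref{EPPJ} places $Y$ in $\mathcal{T}_S(X)$.)

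The reverse inclusion is where the real content sits, and I expect it to be the main obstacle. Suppose $Y=X\cup\{p\}\in\mathcal{T}_K(X)\cap\mathcal{T}_S(X)$ and set $C=\mu(Y)$. From $Y\in\mathcal{T}_K(X)$, Theorem~\ref{TODJ} gives that $C$ is clopen in $X^*$; from $Y\in\mathcal{T}_S(X)$, Lemma~\ref{EPPJ} gives $C\subseteq\sigma X$, so $C\subseteq M^*$ for some $M=\bigcup_{i\in J}X_i$ with $J\subseteq I$ countable, where $X=\bigoplus_{i\in I}X_i$ as in Proposition~\ref{RAWDJ}. Now $M$ is $\sigma$-compact locally compact, so $M^*\in{\mathcal Z}(\beta X)$ by 1B of [13] (applied to $M$, whose Stone--\v{C}ech remainder is a zero-set of $\beta M=\mathrm{cl}_{\beta X}M$, which is clopen in $\beta X$); hence $M^*$ is a zero-set of $\beta X$. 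Since $C$ is clopen in $X^*$ it is also clopen in the zero-set $M^*$, hence a zero-set of $M^*$, hence a zero-set of $\beta X$ — here one uses that a clopen subset of a zero-set of $\beta X$ is again a zero-set of $\beta X$ (write $C$ as $Z(g)$ inside $M^*$ for suitable $g$, extend by $1$ off $\mathrm{cl}_{\beta X}M$, using that $\mathrm{cl}_{\beta X}M$ is clopen in $\beta X$). Thus $C\in{\mathcal Z}(\beta X)$ and $C$ is clopen in $X^*$, so by Lemma~\ref{RAJ} we get $C\in\mu(\mathcal{T}_K^*(X))$, i.e. $Y\in\mathcal{T}_K^*(X)$. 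Combining the two inclusions yields the asserted equality. The delicate point to get right is the passage from "clopen in $X^*$ and contained in $\sigma X$" to "zero-set of $\beta X$": it rests on the fact that countable unions of the $X_i$'s are clopen and $\sigma$-compact, so their closures in $\beta X$ are clopen zero-set-friendly pieces, and I would state this as a short intermediate claim before assembling the proof.
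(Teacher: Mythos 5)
Your proposal is correct, and the easy inclusion ${\mathcal T}^*_K(X)\subseteq{\mathcal T}_K(X)\cap{\mathcal T}_S(X)$ is handled exactly as in the paper (via Lemma~\ref{RRAJ}). For the substantive inclusion, however, you take a genuinely different route. The paper stays entirely on the side of the extensions: given $Y\in{\mathcal T}_K(X)\cap{\mathcal T}_S(X)$ it intersects a compact neighborhood of $p$ with a closed neighborhood whose trace is $\sigma$-compact, obtaining a compact neighborhood $U$ with $U\setminus\{p\}$ $\sigma$-compact, and then quotes Lemma~\ref{RRAJ} (2)$\Rightarrow$(1) -- a three-line argument. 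You instead push everything through $\mu$ and reduce the theorem to the identity, for non-empty closed $C\subseteq X^*$, that ``$C$ clopen in $X^*$ and $C\subseteq\sigma X$'' is equivalent to ``$C\in{\mathcal Z}(\beta X)$ and $C$ clopen in $X^*$'', using Theorem~\ref{TODJ}, Lemma~\ref{EPPJ} and Lemma~\ref{RAJ}. The key step -- that a clopen subset of $X^*$ contained in $\sigma X$ lies in ${\mathcal Z}(\beta X)$ because it sits inside $M^*$ for a clopen $\sigma$-compact $M=\bigcup_{i\in J}X_i$ with $M^*\in{\mathcal Z}(\beta X)$ -- is sound; it is in fact the same computation the author performs inside the proof of Theorem~\ref{GHCG}. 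One small imprecision: extending a function from $M^*$ ``by $1$ off $\operatorname{cl}_{\beta X}M$'' is not quite enough, since $M^*$ is not open in $\operatorname{cl}_{\beta X}M$; the clean version is to separate the disjoint compacta $C$ and $M^*\setminus C$ by some $g\in C(\beta X,\mathbf{I})$ and write $C=Z(g)\cap M^*$ as an intersection of two zero-sets of $\beta X$. What your approach buys is a uniform ``compute both images under $\mu$ and compare'' template consistent with the rest of the paper, and it makes explicit where non-$\sigma$-compactness and paracompactness enter (through Lemma~\ref{EPPJ} and the decomposition of Proposition~\ref{RAWDJ}); what the paper's approach buys is brevity and independence from the image computations for ${\mathcal T}_S$ and ${\mathcal T}_K$.
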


\begin{proof}
Suppose that $Y=X\cup\{p\}\in {\mathcal T}_K(X)\cap
{\mathcal T}_S(X)$. Since $Y\in  {\mathcal T}_S(X)$, there exists a
closed neighborhood $U$ of $p$ in $Y$ such that $U\backslash\{p\}$
is $\sigma$-compact. Since $Y\in{\mathcal T}_K(X)$, there exists a
compact neighborhood $V$ of $p$ in $Y$. Then  $U\cap V$ is a
compact neighborhood of $p$ in $Y$ with $(U\cap V)\backslash\{p\}$
being $\sigma$-compact, and therefore by Lemma \ref{RRAJ} we have $Y\in
{\mathcal T}^*_K(X)$. By Lemma \ref{RRAJ}, we have ${\mathcal
T}^*_K(X)\subseteq{\mathcal T}_S(X)$, which completes the
proof.
\end{proof}

From Theorems 2.3 and \ref{EPSDJ} and Lemma \ref{WRORJ} we obtain the following
result.

\begin{theorem}\label{PYOPJ}
Let $X$ be a locally compact
paracompact  space. Then
\[{\mathcal T}_C(X)\cap{\mathcal T}_D(X)=\emptyset.\]
\end{theorem}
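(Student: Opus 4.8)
The plan is to combine the characterizations of $\mu({\mathcal T}_C(X))$ and $\mu({\mathcal T}_D(X))$ already established, and show that the corresponding families of closed subsets of $X^*$ are disjoint. By Theorem~2.3 we have $\mu({\mathcal T}_C(X))={\mathcal Z}(X^*)\backslash\{\emptyset\}$, and by Theorem~\ref{EPSDJ} we have $\mu({\mathcal T}_D(X))={\mathcal C}(\beta X\backslash\sigma X)\backslash\{\emptyset\}$. Since $\mu$ is a bijection (Theorem~\ref{YUS}), it suffices to prove that a non-empty zero-set of $X^*$ cannot be contained in $\beta X\backslash\sigma X$; equivalently, that every non-empty $Z\in{\mathcal Z}(X^*)$ meets $\sigma X$.

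**But this is exactly Lemma~\ref{WRORJ}**, which asserts that for a locally compact paracompact space $X$, if $\emptyset\neq Z\in{\mathcal Z}(X^*)$ then $Z\cap\sigma X\neq\emptyset$. So the argument runs as follows. First I would dispose of the trivial case: if $X$ is $\sigma$-compact (equivalently compact, since paracompact non-compact spaces are not pseudocompact but that's beside the point here — actually if $X$ is compact then $X^*=\emptyset$ and both sets are empty, so the intersection is vacuously empty; if $X$ is $\sigma$-compact non-compact then $\sigma X=\beta X$, so $\beta X\backslash\sigma X=\emptyset$ and ${\mathcal T}_D(X)=\emptyset$, again making the intersection empty). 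In the remaining case $X$ is non-$\sigma$-compact. Suppose toward a contradiction that $Y\in{\mathcal T}_C(X)\cap{\mathcal T}_D(X)$. Then $\mu(Y)\in{\mathcal Z}(X^*)\backslash\{\emptyset\}$ by Theorem~2.3, and $\mu(Y)\in{\mathcal C}(\beta X\backslash\sigma X)$ by Theorem~\ref{EPSDJ}, so $\mu(Y)$ is a non-empty zero-set of $X^*$ disjoint from $\sigma X$. This contradicts Lemma~\ref{WRORJ}. Hence ${\mathcal T}_C(X)\cap{\mathcal T}_D(X)=\emptyset$.

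**The only real content** is Lemma~\ref{WRORJ}, which is already proved in the excerpt, so there is essentially no obstacle: the proof is a short bookkeeping argument assembling three earlier results. The one point to be careful about is making sure the degenerate cases ($X$ compact, $X$ $\sigma$-compact non-compact) are handled, since Theorem~\ref{EPSDJ} and Lemma~\ref{WRORJ} are stated for general locally compact paracompact $X$ but the geometric picture ($\sigma X$ a proper open subset of $\beta X$) is only interesting when $X$ is non-$\sigma$-compact; in all cases, however, the conclusion holds, either because one of the two sets is empty or because Lemma~\ref{WRORJ} applies directly.
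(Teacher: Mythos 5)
Your proof is correct and is exactly the paper's argument: the paper derives this theorem as an immediate consequence of Theorem~2.3, Theorem~\ref{EPSDJ} and Lemma~\ref{WRORJ}, precisely the three ingredients you assemble (your extra discussion of the $\sigma$-compact cases is harmless but not needed, since Lemma~\ref{WRORJ} already covers them).
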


For a Tychonoff space $X$, if $S,T\in{\mathcal Z}(X)$, then $\mbox
{cl}_{\beta X}(S\cap T)= \mbox {cl}_{\beta X}S\cap \mbox {cl}_{\beta
X} T$. We use this fact  below.

\begin{lemma}\label{UKPJ}
Let $X$ be a locally compact
paracompact  space. If $Z\in {\mathcal Z}(\beta X)$ is such that
$Z\cap X=\emptyset $ then $\mbox {\em int}_{X^*}Z\subseteq\sigma X$.
\end{lemma}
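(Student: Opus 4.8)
The plan is to show that if $Z \in \mathcal{Z}(\beta X)$ misses $X$, then its interior in $X^*$ is contained in $\sigma X$. Suppose to the contrary that there exists a point $x \in \operatorname{int}_{X^*} Z$ with $x \notin \sigma X$. Then there is an open set $U$ of $\beta X$ with $x \in U$ and $U \cap X^* \subseteq Z$. Since $x \notin \sigma X$, the set $U \cap X$ is not contained in the closure (in $\beta X$) of any $\sigma$-compact subset of $X$; in the notation of Proposition~\ref{RAWDJ}, writing $X = \bigoplus_{i \in I} X_i$ with each $X_i$ $\sigma$-compact non-compact, the index set $J = \{i \in I : X_i \cap U \neq \emptyset\}$ must be uncountable, because otherwise $\operatorname{cl}_{\beta X} U = \operatorname{cl}_{\beta X}(U \cap X) \subseteq \operatorname{cl}_{\beta X}(\bigcup_{i \in J} X_i) \subseteq \sigma X$ would force $x \in \sigma X$.

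First I would shrink $U$: since $\beta X$ is compact Hausdorff (hence regular) I can choose an open $V$ with $x \in V \subseteq \operatorname{cl}_{\beta X} V \subseteq U$, and moreover, using complete regularity, pick $f \in C(\beta X, \mathbf{I})$ with $f(x) = 0$ and $f(\beta X \setminus U) \subseteq \{1\}$, so that the cozero-set $\operatorname{coz}(f) = f^{-1}([0,1))$ satisfies $x \in \operatorname{coz}(f) \subseteq U$. The key point is that $\operatorname{coz}(f) \cap X$, being a cozero-set of the locally compact paracompact space $X$, still meets uncountably many of the $X_i$'s (the same argument as above applies with $\operatorname{coz}(f)$ in place of $U$). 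Next I would extract a zero-set of $\beta X$ inside $Z$ through $x$: since $Z$ is a zero-set, write $Z = Z(g)$ for some $g \in C(\beta X, \mathbf{I})$, and consider $S = Z(g) \cap Z(f) \in \mathcal{Z}(\beta X)$. Wait — $x \in Z = Z(g)$ and $x \in Z(f)$ only if $f(x)=0$, which holds; so $x \in S$, hence $S \neq \emptyset$, and $S \cap X \subseteq Z \cap X = \emptyset$. Thus $S$ is a non-empty zero-set of $\beta X$ missing $X$; by Lemma~\ref{WRRJ}, $S \cap \sigma X \neq \emptyset$, so there is a point $y \in S \cap (\sigma X \setminus X)$, lying in $\operatorname{cl}_{\beta X}(\bigcup_{i \in K} X_i)$ for some countable $K \subseteq I$. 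Since $y \in Z(f) \subseteq U$ (as $\operatorname{coz}(f) \subseteq U$ means $Z(f)$... no: $Z(f) = f^{-1}(0) \subseteq f^{-1}([0,1)) = \operatorname{coz}(f) \subseteq U$), the point $y$ lies in $U \cap \operatorname{cl}_{\beta X}(\bigcup_{i\in K} X_i)$; but this does not immediately contradict anything, so I would instead argue directly at $x$.

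The cleaner route, which I would actually follow, is: pick $y \in U \cap X$ with $y \in X_i$ for some $i$; more to the point, because $U \cap X^* \subseteq Z$ and $Z \cap X = \emptyset$, the set $U$ witnesses that near $x$ the remainder is trapped in $Z$, while the cozero-set $\operatorname{coz}(f)$ (with $f$ as above, $Z(f) \ni x$, $\operatorname{coz}(f) \subseteq U$) has the property that $S := Z(f) \in \mathcal{Z}(\beta X)$ contains $x$ but need not miss $X$. So I correct course: replace $S$ by $Z(f) \cap Z(h)$ where $h \in C(\beta X, \mathbf{I})$ vanishes on $Z$ — that is, just take $S = Z(f) \cap Z$, which is a zero-set of $\beta X$ (intersection of two zero-sets), contains $x$, and satisfies $S \cap X \subseteq Z \cap X = \emptyset$. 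By Lemma~\ref{WRRJ}, $S \cap \sigma X \neq \emptyset$. Pick $y \in S \cap \sigma X$; then $y \in Z(f) \subseteq \operatorname{coz}(f) \subseteq U$, wait $Z(f) \not\subseteq \operatorname{coz}(f)$ in general — rather $Z(f) \cap \operatorname{coz}(f) = \emptyset$. I must be careful: I want a zero-set neighborhood basis. Use instead $f^{-1}([0,1/2])$, which is a zero-set of $\beta X$ (it equals $Z(\max\{f - 1/2, 0\})$), contains $x$, and is contained in $\operatorname{coz}(f) \subseteq U$. Set $S = f^{-1}([0,1/2]) \cap Z$: a zero-set of $\beta X$, containing $x$, contained in $U$, and missing $X$. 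Now Lemma~\ref{WRRJ} gives $y \in S \cap \sigma X$, so $y \in \sigma X \cap U$; but $y \in Z$ and $Z \cap X = \emptyset$ forces $y \in \sigma X \setminus X \subseteq X^*$, so $y \in U \cap X^* \subseteq Z$, consistent — still no contradiction! The real contradiction must come from $x$ itself: since $x \in S \subseteq U$, $S$ misses $X$, and $S$ is a zero-set of $\beta X$, Lemma~\ref{WRRJ} says $S$ meets $\sigma X$; but I want to conclude $x \in \sigma X$. The fix is to intersect $S$ with arbitrarily small zero-set neighborhoods of $x$: the family $\{f_\alpha^{-1}([0,1/2]) : \alpha\}$ of all such, intersected with $Z$, has the finite intersection property with $\sigma X$ (each member meets $\sigma X$ by Lemma~\ref{WRRJ}, and $\sigma X$ is... not closed). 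I expect the main obstacle to be exactly this: $\sigma X$ is open, not closed, in $\beta X$, so a filter-base argument does not directly place $x$ in $\sigma X$. The resolution the authors likely intend is to use that $\beta X \setminus \sigma X$ is compact and, being disjoint from $Z(f)^{-1}([0,1/2]) \cap$ (something), apply Lemma~\ref{WRRJ} once to the single zero-set $S$ to get $S \subseteq \sigma X$ outright — indeed if $S \cap (\beta X \setminus \sigma X) \neq \emptyset$ then, since $\beta X \setminus \sigma X$ is a zero-set... it is, because $X$ locally compact $\sigma$-compact-in-pieces makes $\sigma X$ a cozero-set — one shows any non-empty zero-set of $\beta X$ inside $\beta X \setminus \sigma X$ would itself be a zero-set missing $\sigma X$, contradicting Lemma~\ref{WRRJ}. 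Hence $S \subseteq \sigma X$, so $x \in S \subseteq \sigma X$, contradicting $x \notin \sigma X$. Therefore $\operatorname{int}_{X^*} Z \subseteq \sigma X$, as required. The one genuinely delicate point to nail down is that $\beta X \setminus \sigma X$ contains no non-empty zero-set of $\beta X$ — equivalently, that $\sigma X$ is $z$-dense in $\beta X$ — and that is precisely the content of Lemma~\ref{WRRJ}, which I would invoke directly on the zero-set $S = f^{-1}([0,1/2]) \cap Z$.
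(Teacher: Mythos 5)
There is a genuine gap, and it sits exactly at the step you yourself flagged as delicate. Your setup is fine: $U$ open with $x\in U$ and $U\cap X^*\subseteq Z$, $f\in C(\beta X,\mathbf{I})$ with $f(x)=0$ and $f(\beta X\setminus U)\subseteq\{1\}$, and $S=f^{-1}([0,1/2])\cap Z$ a non-empty zero-set of $\beta X$ containing $x$ and missing $X$. But the concluding claim ``hence $S\subseteq\sigma X$'' does not follow. Lemma~\ref{WRRJ} gives only $S\cap\sigma X\neq\emptyset$, and your proposed upgrade rests on $\beta X\setminus\sigma X$ being a zero-set of $\beta X$, i.e.\ on $\sigma X$ being a cozero-set. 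That is false whenever $X$ is not $\sigma$-compact: a cozero-set of the compact space $\beta X$ is $\sigma$-compact, hence Lindel\"{o}f, whereas $\sigma X$ is countably compact and non-compact (as in the proof of Lemma~\ref{WRRJ}), hence not Lindel\"{o}f. Worse, the statement you are trying to prove in this step --- that every non-empty $S\in{\mathcal Z}(\beta X)$ with $S\cap X=\emptyset$ satisfies $S\subseteq\sigma X$ --- is simply false: take $X$ uncountable discrete, write $X=\bigcup_{n<\omega}A_n$ with the $A_n$ disjoint and uncountable, let $f=1/n$ on $A_n$ and $F=\beta f$; then $Z(F)$ misses $X$ but contains every uniform ultrafilter containing all tails $\bigcup_{m>n}A_m$, and such points lie outside $\sigma X$. (This is consistent with the lemma itself, which bounds only $\operatorname{int}_{X^*}Z$, not $Z$.) So no refinement of the ``apply Lemma~\ref{WRRJ} once more'' idea can close the argument; under your reductio hypothesis $x\in S\setminus\sigma X$ is exactly the situation you are in, and nothing you have written rules it out.

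The missing idea is the one the paper uses. Having produced a set $T=f^{-1}([0,1/2])\cap X\in{\mathcal Z}(X)$ with $x\in\operatorname{cl}_{\beta X}T$ and $\operatorname{cl}_{\beta X}T\subseteq X\cup Z$ (your $f$ already gives this, since $f^{-1}([0,1/2])\subseteq U$ and $U\cap X^*\subseteq Z$), one does \emph{not} stay in the remainder; one traps the trace $T\subseteq X$ itself inside countably many summands. Write $Z=Z(g)$ with $g\in C(\beta X,\mathbf{I})$. For each $n$ the set $\operatorname{cl}_{\beta X}T\setminus g^{-1}([0,1/n))$ is closed in $\beta X$, hence compact, and misses $Z$, hence is a compact subset of $X=\bigoplus_{i\in I}X_i$ and so meets only finitely many $X_i$. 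Since $T\cap Z=\emptyset$, the union over $n$ of these sets covers $T$, so $T\subseteq\bigcup_{i\in J}X_i$ for some countable $J$, whence $x\in\operatorname{cl}_{\beta X}T\subseteq\operatorname{cl}_{\beta X}\bigl(\bigcup_{i\in J}X_i\bigr)\subseteq\sigma X$, the desired contradiction. This ``peeling by the levels of $g$'' step is the substance of the lemma and is absent from your proposal.
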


\begin{proof}
Let $ Z\in {\mathcal Z}(\beta X)$ and  $Z\cap
X=\emptyset $. Suppose that $\mbox {int}_{X^*}Z\backslash\sigma
X\neq\emptyset$ and let $x\in \mbox {int}_{X^*}Z\backslash\sigma X$.
First using the same method as in Lemma 6.4 of [7] we find a
$T\in{\mathcal Z}(X)$  such that $x\in T^*\subseteq Z$. Since
$\{S^*:S\in{\mathcal Z}(X)\}$ is a base for closed subsets of $X^*$,
there exists an $S\in{\mathcal Z}(X)$ such that $x\in X^*\backslash
S^*\subseteq Z$. Now
\[S^*\cap\bigcap\big\{T^*:T\in{\mathcal Z}(X)\mbox { and } x\in
T^*\big\}=S^*\cap \{x\}=\emptyset\]
and therefore there exist
$T_1,\ldots, T_n \in{\mathcal Z}(X)$ such that $S^*\cap
T_1^*\cap\cdots\cap T_n^*=\emptyset$ and $ x\in T_i^*$, for $i=1,\ldots,
n$. Now if we let  $T=T_1\cap\cdots\cap T_n\in{\mathcal Z}(X)$, then
$x\in T^*=T_1^*\cap\cdots\cap T_n^*\subseteq X^*\backslash S^*\subseteq
Z$. Thus $\mbox {cl}_{\beta X}T\subseteq Z\cup X$. Let $Z=Z(f)$, for
some $f\in
C(\beta X, \mathbf{I})$. For each $n=1,2,\ldots$, we have
\[\mbox {cl}_{\beta X}T\backslash f^{-1}\big([0,1/n)\big)\subseteq
X=\bigoplus_{i\in
I}X_i\]
where each $X_i$ is a $\sigma$-compact non-compact
subspace. Therefore for each $n=1,2,\ldots$, there exists a finite set
$J_n\subseteq I$ such that
\[\mbox {cl}_{\beta X}T\backslash f^{-1}\big([0,1/n)\big)\subseteq
\bigcup_{i\in
J_n}X_i.\]
Let $J=J_1\cup J_2\cup\cdots$. Then
\[T\subseteq \mbox {cl}_{\beta X}T\backslash Z\subseteq
\bigcup_{n\geq 1}\big(\mbox {cl}_{\beta X}T\backslash
f^{-1}\big([0,1/n)\big)\big)\subseteq\bigcup_{i\in
J}X_i\]
and thus $\mbox {cl}_{\beta X}T\subseteq \mbox {cl}_{\beta
X}(\bigcup_{i\in
J}X_i)\subseteq\sigma X$. But this is a contradiction, as $x\in  \mbox
{cl}_{\beta X}T\backslash\sigma
X$. This proves the lemma.
\end{proof}

\begin{lemma}\label{UHKPJ}
Let $X$ be  a locally compact
space. Then every zero-set of $\beta X$ which misses $X$ is
regular-closed in $X^*$.
\end{lemma}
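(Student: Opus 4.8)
The plan is to show that every zero-set $Z$ of $\beta X$ with $Z\cap X=\emptyset$ satisfies $Z=\mbox{cl}_{X^*}(\mbox{int}_{X^*}Z)$. Since $Z$ is closed in $X^*$, one inclusion is automatic, so the work is to prove $Z\subseteq\mbox{cl}_{X^*}(\mbox{int}_{X^*}Z)$. I would argue by contradiction: suppose there is a point $x\in Z$ and a neighborhood $N$ of $x$ in $X^*$ with $N\cap\mbox{int}_{X^*}Z=\emptyset$; shrinking $N$ I may take it of the form $N=X^*\backslash S^*$ for some $S\in{\mathcal Z}(X)$ (since the sets $S^*$ with $S\in{\mathcal Z}(X)$ form a base for the closed subsets of $X^*$), so that $x\in X^*\backslash S^*$ and $(X^*\backslash S^*)\cap\mbox{int}_{X^*}Z=\emptyset$, i.e.\ $Z\cap(X^*\backslash S^*)$ has empty interior in $X^*$.

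Write $Z=Z(f)$ for some $f\in C(\beta X,\mathbf{I})$. The key point is that because $Z$ misses $X$, the set $Z$ is a $G_\delta$ in $\beta X$ lying entirely in $X^*$, and locally compactness lets me split $\beta X\backslash Z=\bigcup_{n<\omega}(\beta X\backslash f^{-1}([0,1/n)))$ into $\sigma$-compact pieces relative to a clopen decomposition of $X$. Concretely I would fix a point $x$ as above, use the base $\{T^*:T\in{\mathcal Z}(X)\}$ together with the fact that $\mbox{cl}_{\beta X}(T_1\cap T_2)=\mbox{cl}_{\beta X}T_1\cap\mbox{cl}_{\beta X}T_2$ for zero-sets (the displayed fact preceding Lemma~\ref{UKPJ}) to produce a single $T\in{\mathcal Z}(X)$ with $x\in T^*\subseteq X^*\backslash S^*\subseteq Z$, exactly as in the proof of Lemma~\ref{UKPJ}; then $\mbox{cl}_{\beta X}T\subseteq Z\cup X$ and for each $n\geq 1$ the set $\mbox{cl}_{\beta X}T\backslash f^{-1}([0,1/n))$ is contained in $X$.

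At this stage I do \emph{not} want to invoke the sum decomposition of Proposition~\ref{RAWDJ} (which needs paracompactness) — for the present lemma only local compactness is assumed — so instead I would note that $\mbox{cl}_{\beta X}T\backslash f^{-1}([0,1/n))$ is a compact subset of $X$, hence the whole of $\mbox{cl}_{\beta X}T\backslash Z=\bigcup_{n\geq1}(\mbox{cl}_{\beta X}T\backslash f^{-1}([0,1/n)))$ is a $\sigma$-compact, hence Lindel\"of, subset $A$ of $X$ with $\mbox{cl}_{\beta X}T=A^*\cup(\mbox{something in }X)$ and $T^*\subseteq\mbox{cl}_{\beta X}A$. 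The contradiction will come from comparing $T^*$, which is a nonempty open subset of $X^*$ sitting inside $Z\cap(X^*\backslash S^*)$, with the assumption that $Z\cap(X^*\backslash S^*)$ has empty interior. The main obstacle is getting $T^*$ to be \emph{open} in $X^*$: this is where I would use that $A$ is a cozero-set in $X$ whose closure in $\beta X$ meets $X^*$ exactly in $A^*$, so that $A^*=\mbox{cl}_{\beta X}A\backslash X$ is a zero-set of $X^*$ which, being of the form $(\mbox{cl}_{\beta X}T)\backslash X$ with $\mbox{cl}_{\beta X}T$ clopen-like off a compact set, is actually open in $X^*$; more carefully, one shows $x\in\mbox{int}_{X^*}T^*$ by exhibiting a cozero-neighborhood of $x$ in $X^*$ inside $T^*$, using that $\mbox{cl}_{\beta X}T\backslash Z\subseteq X$ forces $T^*$ to contain a $\beta X$-neighborhood of $x$ intersected with $X^*$. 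Once $x\in\mbox{int}_{X^*}T^*\subseteq\mbox{int}_{X^*}\big(Z\cap(X^*\backslash S^*)\big)$, this contradicts the choice of $S$, and the lemma follows; the subtlety to get exactly right is that this argument must avoid paracompactness, which Lemma~\ref{UKPJ} used, and lean only on the $\sigma$-compactness of $\mbox{cl}_{\beta X}T\backslash Z$ produced by $Z$ being a zero-set.
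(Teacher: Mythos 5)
There is a genuine gap, and it occurs at the first concrete step of your construction. You choose $S\in{\mathcal Z}(X)$ so that $X^*\backslash S^*$ is a neighborhood of $x$ \emph{disjoint from} $\mbox{int}_{X^*}Z$, and then claim to produce $T\in{\mathcal Z}(X)$ with $x\in T^*\subseteq X^*\backslash S^*\subseteq Z$ ``exactly as in the proof of Lemma \ref{UKPJ}.'' But the inclusion $X^*\backslash S^*\subseteq Z$ is false in your setting: if it held, the non-empty open set $X^*\backslash S^*$ would lie in $\mbox{int}_{X^*}Z$, contradicting your own choice of $S$. In Lemma \ref{UKPJ} that inclusion was available precisely because there the point $x$ was taken inside $\mbox{int}_{X^*}Z$, so a basic neighborhood $X^*\backslash S^*$ of $x$ could be chosen inside $Z$; here $x$ is a point of $Z$ assumed to lie outside $\mbox{cl}_{X^*}\mbox{int}_{X^*}Z$, so no neighborhood of $x$ in $X^*$ can be contained in $Z$, and the construction of $T$ (hence of $\mbox{cl}_{\beta X}T\subseteq Z\cup X$, of the $\sigma$-compact set $A$, and everything downstream) collapses. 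A second, independent problem is the assertion that $T^*$ (or $A^*$) is open in $X^*$ because $\mbox{cl}_{\beta X}T$ is ``clopen-like off a compact set'': this is not an argument, and the underlying claim is false in general --- for $X=\mathbf{R}$ and the closed $\sigma$-compact set $A=\mathbf{Z}$, the set $A^*$ is a closed, totally disconnected, hence nowhere dense subset of $\mathbf{R}^*$, whose two components are each non-degenerate continua.

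What the proof actually needs, and what the paper supplies, is the cited result of Comfort and Negrepontis: for a locally compact space $X$, every non-empty zero-set of $\beta X$ contained in $X^*$ has non-empty interior in $X^*$. Granting that, the argument is short: if $x\in Z$ but $x\notin\mbox{cl}_{X^*}\mbox{int}_{X^*}Z$, choose $S\in{\mathcal Z}(\beta X)$ with $x\in S$ and $S\cap\mbox{cl}_{X^*}\mbox{int}_{X^*}Z=\emptyset$; then $T=S\cap Z$ is a non-empty zero-set of $\beta X$ contained in $X^*$, so $\mbox{int}_{X^*}T\neq\emptyset$, yet $\mbox{int}_{X^*}T\subseteq\mbox{int}_{X^*}Z$ while $T\cap\mbox{int}_{X^*}Z=\emptyset$ --- a contradiction. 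Your proposal never isolates or proves this non-empty-interior fact (which is exactly the point where local compactness and the zero-set hypothesis enter), and the direct construction offered in its place does not go through.
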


\begin{proof}
Let $Z\in{\mathcal Z}(\beta X)$ be such that $Z\cap
X=\emptyset$, and let $x\in Z$. If $x\notin \mbox {cl}_{X^*}\mbox
{int}_{X^*} Z$, then $x\in S$, for some $S\in{\mathcal Z}(\beta X)$
with $ S\cap \mbox {cl}_{X^*}\mbox {int}_{X^*} Z=\emptyset$. Let
$T=S\cap Z$. By Lemma 1\ref{YUGG}7 of [3], for a locally compact space $Y$,
any non-empty zero-set of $\beta Y$ which is contained in $Y^*$ has
non-empty interior in $Y^*$. Therefore  $\mbox {int}_{X^*}
T\neq\emptyset$. But this is a contradiction, as $\mbox {int}_{X^*}
T\subseteq \mbox {int}_{X^*} Z$ and $T\cap \mbox {int}_{X^*}
Z=\emptyset$. Therefore $x\in \mbox {cl}_{X^*}\mbox {int}_{X^*} Z$
and $Z$ is regular-closed in $X^*$.
\end{proof}

\begin{theorem}\label{FCG}
Let $X$ be a locally compact
paracompact  non-$\sigma$-compact space. Then ${\mathcal T}_L(X)$
contains an order-anti-isomorphic copy of ${\mathcal T}^*(X)$.
\end{theorem}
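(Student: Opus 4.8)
The plan is to build an explicit order-anti-isomorphism from $\mathcal{T}^*(X)$ into $\mathcal{T}_L(X)$ by composing the two known order-anti-isomorphisms $\mu$ with a suitable injection between the corresponding families of closed subsets of $\beta X$. Recall that by Theorem \ref{AADJ} we have $\mu(\mathcal{T}^*(X)) = \{C \in \mathcal{Z}(\beta X): C \cap X = \emptyset\}\backslash\{\emptyset\}$, while by Lemma \ref{ERWRPJ} (combined with Theorem \ref{YUS}) the image $\mu(\mathcal{T}_L(X))$ consists of exactly those non-empty $C \in \mathcal{C}(X^*)$ with $C \supseteq \beta X\backslash\sigma X$. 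So it suffices to produce an order-isomorphism $\iota$ from the poset $\big(\{C \in \mathcal{Z}(\beta X): C \cap X = \emptyset\}\backslash\{\emptyset\}, \subseteq\big)$ onto a subposet of $\big(\{C \in \mathcal{C}(X^*): C \supseteq \beta X\backslash\sigma X\}\backslash\{\emptyset\}, \subseteq\big)$; then $\mu_{\mathcal{T}_L}^{-1}\circ\iota\circ\mu_{\mathcal{T}^*}$ is an order-anti-isomorphism of $\mathcal{T}^*(X)$ onto a subposet of $\mathcal{T}_L(X)$, which is what "contains an order-anti-isomorphic copy" means.

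The natural candidate is $\iota(C) = C \cup (\beta X\backslash\sigma X)$. First I would check $\iota$ is well-defined: if $C \in \mathcal{Z}(\beta X)$ misses $X$, then since $\beta X\backslash\sigma X$ is a closed subset of $X^*$ (as $\sigma X$ is open in $\beta X$ and contains $X$), the union $C\cup(\beta X\backslash\sigma X)$ is a non-empty closed subset of $X^*$ containing $\beta X\backslash\sigma X$, so it lies in $\mu(\mathcal{T}_L(X))$. Monotonicity in both directions is the crux, and this is where Lemma \ref{WRORJ} and Lemma \ref{WRPJ} enter. That $C_1 \subseteq C_2$ implies $\iota(C_1)\subseteq\iota(C_2)$ is trivial. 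For injectivity and the reverse implication, suppose $\iota(C_1)\subseteq\iota(C_2)$, i.e. $C_1 \subseteq C_2\cup(\beta X\backslash\sigma X)$; intersecting with $\sigma X$ and using that each $C_i$ is a zero-set of $\beta X$ missing $X$ (hence $C_i\cap\sigma X = C_i\cap(\sigma X\backslash X)$ is a zero-set of $X^*$), we get $C_1\cap\sigma X \subseteq C_2\cap\sigma X$, so by Lemma \ref{WRPJ} (applied to the zero-sets $C_1, C_2$ of $X^*$) we conclude $C_1\subseteq C_2$. This also gives injectivity. Hence $\iota$ is an order-isomorphism onto its image.

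The main obstacle I anticipate is verifying carefully that $C\cap\sigma X$ is genuinely a zero-set of $X^*$ so that Lemma \ref{WRPJ} applies — one must observe that $C$ being a zero-set of $\beta X$ missing $X$ makes $C = C\backslash X$ a zero-set of $X^*$, and intersect appropriately — and, more delicately, confirming that the composite lands inside $\mathcal{T}_L(X)$ rather than merely inside $\mathcal{C}(X^*)$; but that is exactly guaranteed by the characterization $\mu(\mathcal{T}_L(X)) = \{C\in\mathcal{C}(X^*): C\supseteq\beta X\backslash\sigma X\}\backslash\{\emptyset\}$ from Lemma \ref{ERWRPJ}. One should also note, for the copy to be non-degenerate, that $\mathcal{T}^*(X)\neq\emptyset$: since $X$ is paracompact non-compact it is not pseudocompact, so Corollary \ref{AWDJ} applies. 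Finally, I would remark that $\iota$ need not be onto $\mu(\mathcal{T}_L(X))$ — a general element of $\mathcal{T}_L(X)$ corresponds to an arbitrary closed set containing $\beta X\backslash\sigma X$, not one of the special form $C\cup(\beta X\backslash\sigma X)$ with $C$ a zero-set — which is why the statement claims only an embedded copy.
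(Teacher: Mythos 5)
There is a genuine gap, and it is a parity error in the order directions. Your map $\iota(C)=C\cup(\beta X\backslash\sigma X)$ is inclusion-\emph{preserving}, and both $\mu_{{\mathcal T}^*}$ and $\mu_{{\mathcal T}_L}^{-1}$ are order-\emph{anti}-isomorphisms; the two reversals cancel, so the composite $\mu_{{\mathcal T}_L}^{-1}\circ\iota\circ\mu_{{\mathcal T}^*}$ is an order-isomorphism of ${\mathcal T}^*(X)$ onto a subposet of ${\mathcal T}_L(X)$, not an order-anti-isomorphism. Concretely: if $Y_1\leq Y_2$ in ${\mathcal T}^*(X)$ then $\mu(Y_1)\supseteq\mu(Y_2)$, hence $\iota(\mu(Y_1))\supseteq\iota(\mu(Y_2))$, hence the images satisfy $\leq$ in the same direction again. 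What you have written is essentially the paper's proof of Theorem \ref{FTCG} (restricted to ${\mathcal T}^*(X)={\mathcal T}^*_C(X)$), i.e.\ it establishes Corollary \ref{FPP} (${\mathcal T}^*(X)\hookrightarrow{\mathcal T}_L(X)$), which is a different assertion from Theorem \ref{FCG} (${\mathcal T}^*(X)$ (anti)$\hookrightarrow{\mathcal T}_L(X)$); the paper records both as separate items in Theorem \ref{OOTG}. Your verification of well-definedness and of order-reflection via Lemma \ref{WRPJ} is fine as far as it goes, but it proves the wrong theorem.

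To get the anti-isomorphic copy you need an inclusion-\emph{reversing} injection from $\mu({\mathcal T}^*(X))=\{Z\in{\mathcal Z}(\beta X):Z\cap X=\emptyset\}\backslash\{\emptyset\}$ into $\mu({\mathcal T}_L(X))$, and naive complementation fails because $X^*\backslash Z$ is open, not closed. The paper's device is $\phi(Z)=X^*\backslash\mathrm{int}_{X^*}Z$. This lands in $\mu({\mathcal T}_L(X))$ because Lemma \ref{UKPJ} gives $\mathrm{int}_{X^*}Z\subseteq\sigma X$, so $\phi(Z)\supseteq\beta X\backslash\sigma X$ and Lemma \ref{ERWRPJ} applies; it is trivially order-reversing; and it is injective and order-reflecting because, by Lemma \ref{UHKPJ}, every zero-set of $\beta X$ missing $X$ is regular-closed in $X^*$, so $Z=\mathrm{cl}_{X^*}\mathrm{int}_{X^*}Z$ is recoverable from its interior and $\mathrm{int}_{X^*}S\subseteq\mathrm{int}_{X^*}T$ forces $S\subseteq T$. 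Neither of these two lemmas appears in your argument, and without something playing their role (a way to reverse inclusion while staying among closed sets containing $\beta X\backslash\sigma X$, together with a recoverability statement guaranteeing injectivity) the claimed anti-embedding does not follow.
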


\begin{proof}
Suppose that $Z\in \mu({\mathcal T}^*(X))$. Then by
Theorem \ref{AADJ} we have $Z\in {\mathcal Z}(\beta X)$ and  $Z\cap
X=\emptyset $, and thus by Lemma \ref{UKPJ} we have $\mbox
{int}_{X^*}Z\subseteq \sigma X$. Therefore $ X^*\backslash \mbox
{int}_{X^*}Z\supseteq \beta X\backslash \sigma X$. Define a function
$\phi: \mu({\mathcal T}^*(X))\rightarrow \mu({\mathcal T}_L(X))$ by
$\phi (Z)=X^*\backslash \mbox {int}_{X^*}Z$. By Lemma \ref{ERWRPJ} the
function  $\phi$ is well-defined. Clearly for $S,T\in \mu({\mathcal
T}^*(X))$, if $S\subseteq T$, then $\phi(S)\subseteq \phi(T)$. The
converse also holds, as by Lemma \ref{UHKPJ} the sets  $S$ and $T$ are
regular closed in  $X^*$. Therefore $\phi$ and thus
$\psi=\mu^{-1}\phi\mu:{\mathcal T}^*(X)\rightarrow {\mathcal
T}_L(X)$ are order-anti-isomorphism onto their
images.
\end{proof}

\begin{theorem}\label{FTCG}
Let $X$ be a locally compact
paracompact  space. Then ${\mathcal T}_L(X)$ contains an
order-isomorphic copy of $ {\mathcal T}_C(X)$.
\end{theorem}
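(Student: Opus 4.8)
The plan is to build the embedding first on the level of the $\mu$-images and then pull it back through $\mu$. By Theorem~\ref{DDS} the restriction $\mu|\mathcal{T}_C(X)$ is an order-anti-isomorphism of $\mathcal{T}_C(X)$ onto $\mathcal{Z}(X^*)\setminus\{\emptyset\}$, and by Lemma~\ref{ERWRPJ} the restriction $\mu|\mathcal{T}_L(X)$ is an order-anti-isomorphism of $\mathcal{T}_L(X)$ onto $\{C\in\mathcal{C}(X^*):C\supseteq\beta X\setminus\sigma X\}\setminus\{\emptyset\}$. Since $\sigma X$ is open in $\beta X$ and $X\subseteq\sigma X$, the set $\beta X\setminus\sigma X$ is a closed subset of $X^*$. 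I would therefore define
\[G\colon\mathcal{Z}(X^*)\setminus\{\emptyset\}\longrightarrow\mathcal{C}(X^*)\setminus\{\emptyset\},\qquad G(Z)=Z\cup(\beta X\setminus\sigma X),\]
and show that $G$ is an order-isomorphism onto its image, that image being contained in $\{C\in\mathcal{C}(X^*):C\supseteq\beta X\setminus\sigma X\}\setminus\{\emptyset\}=\mu(\mathcal{T}_L(X))$. Granting this, the composite $\psi=(\mu|\mathcal{T}_L(X))^{-1}\circ G\circ(\mu|\mathcal{T}_C(X))$ maps $\mathcal{T}_C(X)$ into $\mathcal{T}_L(X)$ and, being a composite of an order-anti-isomorphism, an order-isomorphism onto its image, and an order-anti-isomorphism, is itself an order-isomorphism onto its image; this is the desired copy of $\mathcal{T}_C(X)$ inside $\mathcal{T}_L(X)$.

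To see that $G$ is well defined with values in the stated set, note that $G(Z)$ is the union of the two closed subsets $Z$ and $\beta X\setminus\sigma X$ of $X^*$, hence closed in $X^*$; it obviously contains $\beta X\setminus\sigma X$; and it is non-empty because $Z\neq\emptyset$. Thus $G(Z)\in\mu(\mathcal{T}_L(X))$ by Lemma~\ref{ERWRPJ}. That $G$ is order-preserving is immediate, since $S\subseteq T$ gives $G(S)\subseteq G(T)$.

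The substantive point — the one I expect to be the main obstacle — is that $G$ reflects the order, i.e.\ that $G(S)\subseteq G(T)$ forces $S\subseteq T$ for $S,T\in\mathcal{Z}(X^*)\setminus\{\emptyset\}$. Here I would intersect with $\sigma X$: since $(\beta X\setminus\sigma X)\cap\sigma X=\emptyset$ we get $S\cap\sigma X=G(S)\cap\sigma X\subseteq G(T)\cap\sigma X=T\cap\sigma X$, and then Lemma~\ref{WRPJ} — which says precisely that a zero-set of $X^*$ is determined, as to inclusion, by its trace on $\sigma X$ — yields $S\subseteq T$. In particular $G$ is injective, so the above shows it is an order-isomorphism onto its image, which completes the argument.

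Finally, the edge case costs nothing: if $X$ is $\sigma$-compact then $\beta X\setminus\sigma X=\emptyset$, so $G$ is the identity and $\mathcal{T}_L(X)=\mathcal{T}(X)\supseteq\mathcal{T}_C(X)$, making the conclusion trivial, and the argument above applies verbatim in this case as well.
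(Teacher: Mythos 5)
Your proposal is correct and follows essentially the same route as the paper: the paper defines $\phi(Z)=Z\cup(\beta X\backslash\sigma X)$ on $\mu({\mathcal T}_C(X))$, checks well-definedness via Theorem \ref{DDS} and Lemma \ref{ERWRPJ}, and reflects the order by intersecting with $\sigma X$ and invoking Lemma \ref{WRPJ}, exactly as you do. Your explicit treatment of the $\sigma$-compact edge case is a harmless addition the paper leaves implicit.
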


\begin{proof}
Let $\phi:\mu ({\mathcal T}_C(X))\rightarrow\mu
({\mathcal T}_L(X))$ be defined by $\phi (Z)= Z\cup (\beta
X\backslash \sigma X)$. By Theorem \ref{DDS} and Lemma \ref{ERWRPJ} the function
$\phi$ is well-defined.  If $\phi (Z_1)\subseteq \phi (Z_2)$, for
$Z_1, Z_2\in\mu ({\mathcal T}_C(X))$, then $Z_1\cap\sigma X\subseteq
Z_2\cap\sigma X $, and thus by Lemma \ref{WRPJ}, we have $Z_1\subseteq
Z_2$. Therefore if we let $\psi=\mu^{-1}\phi\mu: {\mathcal
T}_C(X)\rightarrow{\mathcal T}_L(X)$, then $\psi$ is an
order-isomorphism onto its image.
\end{proof}

From Theorems \ref{YROEJ} and \ref{FTCG} we obtain the following.

\begin{corollary}\label{FPP}
Let $X$ be a locally compact
paracompact space. Then ${\mathcal T}_{L}(X)$ contains an
order-isomorphic copy of ${\mathcal T}^*(X)$.
\end{corollary}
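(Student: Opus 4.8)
The plan is to simply chain together the two order-theoretic embeddings that are already available, namely the order-isomorphic copy of $\mathcal{T}_C(X)$ inside $\mathcal{T}_L(X)$ provided by Theorem~\ref{FTCG}, and the identity $\mathcal{T}_C^*(X)=\mathcal{T}^*(X)$ of Theorem~\ref{YROEJ}. First I would observe that $\mathcal{T}^*(X)={\mathcal T}_C^*(X)\subseteq\mathcal{T}_C(X)$, and that this inclusion is trivially an order-isomorphism onto its image (the order on each of these sets is the restriction of $\leq$ on $\mathcal{T}(X)$, so a subset is automatically order-isomorphic, via the identity map, to its image under the identity). Hence $\mathcal{T}^*(X)$ embeds order-isomorphically into $\mathcal{T}_C(X)$.

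Next I would invoke Theorem~\ref{FTCG}: since $X$ is locally compact and paracompact, there is an order-isomorphism $\psi=\mu^{-1}\phi\mu$ from $\mathcal{T}_C(X)$ onto its image inside $\mathcal{T}_L(X)$, where $\phi(Z)=Z\cup(\beta X\backslash\sigma X)$. Composing the inclusion $\mathcal{T}^*(X)\hookrightarrow\mathcal{T}_C(X)$ with $\psi$ yields an order-isomorphism from $\mathcal{T}^*(X)$ onto a subset of $\mathcal{T}_L(X)$; the composition of two order-isomorphisms onto their images is again an order-isomorphism onto its image, so this gives exactly the desired copy of $\mathcal{T}^*(X)$ in $\mathcal{T}_L(X)$.

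There is essentially no obstacle here, since both ingredients are established results in the excerpt and the remaining argument is just the observation that order-isomorphisms onto images compose. The one point worth a sentence of care is the reduction step: Theorem~\ref{FTCG} requires only local compactness and paracompactness (no $\sigma$-compactness hypothesis), which is precisely what we are assuming, and Theorem~\ref{YROEJ} holds for any locally compact $X$, so both hypotheses are met. Thus the proof is a one-line composition: the corollary follows from Theorems~\ref{YROEJ} and~\ref{FTCG}.
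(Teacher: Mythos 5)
Your proposal is correct and is exactly the paper's argument: the corollary is stated there as an immediate consequence of Theorems \ref{YROEJ} and \ref{FTCG}, and your filling-in of the details (the inclusion ${\mathcal T}^*(X)={\mathcal T}^*_C(X)\subseteq{\mathcal T}_C(X)$ is trivially an order-embedding, and composing it with the embedding $\psi$ of Theorem \ref{FTCG} gives the desired copy) is just what is implicit in that one-line derivation.
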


\begin{theorem}\label{GHCG}
Let $X$ be a  locally compact
paracompact space. Then ${\mathcal T}_{KL}(X)\backslash\{\omega X\}$
and  ${\mathcal T}_K^*(X)$ are order-anti-isomorphic.
\end{theorem}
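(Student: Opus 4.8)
The plan is to use the order-anti-isomorphism $\mu$ of Theorem \ref{YUS} to translate the assertion into a statement about families of clopen subsets of $X^*$, and then realize the desired anti-isomorphism as complementation inside $X^*$. We may assume $X$ is not $\sigma$-compact: this is the substantive case, and when $X$ is $\sigma$-compact one has $\beta X\backslash\sigma X=\emptyset$ and the families below should be read accordingly. Write $D=\beta X\backslash\sigma X$, a non-empty closed subset of $X^*$, and assume the notation $X=\bigoplus_{i\in I}X_i$ of Proposition \ref{RAWDJ}.

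First I would compute the two relevant images under $\mu$. Since $F_{\omega X}$ collapses all of $X^*$ onto the added point of $\omega X$, we have $\mu(\omega X)=X^*$. Combining Theorem \ref{TODJ} with Lemma \ref{ERWRPJ} gives $\mu({\mathcal T}_{KL}(X))=\{C\in{\mathcal B}(X^*):D\subseteq C\}$ (the exclusion of $\emptyset$ being automatic since $D\neq\emptyset$), so
\[\mu\big({\mathcal T}_{KL}(X)\backslash\{\omega X\}\big)=\big\{C\in{\mathcal B}(X^*):D\subseteq C\subsetneq X^*\big\}.\]
For the other side, ${\mathcal T}_K^*(X)={\mathcal T}_K(X)\cap{\mathcal T}_S(X)$ by Theorem \ref{YOPJ}, so by Theorem \ref{TODJ} and Lemma \ref{EPPJ},
\[\mu\big({\mathcal T}_K^*(X)\big)=\big({\mathcal B}(X^*)\backslash\{\emptyset\}\big)\cap\big\{C\in{\mathcal C}(X^*):C\subseteq\sigma X\big\}=\big\{C\in{\mathcal B}(X^*):C\cap D=\emptyset\big\}\backslash\{\emptyset\},\]
where the last equality uses that for $C\subseteq X^*$ the condition $C\subseteq\sigma X$ is equivalent to $C\cap D=\emptyset$.

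Now the complementation map $c:{\mathcal B}(X^*)\to{\mathcal B}(X^*)$, $c(C)=X^*\backslash C$, is an order-anti-isomorphism of ${\mathcal B}(X^*)$ onto itself; it carries the condition $D\subseteq C$ to $c(C)\cap D=\emptyset$, carries $C\subsetneq X^*$ to $c(C)\neq\emptyset$, and (since $D\neq\emptyset$) the condition $D\subseteq C$ already implies $C\neq\emptyset$. Hence $c$ restricts to an order-anti-isomorphism of $\mu({\mathcal T}_{KL}(X)\backslash\{\omega X\})$ onto $\mu({\mathcal T}_K^*(X))$, and therefore
\[\psi=\big(\mu|{\mathcal T}_K^*(X)\big)^{-1}\circ c\circ\big(\mu|{\mathcal T}_{KL}(X)\backslash\{\omega X\}\big)\]
is a composite of three order-anti-isomorphisms, hence an order-anti-isomorphism of ${\mathcal T}_{KL}(X)\backslash\{\omega X\}$ onto ${\mathcal T}_K^*(X)$, as required. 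The one step that needs genuine care is the identification of $\mu({\mathcal T}_K^*(X))$: one must first use Theorem \ref{YOPJ} to trade the first-countability condition for $\sigma$-compactness of a neighborhood, and then Lemma \ref{EPPJ} to see that the corresponding clopen sets are exactly those disjoint from $D$. Once both images are in these clean forms, everything else—the value $\mu(\omega X)=X^*$, the behaviour of $c$ on the boundary conditions, and the observation that deleting $\omega X$ on one side matches deleting $\emptyset$ on the other—is routine.
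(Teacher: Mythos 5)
Your argument in the non-$\sigma$-compact case is correct and is essentially the paper's: both proofs push everything through $\mu$ and realize the anti-isomorphism as complementation $C\mapsto X^*\backslash C$ inside ${\mathcal B}(X^*)$, with the domain $\{C\in{\mathcal B}(X^*):C\supseteq\beta X\backslash\sigma X\}\backslash\{X^*\}$ obtained from Theorem \ref{TODJ} and Lemma \ref{ERWRPJ} exactly as you do. The one point of divergence is how the codomain $\mu({\mathcal T}_K^*(X))$ is identified: the paper shows directly that $X^*\backslash C$, being a compact subset of $\sigma X$, lies in $M^*$ for some countable union $M$ of the summands $X_i$, hence is a zero-set of the clopen set $\mathrm{cl}_{\beta X}M$ and so of $\beta X$, and then invokes Lemmas \ref{RAJ} and \ref{RWRPJ}; you instead quote Theorem \ref{YOPJ} to write ${\mathcal T}_K^*(X)={\mathcal T}_K(X)\cap{\mathcal T}_S(X)$ and intersect the images given by Theorem \ref{TODJ} and Lemma \ref{EPPJ}. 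Both routes rest on results already established in the paper; yours delegates the zero-set bookkeeping to Theorem \ref{YOPJ} and is the tidier of the two.

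One caveat, which you share with the paper: the $\sigma$-compact case cannot simply be ``read accordingly.'' There $D=\emptyset$, ${\mathcal T}_{KL}(X)={\mathcal T}_K(X)={\mathcal T}_K^*(X)$ by Lemma \ref{WRRAJ}, and your two families become ${\mathcal B}(X^*)\backslash\{\emptyset,X^*\}$ and ${\mathcal B}(X^*)\backslash\{\emptyset\}$ (note that the exclusion of $\emptyset$ is no longer automatic on the first side, since it was justified by $D\neq\emptyset$). Complementation maps the first family into, but not onto, the second: $X^*=\mu(\omega X)$ belongs to $\mu({\mathcal T}_K^*(X))$ yet is the complement of no non-empty clopen set. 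In fact the second poset has a maximum while the first in general has no minimum (e.g.\ for $X=\omega$), so no anti-isomorphism exists at all in that case. The paper's own proof waves this case away in exactly the same manner, so this is a defect of the theorem as stated rather than a gap peculiar to your argument, but your parenthetical disposal of the $\sigma$-compact case should not be taken at face value.
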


\begin{proof}
Suppose that  $X$ is non-$\sigma$-compact and assume
the notations of Proposition \ref{RAWDJ}. By Theorem \ref{TODJ} and Lemma \ref{ERWRPJ} we
have
\[\mu\big({\mathcal T}_{KL}(X)\big)=\big\{C\in {\mathcal
B}(X^*):C\supseteq\beta X\backslash\sigma X\big\}.\]
Let $\phi:\mu({\mathcal
T}_{KL}(X))\backslash\{X^*\}\rightarrow\mu({\mathcal T}_K^*(X))$ be
defined by $\phi(C)=X^*\backslash C$. To see that $\phi$ is
well-defined, let $C\in {\mathcal B}(X^*)$ be such that
$C\supseteq\beta X\backslash\sigma X$. Then $X^*\backslash C$, being a
compact subset of $\sigma X$,
there exists a countable $J\subseteq I$ such that $X^*\backslash
C\subseteq
M^*$, where $M=\bigcup_{i\in J}X_i$.  Now $X^*\backslash C\in
{\mathcal Z}(M^*)$, and
therefore $X^*\backslash C\in {\mathcal Z}(\mbox {cl}_{\beta X}M)$ as
$M$ is $\sigma$-compact, and thus
  $X^*\backslash C\in {\mathcal Z}(\beta X)$, as $\mbox {cl}_{\beta
X}M$ is clopen in $\beta
  X$. Therefore by Lemma \ref{RAJ} we have $X^*\backslash C\in\mu({\mathcal
  T}_K^*(X))$. If $C\in\mu({\mathcal
  T}_K^*(X))$, then by Lemma \ref{RWRPJ} we have $C\subseteq \sigma X$, and
thus $X^*\backslash C\supseteq\beta X\backslash\sigma X$.
  Therefore $X^*\backslash C\in\mu({\mathcal
T}_{KL}(X))$.  This shows that $\phi$ is an order-anti-isomorphism
which proves the lemma in this case.

When $X$ is $\sigma$-compact ${\mathcal T}_{KL}(X)={\mathcal
T}_{K}(X)$, and by Lemma \ref{WRRAJ} we have ${\mathcal
T}_{K}^*(X)={\mathcal T}_{K}(X)$. Clearly in this case $\phi$ is
still a well-defined
order-anti-isomorphism.
\end{proof}

\begin{corollary}\label{FGGK}
For zero-dimensional locally
compact paracompact  spaces $X$ and $Y$ the following conditions are
equivalent.
\begin{itemize}
\item[\rm(1)] ${\mathcal T}_{KL}(X)$ and ${\mathcal T}_{KL}(Y)$ are order-isomorphic;
\item[\rm(2)] $\sigma X\backslash X$ and $\sigma Y\backslash Y$ are homeomorphic.
\end{itemize}
\end{corollary}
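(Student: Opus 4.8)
The plan is to deduce this purely formally from Theorem~\ref{GHCG} together with Theorem~\ref{PSDJ}; the only genuine content is a bookkeeping argument about minimum elements and a parity count of order-(anti-)isomorphisms. We may assume that $X$ and $Y$ are non-compact, since if either is compact then ${\mathcal T}_{KL}$ and $\sigma(\cdot)\backslash(\cdot)$ are empty for that space, and then both conditions hold, or fail, trivially and simultaneously (each amounting to compactness of the other space).

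First I would record that $\omega X$, the one-point compactification of $X$, is the \emph{minimum} of ${\mathcal T}_{KL}(X)$. Indeed, $\mu(\omega X)=X^*$ is the largest member of $\big({\mathcal C}(X^*)\backslash\{\emptyset\},\subseteq\big)$, so by the order-anti-isomorphism $\mu$ of Theorem~\ref{YUS} the extension $\omega X$ is the minimum of all of ${\mathcal T}(X)$; and $\omega X$, being compact, lies in ${\mathcal T}_K(X)\cap{\mathcal T}_L(X)={\mathcal T}_{KL}(X)$, hence it is the minimum there. The same applies to $\omega Y$ in ${\mathcal T}_{KL}(Y)$.

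Next I would invoke the elementary fact that two partially ordered sets, each possessing a minimum, are order-isomorphic if and only if the posets obtained by deleting those minima are order-isomorphic: any order-isomorphism must carry minimum to minimum, and conversely any order-isomorphism between the truncations extends by sending the two minima to one another. Thus condition~(1) is equivalent to ${\mathcal T}_{KL}(X)\backslash\{\omega X\}$ and ${\mathcal T}_{KL}(Y)\backslash\{\omega Y\}$ being order-isomorphic. By Theorem~\ref{GHCG} these two truncated posets are order-anti-isomorphic to ${\mathcal T}_K^*(X)$ and ${\mathcal T}_K^*(Y)$ respectively; and since the composite of an order-anti-isomorphism, an order-isomorphism, and an order-anti-isomorphism is again an order-isomorphism, the truncated posets are order-isomorphic if and only if ${\mathcal T}_K^*(X)$ and ${\mathcal T}_K^*(Y)$ are. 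Finally Theorem~\ref{PSDJ}, applicable since $X$ and $Y$ are zero-dimensional locally compact paracompact, states precisely that this last condition is equivalent to the homeomorphism of $\sigma X\backslash X$ and $\sigma Y\backslash Y$, i.e.\ to condition~(2). Chaining these equivalences yields the corollary.

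I do not expect a genuine obstacle here, since all the substantial work is already carried out in Theorems~\ref{GHCG} and~\ref{PSDJ}. The only points that need a little care are (a) verifying that $\omega X$ is truly the minimum, not merely a minimal element, of ${\mathcal T}_{KL}(X)$, which is exactly what makes the deletion step reversible, and (b) the parity count ``anti $\circ$ iso $\circ$ anti $=$ iso'', so that the chain of equivalences does not silently produce an order-anti-isomorphism somewhere along the way.
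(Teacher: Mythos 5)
Your proposal is correct and takes essentially the same route as the paper, which likewise deduces the corollary from Theorem~\ref{GHCG} together with Theorem~\ref{PSDJ}. The minimum-deletion argument and the ``anti $\circ$ iso $\circ$ anti $=$ iso'' bookkeeping that you spell out are exactly the (implicit) content of the paper's one-line reduction of condition (1) to the order-isomorphism of ${\mathcal T}_K^*(X)$ and ${\mathcal T}_K^*(Y)$.
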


\begin{proof}
By the above lemma ${\mathcal T}_{KL}(X)$ and
${\mathcal T}_{KL}(Y)$ are order-isomorphic if and only if
${\mathcal T}_K^*(X)$ and ${\mathcal T}_K^*(Y)$ are
order-isomorphic. Now Theorem \ref{PSDJ} now completes the
proof.
\end{proof}

\begin{lemma}\label{UPJ}
Let $X$ be  a locally compact
paracompact space. If $Z\in {\mathcal Z}(X^*)$ contains $\beta
X\backslash\sigma X$, then $Z$ is regular-closed in $X^*$.
\end{lemma}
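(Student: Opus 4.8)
Let $X$ be a locally compact paracompact space. If $Z\in{\mathcal Z}(X^*)$ contains $\beta X\backslash\sigma X$, then $Z$ is regular-closed in $X^*$.

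The plan is to show $Z\subseteq\mbox{cl}_{X^*}\mbox{int}_{X^*}Z$, which together with $Z$ being closed gives $Z=\mbox{cl}_{X^*}\mbox{int}_{X^*}Z$. I would split $Z$ into the part lying in $\sigma X$ and the part lying in $\beta X\backslash\sigma X$, and handle these separately. Since $\sigma X$ is open in $\beta X$ (as noted in Section 3), the trace $Z\cap\sigma X$ is relatively open in $Z$; points of $Z$ outside $\sigma X$ are exactly those in $\beta X\backslash\sigma X$, all of which belong to $Z$ by hypothesis.

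First I would treat a point $x\in Z\cap(\sigma X\backslash X)$. Because $Z$ is a zero-set in $X^*$ and $\{S^*:S\in{\mathcal Z}(X)\}$ is a base for the closed sets of $X^*$, I can run the argument used in Lemma~3.\ref{UKPJ} (itself modelled on Lemma~6.4 of~[7]): pick $S\in{\mathcal Z}(X)$ with $x\in X^*\backslash S^*\subseteq Z$, then intersect finitely many zero-sets $T_1^*,\dots,T_n^*$ containing $x$ with $S^*\cap T_1^*\cap\cdots\cap T_n^*=\emptyset$, so that for $T=T_1\cap\cdots\cap T_n\in{\mathcal Z}(X)$ we get $x\in T^*\subseteq Z$. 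Now $T^*\in{\mathcal Z}(X^*)$ misses $X$, so $T^*=\mbox{cl}_{\beta X}T\backslash X$ is a zero-set of $\beta X$ missing $X$; by Lemma~3.\ref{UHKPJ} such a zero-set is regular-closed in $X^*$, hence $x\in T^*=\mbox{cl}_{X^*}\mbox{int}_{X^*}T^*\subseteq\mbox{cl}_{X^*}\mbox{int}_{X^*}Z$. (One must check $x$ actually lies in $\mbox{cl}_{X^*}\mbox{int}_{X^*}T^*$ and not merely that $T^*$ is regular-closed; this is fine because $T^*$ has empty interior only if $T^*=\emptyset$, by Lemma~1.\ref{YUGG}7 of~[3] as invoked in the proof of Lemma~3.\ref{UHKPJ}, and $x\in T^*$.)

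Next I would treat a point $x\in\beta X\backslash\sigma X$. Here I claim $x\in\mbox{cl}_{X^*}(X^*\backslash\sigma X)$ already, and moreover $X^*\backslash\sigma X\subseteq\mbox{int}_{X^*}Z$. The second inclusion is immediate: $\beta X\backslash\sigma X$ is closed in $\beta X$, its complement $\sigma X$ is open, and $\beta X\backslash\sigma X\subseteq Z$ by hypothesis; but I need $\beta X\backslash\sigma X$ to be contained in the \emph{interior} of $Z$ relative to $X^*$, which is not automatic — $\sigma X$ being open only says $\beta X\backslash\sigma X$ is closed. So the genuine point is to show every $x\notin\sigma X$ is a limit of points of $\mbox{int}_{X^*}Z$. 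I would argue that since $Z$ is a zero-set of $X^*$ containing the closed set $X^*\backslash\sigma X$, and since any neighborhood of $x$ in $X^*$ meets $X^*\backslash\sigma X$ in a set that is ``large'' (again using that $\sigma X$ is a countable-union construction and $X$ is paracompact, so the complement is $\omega$-far from being locally covered), one can find inside that neighborhood a nonempty zero-set of $\beta X$ missing $X$ and contained in $Z$, which by Lemma~3.\ref{UHKPJ} has nonempty interior in $X^*$; this interior lies in $\mbox{int}_{X^*}Z$, giving $x\in\mbox{cl}_{X^*}\mbox{int}_{X^*}Z$. The main obstacle is exactly this step: producing, near an arbitrary point of $\beta X\backslash\sigma X$, a nonempty zero-set of $\beta X$ disjoint from $X$ and sitting inside $Z$. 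The tools at hand are Lemma~3.\ref{WRRJ} (every nonempty zero-set of $\beta X$ meets $\sigma X$ — so one cannot hope such a zero-set lies entirely outside $\sigma X$, meaning the witnessing zero-set must straddle the boundary) and the regularity results Lemmas~3.\ref{UHKPJ} and~3.\ref{UKPJ}. I expect the cleanest route is: take any $x\in\beta X\backslash\sigma X$ and any basic closed neighborhood $S^*$ of $x$ in $X^*$ with $x\in X^*\backslash S^*\subseteq Z$ (possible since $Z$ is a zero-set and the $S^*$ form a closed base), then shrink as in the first case to get $T\in{\mathcal Z}(X)$ with $x\in T^*\subseteq Z$ and $T^*\in{\mathcal Z}(\beta X)$ missing $X$; Lemma~3.\ref{UHKPJ} finishes it identically. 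So in fact both cases reduce to the \emph{same} lemma, and the only thing to verify is that the shrinking procedure of Lemma~3.\ref{UKPJ} applies to \emph{every} point of $Z$, which it does because it used nothing about $x$ beyond $x\in Z$ and $Z$ being a zero-set with the $S^*$ forming a closed base — it never used $x\notin\sigma X$. Hence the whole proof is: for each $x\in Z$, extract $T\in{\mathcal Z}(X)$ with $x\in T^*\subseteq Z$, observe $T^*=\mbox{cl}_{\beta X}T\backslash X\in{\mathcal Z}(\beta X)$ misses $X$, apply Lemma~3.\ref{UHKPJ} to conclude $x\in\mbox{cl}_{X^*}\mbox{int}_{X^*}T^*\subseteq\mbox{cl}_{X^*}\mbox{int}_{X^*}Z$, and therefore $Z=\mbox{cl}_{X^*}\mbox{int}_{X^*}Z$.
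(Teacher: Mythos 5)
There is a genuine gap, and it sits exactly at the step you flagged and then claimed to have resolved. The extraction of $T\in{\mathcal Z}(X)$ with $x\in T^*\subseteq Z$ (the "shrinking procedure" of Lemma \ref{UKPJ}) begins by choosing $S\in{\mathcal Z}(X)$ with $x\in X^*\backslash S^*\subseteq Z$. Since $X^*\backslash S^*$ is \emph{open} in $X^*$, such an $S$ exists only when $Z$ contains an open neighborhood of $x$, i.e.\ only when $x\in\mbox{int}_{X^*}Z$. In Lemma \ref{UKPJ} this is harmless because the point there is taken from $\mbox{int}_{X^*}Z\backslash\sigma X$ to begin with; but in the present lemma the entire content is about points of $Z\backslash \mbox{int}_{X^*}Z$, and for such a boundary point every basic open set $X^*\backslash S^*$ containing it meets $X^*\backslash Z$, so no $S$ (and hence no $T$) exists. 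Your remark that the procedure "never used $x\notin\sigma X$" is true but beside the point: what it did use is $x\in\mbox{int}_{X^*}Z$, which is precisely what cannot be assumed. A secondary problem is that even where the extraction succeeds, $T^*=\mbox{cl}_{\beta X}T\backslash X$ for $T\in{\mathcal Z}(X)$ is not in general a zero-set of $\beta X$, so Lemma \ref{UHKPJ} does not apply to it as stated; note also that the hypothesis here gives $Z\in{\mathcal Z}(X^*)$ only, and when $X$ is not $\sigma$-compact $X^*$ is not a zero-set of $\beta X$, so one cannot upgrade $Z$ to a zero-set of $\beta X$ either.

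The paper's actual argument avoids pointwise extraction altogether. Since $Z\supseteq\beta X\backslash\sigma X$, the cozero-set $X^*\backslash Z$ is $\sigma$-compact and contained in $\sigma X$, hence contained in $G^*$ where $G=\bigcup_{i\in J}X_i$ for some countable $J$ (notation of Proposition \ref{RAWDJ}); $G^*$ is clopen in $X^*$. Then $X^*\backslash G^*$ is a clopen subset of $X^*$ contained in $Z$, so it lies in $\mbox{int}_{X^*}Z$ automatically, and on the clopen piece $G^*$ the set $Z\cap G^*$ is a zero-set of $G^*$ with $G$ locally compact and realcompact ($\sigma$-compact implies Lindel\"{o}f implies realcompact), so the cited theorem of Comfort and Negrepontis gives that $Z\cap G^*$ is regular-closed in $G^*$. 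Gluing the two clopen pieces yields $\mbox{cl}_{X^*}\mbox{int}_{X^*}Z=(X^*\backslash G^*)\cup(Z\cap G^*)=Z$. If you want to salvage your outline, you would need to replace the pointwise shrinking by some such global decomposition, or by a genuinely different argument that reaches boundary points of $Z$.
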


\begin{proof}
We assume that  $X$ is non-$\sigma$-compact. Suppose
that  $Z\in {\mathcal Z}(X^*)$ is such that $Z\supseteq\beta
X\backslash\sigma X$. Assume the notations of Proposition \ref{RAWDJ}. Since
$X^*\backslash Z\subseteq \sigma X$, and $X^*\backslash Z$ (being a
cozero-set  in $X^*$) is $\sigma$-compact, we have $X^*\backslash
Z\subseteq G^*$, where $G=\bigcup_{i\in J} X_i$ and $J\subseteq I$
is countable. Obviously since $X^*\backslash G^*\subseteq Z$ we have
\[(X^*\backslash G^*)\cup \mbox {cl}_{G^*}\mbox {int}_{G^*}(Z\cap
G^*)\subseteq \mbox {cl}_{X^*}\mbox {int}_{X^*}Z.\]

To show the
reverse inclusion suppose that $x\in \mbox {cl}_{X^*}\mbox
{int}_{X^*}Z$ and $x\in G^*$. Suppose that $x\notin \mbox
{cl}_{G^*}\mbox {int}_{G^*}(Z\cap G^*)$ and let $V$ be an open
neighborhood of $x$ in $G^*$ such that $V\cap \mbox
{int}_{G^*}(Z\cap G^*)=\emptyset$. But since $x\in \mbox
{cl}_{X^*}\mbox {int}_{X^*}Z$, we have $\emptyset\neq V\cap \mbox
{int}_{X^*}Z \subseteq  G^*\cap Z$, and thus $V\cap \mbox
{int}_{X^*}Z \subseteq V\cap \mbox {int}_{G^*}(Z\cap G^*)$, which is
a contradiction. Now since $G$ is $\sigma$-compact, it is
Lindel\"{o}f and therefore realcompact (see Theorem \ref{EPSDJ}.12 of [4]).
By Theorem 1\ref{YUGG}8 of [3], for a locally compact realcompact space
$T$, any zero-set of $T^*$ is regular-closed in $T^*$. Thus since
$G$ is also locally compact  $Z\cap G^*\in {\mathcal Z}(G^*)$ is
regular-closed in $G^*$. Therefore we have
\[\mbox {cl}_{X^*}\mbox {int}_{X^*}Z=(X^*\backslash G^*)\cup \mbox
{cl}_{G^*}\mbox {int}_{G^*}(Z\cap G^*)=(X^*\backslash G^*)\cup
(Z\cap G^*)=Z\]
which completes the
proof.
\end{proof}

\begin{theorem}\label{GSETG}
Let $X$ be  a locally compact
paracompact non-compact space. Then ${\mathcal T}_S(X)$ contains an
order-anti-isomorphic copy of  ${\mathcal
T}_{CL}(X)\backslash\{\omega X\}$.
\end{theorem}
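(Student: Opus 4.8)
The plan is to realise the desired copy at the level of subsets of $X^*$ through the order-anti-isomorphism $\mu$, exactly following the pattern of Theorems \ref{FCG} and \ref{GHCG}. Since $F_{\omega X}$ collapses all of $X^*$ to the added point, $\mu(\omega X)=X^*$, so by Lemma \ref{UKKPPJ} the set $\mu\big({\mathcal T}_{CL}(X)\backslash\{\omega X\}\big)$ is exactly the set of \emph{proper} zero-sets $D$ of $X^*$ with $D\supseteq\beta X\backslash\sigma X$, while by Lemma \ref{EPPJ} the set $\mu\big({\mathcal T}_S(X)\big)$ is the set of non-empty closed subsets of $X^*$ contained in $\sigma X\backslash X$. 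I would then define $\phi:\mu\big({\mathcal T}_{CL}(X)\big)\backslash\{X^*\}\rightarrow{\mathcal C}(X^*)\backslash\{\emptyset\}$ by $\phi(D)=\mbox{cl}_{X^*}(X^*\backslash D)$, prove that $\phi$ is an order-anti-isomorphism onto its image and that its image is contained in $\mu({\mathcal T}_S(X))$, and then set $\psi=\mu^{-1}\phi\mu$; as a composite of three order-reversing bijections, $\psi$ is then an order-anti-isomorphism of ${\mathcal T}_{CL}(X)\backslash\{\omega X\}$ onto a subset of ${\mathcal T}_S(X)$, as required. It suffices to treat the non-$\sigma$-compact case; when $X$ is $\sigma$-compact we have $\beta X\backslash\sigma X=\emptyset$, ${\mathcal T}_{CL}(X)={\mathcal T}_C(X)$, ${\mathcal T}_S(X)={\mathcal T}(X)$, and the argument below still applies once the appeal to Lemma \ref{UPJ} is replaced by the fact that $X$, being Lindel\"{o}f, is realcompact, so that every zero-set of $X^*$ is regular-closed in $X^*$ (by the result of [3] used in the proof of Lemma \ref{UPJ}).

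For well-definedness, fix $D\in\mu({\mathcal T}_{CL}(X))$ with $D\neq X^*$, and write $X=\bigoplus_{i\in I}X_i$ with each $X_i$ $\sigma$-compact and non-compact, as in Proposition \ref{RAWDJ}. Since $X^*$ is compact, the cozero-set $X^*\backslash D$ is $\sigma$-compact; since $D\supseteq\beta X\backslash\sigma X$ we have $X^*\backslash D\subseteq\sigma X\backslash X=\bigcup\big\{\big(\bigcup_{i\in J}X_i\big)^*:J\subseteq I\text{ countable}\big\}$, and because the clopen sets $\big(\bigcup_{i\in J}X_i\big)^*$ form an upward-directed family, the $\sigma$-compactness of $X^*\backslash D$ gives a single countable $J\subseteq I$ with $X^*\backslash D\subseteq G^*$, where $G=\bigcup_{i\in J}X_i$. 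As $G^*$ is clopen in $X^*$ and $G^*\subseteq\sigma X\backslash X$, it follows that $\phi(D)=\mbox{cl}_{X^*}(X^*\backslash D)\subseteq G^*\subseteq\sigma X\backslash X$; and $\phi(D)\neq\emptyset$ since $D$ is a proper closed subset of $X^*$. Hence $\phi(D)\in\mu({\mathcal T}_S(X))$. One direction of monotonicity is immediate: $D_1\subseteq D_2$ gives $X^*\backslash D_1\supseteq X^*\backslash D_2$, hence $\phi(D_1)\supseteq\phi(D_2)$.

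The step that genuinely uses structure — and the only real obstacle I foresee — is the reverse monotonicity, which at once yields injectivity of $\phi$ and order-reversal of $\phi^{-1}$ on the image. Here I would invoke Lemma \ref{UPJ}: each $D$ in the domain is regular-closed in $X^*$, so $X^*\backslash D$ is regular-open, that is $X^*\backslash D=\mbox{int}_{X^*}\mbox{cl}_{X^*}(X^*\backslash D)=\mbox{int}_{X^*}\phi(D)$. Therefore, if $\phi(D_1)\subseteq\phi(D_2)$, then taking interiors gives $X^*\backslash D_1\subseteq X^*\backslash D_2$, i.e. $D_1\supseteq D_2$. Thus $\phi$ is injective with an order-reversing inverse on its image, hence an order-anti-isomorphism onto its image; composing with the order-anti-isomorphism $\mu$ of Theorem \ref{YUS} on both ends yields the order-anti-isomorphism $\psi:{\mathcal T}_{CL}(X)\backslash\{\omega X\}\rightarrow{\mathcal T}_S(X)$ onto its image, which is the assertion. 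Beyond this regular-closedness input, the only thing to be careful about is tracking the parity of "anti" through the three composed maps.
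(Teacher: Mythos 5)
Your proposal is correct and is essentially the paper's own proof: your map $\phi(D)=\mbox{cl}_{X^*}(X^*\backslash D)$ is literally the same as the paper's $\phi(Z)=X^*\backslash\mbox{int}_{X^*}Z$, and you use the same well-definedness argument via Lemma \ref{EPPJ} and the same key input (regular-closedness from Lemma \ref{UPJ}) for the reverse monotonicity. Your explicit treatment of the $\sigma$-compact case is a small bonus the paper's proof leaves implicit.
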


\begin{proof}
Suppose that $X$ is non-$\sigma$-compact and let
$\phi:\mu({\mathcal
T}_{CL}(X))\backslash\{X^*\}\rightarrow\mu({\mathcal T}_{S}(X))$ be
defined by $\phi(Z)=X^*\backslash \mbox {int}_{X^*}Z$. To see that
$\phi$ is well-defined, we note that if $Z\in\mu({\mathcal
T}_{CL}(X))\backslash\{X^*\}$, then by Lemma \ref{RAJ}0  we have
$Z\supseteq \beta X\backslash\sigma X$, and thus since
$X^*\backslash Z\subseteq \sigma X$ is $\sigma$-compact, using the
notations of Proposition \ref{RAWDJ}, we have  $X^*\backslash Z\subseteq
G^*$, where $G=\bigcup_{i\in J} X_i$ and $J\subseteq I$ is
countable. Now since $X^*\backslash G^*\subseteq Z$, we have
$X^*\backslash G^*\subseteq \mbox {int}_{X^*}Z$, and thus
$\phi(Z)=X^*\backslash \mbox {int}_{X^*}Z\subseteq\sigma X$.
Therefore by Lemma \ref{EPPJ} we have
$\phi(Z)\in\mu({\mathcal T}_{S}(X))$. Now since by Lemma \ref{UPJ}
each $Z\in\mu({\mathcal T}_{CL}(X))$ is regular-closed in $X^*$, it
follows that $\phi$ and thus $\psi=\mu^{-1}\phi\mu:{\mathcal
T}_{CL}(X)\backslash\{\omega X\}\rightarrow{\mathcal T}_S(X)$ are
order-anti-isomorphisms onto their
images.
\end{proof}

We summarize some of the results of this section in the next
theorem. For this purpose we make the following notational
convention. For two partially ordered sets $P$ and $Q$ we write
$P\hookrightarrow Q$ ($P \mbox { (anti)}\hookrightarrow Q$,
respectively) if $Q$ contains an order-isomorphic
(order-anti-isomorphic, respectively) copy of $P$. We write $P\simeq
Q$ ($P \mbox { (anti)}\simeq Q$, respectively) if $P$ and $Q$ are
order-isomorphic (order-anti-isomorphic, respectively).

\begin{theorem}\label{OOTG}
Let $X$ be  a locally compact
paracompact  space. Then
\begin{itemize}
\item[\rm(1)] ${\mathcal T}^*(X) \mbox { \em  (anti)}\hookrightarrow {\mathcal
T}_L(X)$ (if $X$ is non-$\sigma$-compact);
\item[\rm(2)] ${\mathcal T}_C(X)\hookrightarrow {\mathcal T}_L(X)$;
\item[\rm(3)] ${\mathcal T}^*(X)\hookrightarrow {\mathcal T}_L(X)$;
\item[\rm(4)] ${\mathcal T}^*_K(X) \mbox { \em  (anti)}\simeq  {\mathcal T}_{KL}(X)\backslash\{\omega X\}$;
\item[\rm(5)] ${\mathcal T}_{CL}(X)\backslash\{\omega X\}\mbox{\em(anti)}\hookrightarrow {\mathcal T}_S(X)$.
\end{itemize}
\end{theorem}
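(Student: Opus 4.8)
The plan is to observe that Theorem \ref{OOTG} is merely a consolidation of results already established earlier in this section, so that the proof reduces to matching each item to the statement that proves it and recording how the order-anti-isomorphism $\mu$ of Theorem \ref{YUS} transfers the relevant map from the level of subsets of $X^*$ to the level of one-point extensions. Concretely: item (1) is exactly Theorem \ref{FCG}; item (2) is Theorem \ref{FTCG}; item (3) is Corollary \ref{FPP}; item (4) is Theorem \ref{GHCG}; and item (5) is Theorem \ref{GSETG}. Thus there is genuinely nothing new to prove, and the ``proof'' consists of these five citations together with a sentence explaining the bookkeeping.

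The one conceptual point worth spelling out is how $\mu$ interacts with the notation ``$\hookrightarrow$'' and ``$\mathrm{(anti)}\hookrightarrow$''. Since $\mu\colon\big({\mathcal T}(X),\le\big)\to\big({\mathcal C}(X^*)\backslash\{\emptyset\},\subseteq\big)$ is an order-anti-isomorphism, for any subfamily it restricts to an order-anti-isomorphism onto its $\mu$-image; consequently, if $\phi$ is an order-preserving (respectively order-reversing) injection between two such $\mu$-images, then $\mu^{-1}\phi\mu$ is an order-isomorphism (respectively order-anti-isomorphism) of the corresponding families of one-point extensions onto its image. In each of the five cited proofs the witnessing map at the $X^*$-level is visibly of one of these two types: $Z\mapsto Z\cup(\beta X\backslash\sigma X)$ for (2), which is order-preserving, yielding an order-isomorphism; $Z\mapsto X^*\backslash\mathrm{int}_{X^*}Z$ for (1) and (5), and $C\mapsto X^*\backslash C$ for (4), which are order-reversing, yielding order-anti-isomorphisms; and for (3) the composition of the identification ${\mathcal T}^*(X)={\mathcal T}_C^*(X)\subseteq{\mathcal T}_C(X)$ of Theorem \ref{YROEJ} with the embedding of (2). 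Hence the labels attached to the five items in the statement are exactly the ones produced by the earlier constructions.

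Finally, a word on the hypotheses, which is the only place a little care is needed. Items (2)--(5) hold for an arbitrary locally compact paracompact $X$; when $X$ is $\sigma$-compact the sets $\beta X\backslash\sigma X$ vanish and the maps degenerate in a harmless way (for instance in (4) one has ${\mathcal T}_{KL}(X)={\mathcal T}_K(X)={\mathcal T}_K^*(X)$, using Lemma \ref{WRRAJ}, and $C\mapsto X^*\backslash C$ still embeds). Item (1), by contrast, truly requires $X$ to be non-$\sigma$-compact, since the construction in Theorem \ref{FCG} uses that $\beta X\backslash\sigma X$ is a nonempty subset lying below every member of $\mu({\mathcal T}_L(X))$ and that, by Lemma \ref{UKPJ}, $\mathrm{int}_{X^*}Z\subseteq\sigma X$ for $Z\in\mu({\mathcal T}^*(X))$; this is why item (1) alone carries the parenthetical restriction. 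Since no step involves anything beyond re-citing established results, there is no real obstacle here --- the only risk is a clerical one, namely mislabeling which earlier theorem supplies which item.
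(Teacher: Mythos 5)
Your proposal is correct and coincides with the paper's own treatment: Theorem \ref{OOTG} is explicitly introduced there as a summary of earlier results, with item (1) given by Theorem \ref{FCG}, item (2) by Theorem \ref{FTCG}, item (3) by Corollary \ref{FPP}, item (4) by Theorem \ref{GHCG}, and item (5) by Theorem \ref{GSETG}. Your added remarks on how $\mu$ converts order-preserving versus order-reversing maps on subsets of $X^*$ into $\hookrightarrow$ versus $\mbox{(anti)}\hookrightarrow$, and on why only item (1) needs the non-$\sigma$-compactness proviso, are accurate and consistent with the cited proofs.
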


\begin{question}\label{UUTG}
In  Theorems \ref{FCG}, \ref{FTCG} and
\ref{GSETG}, which one-point extensions constitute exactly the image of
$\psi$?
\end{question}

\section{The existence of minimal and maximal elements in various sets
of one-point extensions}

We start this section  with  the following simple observation.

\begin{theorem}\label{YUGG}
Let $X$ be a locally compact
non-compact space. Then the maximal elements  of ${\mathcal T}(X)$
are exactly those of the form $X\cup\{p\}\subseteq \beta X$, for
$p\in X^*$. Moreover,  ${\mathcal T}(X)$ has a minimum, namely, its
one-point compactification.
\end{theorem}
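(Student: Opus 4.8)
The natural route is to pass everything through the order-anti-isomorphism $\mu$ of Theorem \ref{YUS}, which identifies $({\mathcal T}(X),\leq)$ with $({\mathcal C}(X^*)\setminus\{\emptyset\},\subseteq)$. Under $\mu$, maximal elements of ${\mathcal T}(X)$ correspond to \emph{minimal} nonempty closed subsets of $X^*$, and a minimum of ${\mathcal T}(X)$ corresponds to a \emph{maximum} of ${\mathcal C}(X^*)\setminus\{\emptyset\}$ with respect to inclusion. So the statement is equivalent to two purely set-theoretic/topological assertions about $X^*$: (i) the minimal nonempty closed subsets of $X^*$ are exactly the singletons $\{p\}$, $p\in X^*$; and (ii) $X^*$ itself is the largest nonempty closed subset of $X^*$. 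Assertion (ii) is immediate since $X^*\in{\mathcal C}(X^*)$ and every closed subset is contained in it; assertion (i) holds because $X^*$ is a Hausdorff ($T_1$) space, so every singleton is closed, and no proper nonempty subset of a singleton exists, whereas any closed set with at least two points properly contains a singleton closed subset. This already yields the whole theorem once we translate back: the minimum of ${\mathcal T}(X)$ is $\mu^{-1}(X^*)$, and the maximal elements are the $\mu^{-1}(\{p\})$.

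**Identifying the concrete extensions.** It remains to check that $\mu^{-1}(\{p\})$ is the subspace $X\cup\{p\}\subseteq\beta X$ and that $\mu^{-1}(X^*)$ is the one-point compactification $\omega X$. For the first: given $p\in X^*$, the space obtained from $\beta X$ by contracting the singleton $\{p\}$ is just $\beta X$ itself with quotient map the identity, so by the description of $\mu$ immediately following the proof of Theorem \ref{YUS} (namely $Y=X\cup\{p\}\subseteq Z$ with $Z=\beta Y$ and $q=F_Y$), we get $\mu(X\cup\{p\})=F_Y^{-1}(p)=\{p\}$ — here we use that $X$ is locally compact, hence open in $\beta X$, so that $X\cup\{p\}$ is indeed Tychonoff and belongs to ${\mathcal T}(X)$. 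For the second: $\omega X=X\cup\{\Omega\}$ is a one-point extension, its Stone--\v{C}ech compactification is $\beta X$ with $F_{\omega X}:\beta X\to\beta(\omega X)=\omega X$ collapsing $X^*$ to $\Omega$ (since $\omega X$ is compact, $\beta(\omega X)=\omega X$, and $F_{\omega X}$ restricted to $X$ is the identity forces the rest), so $\mu(\omega X)=F_{\omega X}^{-1}(\Omega)=X^*$. Since $X^*\neq\emptyset$ (as $X$ is non-compact), this is a legitimate element of ${\mathcal C}(X^*)\setminus\{\emptyset\}$, and being the maximum there, $\omega X$ is the minimum of ${\mathcal T}(X)$.

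**Main obstacle.** None of this is deep; the only point requiring a little care is the bookkeeping about which direction of the order $\mu$ reverses — one must remember that $\mu$ is an \emph{anti}-isomorphism, so maxima go to minima and minima (of ${\mathcal T}(X)$) go to maxima (in ${\mathcal C}(X^*)$), and that "maximal element" translates to "minimal closed set," for which the relevant fact is simply the $T_1$ separation of $X^*$. I would also explicitly invoke local compactness of $X$ at the one spot it is genuinely used, namely to guarantee $X\cup\{p\}\subseteq\beta X$ is an honest Tychonoff extension (equivalently, that $X$ is open in $\beta X$ so points of $X^*$ are not "mixed into" $X$). Writing it up, I would state the two translations as a short paragraph, verify (i) and (ii) in two or three lines each, and close by reading off the named extensions, so the whole proof is under half a page.
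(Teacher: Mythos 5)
Your proof is correct and follows exactly the route the paper intends for this ``simple observation,'' which it states without proof: translate through the order-anti-isomorphism $\mu$ of Theorem \ref{YUS}, observe that the minimal nonempty closed subsets of the $T_1$ space $X^*$ are the singletons and the maximum is $X^*$ itself, and identify $\mu^{-1}(\{p\})=X\cup\{p\}\subseteq\beta X$ and $\mu^{-1}(X^*)=\omega X$. One small quibble: local compactness is not really needed to make $X\cup\{p\}\subseteq\beta X$ a Tychonoff extension (any subspace of $\beta X$ containing $X$ densely is one); where it genuinely enters is in Theorem \ref{YUS} itself, e.g.\ via the fact $F_Y(X^*)=\beta Y\setminus X$, which you in any case invoke correctly when computing $\mu(\omega X)$.
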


\begin{theorem}\label{YOOG}
Let $X$ be a locally compact
non-compact space. Then
\begin{itemize}
\item[\rm(1)] ${\mathcal T}^*(X)$ has no maximal element.
\item[\rm(2)] The following conditions are equivalent.
\begin{itemize}
\item[\rm(a)] ${\mathcal T}^*(X)$ has a minimal element;
\item[\rm(b)] ${\mathcal T}^*(X)$ has a minimum;
\item[\rm(c)] $\upsilon X$ is locally compact and $\sigma$-compact;
\item[\rm(d)] {\em (Hager; cited in  [6], Theorem 2.9)} $X=\bigcup_{n<\omega}A_n$, where for each $n<\omega$, $A_n$ is pseudocompact and $A_n$ and $X\backslash A_{n+1}$ are completely separated in $X$.
\end{itemize}
\end{itemize}
\end{theorem}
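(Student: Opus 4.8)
The plan is to carry the whole problem over to $X^*$ by means of the order-anti-isomorphism $\mu$ of Theorem \ref{YUS}. By Theorem \ref{AADJ}, $\mu$ restricts to an order-anti-isomorphism of $\big({\mathcal T}^*(X),\leq\big)$ onto $({\mathcal Q},\subseteq)$, where
\[{\mathcal Q}=\big\{Z\in{\mathcal Z}(\beta X):Z\cap X=\emptyset\big\}\setminus\{\emptyset\};\]
so maximal, minimal and minimum elements of ${\mathcal T}^*(X)$ correspond respectively to minimal, maximal and maximum elements of ${\mathcal Q}$. Since ${\mathcal Q}\neq\emptyset$ exactly when $X$ is not pseudocompact (Corollary \ref{AWDJ}), I assume from now on that $X$ is not pseudocompact; this is the only case requiring argument in (1) and for (a)$\Leftrightarrow$(b), and it is the case for which (2) is intended (if $X$ is pseudocompact then ${\mathcal T}^*(X)=\emptyset$, so (a) and (b) fail).

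\emph{Proof of (1).} It suffices to show that ${\mathcal Q}$ has no minimal element, i.e.\ that every $Z\in{\mathcal Q}$ properly contains a member of ${\mathcal Q}$. If $Z$ has two distinct points $p,q$, choose $h\in C(\beta X,{\bf I})$ with $h(p)=0$ and $h(q)=1$; then $Z\cap Z(h)$ is a zero-set of $\beta X$, it contains $p$, it misses $X$ (as $Z$ does), and it omits $q$, so $Z\cap Z(h)\in{\mathcal Q}$ and $Z\cap Z(h)\subsetneq Z$. Hence the only situation to exclude is that a singleton $\{p\}$ with $p\in X^*$ is a zero-set of $\beta X$, and I expect this to be the crux of the proof. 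Suppose $\{p\}=Z(g)$ with $g\in C(\beta X,{\bf I})$ and $p\in X^*$, and put $O_n=g^{-1}\big([0,1/n)\big)\cap X$. Then the $O_n$ are non-empty (as $X$ is dense and $g^{-1}([0,1/n))$ is a neighbourhood of $p$) open subsets of $X$, they decrease, $\bigcap_{n}O_n=\emptyset$, and hence $O_n\supsetneq O_{n+1}$ for infinitely many $n$. Using local compactness, I would pick points $x_k$ and non-empty open $G_k\subseteq X$ with $x_k\in G_k$, $\mbox{cl}_XG_k$ compact, and $\mbox{cl}_XG_k$ contained in the ``layer'' $O_{n_k}\setminus\mbox{cl}_XO_{n_k+2}$ for a subsequence $n_k\to\infty$ with $n_{k+1}\geq n_k+2$; such layers are non-empty for suitable indices, pairwise disjoint, and, since they shrink towards $p$, locally finite in $X$, so the $G_k$ form a pairwise disjoint, locally finite family. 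Consequently $D=\{x_k:k<\omega\}$ is a closed discrete subset of $X$ which is $C^*$-embedded in $X$ (a bounded $\phi\colon D\to\mathbf R$ is extended by the locally finite, hence continuous, sum $\sum_k\phi(x_k)f_k$, where $f_k\in C(X,{\bf I})$, $f_k(x_k)=1$, $f_k\equiv0$ off $G_k$); therefore $\mbox{cl}_{\beta X}D=\beta D$ is uncountable, being the Stone--\v{C}ech compactification of an infinite discrete space. On the other hand $g(G_k)\subseteq[0,1/n_k)$ and the compact sets $g^{-1}\big([0,1/n]\big)$ decrease to $\{p\}$, so every neighbourhood of $p$ in $\beta X$ contains $G_k$ (hence $x_k$) for all large $k$; thus $x_k\to p$, so $p$ is the only accumulation point of $D$ in $\beta X$ and $\mbox{cl}_{\beta X}D=D\cup\{p\}$ is countable. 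This contradiction proves (1).

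\emph{Proof of (2).} Implication (b)$\Rightarrow$(a) is trivial. For (a)$\Rightarrow$(b), note that ${\mathcal Q}$ is closed under finite unions since $Z(f)\cup Z(g)=Z(fg)$; hence if $Z_0$ is maximal in ${\mathcal Q}$, then for every $Z\in{\mathcal Q}$ the set $Z_0\cup Z$ lies in ${\mathcal Q}$ and contains $Z_0$, so $Z_0\cup Z=Z_0$ and $Z\subseteq Z_0$, i.e.\ $Z_0$ is the maximum of ${\mathcal Q}$ and ${\mathcal T}^*(X)$ has a minimum; thus (a)$\Leftrightarrow$(b). Next, recall (from the proof of Corollary \ref{AWDJ}) that $\upsilon X$ is the intersection of all cozero-sets of $\beta X$ containing $X$, so that $\beta X\setminus\upsilon X$ is exactly the union of all members of ${\mathcal Q}$ (together with $\emptyset$); therefore ${\mathcal Q}$ has a maximum if and only if this union is itself a zero-set of $\beta X$ — equivalently, if and only if $\upsilon X$ is a cozero-set of $\beta X$ (note $\beta X\setminus\upsilon X$ is non-empty, as $X$ is not pseudocompact, and disjoint from $X$). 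Finally, $\upsilon X$ is a cozero-set of $\beta X$ if and only if (c) holds: if $\upsilon X=\mbox{coz}(g)$ with $g\in C(\beta X,{\bf I})$, then $\upsilon X$ is open in $\beta X$, hence locally compact, and $\upsilon X=\bigcup_{n}g^{-1}\big([1/n,1]\big)$ is $\sigma$-compact; conversely, if $\upsilon X$ is locally compact and $\sigma$-compact, then since $\beta(\upsilon X)=\beta X$, by 1B of [13] we get $(\upsilon X)^*=\beta X\setminus\upsilon X\in{\mathcal Z}(\beta X)$. This yields (a)$\Leftrightarrow$(b)$\Leftrightarrow$(c), and (c)$\Leftrightarrow$(d) is the cited theorem of Hager ([6], Theorem 2.9), which completes the proof.
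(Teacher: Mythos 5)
Your proof is correct, and part (2) follows essentially the same route as the paper: transfer everything through $\mu$ and Theorem \ref{AADJ} to the family ${\mathcal Q}$ of non-empty zero-sets of $\beta X$ missing $X$, use closure of ${\mathcal Q}$ under finite unions for (a)$\Leftrightarrow$(b), and identify $\beta X\backslash\upsilon X$ with $\bigcup{\mathcal Q}$ to get (b)$\Leftrightarrow$(c), with (c)$\Leftrightarrow$(d) delegated to Hager exactly as the paper does. (Both you and the paper quietly restrict to non-pseudocompact $X$ in part (2); you are slightly more explicit that (c) and (d) hold vacuously for pseudocompact $X$ while (a) and (b) fail, which is an honest reading of what the statement can mean.) The genuine divergence is in part (1). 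Both arguments reduce to showing that no singleton $\{p\}$ with $p\in X^*$ can be a zero-set of $\beta X$; the paper then sets $T=\beta X\backslash\{p\}$, observes $T$ is a $\sigma$-compact non-compact cozero-set with $\beta T=\beta X$, and invokes the Comfort--Negrepontis bound $|T^*|\geq 2^{2^{\aleph_0}}$ to contradict $|T^*|=1$. You instead build, inside the shrinking cozero layers of $g$ (where $Z(g)=\{p\}$), a pairwise disjoint locally finite family of open sets and a closed discrete $C^*$-embedded copy $D$ of $\omega$ converging to $p$ in $\beta X$, so that $\operatorname{cl}_{\beta X}D=\beta D$ is uncountable while also equal to the countable set $D\cup\{p\}$. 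This is the classical ``no point of $X^*$ is a $G_\delta$-point of $\beta X$'' argument; it is more self-contained and elementary (it does not need the growth cardinality theorem, nor really the compactness of $\operatorname{cl}_XG_k$), at the cost of more hands-on construction. One small point you should make explicit: the claim that the layers $O_{n}\backslash\operatorname{cl}_XO_{n+2}$ are non-empty for infinitely many $n$ needs a sentence --- since $Z(g)=\{p\}\subseteq X^*$, each $g^{-1}([0,1/n))$ meets $X$ at points where $g>0$, so $g(X)$ accumulates at $0$ from above, and any $t\in g(X)$ with $1/(n+1)\leq t<1/n$ witnesses that $g^{-1}((1/(n+2),1/n))\cap X\neq\emptyset$, which lies in the corresponding layer. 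With that filled in, the argument is complete.
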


\begin{proof}
1) Suppose to the contrary that $Y$ is a maximal
element of ${\mathcal T}^*(X)$ and let $S=\mu (Y)$. By Theorem \ref{AADJ},
we have $S\in{\mathcal Z}(\beta X)$ and $S\cap X=\emptyset$. Clearly
$|S|=1$, for otherwise, there is a non-empty zero-set of $\beta X$
properly contained in $S$, which contradicts the maximality of $Y$.
Let $T=\beta X\backslash S$. By Theorem 1\ref{YUGG}5 of [3], for any
$\sigma$-compact non-compact space $G$, we have $|G^*|\geq
2^{2^{\aleph_0}}$. Therefore since $T$ is $\sigma$-compact
non-compact,  we have $| \beta T\backslash T|\geq 2^{2^{\aleph_0}}$.
But this is clearly a contradiction, as $\beta T\backslash T=\beta
X\backslash (\beta X\backslash S)=S$. Therefore ${\mathcal T}^*(X)$
has no maximal element.

2) The equivalence of conditions (a) and (b) follows from the fact
that by Theorem \ref{AADJ}, for any $Y_1,Y_2\in {\mathcal T}^*(X)$ we have
$Y_1\wedge Y_2\in {\mathcal T}^*(X)$.

To show that condition (b) implies  (c), suppose that ${\mathcal
T}^*(X)$ has a minimum element $Y$. Let $C=\mu(Y)$. Then since by
Theorem \ref{AADJ} every non-empty zero-set of $\beta X$ which is disjoint
from $X$ corresponds to an element of ${\mathcal T}^*(X)$, it is
contained in $C$, and therefore since $\upsilon X$ is the
intersection of all cozero-sets of $\beta X$ which contain $X$, we
have $\beta X\backslash C\subseteq\upsilon X$. Clearly $\upsilon
X\subseteq \beta X\backslash C$, and therefore $\upsilon X=\beta
X\backslash C$ being a cozero-set in $\beta X$ is $\sigma$-compact.
It is also locally compact as it is open in $\beta X$. Thus
condition (c) holds.

Now suppose that condition (c) holds. Then since $\upsilon X$ is
locally compact and $\sigma$-compact, by 1B of [13], we have $\beta
X\backslash \upsilon X\in {\mathcal Z}(\beta X)$. We assume that $X$
is not pseudocompact, as otherwise by Corollary \ref{AWDJ} we have
${\mathcal T}^*(X)=\emptyset$. Let $Y\in{\mathcal T}^*(X)$ be such
that $\mu (Y)=\beta X\backslash \upsilon X$. Then clearly for every
$S\in{\mathcal T}^*(X)$, we have  $\upsilon X\subseteq \beta
X\backslash \mu (S)$ and thus $\mu(S)\subseteq\mu (Y)$, i.e., $Y\leq
S$, which shows that ${\mathcal T}^*(X)$ has a
minimum.
\end{proof}

A space is called {\em almost realcompact} if it is the perfect
(continuous) image of a realcompact space (see [12, 6U]).

\begin{corollary}\label{TYUGG}
Let $X$ be a locally compact
non-compact space. Consider the following conditions.
\begin{itemize}
\item[\rm(1)] $X$ is a $P$-space;
\item[\rm(2)] $X$ is almost realcompact;
\item[\rm(3)] $X$ is weakly paracompact;
\item[\rm(4)] $X$ is Dieudonn\'{e}-complete;
\item[\rm(5)] {\em [MA+$\neg$CH]} $X$ is perfectly normal.
\end{itemize}
Assume that $X$ satisfies one of the above conditions.
Then ${\mathcal T}^*(X)$ has a minimum if and only if $X$ is
$\sigma$-compact.
\end{corollary}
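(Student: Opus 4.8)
The plan is to reduce both implications to Theorem~\ref{YOOG}(2), so that the only real content is the assertion that, under each of conditions (1)--(5), the space $X$ is realcompact.

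\emph{The ``if'' direction.} Suppose $X$ is $\sigma$-compact. Then $X$ is Lindel\"of, hence realcompact, so $\upsilon X=X$ is locally compact and $\sigma$-compact; moreover $X$, being locally compact, $\sigma$-compact and non-compact, is not pseudocompact (a pseudocompact realcompact space is compact), so ${\mathcal T}^*(X)\neq\emptyset$ by Corollary~\ref{AWDJ}. Thus condition~(c) of Theorem~\ref{YOOG}(2) holds, and Theorem~\ref{YOOG}(2) gives that ${\mathcal T}^*(X)$ has a minimum. This half uses none of the conditions (1)--(5).

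\emph{Reduction of the ``only if'' direction.} Suppose ${\mathcal T}^*(X)$ has a minimum. Then ${\mathcal T}^*(X)\neq\emptyset$, so $X$ is not pseudocompact, and by Theorem~\ref{YOOG}(2) the space $\upsilon X$ is locally compact and $\sigma$-compact, hence Lindel\"of. It suffices to prove $\upsilon X\backslash X=\emptyset$, i.e.\ that $X$ is realcompact: for then $X=\upsilon X$ is $\sigma$-compact, as required. So the whole problem comes down to showing that \emph{a locally compact space $X$ satisfying one of} (1)--(5), \emph{whose Hewitt realcompactification is locally compact and $\sigma$-compact (hence Lindel\"of), is realcompact.}

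I would settle (1) directly: a locally compact $P$-space has only finite compact subsets, hence is discrete; if $p\in\upsilon X\backslash X$, regarded as a free ultrafilter on the discrete set $X$, then local compactness of $\upsilon X$ yields a compact neighbourhood of $p$ containing some $\mbox{cl}_{\beta X}A\cap\upsilon X=\upsilon A$ with $A\in p$ (for $A\subseteq X$ clopen one has $\mbox{cl}_{\beta X}A=\beta A$ and $\mbox{cl}_{\beta X}A\cap\upsilon X=\upsilon A$); since $p$ is free, $A$ is infinite, while $\upsilon A$, being closed in a compact set, is compact, so $\upsilon A=\beta A$ and $A$ is pseudocompact, hence finite --- a contradiction. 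Thus $X=\upsilon X$. For (2)--(5) I would invoke the corresponding classical characterizations, using that $\upsilon X$ is Lindel\"of: an almost realcompact, a weakly paracompact, a Dieudonn\'e-complete, or (under [MA$+\neg$CH]) a perfectly normal space with Lindel\"of Hewitt realcompactification is realcompact (see [6], and for (5) compare the form of Hager's theorem quoted in (d)). The main obstacle is precisely this last step --- matching each of (2)--(5) with the theorem that makes it work and checking its hypotheses in the present setting, the almost-realcompact case being the most delicate; (1) and the two reductions above are routine. Assembling these pieces with the ``if'' direction completes the proof.
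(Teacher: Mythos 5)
Your ``if'' direction and your reduction of the ``only if'' direction to the single claim ``$X$ is realcompact'' are both sound, and your direct treatment of condition (1) (a locally compact $P$-space is discrete; a free ultrafilter $p\in\upsilon X\backslash X$ with $\upsilon X$ locally compact would force some infinite $A\in p$ to have $\upsilon A$ compact, hence $A$ pseudocompact and finite) is correct and is a legitimate alternative to what the paper does. The problem is that for conditions (2)--(5) you have not actually supplied a proof: you defer to unnamed ``classical characterizations'' of the form \emph{property $+$ Lindel\"of $\upsilon X$ $\Rightarrow$ realcompact}, and you yourself flag matching these up as the main obstacle. No such theorems are readily quotable in that form. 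What the paper actually uses for (1)--(3) and (5) is a different kind of fact: by 6AB of [12], for spaces in these classes $\beta X\backslash\upsilon X$ is \emph{dense} in $X^*$; since Theorem \ref{YOOG} makes $\upsilon X$ locally compact, $\beta X\backslash\upsilon X$ is also \emph{closed} in $\beta X$, whence $\beta X\backslash\upsilon X=X^*$, so $X=\upsilon X$ is $\sigma$-compact. Density of $\beta X\backslash\upsilon X$ in $X^*$ is strictly weaker than realcompactness and is what these hypotheses actually deliver; your formulation asks the literature for more than it provides.

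For condition (4) the gap is sharper still: Dieudonn\'e-completeness is not among the properties for which $\beta X\backslash\upsilon X$ is known to be dense in $X^*$, and ``Dieudonn\'e-complete with Lindel\"of $\upsilon X$ implies realcompact'' is not a theorem you can simply cite. The paper sidesteps realcompactness altogether here: it invokes condition (d) of Theorem \ref{YOOG} (Hager's decomposition) to write $X=\bigcup_{n<\omega}A_n$ with each $A_n$ pseudocompact, observes that each $\mbox{cl}_XA_n$ is Dieudonn\'e-complete (closed heredity) and pseudocompact, hence compact because pseudocompactness and compactness coincide for Dieudonn\'e-complete spaces, and concludes $\sigma$-compactness directly. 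To repair your argument you would need either to import the density result of [12, 6AB] for (1)--(3) and (5) and argue as above, or to actually prove the realcompactness assertions you posit; as written, the heart of the corollary for (2)--(5) is missing.
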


\begin{proof}
Suppose that ${\mathcal T}^*(X)$ has a minimum.
First assume that one of conditions (1)-(3) and (5) holds. Then by
6AB of [12] the set  $\beta X\backslash\upsilon X$ is dense in
$X^*$. But by Theorem \ref{YOOG} $\upsilon X$ is locally compact, and thus
$\beta X\backslash\upsilon X$ is  closed in $\beta X$. Therefore
$\beta X\backslash\upsilon X=X^*$, and thus $X=\upsilon X$, which by
Theorem \ref{YOOG} is $\sigma$-compact.

Suppose that condition (4) holds. Then since ${\mathcal T}^*(X)$ has
a minimum, by Theorem \ref{YOOG} we have $X=\bigcup_{n<\omega}A_n$, where
for each $n<\omega$, $A_n$ is pseudocompact.  Since
Dieudonn\'{e}-completeness is closed hereditary, each $\mbox
{cl}_XA_n$ is Dieudonn\'{e}-complete. But pseudocompactness and
compactness coincide in the realm of Dieudonn\'{e}-complete spaces
(see 8.\ref{YUGG}3 of [4]) therefore each  $\mbox {cl}_XA_n$ being
pseudocompact is compact, and $X=\bigcup_{n<\omega}\mbox {cl}_XA_n$
is $\sigma$-compact.

The converse is clear, as if $X$ is $\sigma$-compact, then $\omega
X$ is the minimum of ${\mathcal
T}^*(X)$.
\end{proof}

It is worth to note that $X=\omega$ is the only locally compact
non-compact $P$-space for which ${\mathcal T}^*(X)$ has a minimum.
This is because if for a locally compact non-compact $P$-space $X$,
${\mathcal T}^*(X)$ has a minimum, then by Theorem \ref{YOOG} we have
$X=\bigcup_{n<\omega}A_n$, where each $A_n$ is pseudocompact,
and since each $A_n$ is also a $P$-space, it is finite. Therefore
$X$ is a countable $P$-space, and thus it is discrete (see 4K of
[5]).

\begin{theorem}\label{SYOOG}
Let $X$ be a locally compact
non-compact space. Then ${\mathcal T}_C(X)$  has  a minimum. If $X$
is realcompact or paracompact then ${\mathcal T}_C(X)$ has no
maximal element.
\end{theorem}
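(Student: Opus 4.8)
I would prove this by transporting the problem through the order-anti-isomorphism $\mu$. By Theorems \ref{YUS} and \ref{DDS}, $\mu$ restricts to an order-anti-isomorphism of $\mathcal{T}_C(X)$ onto $\mathcal{Z}(X^*)\setminus\{\emptyset\}$, so $\mathcal{T}_C(X)$ has a minimum (respectively, a maximal element) if and only if $\mathcal{Z}(X^*)\setminus\{\emptyset\}$ has a maximum (respectively, a minimal element). Since $X$ is non-compact, $X^*\ne\emptyset$, and $X^*$ is plainly the largest element of $\mathcal{Z}(X^*)$; its $\mu$-preimage is the one-point compactification $\omega X$ (indeed $\mu(\omega X)=F_{\omega X}^{-1}(p)=X^*$, where $p$ is the point at infinity and one uses $F_{\omega X}(X^*)=\omega X\setminus X$), which is compact and hence \v{C}ech-complete. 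This gives the first assertion, and reduces the second to showing that, under either hypothesis, every non-empty zero-set of the compact space $X^*$ properly contains a non-empty zero-set of $X^*$.

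I would first record a soft fact valid in any compact Hausdorff $K$: a minimal element of $\mathcal{Z}(K)\setminus\{\emptyset\}$ is either nowhere dense or a one-point set $\{p\}$ with $p$ isolated. Indeed, let $Z$ be minimal with $\mathrm{int}_K Z\ne\emptyset$. If $\mathrm{int}_K Z$ had two distinct points $x,y$, then, $(K\setminus\mathrm{int}_K Z)\cup\{y\}$ being a closed set not containing $x$, Urysohn's lemma gives $f\in C(K,\mathbf{I})$ with $f(x)=1$ vanishing on that set; then $Z\big((f-\tfrac12)\wedge 0\big)=\{f\ge\tfrac12\}$ is a non-empty zero-set inside $\mathrm{int}_K Z\subseteq Z$ omitting $y\in Z$, contradicting minimality. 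So $\mathrm{int}_K Z=\{p\}$ with $p$ isolated, and if $Z\ne\{p\}$ then $K\setminus\{p\}$ is clopen, whence $Z\cap(K\setminus\{p\})$ is a non-empty zero-set strictly inside $Z$, again impossible. Thus it remains to verify that $X^*$ has no isolated points and no non-empty nowhere dense zero-sets.

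Absence of non-empty nowhere dense zero-sets. If $X$ is realcompact this is immediate from Theorem $1.18$ of $[3]$: every zero-set of $X^*$ is regular-closed in $X^*$, hence has non-empty interior. If $X$ is paracompact we may assume $X$ is non-$\sigma$-compact (a $\sigma$-compact locally compact space being realcompact), so by Proposition \ref{RAWDJ} write $X=\bigoplus_{i\in I}X_i$. Given $\emptyset\ne Z\in\mathcal{Z}(X^*)$, Lemma \ref{WRORJ} supplies a point of $Z\cap\sigma X$, hence a point of $Z\cap M^*$ with $M=\bigcup_{i\in J}X_i$ for some countable $J$; here $M$ is clopen in $X$ and locally compact $\sigma$-compact non-compact, so $M^*$ is clopen in $X^*$ and $M^*\in\mathcal{Z}(\beta M)$ by $1$B of $[13]$. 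Intersecting with a zero-set of $\beta X$ defining $Z$ and with a zero-set of $\beta M$ defining $M^*$ shows $Z\cap M^*$ is a non-empty zero-set of $\beta M$ contained in $M^*$, which by Theorem $1.17$ of $[3]$ has non-empty interior in $M^*$; since $M^*$ is open in $X^*$, this gives $\mathrm{int}_{X^*}Z\ne\emptyset$.

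Absence of isolated points, and the main obstacle. Suppose $p\in X^*$ is isolated; then $X^*\setminus\{p\}$ is compact, so by normality of $\beta X$ there is open $G\ni p$ in $\beta X$ with $\mathrm{cl}_{\beta X}G\cap X^*=\{p\}$, and putting $W=G\cap X$ and $L=\mathrm{cl}_X W$ (closed in $X$) we get $\mathrm{cl}_{\beta X}L=L\cup\{p\}$. If $X$ is paracompact it is normal, so $L$ is $C^*$-embedded in $\beta X$ and $\beta L=L\cup\{p\}$; as $L$ is non-compact (else $p\in X$) and almost compact it is pseudocompact — an unbounded $f\in C(L)$ would make the remainder point a zero-set of $\beta L$, forcing $L$ to be $\sigma$-compact non-compact and $|\beta L\setminus L|\ge 2^{2^{\aleph_0}}$ by Theorem $1.15$ of $[3]$, absurd — and a pseudocompact paracompact space is compact, a contradiction. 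For realcompact $X$ the same chain applies once $L$ is known to be $C^*$-embedded in $\beta X$ (then $\beta L=L\cup\{p\}$, so $L$ is almost compact, pseudocompact, and, being a closed subspace of a realcompact space, compact, contradiction). This $C^*$-embedding step is the delicate point: it is automatic when $X$ is normal but for merely realcompact (not paracompact) $X$ it is exactly where realcompactness rather than local compactness alone is used, and I would either argue it directly or cite the corresponding remainder property of $X^*$ from $[3]$. Granting it, $\mathcal{Z}(X^*)\setminus\{\emptyset\}$ has no minimal element, and therefore $\mathcal{T}_C(X)$ has no maximal element.
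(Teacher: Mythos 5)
Your first assertion and your treatment of the paracompact case are correct, but they take a genuinely different route from the paper's. The paper reduces more directly: a maximal element of $\mathcal{T}_C(X)$ corresponds to a minimal non-empty zero-set of $X^*$, which must be a singleton $\{p\}$ (separate two points of a non-singleton zero-set by a zero-set and intersect); it then shows $\{p\}\in\mathcal{Z}(\beta X)$ --- in the realcompact case by intersecting a zero-set of $\beta X$ through $p$ that misses $X$ with one cutting $\{p\}$ out of $X^*$, in the paracompact case by locating $p$ in $\mathrm{cl}_{\beta X}M$ for a countable clopen $\sigma$-compact sum $M$ via Lemma \ref{WRORJ} --- and gets a contradiction because $\beta X\setminus\{p\}$ is then a cozero-set of $\beta X$ containing $X$, hence $\sigma$-compact, locally compact and almost compact (its Stone-\v{C}ech compactification is $\beta X$ simply because it lies between $X$ and $\beta X$), hence pseudocompact and Lindel\"{o}f, hence compact, forcing $p$ to be isolated in $\beta X$. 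Your dichotomy (nowhere dense versus isolated singleton) together with the regular-closedness results of [3] is a legitimate alternative, and your handling of the nowhere-dense half is sound in both cases.

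The realcompact half of your ``no isolated points'' step, however, is a genuine gap --- one you flag yourself but do not close. Your argument needs $\beta L=L\cup\{p\}$ for $L=\mathrm{cl}_X(G\cap X)$, which requires $L$ to be $C^*$-embedded in $X$; for a Tychonoff locally compact realcompact space that is not normal, closed subsets need not be $C^*$-embedded, and without that the equality $\mathrm{cl}_{\beta X}L=L\cup\{p\}$ only says that $L$ is locally compact with one-point compactification $L\cup\{p\}$, which carries no pseudocompactness information at all (think of $L\cong\mathbf{R}$). The repair is to abandon $L$ and argue inside $\beta X$, exactly as the paper does: if $p$ is isolated in $X^*$ then $\{p\}\in\mathcal{Z}(X^*)$, realcompactness supplies $T\in\mathcal{Z}(\beta X)$ with $p\in T\subseteq X^*$, whence $\{p\}\in\mathcal{Z}(\beta X)$, and the almost-compact/$\sigma$-compact argument applied to $\beta X\setminus\{p\}$ (whose Stone-\v{C}ech compactification is automatically $\beta X$, with no embedding hypothesis) gives the contradiction. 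Note that this argument does not use isolatedness of $p$ at all --- it eliminates every singleton zero-set of $X^*$ at once --- so in the realcompact case your entire detour through isolated points can be dropped.
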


\begin{proof}
Since $\omega X\in{\mathcal T}_C(X)$, it is clear that
${\mathcal T}_C(X)$ has a minimum.

Now suppose that $X$ is realcompact. Suppose that  ${\mathcal
T}_C(X)$ has a maximal element $Y$. If $G=\mu(Y)$, then $|G|=1$. As
otherwise, $G$ properly contains a non-empty zero-set of $X^*$,
contradicting the maximality of $Y$.  Let $G=\{p\}$ and let $S\in
{\mathcal Z}(\beta X)$  be such that $p\in S$ and $ S\cap
X=\emptyset$. Let $T\in {\mathcal Z}(\beta X)$  be such that
$G=T\backslash X$. Then $G=T\cap S\in {\mathcal Z}(\beta X)$. Now
$\beta X\backslash G$ is almost compact and thus pseudocompact (see
6J of [5]). But it is also $\sigma$-compact as it is a cozero-set in
$\beta X$, therefore, it is compact. This contradictions shows that
in this case ${\mathcal T}_C(X)$ has no maximal element.

Next suppose that $X$ is paracompact. We
may assume that $X$ is not $\sigma$-compact, as $\sigma$-compact
spaces are realcompact. Suppose that  ${\mathcal T}_C(X)$ has a
maximal element $Y$ and let $H=\mu (Y)$. As above $H=\{p\}$, for
some $p\in X^*$. Since by Lemma \ref{WRORJ} $H\cap \sigma X\neq \emptyset$,
we have $p\in \sigma X$. Assume the notations of Proposition \ref{RAWDJ} and
let $J\subseteq I$ be  countable and  such that $p\in \mbox
{cl}_{\beta X}M$, where $M=\bigcup _{i\in J}X_i$. Since
$H\in{\mathcal Z}(X^*)$, we have $H\in{\mathcal Z}(M^*)$. Let
$S\in{\mathcal Z}(\mbox {cl}_{\beta X}M)$ be such that $H=S\cap
M^*$. Now since $M$ is $\sigma$-compact, $M^*\in{\mathcal Z}(\mbox
{cl}_{\beta X}M)$, and thus $H\in{\mathcal Z}(\mbox {cl}_{\beta
X}M)$. But $M^*$ is itself clopen in $\beta X$ and therefore $H\in
{\mathcal Z}(\beta X)$, which as in the above part we get a
contradiction. Therefore ${\mathcal T}_C(X)$ has no maximal
element.
\end{proof}

In connection with the above theorem we remark that, assuming that
every cardinal number is non-measurable, paracompact spaces are
realcompact (see Corollary \ref{HGH}(m) of [12]).

\begin{theorem}\label{UOOG}
Let $X$ be a locally compact
non-compact space. Then ${\mathcal T}_K(X)$  has a minimum.
${\mathcal T}_K(X)$ may or may not have maximal elements.
\end{theorem}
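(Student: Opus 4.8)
The first assertion is immediate: the one-point compactification $\omega X = X\cup\{\Omega\}$ is locally compact, so $\omega X\in{\mathcal T}_K(X)$, and by Theorem \ref{YUGG} it is the minimum of all of ${\mathcal T}(X)$, hence in particular of the subset ${\mathcal T}_K(X)$. Equivalently, in terms of $\mu$: by Theorem \ref{TODJ} we have $\mu({\mathcal T}_K(X))={\mathcal B}(X^*)\setminus\{\emptyset\}$, and $X^*$ itself is the largest element of this family, so its $\mu$-preimage $\omega X$ is the smallest element of ${\mathcal T}_K(X)$. So the real content is the second sentence: I must exhibit one locally compact non-compact $X$ for which ${\mathcal T}_K(X)$ has a maximal element, and another for which it does not.

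For the ``may not'' direction, I would look for $X$ whose remainder $X^*$ has no minimal non-empty clopen subset — that is, $X^*$ has no isolated-in-a-clopen-sense atoms in its Boolean algebra ${\mathcal B}(X^*)$. Under $\mu$ (an order-anti-isomorphism, Theorem \ref{YUS}) a maximal element of ${\mathcal T}_K(X)$ corresponds to a \emph{minimal} element of ${\mathcal B}(X^*)\setminus\{\emptyset\}$, i.e.\ a minimal non-empty clopen subset of $X^*$. A clean candidate is $X=\omega\times\mathbf{I}$ (or $X=\mathbf{R}$, or $X=[0,\infty)$): here $X$ is locally compact, $\sigma$-compact and connected-at-infinity enough that $X^*$ is connected, so the only clopen subset of $X^*$ is $X^*$ itself; then ${\mathcal B}(X^*)\setminus\{\emptyset\}=\{X^*\}$, whose unique (hence minimal) element pulls back to $\omega X$, so $\omega X$ is simultaneously the minimum \emph{and} the only, hence trivially maximal, element. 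To get genuine non-existence of maximal elements I instead want $X^*$ to have infinitely many clopen sets but no minimal ones: take $X$ to be a countable disjoint sum $\bigoplus_{n<\omega}\mathbf{R}$ refined appropriately, or more simply $X=\omega\times\mathbf{R}$ itself already won't do (its remainder has the atom coming from a single copy). The right example is one where $X^*$ is homeomorphic to a space like $\omega^*$ or a remainder with a non-atomic clopen algebra: e.g.\ $X=\omega$ (the countable discrete space), for which $X^*=\omega^*=\beta\omega\setminus\omega$. The clopen algebra of $\omega^*$ is ${\mathcal P}(\omega)/\mathrm{fin}$, which is atomless, so it has no minimal non-empty element; hence ${\mathcal T}_K(\omega)$ has no maximal element. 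For the ``may'' direction, I want $X^*$ to have a minimal non-empty clopen set: take $X=\omega\sqcup Y$ where $Y$ has connected remainder — say $X=\omega\oplus\mathbf{R}$. Then $X^*=\omega^*\oplus\mathbf{R}^*$ (a topological sum, since $\omega$ and $\mathbf{R}$ are clopen in $X$), and $\mathbf{R}^*$ is a non-empty clopen subset of $X^*$ which is connected, hence minimal among non-empty clopen subsets; its $\mu$-preimage is a maximal element of ${\mathcal T}_K(X)$. I should also record that this maximal element is \emph{not} the maximum (since $\omega^*$ also supports clopen sets incomparable with $\mathbf{R}^*$), which matches the phrasing.

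Carrying this out: (i) State and prove the minimum via Theorem \ref{YUGG} (one line). (ii) Recall from Theorem \ref{TODJ} that $\mu$ restricts to an order-anti-isomorphism ${\mathcal T}_K(X)\to{\mathcal B}(X^*)\setminus\{\emptyset\}$, so maximal elements of ${\mathcal T}_K(X)$ correspond to minimal non-empty clopen subsets of $X^*$. (iii) For the example with a maximal element, verify that for $X=\omega\oplus\mathbf{R}$ one has $X^*=\omega^*\oplus\mathbf{R}^*$ (using that a clopen decomposition $X=A\oplus B$ gives $X^*=A^*\oplus B^*$, since $\mathrm{cl}_{\beta X}A$ and $\mathrm{cl}_{\beta X}B$ partition $\beta X$ into clopen pieces), and that $\mathbf{R}^*$ is connected (a standard fact: the remainder of $[0,\infty)$, and of $\mathbf{R}$, is a continuum), hence clopen-minimal. (iv) For the example without maximal elements, take $X=\omega$, note $X^*=\beta\omega\setminus\omega$, and invoke the classical fact that the Boolean algebra of clopen subsets of $\omega^*$ is ${\mathcal P}(\omega)/\mathrm{fin}$, which is atomless — so no non-empty clopen subset of $\omega^*$ is minimal, and ${\mathcal T}_K(\omega)$ has no maximal element.

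The main obstacle is purely expository: both example-facts (connectedness of $\mathbf{R}^*$, atomlessness of the clopen algebra of $\omega^*$) are classical but need to be cited correctly — the first from the standard theory of Stone–Čech remainders of $\sigma$-compact locally compact spaces (e.g.\ the relevant parts of [3] or [4]), the second from the identification ${\mathcal B}(\omega^*)\cong{\mathcal P}(\omega)/\mathrm{fin}$ together with the elementary observation that this quotient has no atoms (given any infinite $A\subseteq\omega$, split it into two infinite pieces). No new technical work is required beyond Theorems \ref{YUS}, \ref{YUGG} and \ref{TODJ}; the content is in choosing the two examples and checking their remainders have, respectively, a minimal non-empty clopen set and none.
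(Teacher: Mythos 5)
Your overall strategy is exactly the paper's: the minimum is the one-point compactification, and via the order-anti-isomorphism $\mu$ (Theorems \ref{YUS} and \ref{TODJ}) maximal elements of ${\mathcal T}_K(X)$ correspond to minimal non-empty clopen subsets of $X^*$, so the task reduces to exhibiting one remainder with such an atom and one without. For the non-existence example your choice $X=\omega$ is correct and in fact cleaner than the paper's: the paper uses an \emph{uncountable} discrete space and must invoke Lemma \ref{WRORJ} to push an arbitrary non-empty clopen set of $X^*$ into some $A^*\simeq\omega^*$ with $A$ countable before splitting it; you work in $\omega^*$ directly and only need that ${\mathcal P}(\omega)/\mathrm{fin}$ is atomless. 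For the existence example the paper takes $X=\bigoplus_{i\in I}[0,1)$ and uses that each $[0,1)^*$ is a minimal non-empty clopen subset of $X^*$.

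There is, however, one genuine factual error in your justification of the ``may have'' example: the remainder of the whole real line is \emph{not} connected. Since $\mathbf{R}=(-\infty,0]\cup[0,\infty)$ is a union of two zero-sets meeting in the compact set $\{0\}$, one has $\mathrm{cl}_{\beta\mathbf{R}}(-\infty,0]\cap\mathrm{cl}_{\beta\mathbf{R}}[0,\infty)=\{0\}$, so $\mathbf{R}^*$ is the disjoint union of the two continua $(-\infty,0]^*$ and $[0,\infty)^*$, each of which is clopen in $\mathbf{R}^*$. Thus $\mathbf{R}^*$ is not itself a minimal non-empty clopen set, and the same slip affects your parenthetical claims about $X=\mathbf{R}$ and $X=\omega\times\mathbf{I}$ having connected remainders. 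Your conclusion for $X=\omega\oplus\mathbf{R}$ still stands, because each of the two components of $\mathbf{R}^*$ \emph{is} a minimal non-empty clopen subset of $X^*$ (being a connected clopen set), so a maximal element of ${\mathcal T}_K(X)$ still exists; but the argument as written appeals to a false ``standard fact.'' The clean fix is to use a half-line, i.e.\ $[0,\infty)$ or $[0,1)$, whose remainder genuinely is a continuum --- which is precisely what the paper does.
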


\begin{proof}
It is clear that  ${\mathcal T}_K(X)$ has a minimum,
namely, its one-point compactification.

Let $X=\bigoplus_{i\in I}
X_i$, where $I\neq \emptyset$ and for each $i\in I$, $X_i=[0,1)$.
Since for each $i\in I$, we have $X_i^*\in {\mathcal B}(X^*)$, there
exists a $Y_i\in {\mathcal T}_K(X)$ such that $\mu(Y_i)=X_i^*$. Now
since each $X_i^*$ does not properly contains any non-empty element
of ${\mathcal B}(X^*)$, the corresponding $Y_i$'s are maximal
elements of ${\mathcal T}_K(X)$.

Now let $X$ be an uncountable discrete space and let $C\in{\mathcal
B}(X^*)\backslash\{\emptyset\}$. By Lemma \ref{WRORJ}, we have $C\cap \sigma
X\neq\emptyset$. Let $A$ be a countable subset of $X$ such that
$C\cap A^*\neq\emptyset$. Now $C\cap A^*$ is clopen in $A^*\simeq
\omega^*$ and therefore it properly contains a non-empty clopen
subset of $A^*$, which is therefore a clopen subset of $X^*$. By
Theorem \ref{TODJ} this shows that ${\mathcal T}_K(X)$ has no maximal
element.
\end{proof}

\begin{lemma}\label{YHG}
Let $X$ be a normal space. Then
every one-point regular extension of $X$ also is normal.
\end{lemma}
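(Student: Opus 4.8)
The plan is to prove normality of a one-point regular extension $Y=X\cup\{p\}$ of the normal space $X$ directly from the definition: take disjoint closed sets $A,B\subseteq Y$ and produce disjoint open sets separating them, using normality of $X$ for the ``part inside $X$'' and regularity of $Y$ only to deal with the single extra point $p$.

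First I would record the bookkeeping facts that make the two topologies fit together. Since $Y$ is regular it is in particular $T_1$, so $\{p\}$ is closed in $Y$ and hence $X=Y\setminus\{p\}$ is open in $Y$. Two consequences: every subset of $X$ that is open in the subspace $X$ is already open in $Y$; and for every $A$ that is closed in $Y$ the trace $A\cap X$ is closed in $X$ (while any $B\subseteq X$ closed in $Y$ is closed in $X$ as well). These are the only structural observations needed.

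Now, given disjoint closed $A,B\subseteq Y$: since $A\cap B=\emptyset$ the point $p$ lies in at most one of them, so after possibly interchanging the names of $A$ and $B$ we may assume $p\notin B$, hence $B\subseteq X$. The sets $A\cap X$ and $B$ are disjoint and closed in $X$, so normality of $X$ supplies disjoint sets $U,V$ open in $X$ — therefore open in $Y$ — with $A\cap X\subseteq U$ and $B\subseteq V$. Since $B$ is closed in $Y$ and $p\notin B$, regularity of $Y$ gives an open set $G\ni p$ with $\mathrm{cl}_Y G\cap B=\emptyset$. Put $W_A=U\cup G$ and $W_B=V\setminus\mathrm{cl}_Y G$. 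Both are open in $Y$; $A\subseteq W_A$ because $A\cap X\subseteq U$ and $A\setminus X\subseteq\{p\}\subseteq G$; $B\subseteq W_B$ because $B\subseteq V$ and $B\cap\mathrm{cl}_Y G=\emptyset$; and $W_A\cap W_B=\emptyset$ since $U\cap V=\emptyset$ while $G\setminus\mathrm{cl}_Y G=\emptyset$. Thus $A$ and $B$ are separated, so $Y$ is normal.

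There is no serious obstacle here; the one place to be careful is the opening observation that $X$ is open in $Y$ (which is exactly where the separation axiom on $Y$ beyond regularity — namely $T_1$ — is used), because without it the sets $U,V$ produced by normality of $X$ need not be open in $Y$ and the whole reduction collapses. Once the ``open/closed in $X$ versus in $Y$'' dictionary is fixed, the single enlargement of $U$ to $U\cup G$ across the one new point, balanced by trimming $V$ with $\mathrm{cl}_Y G$, is entirely routine.
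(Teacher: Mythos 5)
Your proof is correct and follows essentially the same route as the paper: reduce to the case where $p$ lies in one of the two closed sets, separate the traces in $X$ by normality, shrink one side by $\mathrm{cl}_Y G$ and enlarge the other by $G$ using regularity of $Y$ at $p$. The only difference is that you make explicit the (correct and implicitly used) observation that $X$ is open in $Y$, so that sets open in $X$ are open in $Y$.
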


\begin{proof}
Suppose that  $Y=X\cup\{p\}$ is a one-point regular
extension of $X$. Let $A$ and $B$ be disjoint closed subsets of $Y$.
If $A$ and $B$ are closed subsets of $X$, then obviously they can be
separated by disjoint open sets in $X$, and thus in  $Y$. So suppose
that  $p\in A$, and let $U$ and $V$ be disjoint open subsets of $X$
such that $ A\cap X\subseteq U$ and $B\subseteq V$. Let $W$ be an
open neighborhood of $p$ in $Y$ such that $B\cap\mbox
{cl}_YW=\emptyset$. Then $U\cup W$ and $V\backslash \mbox {cl}_YW$
are disjoint open subsets of $Y$ which separate $A$ and $B$,
respectively.
\end{proof}

We call a  space $X$ {\em locally Lindel\"{o}f}, if every $x\in X$
has an open neighborhood $U$ in $X$ such that $\mbox {cl}_XU$ is
Lindel\"{o}f.

\begin{theorem}\label{PPG}
Let $X$ be a paracompact
non-Lindel\"{o}f space. Then
\begin{itemize}
\item[\rm(1)] ${\mathcal T}_D(X)$  has a minimum if and only if $X$ is locally Lindel\"{o}f;
\item[\rm(2)] If $X$ is moreover locally compact, then ${\mathcal T}_D(X)$ has a maximal element.
\end{itemize}
\end{theorem}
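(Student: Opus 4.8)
The plan is to handle the two parts separately: part~(2) is a short deduction from the machinery already developed, while part~(1) calls for a direct filter-theoretic analysis. Throughout, I identify a one-point extension $Y=X\cup\{p\}$ with the filter $\mathcal{F}_Y=\{U\subseteq X:U\cup\{p\}\text{ is a neighbourhood of }p\text{ in }Y\}$ of open subsets of $X$; then $Y_1\geq Y_2$ iff $\mathcal{F}_{Y_2}\subseteq\mathcal{F}_{Y_1}$, and $Y\in\mathcal{T}_D(X)$ iff $X\setminus A\in\mathcal{F}_Y$ for every closed Lindel\"of $A\subseteq X$. Consequently, if $\mathcal{T}_D(X)$ has a minimum $Y_0$, then $\mathcal{F}_{Y_0}$ is the $\subseteq$-smallest member of $\{\mathcal{F}_Y:Y\in\mathcal{T}_D(X)\}$; in particular it contains the filter $\mathcal{F}_0$ generated by $\{X\setminus A:A\subseteq X\text{ closed Lindel\"of}\}$, which is proper precisely because $X$ is non-Lindel\"of. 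The only possible candidate for the minimum is therefore the extension whose $p$-neighbourhood filter is exactly $\mathcal{F}_0$.

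For the implication ``minimum exists $\Rightarrow$ $X$ locally Lindel\"of'' I would first show $\mathcal{F}_{Y_0}=\mathcal{F}_0$. Suppose instead some $W\in\mathcal{F}_{Y_0}$ has $C=X\setminus W$ not contained in any closed Lindel\"of set; then $C$ is a closed non-Lindel\"of (hence paracompact non-Lindel\"of) subspace, so it contains an uncountable subset $P$ that is closed and discrete in $X$. Fixing an ultrafilter $\mathcal{U}$ on $P$ extending the cocountable filter, and using collectionwise normality of $X$ to choose a ``telescoping'' discrete family $\{O_x^n\}_{x\in P,\,n\geq 1}$ of open sets with $x\in O_x^{n+1}$ and $\mathrm{cl}_X O_x^{n+1}\subseteq O_x^n$, I would build the one-point extension $Y'$ whose $p'$-neighbourhood filter is generated by the sets $(\bigcup_{x\in B}O_x^n)\setminus K$ with $B\in\mathcal{U}$, $n\geq 1$, and $K\subseteq X$ closed Lindel\"of. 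One checks: this filter is proper and has empty intersection; $Y'$ is regular at each point of $X$ (separate a point from $p'$ using a neighbourhood whose closure meets at most one $\mathrm{cl}_X O_x^1$, which is possible by discreteness, and discard that index from $B$); $Y'$ is regular at $p'$ (shrink within the telescope and discard the countably many indices $x\in B$ with $\mathrm{cl}_X O_x^{n+1}\cap K\neq\emptyset$); hence $Y'$ is Tychonoff by Lemma~\ref{YHG}; $Y'\in\mathcal{T}_D(X)$; and $W\notin\mathcal{F}_{Y'}$ because every basic neighbourhood of $p'$ meets $P\subseteq C$. This contradicts $Y_0\leq Y'$, so $\mathcal{F}_{Y_0}=\mathcal{F}_0$. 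Finally, for $x\in X$, regularity of $Y_0$ at $x$ separates $x$ from the closed set $\{p\}$, giving $W\in\mathcal{F}_0$ and an open $V\ni x$ with $V\cap W=\emptyset$; since $W\supseteq X\setminus A$ for some closed Lindel\"of $A$, we get $V\subseteq A$, so $A$ is a closed Lindel\"of neighbourhood of $x$, and $X$ is locally Lindel\"of.

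For the converse, ``$X$ locally Lindel\"of $\Rightarrow$ minimum exists'', I would first establish the Lindel\"of analogue of Proposition~\ref{RAWDJ}: a paracompact locally Lindel\"of space $X$ decomposes as $X=\bigoplus_{i\in I}X_i$ with each $X_i$ clopen and Lindel\"of. (Take a locally finite open refinement of a cover of $X$ by open sets with Lindel\"of closures; each such closure meets only countably many others, because a discrete family of non-empty sets meets a Lindel\"of set in only countably many members; let the $X_i$ be the unions over the connected components of the resulting locally countable overlap graph, which are countable, hence the $X_i$ are Lindel\"of, and one checks they are clopen.) Since a closed Lindel\"of $A$ meets only countably many $X_i$ and each countable union $\bigcup_{i\in J}X_i$ is clopen and Lindel\"of, $\mathcal{F}_0$ coincides with the filter generated by $\{X\setminus\bigcup_{i\in J}X_i:J\subseteq I\text{ countable}\}$, which has a base of clopen subsets of $X$. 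Hence the corresponding extension $Y_0$ is zero-dimensional (so regular) at $p$, is regular at each point of $X_i$ since $X_i$ is clopen in $Y_0$ and Lindel\"of-hence-normal, and is $T_1$; by Lemma~\ref{YHG} it is normal, so Tychonoff, so $Y_0\in\mathcal{T}_D(X)$, and by the filter description $Y_0$ is the minimum of $\mathcal{T}_D(X)$.

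For part~(2), $X$ is locally compact paracompact and non-Lindel\"of, hence non-$\sigma$-compact. By Theorem~\ref{EPSDJ} the map $\mu$ restricts to an order-anti-isomorphism of $\mathcal{T}_D(X)$ onto $\mathcal{C}(\beta X\setminus\sigma X)\setminus\{\emptyset\}$, so the maximal elements of $\mathcal{T}_D(X)$ correspond exactly to the minimal non-empty closed subsets of $\beta X\setminus\sigma X$, i.e. its singletons (as $\beta X$ is Hausdorff). It remains to see $\beta X\setminus\sigma X\neq\emptyset$: writing $X=\bigoplus_{i\in I}X_i$ as in Proposition~\ref{RAWDJ} with $I$ uncountable, if $\sigma X=\beta X$ then by compactness finitely many sets $\mathrm{cl}_{\beta X}(\bigcup_{i\in J_k}X_i)$ with each $J_k$ countable cover $\beta X$, so the clopen set $\mathrm{cl}_{\beta X}(\bigcup_{i\notin J_1\cup\cdots\cup J_n}X_i)$ is empty although $I\setminus(J_1\cup\cdots\cup J_n)\neq\emptyset$ --- a contradiction. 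Choosing any $x\in\beta X\setminus\sigma X$, the extension $\mu^{-1}(\{x\})$ is then a maximal element of $\mathcal{T}_D(X)$.

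The main obstacle is the construction and verification of the auxiliary extension $Y'$ in part~(1): manufacturing a one-point Tychonoff extension in $\mathcal{T}_D(X)$ clinging to a prescribed uncountable closed discrete set forces the use of the telescoping discrete family (to obtain regularity at the new point past a closed Lindel\"of ``hole'') together with a uniform ultrafilter (to keep every neighbourhood of the new point meeting that set), and checking the separation axioms for $Y'$ is the delicate step; the remaining arguments are routine bookkeeping with the filters $\mathcal{F}_Y$ and the previously established results.
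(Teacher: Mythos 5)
Your argument is correct, and while your part (2) and the broad strategy of part (1) (contradict minimality by manufacturing an auxiliary one-point extension lying in ${\mathcal T}_D(X)$) coincide with the paper's, both halves of part (1) are realized differently. For the sufficiency of local Lindel\"ofness, the paper constructs $\delta X$ with neighbourhoods of $\Delta$ the complements of closed Lindel\"of sets and verifies regularity directly: given a closed Lindel\"of $F$ it covers $F$ by countably many open sets with Lindel\"of closures, passes to a locally finite refinement, and produces a larger closed Lindel\"of $G$ with $F\subseteq \mathrm{int}_X G$. You instead first prove that a paracompact locally Lindel\"of space is a topological sum of clopen Lindel\"of subspaces, after which the same extension has a clopen neighbourhood base at the added point and the separation axioms are immediate; this is cleaner at the verification stage but imports an auxiliary decomposition lemma (a Lindel\"of analogue of Proposition \ref{RAWDJ}) that the paper does not need --- your sketch of it via the locally countable overlap graph is sound. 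For necessity, the paper starts from a point where local Lindel\"ofness fails, extracts an uncountable locally finite family of open sets meeting a fixed open $U$ with $p\notin \mathrm{cl}_Y U$, and builds the competing extension from co-countable subfamilies; you instead prove the sharper intermediate claim that the minimum's trace filter equals the filter generated by complements of closed Lindel\"of sets, using an uncountable closed discrete set (available because paracompact non-Lindel\"of spaces have uncountable extent --- a standard fact you should cite or prove), collectionwise normality for the telescoping discrete expansions, and a uniform ultrafilter (the co-countable filter would already suffice, as in the paper). Both constructions verify regularity of the auxiliary extension by the same two mechanisms (discreteness at points of $X$, discarding countably many indices at the new point) and both invoke Lemma \ref{YHG} to upgrade to normality; what your version buys is the explicit identification ${\mathcal F}_{Y_0}={\mathcal F}_0$, from which local Lindel\"ofness falls out of Hausdorff separation, at the cost of two standard but unproved-in-the-paper facts about paracompact spaces. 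Part (2) is the paper's argument, with the useful added check that $\beta X\backslash \sigma X\neq\emptyset$.
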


\begin{proof}
1) Suppose  that $X$ is locally
Lindel\"{o}f. Let $\delta X= X\cup\{\Delta\}$, where $\Delta\notin
X$. Define a topology on $\delta X$ consisting of open sets of $X$
together with sets of the form $\{\Delta\}\cup (X\backslash F)$,
where $F$ is a closed Lindel\"{o}f subspace of $X$. It is
straightforward to see that $\delta X$ is a topological space which
contains $X$ as a dense subspace. We first check that $X$ is
Hausdorff.

So suppose that  $a,b\in \delta X$ and $a\neq b$. If $a,b\in X$,
then clearly they can be separated by disjoint open sets in $X$, and
thus in $\delta X$. Suppose that $a=\Delta$ and let $U$ be an open
neighborhood of $b$ in $X$ such that $\mbox {cl}_XU$ is
Lindel\"{o}f. Then the sets $\{\Delta\}\cup (X\backslash \mbox
{cl}_XU)$ and $U$ are disjoint open sets of $\delta X$ separating
$a$ and $b$, respectively.

Next we show that $\delta X$ is regular.
So suppose that $y\in \delta X$ and let $U$ be an open neighborhood
of $y$ in $\delta X$. First suppose that $y=\Delta$. Then $U$ is of
the form $\{\Delta\}\cup (X\backslash F)$, for some closed
Lindel\"{o}f subspace $F$ of $X$. For each $x\in F$, let $U_x$ be an
open neighborhood of $x$ in $X$ with $\mbox {cl}_XU_x$ being
Lindel\"{o}f. Since $F$ is Lindel\"{o}f, there exist $x_1,
x_2,\ldots\in F$ such that $F\subseteq \bigcup_{n\geq 1}U_{x_n}$.
Consider the open cover ${\mathcal U}= \{U_{x_n}\}_{n\geq
1}\cup\{X\backslash F\}$ of $X$. Then since $X$ is paracompact,
there exists a locally finite open refinement ${\mathcal V}$ of
${\mathcal U}$. Let
\[G=\mbox {cl}_X\Big(\bigcup\{V\in {\mathcal V}: V\cap
F\neq\emptyset\}\Big).\]
Then since if $V\in {\mathcal V}$ and $V\cap
F\neq\emptyset $, then $V\subseteq U_{x_n}$ for some $n\geq 1$, and
$ {\mathcal V}$ is locally finite, we have
\[G=\bigcup\{\mbox {cl}_X V: V\in {\mathcal V} \mbox { and } V\cap
F\neq\emptyset\}\subseteq \bigcup_{n\geq 1} \mbox {cl}_X
U_{x_n}=H.\]
Thus $G$ being a closed subset of the Lindel\"{o}f space
$H$ is itself Lindel\"{o}f. Now we note that
\[F\subseteq \bigcup\{V\in {\mathcal V}: V\cap
F\neq\emptyset\}\subseteq \mbox {int}_X G\]
and therefore we have
\[\mbox {cl}_{\delta X}\big(\{\Delta\}\cup (X\backslash
G)\big)=\{\Delta\}\cup \mbox {cl}_X (X\backslash G)\subseteq
\{\Delta\}\cup (X\backslash F)\]
i.e., $\{\Delta\}\cup (X\backslash G)$ is an open
neighborhood of $y$ in $\delta X$ whose closure in $\delta X$ is
contained in $U$. Now suppose that $y\in X$ and let $V$ and $W$ be
open neighborhoods of $y$ in $X$ with $\mbox {cl}_XV$ being
Lindel\"{o}f and $\mbox {cl}_XW\subseteq U\cap V$. Then $\mbox
{cl}_{\delta X}W=\mbox {cl}_XW\subseteq U$. This shows that $\delta
X$ is regular, and since it is Lindel\"{o}f, it is normal.

Clearly $\Delta\notin\mbox
{cl}_{\delta X}F$, for any closed Lindel\"{o}f subset $F$ of $X$,
and thus $\delta X\in {\mathcal T}_D(X)$.  To show that  $\delta X$
is a minimum, suppose that $Y=X\cup\{p\}\in{\mathcal T}_D(X)$ and
let $f:Y\rightarrow\delta X$ be defined such that  $f|X=\mbox
{id}_X$ and $f(p)=\Delta$. Then since any open neighborhood of
$\Delta$ in $\delta X$ is of the form $V=\{\Delta\}\cup (X\backslash
F)$, for some closed Lindel\"{o}f subset $F$ of $X$, and $p\notin
\mbox {cl}_YF$, there exists an open neighborhood $U$ of $p$ in $Y$
such that $U\cap F=\emptyset$, and therefore  $f(U)\subseteq V$,
i.e., $f$ is continuous at $p$ and thus on $Y$. This shows that
$Y\geq \delta X$, which completes the proof of this part.

Next suppose that ${\mathcal T}_D(X)$  has a minimum, say
$Y=X\cup\{p\}$.
Suppose that  $X$ is not locally
Lindel\"{o}f and let $U$ be an open subset of $X$ such that $p\notin
\mbox {cl}_YU$ and $\mbox {cl}_XU$ is not Lindel\"{o}f. Let
$\{U_i\}_{i\in I}$ be a
cover of $\mbox {cl}_XU$ consisting of open subsets of $X$ with no
countable subcover. Refining  $\{U_i\}_{i\in I}$ by using
regularity, we may assume
that $\mbox {cl}_XU$ is not covered by any countable union of closures
of $U_i$'s in $X$. Let ${\mathcal V}$ be a locally finite open
refinement of $\{U_i\}_{i\in I}\cup\{X\backslash \mbox {cl}_XU\}$.
Let
\[{\mathcal W}=\{V\in{\mathcal V}: V\cap U\neq\emptyset
\}=\{W_j\}_{j\in J}\]
which is faithfully indexed. It is clear that $J$ is
uncountable, as otherwise, since $\{W_j\}_{j\in J}$ covers $U$ and
they are locally finite $\mbox {cl}_XU\subseteq \bigcup_{i\in
J}\mbox {cl}_XW_j$, which is a contradiction, as each $W_j$ is a
subset of some $U_i$. For each $j\in J$, let $x_j\in W_j\cap U$. Let
$A=X\cup \{q\}$, with $q\notin X$, and define a topology on $A$
consisting of open sets of $X$ together with sets of the form
$B\cup\{q\}$, where $B\subseteq X$, the set $B\cup\{p\}$ is open in
$Y$, and $B\supseteq \bigcup_{j\in J\backslash L} C_j$, where
$L\subseteq J$ is countable,  and for each $j\in J\backslash L$,
the set  $C_j$ is an open neighborhood of $x_j$ in $X$ contained in
$W_j\cap U$. Then it is easy to verify that $A$ is a topological
space containing $X$ as a dense subspace.

To see that
$A$ is a $T_1$-space, let $x\in X$. Since ${\mathcal W}$ is locally
finite, there exists a finite set $L\subseteq J$ such that $x\notin
W_j$, for any $j\in J\backslash L$.
Let
\[B=(D\cap X)\cup\bigcup_{j\in J\backslash L}(W_j\cap U)\]
where $D$ is
an open neighborhood of $p$ in $Y$ not containing $x$. Then
$B\cup\{q\}$ is an open neighborhood of $q$ in $A$  which does not
contain $x$.

Next we show that $A$ is regular. So suppose that $y\in A$ and let
$W$ be  an open neighborhood of  $y$ in $A$. First suppose that
$y=q$. Then $W=B\cup\{q\}$, where $B\subseteq X$, the set
$B\cup\{p\}$ is open in $Y$ and $B\supseteq \bigcup_{j\in
J\backslash L} C_j$, for some countable $L\subseteq J$ and open sets
$C_j$'s of $X$ such that $x_j\in C_j\subseteq W_j\cap U$. Let $G$ be
an open neighborhood of $p$ in $Y$ such that $\mbox {cl}_YG\subseteq
B\cup\{p\}$, and for each $j\in J\backslash L$, let  $H_j$ be an
open neighborhood of $x_j$ in $X$ with $\mbox {cl}_X H_j\subseteq
C_j$. Then
\[V=(G\cap X)\cup\bigcup_{j\in J\backslash L}H_j\cup \{q\}\]
is an open neighborhood of $q$ in $A$ and
\[ \mbox {cl}_AV=\mbox {cl}_X(G\cap X)\cup\bigcup_{j\in J\backslash
L}\mbox {cl}_XH_j\cup \{q\}\subseteq B\cup\bigcup_{j\in J\backslash
L}C_j\cup \{q\}=B\cup\{q\}=W. \]
Now suppose that  $y\in X$. Let $F$ and $G$ be disjoint
open neighborhoods of $p$ and $y$ in $Y$, respectively. Let $H$ be
an open neighborhood of $y$ in $X$ such that $\mbox {cl}_X
H\subseteq W\cap G$, and let $K$ be an open neighborhood of $y$ in
$X$ intersecting at most finitely many of $W_j$'s.  Let the finite
set $L\subseteq J$ be such that $K\cap W_j=\emptyset$, for any $j\in
J\backslash L$. Let
\[D=(F\cap X)\cup\bigcup _{j\in J\backslash L}W_j.\]
Then $D\cup \{q\}$ is an open neighborhood of $q$ in $A$
missing $K\cap H$. Therefore $q\notin \mbox {cl}_A (K\cap H)$, and
thus $K\cap H$  is an open neighborhood of $y$ in $A$ such that
\[\mbox {cl}_A(K\cap H)=\mbox {cl}_X(K\cap H)\subseteq \mbox
{cl}_X H \subseteq W.\]
This shows that $A$ is regular and thus by Lemma
\ref{YHG}, it is also normal.

Now let $P$ be a closed Lindel\"{o}f subspace of
$X$. For each $x\in P\cap \mbox {cl}_XU$, let  $V_x$ be an open
neighborhood of $x$ in $X$ which intersects only finitely many of
$W_j$'s, say for $j\in L_x$, where $L_x\subseteq J$ is finite. Since
$P\cap \mbox {cl}_XU$ is closed in $P$, it is Lindel\"{o}f, and
therefore since
\[P\cap \mbox {cl}_XU\subseteq \bigcup\{V_x:x\in P\cap
\mbox{cl}_XU\}\]
there exist $x_1, x_2,\ldots\in P\cap \mbox {cl}_XU$ such
that $P\cap \mbox {cl}_XU\subseteq\bigcup_{n\geq 1}V_{x_n} $. Let
$L=\bigcup_{n\geq 1}L_{x_n}$. Then clearly for each $j\in
J\backslash L$, we have $W_j\cap P\cap \mbox {cl}_XU=\emptyset$. Now
since $Y=X\cup\{p\}\in {\mathcal T}_D(X)$, we have $p\notin \mbox
{cl}_YP$, and thus there exists an open neighborhood $M$ of $p$ in
$Y$ such that $M\cap P=\emptyset $. Let
\[B=(M\cap X)\cup\bigcup_{j\in J\backslash L}(W_j\cap U).\]
Then $B\cup\{q\}$ is an open neighborhood of $q$ in $A$,
and we have
\[P\cap\big(B\cup\{q\}\big)=P\cap\bigcup_{j\in J\backslash L}(W_j\cap U)
=\emptyset.\]
This shows that $ q\notin \mbox {cl}_A P$, and thus $
A\in {\mathcal T}_D(X)$. But this is impossible, as by the way we
defined neighborhoods of $q$ in $A$, each of them contains an $x_j$,
for some $j\in J$, and therefore has non-empty intersection with
$U$, which contradicts  the fact that $A\geq Y$. This shows that $X$
is locally Lindel\"{o}f.

2) This is clear as in this case by Theorem \ref{EPSDJ}, any
$Y=X\cup\{p\}$, where $p\in \beta X\backslash\sigma X$, belongs to
${\mathcal T}_D(X)$ and it is obviously
maximal.
\end{proof}

We note in passing that a hedgehog with an uncountable number of
spines is an example of a paracompact space which is not locally
Lindel\"{o}f.

\begin{theorem}\label{GHHGG}
Let $X$ be a locally compact
paracompact non-$\sigma$-compact space. Then ${\mathcal T}_L(X)$ has
both maximal and minimal elements.
\end{theorem}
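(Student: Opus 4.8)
The plan is to transport the question to $X^*$ through the order-anti-isomorphism $\mu$ of Theorem \ref{YUS}, and in fact to exhibit both a maximum and a minimum of ${\mathcal T}_L(X)$; once these are produced, the existence of maximal and minimal elements is immediate.

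For the minimal element I would simply note that the one-point compactification $\omega X$ is compact, hence Lindel\"of, so $\omega X\in{\mathcal T}_L(X)$; by Theorem \ref{YUGG} it is the minimum of the whole poset ${\mathcal T}(X)$, and therefore also the minimum of the subset ${\mathcal T}_L(X)$. For the maximal element I would invoke Lemma \ref{ERWRPJ}, which says that ${\mathcal T}_L(X)$ consists exactly of those $Y\in{\mathcal T}(X)$ with $\mu(Y)\supseteq\beta X\backslash\sigma X$. Since $\sigma X$ is open in $\beta X$ and contains $X$, the set $C_0=\beta X\backslash\sigma X$ is closed in $X^*$, and it is non-empty because $X$ is non-$\sigma$-compact: if $\sigma X=\beta X$ then, $\sigma X$ being the union of the directed family of clopen subsets $\mbox{cl}_{\beta X}(\bigcup_{i\in J}X_i)$ ($J\subseteq I$ countable) covering the compact space $\beta X$, one member of this family already equals $\beta X$, which forces $\bigcup_{i\in J}X_i$ to be dense and clopen in $X$, hence equal to $X$, so that $X$ is $\sigma$-compact — a contradiction. (This non-emptiness was already used in the proof of Theorem \ref{EJ}.) By surjectivity of $\mu$ choose $Y_0\in{\mathcal T}(X)$ with $\mu(Y_0)=C_0$; then $Y_0\in{\mathcal T}_L(X)$ by Lemma \ref{ERWRPJ}, and for every $S\in{\mathcal T}_L(X)$ one has $\mu(S)\supseteq C_0=\mu(Y_0)$, whence $S\leq Y_0$ since $\mu$ reverses order. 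Thus $Y_0$ is the maximum of ${\mathcal T}_L(X)$ (it is the unique Lindel\"of member of ${\mathcal T}_D(X)$ appearing in Theorem \ref{EJ}), and in particular ${\mathcal T}_L(X)$ has a maximal element.

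I do not expect any real obstacle here: the statement is a direct corollary of Theorem \ref{YUS}, Lemma \ref{ERWRPJ}, and Theorem \ref{YUGG}, together with the routine non-emptiness of $\beta X\backslash\sigma X$. The only points worth a word of care are that $\omega X$ and $Y_0$ are genuine elements of ${\mathcal T}_L(X)$ (the first because compact spaces are Lindel\"of, the second by Lemma \ref{ERWRPJ}) and that ${\mathcal T}_L(X)$ is non-empty, which is clear since $\omega X\in{\mathcal T}_L(X)$.
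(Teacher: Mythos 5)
Your proposal is correct and follows essentially the same route as the paper: the paper's (one-line) proof likewise deduces the result from Lemma \ref{ERWRPJ} together with the observation that $\beta X\backslash\sigma X$ and $X^*$ both lie in $\mu({\mathcal T}_L(X))$, these being respectively the least and greatest elements of that family, hence (via the anti-isomorphism $\mu$) the maximum and minimum of ${\mathcal T}_L(X)$. You merely make explicit two details the paper leaves tacit — that $\omega X$ is the preimage of $X^*$ and that $\beta X\backslash\sigma X\neq\emptyset$ when $X$ is non-$\sigma$-compact — and you prove the slightly stronger assertion that a maximum and a minimum exist.
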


\begin{proof}
This follows from Lemma \ref{ERWRPJ} and the fact that both
$\beta X\backslash \sigma X$ and $X^*$ belong to $\mu({\mathcal
T}_L(X))$.
\end{proof}

\begin{theorem}\label{HJL}
Let $X$ be a locally compact
paracompact non-$\sigma$-compact space. Then ${\mathcal T}_S(X)$ has
a  maximal element but does not have a minimal element.
\end{theorem}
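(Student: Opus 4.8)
The plan is to push the whole question through the order-anti-isomorphism $\mu$ of Theorem \ref{YUS} and argue with subsets of $X^*$. By Lemma \ref{EPPJ}, $\mu$ restricts to an order-anti-isomorphism of ${\mathcal T}_S(X)$ onto the family ${\mathcal A}=\{C\in{\mathcal C}(X^*):\emptyset\neq C\subseteq\sigma X\}$ partially ordered by inclusion. Since an order-anti-isomorphism interchanges maximal and minimal elements, it suffices to show that $({\mathcal A},\subseteq)$ has a minimal element but no maximal element; the corresponding $\mu$-preimages then give a maximal element of ${\mathcal T}_S(X)$, respectively witness that ${\mathcal T}_S(X)$ has no minimal element.

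For the minimal element of ${\mathcal A}$ I would use the decomposition $X=\bigoplus_{i\in I}X_i$ of Proposition \ref{RAWDJ}, with each $X_i$ $\sigma$-compact and non-compact and $I$ necessarily uncountable. Fix any $i_0\in I$. Since $X_{i_0}$ is clopen and non-compact, $X_{i_0}^*=(\mbox{cl}_{\beta X}X_{i_0})\setminus X$ is a non-empty clopen subset of $X^*$, and $X_{i_0}^*\subseteq\mbox{cl}_{\beta X}X_{i_0}\subseteq\sigma X$. Choosing a point $p\in X_{i_0}^*$, the singleton $\{p\}$ belongs to ${\mathcal A}$, and it is trivially minimal there, as no non-empty set is properly contained in a singleton. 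This already yields a maximal element of ${\mathcal T}_S(X)$.

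For the absence of a maximal element I would take an arbitrary $C\in{\mathcal A}$ and build a strictly larger member of ${\mathcal A}$. The key observations are: (i) $X^*$ is closed in $\beta X$, since $X$ is locally compact and hence open in $\beta X$, so $X^*$ is compact and therefore so is $C$; and (ii) by the remark following Proposition \ref{RAWDJ}, $\sigma X$ is the union of the sets $\mbox{cl}_{\beta X}(\bigcup_{i\in J}X_i)$ over countable $J\subseteq I$, each of which is open (indeed clopen) in $\beta X$, and this family is up-directed because a union of two countable sets is countable. Viewing these as an open cover of the compact set $C$ and passing to a single member, I get a countable $J\subseteq I$ with $C\subseteq\mbox{cl}_{\beta X}M$, where $M=\bigcup_{i\in J}X_i$. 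Since $I$ is uncountable, pick $i_0\in I\setminus J$; then $X_{i_0}^*$ is a non-empty clopen subset of $X^*$, contained in $\sigma X$ and disjoint from $M^*\supseteq C$ (because $X_{i_0}$ and $M$ are disjoint clopen subsets of $X$). Hence $C'=C\cup X_{i_0}^*$ is a closed subset of $X^*$ with $C\subsetneq C'\subseteq\sigma X$, so $C$ is not maximal in ${\mathcal A}$.

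The only genuinely non-routine step is (ii) together with the reduction of $C$ into a single ``countable block'' $\mbox{cl}_{\beta X}M$; everything else is straightforward manipulation of clopen subsets of $\beta X$, the compactness of $X^*$, and the translation through $\mu$. I expect the main obstacle to be nothing deeper than invoking the directedness and the compactness cleanly, and making sure the enlargement $C'$ lands back inside $\sigma X$.
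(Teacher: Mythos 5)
Your proposal is correct and follows essentially the same route as the paper: it translates the problem through $\mu$ and Lemma \ref{EPPJ}, obtains maximal elements from singletons $\{p\}$ with $p\in\sigma X\backslash X$, and shows non-minimality by trapping $\mu(Y)$ inside $(\bigcup_{i\in J}X_i)^*$ for a countable $J$ and then enlarging by a summand $X_{i_0}^*$ with $i_0\notin J$. The only difference is cosmetic: you spell out the compactness/directedness argument that places $\mu(Y)$ in a single countable block, and you enlarge by adjoining $X_{i_0}^*$ directly rather than passing to $(\bigcup_{i\in L}X_i)^*$ for a countable $L\supsetneq J$ as the paper does.
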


\begin{proof}
Clearly every element of the form $Y=X\cup\{p\}$, for
$p\in \sigma X\backslash X$, is a maximal element of ${\mathcal
T}_S(X)$.

Suppose that $Y\in {\mathcal T}_S(X)$. Assume the notations of
Proposition \ref{RAWDJ}. Then since by Lemma \ref{EPPJ} $\mu (Y)\subseteq \sigma
X$, we have $\mu(Y)\subseteq (\bigcup_{i\in J}X_i)^*$, for some
countable $J\subseteq I$. Let the countable $L\subseteq I$ properly
contain $J$. Then if $T\in {\mathcal T}_S(X)$ is such that
$\mu(T)=(\bigcup_{i\in L}X_i)^*$, we have $T<Y$. Therefore
${\mathcal T}_S(X)$ has no  minimal
element.
\end{proof}

\begin{theorem}\label{HJL}
Let $X$ be a locally compact
non-pseudocompact space. Then ${\mathcal T}_P(X)$ has both minimum
and maximum.
\end{theorem}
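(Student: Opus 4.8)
The plan is to read off both extremal elements directly from the description of $\mu({\mathcal T}_P(X))$ furnished by Theorem \ref{SDJ}, using that $\mu$ is an order-anti-isomorphism (Theorem \ref{YUS}): a maximum of ${\mathcal T}_P(X)$ corresponds to a minimum (with respect to $\subseteq$) of $\mu({\mathcal T}_P(X))$, and a minimum of ${\mathcal T}_P(X)$ to a maximum of $\mu({\mathcal T}_P(X))$. By Theorem \ref{SDJ},
\[\mu\big({\mathcal T}_P(X)\big)=\big\{C\in{\mathcal C}(X^*): C\supseteq\beta X\backslash\upsilon X\big\}\backslash\{\emptyset\},\]
so the task reduces to exhibiting the largest and the smallest non-empty closed subset of $X^*$ that contains $\beta X\backslash\upsilon X$.

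First I would record two consequences of non-pseudocompactness of $X$. Since compact spaces are pseudocompact, $X$ is non-compact, whence $X^*\neq\emptyset$. And since $\upsilon X$ is the intersection of all cozero-sets of $\beta X$ containing $X$, non-pseudocompactness gives $\upsilon X\neq\beta X$, so $\beta X\backslash\upsilon X$ is a non-empty subset of $X^*$ (note $X\subseteq\upsilon X$). For the minimum of ${\mathcal T}_P(X)$: $X^*$ is itself a non-empty closed subset of $X^*$ containing $\beta X\backslash\upsilon X$, hence $X^*\in\mu({\mathcal T}_P(X))$, and trivially $C\subseteq X^*$ for every $C\in\mu({\mathcal T}_P(X))$; therefore $\mu^{-1}(X^*)$ — which is the one-point compactification $\omega X$ — is the minimum of ${\mathcal T}_P(X)$ (consistently, $\omega X$ is compact, hence pseudocompact). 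For the maximum: I would set $C_0=\mbox{cl}_{X^*}(\beta X\backslash\upsilon X)$, a non-empty closed subset of $X^*$ containing $\beta X\backslash\upsilon X$, so $C_0\in\mu({\mathcal T}_P(X))$; and any $D\in\mu({\mathcal T}_P(X))$ is closed in $X^*$ and contains $\beta X\backslash\upsilon X$, hence $D\supseteq C_0$. Thus $C_0$ is the least element of $\mu({\mathcal T}_P(X))$ and $\mu^{-1}(C_0)$ is the maximum of ${\mathcal T}_P(X)$.

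There is essentially no serious obstacle here; the only points needing care are the two standard facts invoked above (non-pseudocompact implies non-compact, and $X$ is pseudocompact if and only if $\upsilon X=\beta X$), which guarantee that the two candidate closed sets $X^*$ and $\mbox{cl}_{X^*}(\beta X\backslash\upsilon X)$ are non-empty and hence legitimately lie in $\mu({\mathcal T}_P(X))$. Everything else is formal manipulation with the order-anti-isomorphism $\mu$, so I expect the write-up to be short.
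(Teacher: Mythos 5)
Your proof is correct and follows essentially the same route as the paper: both identify the minimum with $\mu^{-1}(X^*)=\omega X$ and the maximum with the $\mu$-preimage of the smallest closed subset of $X^*$ containing $\beta X\backslash\upsilon X$. Your candidate $\mbox{cl}_{X^*}(\beta X\backslash\upsilon X)$ coincides with the paper's $\beta X\backslash\mbox{int}_{\beta X}\upsilon X$ (local compactness of $X$ gives $X\subseteq\mbox{int}_{\beta X}\upsilon X$, so the complement of the interior, i.e.\ the closure of $\beta X\backslash\upsilon X$ in $\beta X$, lies in $X^*$), so the two write-ups differ only in this cosmetic description.
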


\begin{proof}
It is clear that $\omega X$ is the minimum of
${\mathcal T}_P(X)$. Let $C=\beta X\backslash\mbox{int}_{\beta
X}\upsilon X$. Then since $X$ is locally compact $X\subseteq
\mbox{int}_{\beta X}\upsilon X$, and thus $C\subseteq X^*$. Since
$X$ is not pseudocompact $C\neq \emptyset$. By Theorem \ref{SDJ} there
exists a $Y\in{\mathcal T}_P(X)$ such that $\mu(Y)=C$. If $S\in
{\mathcal T}_P(X)$, then since by Theorem \ref{SDJ} we have
$\mu(S)\supseteq\beta X\backslash\upsilon X$, it follows that $\beta
X\backslash\mu(S)\subseteq\upsilon X$, and therefore $\beta
X\backslash\mu(S)\subseteq\mbox{int}_{\beta X}\upsilon X$ . Thus
$\mu(Y)\subseteq\mu(S)$, and therefore $S\leq Y$. This shows that
$Y$ is maximum in ${\mathcal
T}_P(X)$.
\end{proof}

\begin{theorem}\label{HGH}
Let $X$ be a locally compact
non-compact space. Then the minimum of ${\mathcal T}^*(X)$, if
exists, is the unique pseudocompact element of ${\mathcal T}^*(X)$
{\em (compare  with Theorem \ref{UOOG} and Corollary \ref{YHG} of  [B])}.
\end{theorem}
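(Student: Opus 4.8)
The plan is to use the order-anti-isomorphism $\mu$ together with Theorems~\ref{AADJ} and~\ref{SDJ} to translate the statement into a question about zero-sets of $\beta X$, and then settle that question. First I would dispose of the trivial case: if $X$ is pseudocompact then, by Corollary~\ref{AWDJ}, ${\mathcal T}^*(X)=\emptyset$, so the claim holds vacuously (there is no minimum and no pseudocompact element). So assume $X$ is not pseudocompact, and suppose ${\mathcal T}^*(X)$ has a minimum $Y_0=X\cup\{p\}$. By Theorem~\ref{YOOG}(2), this happens precisely when $\upsilon X$ is locally compact and $\sigma$-compact, and in that case the proof of Theorem~\ref{YOOG} shows $\mu(Y_0)=\beta X\backslash\upsilon X$.

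Next I would show $Y_0$ is pseudocompact. Since $\upsilon X$ is $\sigma$-compact and locally compact, by 1B of [13] we have $\beta X\backslash\upsilon X\in{\mathcal Z}(\beta X)$, so in particular $\mu(Y_0)=\beta X\backslash\upsilon X$ is a closed subset of $X^*$ containing $\beta X\backslash\upsilon X$ (trivially, with equality). Theorem~\ref{SDJ} states that $\mu\big({\mathcal T}_P(X)\big)=\{C\in{\mathcal C}(X^*):C\supseteq\beta X\backslash\upsilon X\}\backslash\{\emptyset\}$, and since $\mu(Y_0)=\beta X\backslash\upsilon X$ evidently contains $\beta X\backslash\upsilon X$, we get $Y_0\in{\mathcal T}_P(X)$.

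Finally I would show uniqueness: no other element of ${\mathcal T}^*(X)$ is pseudocompact. Suppose $S=X\cup\{q\}\in{\mathcal T}^*(X)\cap{\mathcal T}_P(X)$. By Theorem~\ref{AADJ}, $\mu(S)\in{\mathcal Z}(\beta X)$ with $\mu(S)\cap X=\emptyset$; hence $\mu(S)$ is a zero-set of $\beta X$ missing $X$, so it is contained in $\upsilon X$ being disjoint from\ldots\ wait---rather, since $\upsilon X$ is the intersection of all cozero-sets of $\beta X$ containing $X$, any zero-set of $\beta X$ disjoint from $X$ is disjoint from $\upsilon X$, i.e.\ $\mu(S)\subseteq\beta X\backslash\upsilon X$. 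On the other hand, since $S\in{\mathcal T}_P(X)$, Theorem~\ref{SDJ} gives $\mu(S)\supseteq\beta X\backslash\upsilon X$. Therefore $\mu(S)=\beta X\backslash\upsilon X=\mu(Y_0)$, and since $\mu$ is injective (Theorem~\ref{YUS}), $S=Y_0$. This proves that $Y_0$ is the unique pseudocompact element of ${\mathcal T}^*(X)$.

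I expect the only subtlety to be the bookkeeping in the last paragraph---correctly using that a zero-set of $\beta X$ missing $X$ must miss $\upsilon X$ (this is exactly the characterization of $\upsilon X$ as the intersection of cozero-sets of $\beta X$ containing $X$, as used in Corollary~\ref{AWDJ}), and pinning down that the minimum, when it exists, really is the extension whose $\mu$-image is $\beta X\backslash\upsilon X$, which is already established inside the proof of Theorem~\ref{YOOG}. Everything else is a direct application of the dictionary provided by $\mu$ and Theorems~\ref{AADJ} and~\ref{SDJ}.
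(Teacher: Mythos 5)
Your proposal is correct and follows essentially the same route as the paper: identify $\mu$ of the minimum with $\beta X\backslash\upsilon X$ via the proof of Theorem \ref{YOOG}, apply Theorem \ref{SDJ} for pseudocompactness, and combine Theorems \ref{AADJ} and \ref{SDJ} for uniqueness. The only difference is your explicit (and harmless) handling of the vacuous pseudocompact case, which the paper omits.
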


\begin{proof}
Suppose that $Y$ is the minimum of ${\mathcal T}^*(X)$
and let $C=\mu(Y)$. By the proof of Theorem \ref{YOOG} ((b) implies (c)) we
know that $C=\beta X\backslash\upsilon X$. Therefore by Theorem \ref{SDJ}, the space
$Y$ is pseudocompact.

If $S$ is another pseudocompact element of
${\mathcal T}^*(X)$, then by Theorem \ref{SDJ} we have
$\mu(S)\supseteq\beta X\backslash\upsilon X$. On the other hand, by
Theorem \ref{AADJ}, $\mu(S)$ is a zero-set in $\beta X$ contained in $X^*$,
which implies that $\mu(S)\subseteq\beta X\backslash\upsilon X$.
Thus $\mu(S)=\beta X\backslash\upsilon X=\mu(Y)$, and therefore
$S=Y$. This shows the uniqueness of
$Y$.
\end{proof}

The following result partially answers Question \ref{UPSDJ}.

\begin{theorem}\label{TGHG}
Let $X$ and $Y$ be locally
compact non-compact spaces such that $X=\bigcup_{n<\omega}A_n$ and
$Y=\bigcup_{n<\omega}B_n$, where each $A_n$ and $B_n$ is
pseudocompact and for each $n<\omega$ the pairs $A_n$, $X\backslash
A_{n+1}$ and $B_n$, $X\backslash B_{n+1}$ are completely separated
in $X$. Then the following conditions  are equivalent.
\begin{itemize}
\item[\rm(1)] ${\mathcal T}^*(X)$ and ${\mathcal T}^*(Y)$ are order-isomorphic;
\item[\rm(2)] $\beta X\backslash\upsilon X$ and $\beta Y\backslash\upsilon Y$ are homeomorphic.
\end{itemize}
\end{theorem}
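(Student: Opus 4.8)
The plan is to show that, under the stated hypothesis, the order-anti-isomorphism $\mu$ of Theorem \ref{YUS} restricts to an order-anti-isomorphism of ${\mathcal T}^*(X)$ onto ${\mathcal Z}(\beta X\backslash\upsilon X)\backslash\{\emptyset\}$, and likewise for $Y$, and then to invoke the classical fact (used already in Theorem \ref{TDJ}) that the topology of a compact space is recovered from the order structure of its lattice of zero-sets.

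First I would note that the hypothesis on $X$ is precisely condition (d) of Theorem \ref{YOOG}, hence equivalent to its condition (c): $\upsilon X$ is locally compact and $\sigma$-compact; as in the proof of Theorem \ref{YOOG} this gives $\beta X\backslash\upsilon X\in{\mathcal Z}(\beta X)$. Put $W_X=\beta X\backslash\upsilon X$ and $W_Y=\beta Y\backslash\upsilon Y$; each is a zero-set of the corresponding Stone-\v{C}ech compactification and hence compact. If one of $X$, $Y$ is pseudocompact, then by Corollary \ref{AWDJ} the corresponding set of one-point extensions is empty and the corresponding remainder $W_\cdot$ is empty, and an immediate check shows that (1) and (2) are then both true (when both spaces are pseudocompact) or both false (otherwise); so I may assume neither $X$ nor $Y$ is pseudocompact. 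The key step is the identity $\mu\big({\mathcal T}^*(X)\big)={\mathcal Z}(W_X)\backslash\{\emptyset\}$. By Theorem \ref{AADJ}, $\mu({\mathcal T}^*(X))=\{C\in{\mathcal Z}(\beta X):C\cap X=\emptyset\}\backslash\{\emptyset\}$, so it suffices to prove $\{C\in{\mathcal Z}(\beta X):C\cap X=\emptyset\}={\mathcal Z}(W_X)$. For $\subseteq$: if $C\in{\mathcal Z}(\beta X)$ misses $X$, then $\beta X\backslash C$ is a cozero-set of $\beta X$ containing $X$, so $\upsilon X\subseteq\beta X\backslash C$, i.e. $C\subseteq W_X$, and then $C=C\cap W_X$ is a zero-set of $W_X$. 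For $\supseteq$: a zero-set of $W_X$ has the form $Z(f)$ with $f\in C(W_X)$, $f\geq 0$; since $W_X$ is compact, $f$ is bounded, hence extends to some $\tilde f\in C(\beta X)$ with $\tilde f\geq 0$, and writing $W_X=Z(g)$ with $g\geq 0$ one sees that $Z(\tilde f+g)=Z(\tilde f)\cap W_X=Z(f)$ is a zero-set of $\beta X$ contained in $W_X$, hence disjoint from $X$. Together with Theorem \ref{YUS} this yields the order-anti-isomorphism $\mu_X:{\mathcal T}^*(X)\to{\mathcal Z}(W_X)\backslash\{\emptyset\}$, and the same for $Y$.

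With this in hand the equivalence is bookkeeping. For (2)$\Rightarrow$(1): a homeomorphism $h:W_X\to W_Y$ induces the order-isomorphism $Z\mapsto h(Z)$ from ${\mathcal Z}(W_X)\backslash\{\emptyset\}$ onto ${\mathcal Z}(W_Y)\backslash\{\emptyset\}$, and the composite $\mu_Y^{-1}\circ\big(Z\mapsto h(Z)\big)\circ\mu_X$ is then an order-isomorphism of ${\mathcal T}^*(X)$ onto ${\mathcal T}^*(Y)$, being a composite of two order-anti-isomorphisms and one order-isomorphism. For (1)$\Rightarrow$(2): an order-isomorphism of ${\mathcal T}^*(X)$ onto ${\mathcal T}^*(Y)$ transports through $\mu_X$ and $\mu_Y$ to an order-isomorphism of ${\mathcal Z}(W_X)\backslash\{\emptyset\}$ onto ${\mathcal Z}(W_Y)\backslash\{\emptyset\}$, which extends, by sending $\emptyset$ to $\emptyset$ (the least element of each lattice), to an order-isomorphism of ${\mathcal Z}(W_X)$ onto ${\mathcal Z}(W_Y)$; since $W_X$ and $W_Y$ are compact, the order structure of the lattice of zero-sets determines the topology, so $W_X$ and $W_Y$ are homeomorphic. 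The only genuinely new point is the identification ${\mathcal Z}(W_X)\backslash\{\emptyset\}=\mu({\mathcal T}^*(X))$; that is where I would be most careful, while everything else is a routine combination of Theorems \ref{YUS}, \ref{AADJ}, \ref{YOOG} and the standard zero-set duality for compact spaces.
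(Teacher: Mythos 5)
Your proposal is correct and follows essentially the same route as the paper: both reduce the theorem to the identity $\mu({\mathcal T}^*(X))\cup\{\emptyset\}={\mathcal Z}(\beta X\backslash\upsilon X)$ (obtained from Theorem \ref{AADJ} together with the fact, from Theorem \ref{YOOG}, that $\upsilon X$ is locally compact and $\sigma$-compact, so $\beta X\backslash\upsilon X$ is a compact zero-set of $\beta X$ and zero-sets of this closed subspace extend to zero-sets of $\beta X$), and then invoke the zero-set lattice duality for compact spaces. Your explicit treatment of the pseudocompact edge case is a harmless addition not present in the paper.
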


\begin{proof}
By Theorem \ref{YOOG} $\upsilon X$ is locally compact and
$\sigma$-compact, and therefore by 1B of [13], we have $\beta
X\backslash\upsilon X\in {\mathcal Z}(\beta X)$. Let $Z\in {\mathcal
Z}(\beta X\backslash\upsilon X)$. Then since $\upsilon X$ is locally
compact, $\beta X\backslash\upsilon X$ is closed in $\beta X$, and
therefore there exists an $S\in {\mathcal Z}(\beta X)$ such that
$Z=S\cap(\beta X\backslash\upsilon X)$. Thus $Z\in{\mathcal Z}(\beta
X)$. Clearly $Z\cap X=\emptyset$, and therefore by Theorem \ref{AADJ} we
have ${\mathcal Z}(\beta X\backslash\upsilon X)\subseteq
\mu_X({\mathcal T}^*(X))\cup\{\emptyset\}$. Clearly for every
$C\in\mu_X({\mathcal T}^*(X))$, since $C\in{\mathcal Z}(\beta X)$
and $C\cap X=\emptyset$, we have $C\subseteq \beta
X\backslash\upsilon X$. Therefore ${\mathcal Z}(\beta
X\backslash\upsilon X)= \mu_X({\mathcal T}^*(X))\cup\{\emptyset\}$.
Similarly ${\mathcal Z}(\beta Y\backslash\upsilon Y)=
\mu_Y({\mathcal T}^*(Y))\cup\{\emptyset\}$. Now since $\mu_X$ and
$\mu_Y$ are order-anti-isomorphisms, ${\mathcal T}^*(X)$ and
${\mathcal T}^*(Y)$ are order-isomorphic, if and only if, ${\mathcal
Z}(\beta X\backslash\upsilon X)$ and ${\mathcal Z}(\beta
Y\backslash\upsilon Y)$ are order-isomorphic, if and only if, $\beta
X\backslash\upsilon X$ and $\beta Y\backslash\upsilon Y$ are
homeomorphic.
\end{proof}

\section{Some cardinality theorems}

Suppose that $X$ is a locally compact space. Let $w(T)$ and $d(T)$
denote the  weight and the density of a space $T$, respectively.
Then since
\[w(X^*)\leq w(\beta X)\leq 2^{ d(\beta X)}\leq 2^{ d(X)}\]
we have
\[\big|{\mathcal T}(X)\big|\leq\big|{\mathcal C}(X^*)\big|\leq 2^{w(X^*)}\leq
2^{2^{d(X)}}\]
which gives an upper bound for cardinality of the set
${\mathcal T}(X)$. In the following theorems we obtain a lower bound
for cardinalities of two subsets of ${\mathcal T}(X)$. Here for a
space $T$,  $L(T)$ denotes the Lindel\"{o}f number of $T$.

\begin{theorem}\label{DFG}
Let $X$ be a locally compact
paracompact non-compact space. Then
\[2^{L(X)}\leq \big|{\mathcal T}_L(X)\big|.\]
\end{theorem}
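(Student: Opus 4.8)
The plan is to pass to the lattice of closed subsets of $X^*$ through the order-anti-isomorphism $\mu$ of Theorem \ref{YUS}. By Lemma \ref{ERWRPJ}, $\mu$ carries $\mathcal{T}_L(X)$ bijectively onto the family of all non-empty $C\in\mathcal{C}(X^*)$ with $C\supseteq\beta X\setminus\sigma X$, so it suffices to produce $2^{L(X)}$ pairwise distinct such $C$. I would split the argument according to whether $X$ is $\sigma$-compact.

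\emph{The $\sigma$-compact case.} Here $L(X)=\aleph_0$, hence $2^{L(X)}=2^{\aleph_0}$, and $\sigma X=\beta X$, so the condition $C\supseteq\beta X\setminus\sigma X$ is vacuous and $\mu(\mathcal{T}_L(X))=\mathcal{C}(X^*)\setminus\{\emptyset\}$. Since $X$ is $\sigma$-compact and non-compact, $X^*$ is infinite (indeed $|X^*|\geq 2^{2^{\aleph_0}}$ by the result of [3] quoted in the proof of Theorem \ref{YOOG}). Choosing in $X^*$ a countably infinite subspace $\{d_n:n<\omega\}$ that is discrete in itself, together with open sets $W_n$ of $X^*$ with $W_n\cap\{d_m:m<\omega\}=\{d_n\}$, the assignment $S\mapsto\mathrm{cl}_{X^*}\{d_n:n\in S\}$, for $S\subseteq\omega$, yields $2^{\aleph_0}$ non-empty closed subsets of $X^*$, pairwise distinct because $\mathrm{cl}_{X^*}\{d_n:n\in S\}\cap\{d_m:m<\omega\}=\{d_n:n\in S\}$. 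Thus $|\mathcal{T}_L(X)|\geq 2^{\aleph_0}=2^{L(X)}$.

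\emph{The non-$\sigma$-compact case.} Write $X=\bigoplus_{i\in I}X_i$ as in Proposition \ref{RAWDJ} with each $X_i$ $\sigma$-compact non-compact; then $I$ is uncountable and, since each $X_i$ is Lindel\"of, the clopen cover $\{X_i\}_{i\in I}$ shows $L(X)=|I|$. For $J\subseteq I$ set $A_J=\bigcup_{i\in J}X_i$ and $C_J=(A_J)^*\cup(\beta X\setminus\sigma X)$. As $A_J$ is clopen in $X$, $\mathrm{cl}_{\beta X}A_J$ is clopen in $\beta X$ and $(A_J)^*$ is clopen in $X^*$; since $\sigma X$ is open in $\beta X$, $\beta X\setminus\sigma X$ is closed in $X^*$; hence each $C_J$ is closed in $X^*$, contains $\beta X\setminus\sigma X$, and is non-empty (for $\beta X\setminus\sigma X\neq\emptyset$: otherwise compactness would give $\beta X=\mathrm{cl}_{\beta X}A_{J_0}$ for some countable $J_0\subseteq I$, forcing $I=J_0$, a contradiction). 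So $C_J\in\mu(\mathcal{T}_L(X))$ for every $J\subseteq I$. For distinctness, if $i_0\in J\setminus J'$ then $X_{i_0}^*\subseteq(A_J)^*\subseteq C_J$, whereas $X_{i_0}^*\cap C_{J'}=\emptyset$: indeed $X_{i_0}$ is $\sigma$-compact, so $X_{i_0}^*\subseteq\sigma X$ and thus misses $\beta X\setminus\sigma X$, and $X_{i_0}$ is disjoint from the clopen set $A_{J'}$, so $\mathrm{cl}_{\beta X}X_{i_0}$ misses $\mathrm{cl}_{\beta X}A_{J'}$ and a fortiori $X_{i_0}^*$ misses $(A_{J'})^*$; since $X_{i_0}$ is non-compact, $X_{i_0}^*\neq\emptyset$, whence $C_J\neq C_{J'}$. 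Therefore $|\mathcal{T}_L(X)|\geq|\{C_J:J\subseteq I\}|=2^{|I|}=2^{L(X)}$.

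\emph{Main obstacle.} Once $\mu$ and Lemma \ref{ERWRPJ} are in hand, the task reduces to exhibiting a sufficiently large family of closed subsets of $X^*$ lying in the ``interval'' $[\beta X\setminus\sigma X,\,X^*]$, and the sets $C_J$ do exactly that; verifying they are genuinely distinct (via the nonempty pieces $X_{i_0}^*$) is the only delicate step. The degenerate $\sigma$-compact case is the other point needing separate care, since there is no decomposition to exploit and one must instead fall back on the infinitude (indeed the size $\geq 2^{2^{\aleph_0}}$) of $X^*$ to extract $2^{\aleph_0}$ closed subsets.
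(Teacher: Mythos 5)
Your proof is correct and follows essentially the same route as the paper: in the non-$\sigma$-compact case you use exactly the paper's family $C_J=(\bigcup_{i\in J}X_i)^*\cup(\beta X\setminus\sigma X)$ together with Lemma \ref{ERWRPJ}, and you even supply the small details (non-emptiness of $\beta X\setminus\sigma X$ and of $X_{i_0}^*$) that the paper leaves implicit. The only cosmetic difference is in the $\sigma$-compact case, where the paper simply counts the singleton extensions $X\cup\{p\}$, $p\in X^*$, using $|X^*|\geq 2^{2^{\aleph_0}}\geq 2^{L(X)}$, while you extract $2^{\aleph_0}$ closed sets from a countable relatively discrete subset of $X^*$; both work.
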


\begin{proof}
{\em Case 1)} Suppose that $X$ is $\sigma$-compact.
Then since $X$ is non-pseudocompact, as $X$ paracompact and
non-compact (see Theorem \ref{YUGG}.20 of [4]) by 4C of [13] we have
$|X^*|\geq 2^{2^{\aleph_0}}$. Now since each element of ${\mathcal
T}(X)$ is $\sigma$-compact, ${\mathcal T}_L(X)={\mathcal T}(X)$, and
thus we have
\[ \big|{\mathcal
T}_L(X)\big|=\big|{\mathcal T}(X)\big|\geq\big|\big\{X\cup\{p\}:p\in X^*\big\}\big|=|X^*|\geq
2^{2^{\aleph_0}}\geq 2^{L(X)}.\]

{\em Case 2)} Suppose that $X$ is non-$\sigma$-compact. Assume the
notations of Proposition \ref{RAWDJ}. Then since each $X_i$ is
$\sigma$-compact, we have $L(X)\leq |I|$. For each $J\subseteq I$,
let  $Q_J=\bigcup_{i\in J}X_i$ and $C_J=Q_J^*\cup(\beta
X\backslash\sigma X )$. For $J_1, J_2\subseteq I$, if $j\in
J_1\backslash J_2$, then since $X^*_j\subseteq C_{J_1}$ and
$X^*_j\cap C_{J_2}=\emptyset$, we have $C_{J_1}\neq C_{J_2}$. By
Lemma \ref{ERWRPJ}, for each $J\subseteq I$, there exists $Y_J\in {\mathcal
T}_L(X)$ such that $\mu (Y_J)=C_J$.  Now
\[\big|{\mathcal T}_L(X)\big|\geq\big| {\mathcal P }(I)\big|=2^{|I|}\geq 2^{L(X)}.\]
\end{proof}

For purpose of the next result we need the following proposition
stated in Lemma 3\ref{TYUGG} of [9].

\begin{proposition}\label{EEFR}
Suppose that $E$ is an infinite set of cardinality $\alpha$. Then there exists a collection $\mathcal{A}$ of subsets of $E$ with $|{\mathcal A}|=2^{\alpha}$ such that for any distinct $A, B\in \mathcal{A}$ we have $|A\backslash B|=\alpha$.
\end{proposition}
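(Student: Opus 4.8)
The plan is to realize this as the classical independent-family theorem of Fichtenholz--Kantorovich and Hausdorff: on a set of cardinality $\alpha$ there is a family of $2^{\alpha}$ subsets any two of which remain ``independent,'' so that even the difference of two distinct members has size $\alpha$. Since $|E|=\alpha$, it suffices to construct such a family on one convenient set of cardinality $\alpha$ and then transport it to $E$ along an arbitrary bijection; the property ``$|A\backslash B|=\alpha$'' is plainly preserved by bijections.

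For the convenient set I would take $P$, the set of all finite partial functions from $\alpha$ to $\{0,1\}$ (functions whose domain is a finite subset of $\alpha$ and whose values lie in $\{0,1\}$). A routine cardinal computation gives $|P|=\alpha$: $P$ is a countable union of sets each of cardinality at most $\alpha$, since for each $n<\omega$ the partial functions with an $n$-element domain number $\binom{\alpha}{n}\cdot 2^{n}\le\alpha$ (using that $\alpha$ is infinite), so $|P|\le\aleph_0\cdot\alpha=\alpha$, while $|P|\ge\alpha$ because $P$ contains the one-point function $\{(\xi,0)\}$ for every $\xi<\alpha$. For each total function $f\colon\alpha\to\{0,1\}$ put
\[A_f=\{p\in P : p\subseteq f\},\]
the set of all finite restrictions of $f$, and let $\mathcal{A}_0=\{A_f : f\in\{0,1\}^{\alpha}\}$.

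Two verifications remain. First, $f\mapsto A_f$ is injective: if $f\neq g$, pick $\xi<\alpha$ with $f(\xi)\neq g(\xi)$; then the one-point partial function $\{(\xi,f(\xi))\}$ lies in $A_f$ but not in $A_g$, so $A_f\neq A_g$ and $|\mathcal{A}_0|=2^{\alpha}$. Second, for distinct $f,g$ one has $|A_f\backslash A_g|=\alpha$: choosing $\xi$ as above, for every finite $D\subseteq\alpha$ with $\xi\in D$ the restriction $f|_D$ belongs to $A_f$, while $(f|_D)(\xi)=f(\xi)\neq g(\xi)$ forces $f|_D\not\subseteq g$, so $f|_D\in A_f\backslash A_g$; distinct $D$'s give distinct restrictions and there are $\alpha$ many finite subsets of $\alpha$ containing $\xi$, whence $|A_f\backslash A_g|\ge\alpha$, and the reverse inequality is automatic since $A_f\backslash A_g\subseteq P$ and $|P|=\alpha$. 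Fixing a bijection $\theta\colon P\to E$ and setting $\mathcal{A}=\{\theta[A]:A\in\mathcal{A}_0\}$ then yields the desired family.

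I do not expect a genuine obstacle here; the only points needing a little care are the two cardinal bookkeeping steps (that $|P|=\alpha$ and that a fixed point of $\alpha$ lies in $\alpha$-many finite subsets of $\alpha$), both of which rest solely on $\alpha$ being infinite. A completely parallel argument works if one prefers to index by pairs $(F,\mathcal{F})$ with $F\subseteq\alpha$ finite and $\mathcal{F}$ a set of subsets of $F$, setting $A_X=\{(F,\mathcal{F}):X\cap F\in\mathcal{F}\}$ for $X\subseteq\alpha$; this is the more usual presentation of the independent-family construction and could be substituted verbatim if the pairs coding meshes better with how this lemma is used later.
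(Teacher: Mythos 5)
Your construction is correct, and every step you flag as needing care (that $|P|=\alpha$ and that $\xi$ lies in $\alpha$-many finite subsets of $\alpha$) does go through for infinite $\alpha$. Note, however, that the paper does not prove this proposition at all: it simply cites it as a lemma from Kuratowski's \emph{Topology} (reference [9]), so any self-contained argument is already doing more than the source text. One small caveat on your framing: the family $\{A_f\}$ you build is \emph{not} an independent family in the Fichtenholz--Kantorovich--Hausdorff sense --- if $f$ and $g$ differ at every coordinate then $A_f\cap A_g$ consists of the empty partial function alone, so finite intersections need not have size $\alpha$ --- but this is harmless, since the proposition only asks for $|A\backslash B|=\alpha$ and you verify that directly: every restriction $f|_D$ with $\xi\in D$ witnesses membership in $A_f\backslash A_g$, and there are $\alpha$ such $D$. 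The alternative coding you sketch at the end, $A_X=\{(F,\mathcal{F}):X\cap F\in\mathcal{F}\}$, is the genuinely independent family and would also suffice, since $A\backslash B\supseteq A\cap(E\backslash B)$ already has cardinality $\alpha$ there; either route is a valid replacement for the paper's external citation.
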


\begin{theorem}\label{DFGG}
Let $X$ be a locally compact
paracompact non-$\sigma$-compact space. Then
\[2^{L(X)}\leq \big|{\mathcal T}_D(X)\big|.\]
\end{theorem}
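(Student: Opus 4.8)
The plan is to mimic the structure of the proof of Theorem~5.20 (the bound $2^{L(X)}\le|{\mathcal T}_L(X)|$), using the order-anti-isomorphism $\mu$ of Theorem~2.1 together with the description of $\mu({\mathcal T}_D(X))$ given in Theorem~3.10, namely $\mu({\mathcal T}_D(X))={\mathcal C}(\beta X\backslash\sigma X)\backslash\{\emptyset\}$. Since $X$ is assumed non-$\sigma$-compact, Proposition~3.1 gives a decomposition $X=\bigoplus_{i\in I}X_i$ with each $X_i$ a $\sigma$-compact non-compact subspace; because each $X_i$ is Lindel\"of, we have $L(X)\le|I|$, so it suffices to produce $2^{|I|}$ pairwise distinct elements of ${\mathcal T}_D(X)$.

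First I would show that $\beta X\backslash\sigma X$ is non-empty: indeed, since $X$ is non-$\sigma$-compact, $X^*\notin{\mathcal Z}(\beta X)$ (by 1B of [13]), while $\sigma X$ is an open subset of $\beta X$ containing $X$; if $\beta X\backslash\sigma X$ were empty then $\sigma X=\beta X$, and one checks this forces $X$ to be $\sigma$-compact (for instance $X^*=\sigma X\backslash X$ would then be clopen in $\sigma X$, hence a zero-set of the compact space $\beta X$, contradicting 1B of [13]). So ${\mathcal C}(\beta X\backslash\sigma X)\backslash\{\emptyset\}$ is a genuinely large family. Next, for each $i\in I$ note that $X_i^*=\mbox{cl}_{\beta X}X_i\backslash X$ is a clopen subset of $X^*$ contained in $\sigma X$; more importantly, $X_i^*\cap(\beta X\backslash\sigma X)=\emptyset$. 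Now apply Proposition~6.3 to the index set $I$ (of cardinality $|I|$): there is a family ${\mathcal A}\subseteq{\mathcal P}(I)$ with $|{\mathcal A}|=2^{|I|}$ such that any two distinct $A,B\in{\mathcal A}$ satisfy $|A\backslash B|=|I|$; in particular $A\ne B$ implies $A\backslash B\ne\emptyset$. For each $A\in{\mathcal A}$ put $C_A=\mbox{cl}_{\beta X}\big(\bigcup_{i\in A}X_i\big)\cap(\beta X\backslash\sigma X)$, wait — that is empty; instead set $C_A=\big(\bigcup_{i\in A}X_i^*\big)^{-}\cup(\beta X\backslash\sigma X)$ where the closure is taken in $X^*$; equivalently $C_A=\mbox{cl}_{\beta X}(Q_A)\cup(\beta X\backslash\sigma X)$ with $Q_A=\bigcup_{i\in A}X_i$, which is a closed subset of $\beta X$ lying in $X^*$, hence in ${\mathcal C}(\beta X\backslash\sigma X)$ precisely when it contains $\beta X\backslash\sigma X$ — which it does by construction. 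Then if $j\in A\backslash B$, we have $X_j^*\subseteq C_A$ while $X_j^*\cap C_B\subseteq X_j^*\cap(\mbox{cl}_{\beta X}Q_B\cup(\beta X\backslash\sigma X))=\emptyset$ (using that the $\mbox{cl}_{\beta X}X_i$ are pairwise disjoint clopen sets, and $X_j^*\subseteq\sigma X$), so $C_A\ne C_B$. Thus $A\mapsto C_A$ is injective, giving $2^{|I|}$ distinct non-empty closed subsets of $\beta X\backslash\sigma X$, and hence via $\mu^{-1}$ exactly $2^{|I|}$ distinct elements of ${\mathcal T}_D(X)$.

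Finally I would assemble the chain of inequalities
\[
\big|{\mathcal T}_D(X)\big|\ge\big|\{C_A:A\in{\mathcal A}\}\big|=\big|{\mathcal A}\big|=2^{|I|}\ge 2^{L(X)}.
\]
The step I expect to need the most care with is verifying that each $C_A$ really is an element of $\mu({\mathcal T}_D(X))={\mathcal C}(\beta X\backslash\sigma X)\backslash\{\emptyset\}$ — i.e. that $C_A$ is closed in $\beta X$, is non-empty, contains $\beta X\backslash\sigma X$, and is disjoint from $X$ — and that distinct $A$ yield distinct $C_A$; this is where the disjoint-clopen structure of the decomposition and the condition $|A\backslash B|=|I|$ from Proposition~6.3 are used. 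Everything else is a direct transcription of the argument for Theorem~5.20.
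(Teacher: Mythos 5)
There is a genuine error in your construction, and it stems from misreading Theorem \ref{EPSDJ}. That theorem says $\mu\big({\mathcal T}_D(X)\big)={\mathcal C}(\beta X\backslash\sigma X)\backslash\{\emptyset\}$, i.e.\ the non-empty \emph{closed subsets of the space} $\beta X\backslash\sigma X$ --- sets that are \emph{contained in} $\beta X\backslash\sigma X$. You instead require that $C_A$ ``contains $\beta X\backslash\sigma X$,'' which is the characterization of $\mu\big({\mathcal T}_L(X)\big)$ from Lemma \ref{ERWRPJ}, not of $\mu\big({\mathcal T}_D(X)\big)$. Your sets $C_A=\mbox{cl}_{\beta X}(Q_A)\cup(\beta X\backslash\sigma X)$ contain each $X_j^*$ with $j\in A$, and $X_j^*\subseteq\sigma X$, so $C_A\not\subseteq\beta X\backslash\sigma X$; these sets therefore do not lie in $\mu\big({\mathcal T}_D(X)\big)$ at all (they lie in $\mu\big({\mathcal T}_L(X)\big)$, and what you have written is essentially a transcription of the proof of Theorem \ref{DFG}, which bounds $|{\mathcal T}_L(X)|$). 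Worse, the set you first wrote down and dismissed, $C_A=\mbox{cl}_{\beta X}(Q_A)\cap(\beta X\backslash\sigma X)=Q_A^*\backslash\sigma X$, is the \emph{correct} choice and is \emph{not} empty when $A$ is uncountable: $\sigma X$ is only the union of the sets $\mbox{cl}_{\beta X}\big(\bigcup_{i\in J}X_i\big)$ over \emph{countable} $J$, so if $Q_A^*\subseteq\sigma X$ then by compactness $\mbox{cl}_{\beta X}Q_A$ would be covered by such a set with $J$ countable, forcing $Q_A\subseteq\bigcup_{i\in J}X_i$ and contradicting the uncountability of $A$.

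This also explains why your remark ``in particular $A\neq B$ implies $A\backslash B\neq\emptyset$'' gives away too much of Proposition \ref{EEFR}: with the correct sets $C_s=Q_s^*\backslash\sigma X$, a single index $j\in J_s\backslash J_t$ does not separate $C_s$ from $C_t$, because $X_j^*\subseteq\sigma X$ and hence $X_j^*\backslash\sigma X=\emptyset$. One genuinely needs $|J_s\backslash J_t|=\alpha=|I|$, so that $P=\bigcup_{i\in J_s\backslash J_t}X_i$ is non-$\sigma$-compact and therefore $P^*\backslash\sigma X\neq\emptyset$; this non-empty set lies in $C_s$ and (since the $\mbox{cl}_{\beta X}X_i$ are pairwise disjoint clopen sets) misses $C_t$, whence $C_s\neq C_t$. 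With these two repairs --- take $C_s=Q_s^*\backslash\sigma X$ and use the full strength of $|J_s\backslash J_t|=\alpha$ --- your outline becomes the paper's proof.
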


\begin{proof}
Assume the notations of Proposition \ref{RAWDJ}. By the above
proposition, since $\alpha=|I|>\aleph_0$, there exists a family
$\{J_s\}_{s\in S}$ of subsets of $I$, faithfully indexed, such that
$|S|=2^{\alpha}$ and $|J_s\backslash J_t|=\alpha$, for distinct
$s,t\in S$. For each $s\in S$, let $Q_s=\bigcup_{i\in J_s}X_i$ and
let $C_s= Q_s^*\backslash\sigma X$. If for some $s\in S$, we have
$C_s= \emptyset$, then since $\mbox{cl}_{\beta X} Q_s\subseteq\sigma
X$, we have $\mbox{cl}_{\beta X} Q_s\subseteq\mbox{cl}_{\beta
X}(\bigcup_{i\in H}X_i)$, for some countable $H\subseteq I$, and
thus $Q_s\subseteq\bigcup_{i\in H}X_i$, as  $ \bigcup_{i\in H}X_i$
is clopen in $X$. But this is a contradiction as $J_s$ is not
countable. Therefore $C_s\neq\emptyset$ for any $s\in S$. By Theorem
\ref{EPSDJ} for each $s\in  S$, we have $C_s=\mu (Y_s)$ for some
$Y_s\in{\mathcal T}_D(X)$. Suppose that $s,t\in S$ and $s\neq t$.
Let $K=J_s\backslash J_t$ and let $P=\bigcup_{i\in K}X_i$. Then
since $|K|=\alpha$, we have $A=P^*\backslash \sigma X\neq\emptyset$.
But since $P\cap Q_t=\emptyset$, we have $P^*\cap C_t=\emptyset$,
which implies that $A\cap C_t=\emptyset$. Therefore since
$\emptyset\neq A\subseteq C_s$, we have  $C_s\neq C_t$. Thus for any
distinct $s,t\in S$, we have $Y_s\neq Y_t$. This shows  that
$|{\mathcal T}_D(X)|\geq |S|=2^\alpha$, which together with the fact
that $\alpha=|I|\geq L(X)$ proves the
theorem.
\end{proof}

\section{Some applications}

In this section we correspond to each one-point extension of a
Tychonoff space $X$ an ideal of $C^*(X)$. Using this, and applying
some of our previous results, we will be able to obtain some
relations between the order structure of certain collections of
ideals of $C^*(X)$, partially ordered by set-theoretic inclusion,
and the topology of a certain subspace of $X^*$.

For a Tychonoff space
$X$, let ${\mathcal I}(C^*(X))$ denote the set of all ideals of
$C^*(X)$.  We define a function
\[\gamma:\big({\mathcal T}(X),\leq\big)\rightarrow\big({\mathcal
I}\big(C^*(X)\big),\subseteq\big)\]
by
\[ \gamma (Y)=\big\{f|X:f\in C^*(Y) \mbox { and } f(p)=0\big\}\]
for $Y=X\cup\{p\}\in{\mathcal T}(X)$.

\begin{lemma}\label{YSDG}
The function $\gamma$ is an
order-isomorphism onto its image.
\end{lemma}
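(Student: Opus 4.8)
The plan is to verify three things in turn: that $\gamma(Y)$ is indeed an ideal of $C^*(X)$ for each $Y\in{\mathcal T}(X)$, that $\gamma$ is order-preserving and injective, and that $\gamma^{-1}$ (on the image) is order-preserving. First I would check that $\gamma(Y)$ is an ideal: given $Y=X\cup\{p\}$, the set $M_p=\{g\in C^*(Y):g(p)=0\}$ is a (maximal) ideal of $C^*(Y)$, and restriction $g\mapsto g|X$ is a ring homomorphism $C^*(Y)\to C^*(X)$ which is injective because $X$ is dense in $Y$; but it need not be surjective, so $\gamma(Y)$ is the image of the ideal $M_p$ under an injective ring homomorphism, hence an ideal of the subring $C^*(Y)|X$ — and here I must be a little careful, since an ideal of a subring need not be an ideal of the whole ring. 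The point is that for $f\in\gamma(Y)$ (so $f=g|X$ with $g(p)=0$) and arbitrary $h\in C^*(X)$, one needs $hf\in\gamma(Y)$; this holds because $hf$ extends continuously to $Y$ by setting its value at $p$ to be $0$ — indeed $hf$ is bounded and $f$ vanishes "near $p$" in the sense that $g$ is continuous at $p$ with $g(p)=0$, so $hf=(h\text{ restricted})\cdot g$ tends to $0$ along any net converging to $p$; making this rigorous is the one genuinely non-routine verification. Concretely, define $\tilde{f}:Y\to\mathbf{R}$ by $\tilde f|X=hf$ and $\tilde f(p)=0$; continuity at $p$ follows since $|hf|\le\|h\|_\infty|g|$ on $X$ and $g(p)=0$ with $g$ continuous, so $\tilde f\in C^*(Y)$ and $hf=\tilde f|X\in\gamma(Y)$.

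Next I would show $\gamma$ is an order-homomorphism. Suppose $Y_1\ge Y_2$ in ${\mathcal T}(X)$, witnessed by a continuous $k:Y_1\to Y_2$ with $k|X=\mathrm{id}_X$; write $Y_i=X\cup\{p_i\}$, so necessarily $k(p_1)=p_2$. Given $f\in\gamma(Y_2)$, say $f=g|X$ with $g\in C^*(Y_2)$, $g(p_2)=0$, the composition $g\circ k\in C^*(Y_1)$ satisfies $(g\circ k)(p_1)=g(p_2)=0$ and $(g\circ k)|X=g|X=f$, so $f\in\gamma(Y_1)$. Hence $\gamma(Y_2)\subseteq\gamma(Y_1)$, i.e. $\gamma$ is order-preserving.

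For injectivity and for the reverse implication, I would argue contrapositively: it suffices to show that if $\gamma(Y_2)\subseteq\gamma(Y_1)$ then $Y_1\ge Y_2$, since then $\gamma(Y_1)=\gamma(Y_2)$ forces $Y_1\ge Y_2$ and $Y_2\ge Y_1$, hence $Y_1=Y_2$ as equivalence classes, and also $\gamma^{-1}$ is order-preserving on the image. So assume $\gamma(Y_2)\subseteq\gamma(Y_1)$ and define $f:Y_1\to Y_2$ by $f|X=\mathrm{id}_X$, $f(p_1)=p_2$; I must check continuity at $p_1$. Let $V$ be a basic open neighborhood of $p_2$ in $Y_2$; using complete regularity of $Y_2$, choose $g\in C^*(Y_2)$ with $g(p_2)=0$ and $g\equiv 1$ on $Y_2\setminus V$, so $g$ may be taken with values in $[0,1]$. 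Then $f':=g|X\in\gamma(Y_2)\subseteq\gamma(Y_1)$, so $f'=h|X$ for some $h\in C^*(Y_1)$ with $h(p_1)=0$; since $X$ is dense in $Y_1$ and $h$ agrees with the $[0,1]$-valued $g\circ(\text{inclusion})$ on $X$, $h$ is itself $[0,1]$-valued. Now $U:=h^{-1}\bigl([0,1/2)\bigr)$ is an open neighborhood of $p_1$ in $Y_1$, and for $x\in U\cap X$ we have $g(x)=h(x)<1/2$, so $x\notin Y_2\setminus V$, i.e. $x\in V$; thus $f(U)\subseteq V$ (noting $f(p_1)=p_2\in V$). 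Hence $f$ is continuous at $p_1$, trivially continuous on $X$, so $Y_1\ge Y_2$. This completes the proof. The main obstacle is the careful handling of the ideal property in the first step — making sure that multiplying an element of $\gamma(Y)$ by an arbitrary bounded continuous function on $X$ again yields something that extends continuously across the added point — together with the density argument ensuring range preservation when lifting $g$ through restriction.
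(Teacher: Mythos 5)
Your proposal is correct and follows essentially the same route as the paper's proof: the same $\epsilon$-estimate $|hf|\le\|h\|_\infty|g|$ to verify the ideal property, the same composition argument for one direction, and the same construction of the map $Y_1\to Y_2$ via a separating function lifted through the inclusion $\gamma(Y_2)\subseteq\gamma(Y_1)$. Your added care (noting that the lift $h$ is $[0,1]$-valued by density of $X$ in $Y_1$, and spelling out how injectivity follows) only makes explicit what the paper leaves implicit.
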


\begin{proof}
To show that $\gamma$ is well-defined, consider the
functions  $g\in \gamma (Y) $ and $h\in C^*(X)$. Let
$f:Y\rightarrow {\bf R}$ be defined such that $f(p)=0$ and
$f|X=g.h$. We verify that $f$ is continuous. So let $G\in C^*(Y)$ be
such that $G(p)=0$ and $G|X=g$. Suppose that $\epsilon >0$. Let $W$
be an open neighborhood of $p$ in $Y$ such that $G(W)\subseteq
(-\epsilon/M,\epsilon/M)$, where $M>0$ and $|h(x)|\leq M$ for every
$ x\in X$. Then for every  $x\in W\cap X$ we have $|f(x)|
=|g(x)|<\epsilon$. So $f$ is continuous. Now $g.h=f| X\in \gamma
(Y)$. It is clear that for any $k,l\in \gamma(Y)$, $k-l\in
\gamma(Y)$. This shows that $\gamma$ is well-defied.

Now suppose that $Y_i=X\cup\{p_i\}\in{\mathcal T}(X)$, for
$i=1,2$. Suppose that $Y_1\geq Y_2$ and let $\phi: Y_1\rightarrow
Y_2$ be a continuous function such that $\phi|X=\mbox {id}_X$. Let $
g\in\gamma (Y_2) $. Then $g=f|X$, where $f\in C^*(Y_2)$ and
$f(p_2)=0$. Now since $\phi(p_1)=p_2$, we have $g=f|X=f\phi|X\in
\gamma (Y_1) $, i.e., $\gamma (Y_1)\supseteq \gamma (Y_2)$.

Conversely, suppose that $\gamma (Y_1)\supseteq \gamma (Y_2)$.
Define a function  $\phi : Y_1\rightarrow Y_2$ by $\phi|X=\mbox
{id}_X$ and $\phi(p_1)=p_2$. To show that $\phi$ is continuous at
$p_1$, suppose that $V$ is an open neighborhood of $p_2=\phi(p_1)$
in $Y_2$. Let $f:Y_2\rightarrow {\bf I}$ be a continuous function
such that $f(p_2)=0$ and $ f(Y_2\backslash V)\subseteq \{1\}$. Then
since $f|X\in\gamma (Y_2)$, we have $f|X\in\gamma (Y_1)$ and
therefore $f|X=h|X$, for some $h\in C^*(Y_1)$ with $h(p_1)=0$. Now
$U=h^{-1}([0,1))$ is an open neighborhood of $p_1$ in $Y_1$
satisfying $\phi(U)\subseteq V$. This proves the continuity of
$\phi$ and therefore we have  $ Y_1\geq
Y_2$.
\end{proof}

The following result is well known. We include a proof in here for
the sake of completeness.

\begin{lemma}\label{ASDG}
Let $X$ be a strongly
zero-dimensional locally compact space. Then the set of  clopen
subset of $X^*$ consist of exactly those sets which are of the form
$U^*$, for some clopen subset $U$ of $X$.
\end{lemma}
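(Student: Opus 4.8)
**Proof plan for Lemma 7.4 (the clopen subsets of $X^*$).**

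The plan is to prove the two inclusions separately. Let $X$ be strongly zero-dimensional and locally compact. First I would handle the easy direction: if $U$ is a clopen subset of $X$, then $\mathrm{cl}_{\beta X}U$ is clopen in $\beta X$ (since $X$ is strongly zero-dimensional, hence $\beta X$ is zero-dimensional, and the closure of a clopen set in a dense subspace is clopen when that clopen set is $C^*$-embedded — more directly, the characteristic function $\chi_U$ extends continuously to $\beta X$, so $\mathrm{cl}_{\beta X}U = \chi_U^{-1}(1)$ is clopen). Then $U^* = \mathrm{cl}_{\beta X}U \setminus X = \mathrm{cl}_{\beta X}U \cap X^*$ is clopen in $X^*$. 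This disposes of the inclusion $\{U^* : U \text{ clopen in } X\} \subseteq \mathcal{B}(X^*)$.

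For the reverse inclusion, suppose $C$ is a clopen subset of $X^*$. Both $C$ and $X^* \setminus C$ are then closed subsets of the compact set $X^*$, hence compact, and they are disjoint. Since $\beta X$ is zero-dimensional (being strongly zero-dimensional and compact) and normal, I would separate the two disjoint compact sets $C$ and $X^* \setminus C$ by a clopen subset $V$ of $\beta X$ with $C \subseteq V$ and $(X^* \setminus C) \cap V = \emptyset$; this uses compactness of $C$ to cover it by basic clopen sets inside an open set missing $X^* \setminus C$, then takes a finite union. Now set $U = V \cap X$. Since $V$ is clopen in $\beta X$, $U$ is clopen in $X$. It remains to check $U^* = C$: we have $\mathrm{cl}_{\beta X}U = \mathrm{cl}_{\beta X}(V \cap X) = V$ because $V$ is clopen (hence closed) and $X$ is dense, so $V = \mathrm{cl}_{\beta X}(V\cap X)$; therefore $U^* = V \setminus X = V \cap X^*$, and by construction $V \cap X^* \supseteq C$ while $V \cap X^*$ is disjoint from $X^* \setminus C$, forcing $V \cap X^* = C$.

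The only mildly delicate point is justifying the clopen separation of the two disjoint compact subsets $C$ and $X^*\setminus C$ of $\beta X$; this is routine once one invokes zero-dimensionality of $\beta X$ (which follows from strong zero-dimensionality of $X$ by Theorem 6.2.10 of [4], already cited in the paper) together with compactness. Everything else is bookkeeping about closures of clopen sets in the dense subspace $X$. I do not expect any real obstacle; the main thing to be careful about is not conflating zero-dimensionality with strong zero-dimensionality, and remembering that local compactness of $X$ is what guarantees $X^*$ is compact so that $C$ and $X^*\setminus C$ are compact.
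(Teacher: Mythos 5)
Your proof is correct and follows essentially the same route as the paper: for the nontrivial inclusion, both arguments use compactness of the clopen set $C\subseteq X^*$ together with zero-dimensionality of $\beta X$ (coming from strong zero-dimensionality of $X$) to find a clopen $V\subseteq\beta X$ containing $C$ and missing $X^*\setminus C$, and then take $U=V\cap X$, checking $U^*=V\cap X^*=C$ via density of $X$. The paper phrases the separation as choosing $V$ inside an open $W$ with $C=W\setminus X$ rather than separating $C$ from $X^*\setminus C$ directly, but this is the same argument.
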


\begin{proof}
Clearly for every clopen subset $U$ of $X$, the set
$U^*$ is clopen in $X^*$. To see the converse suppose that $C$ is a
clopen subset of $X^*$. Let $W$ be an open set of $\beta X$ such
that $C=W\backslash X$. Since $C\subseteq X$ is compact, there
exists a clopen subset $V$ of $\beta X$ such that $C\subseteq
V\subseteq W$, and therefore
\[C=\mbox {cl}_{\beta X} V\backslash X=\mbox {cl}_{\beta X} (V\cap
X)\backslash X=(V\cap X)^*.\]
\end{proof}

For a Tychonoff space $X$ and $E\subseteq X$, we let
\[ I_E=\big\{g\in C^*(X): |g|^{-1}\big([\epsilon,\infty)\big)\backslash E\mbox{ is compact for any } \epsilon>0\big\}.\]
It is easy to see that if $E$ is open in $X$ then $I_E$ is
an ideal in $C^*(X)$.

\begin{lemma}\label{FFDG}
 Let $X$ be a locally compact space
and let $U$ be a clopen subset of $X$. If $Y\in{\mathcal T}(X)$ is
such that $\mu (Y)=X^*\backslash U^*$ then $\gamma (Y)=I_U$.
\end{lemma}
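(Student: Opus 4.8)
The plan is to route the statement through the following general fact, valid for every $Y=X\cup\{p\}\in{\mathcal T}(X)$: writing $g^\beta\in C^*(\beta X)$ for the continuous extension of $g\in C^*(X)$, one has $g\in\gamma(Y)$ if and only if $g^\beta$ vanishes on $\mu(Y)$. Indeed, by the remark following Theorem \ref{YUS}, $\beta Y$ is the quotient of $\beta X$ obtained by contracting $C:=\mu(Y)$ to $p$, with $q=F_Y:\beta X\to\beta Y$ the quotient map. If $g=f|X$ with $f\in C^*(Y)$ and $f(p)=0$, then $f^\beta q$ is a continuous extension of $g$ over $\beta X$, so $g^\beta=f^\beta q$, which is constantly $f(p)=0$ on $q^{-1}(p)=C$. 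Conversely, if $g^\beta$ vanishes on $C$, then $g^\beta$ is constant on every fiber of $q$ (the only non-degenerate one being $C$), so it factors as $g^\beta=hq$ for a function $h$ on $\beta Y$; since $q$ is a quotient map $h$ is continuous, $h(p)=0$ and $h|X=g$, so $g=(h|Y)|X\in\gamma(Y)$. (This also reproves the relevant half of Lemma \ref{YSDG}.)

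With $\mu(Y)=X^*\backslash U^*$, the lemma reduces to: for $g\in C^*(X)$, the extension $g^\beta$ vanishes on $X^*\backslash U^*$ if and only if $g\in I_U$. Here I would use two standard facts. First, since $U$ is clopen in $X$, the set $\mathrm{cl}_{\beta X}U$ is clopen in $\beta X$ and $\mathrm{cl}_{\beta X}(X\backslash U)=\beta X\backslash\mathrm{cl}_{\beta X}U$, so that $\mathrm{cl}_{\beta X}U\cap X^*=U^*$ and $\mathrm{cl}_{\beta X}(X\backslash U)\cap X^*=X^*\backslash U^*$. Second, a closed subset $A$ of the locally compact space $X$ is compact if and only if $\mathrm{cl}_{\beta X}A\cap X^*=\emptyset$.

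For $I_U\subseteq\gamma(Y)$: given $g\in I_U$ and $\epsilon>0$, the closed set $K_\epsilon=|g|^{-1}([\epsilon,\infty))\backslash U$ is compact, so $|g|^{-1}([\epsilon,\infty))\subseteq\mathrm{cl}_{\beta X}U\cup K_\epsilon$, whence $\mathrm{cl}_{\beta X}|g|^{-1}([\epsilon,\infty))\cap X^*\subseteq U^*$; since $\{|g^\beta|>\epsilon\}$ is open it lies in $\mathrm{cl}_{\beta X}\{|g|>\epsilon\}\subseteq\mathrm{cl}_{\beta X}|g|^{-1}([\epsilon,\infty))$, so $\{|g^\beta|>\epsilon\}\cap(X^*\backslash U^*)=\emptyset$; as $\epsilon>0$ was arbitrary, $g^\beta\equiv0$ on $X^*\backslash U^*$. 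For $\gamma(Y)\subseteq I_U$: if $g^\beta$ vanishes on $X^*\backslash U^*=\mathrm{cl}_{\beta X}(X\backslash U)\cap X^*$, then for each $\epsilon>0$ the closed set $K_\epsilon=|g|^{-1}([\epsilon,\infty))\backslash U$ satisfies $\mathrm{cl}_{\beta X}K_\epsilon\subseteq\{|g^\beta|\ge\epsilon\}\cap\mathrm{cl}_{\beta X}(X\backslash U)$; any point of this set lying in $X^*$ would be a point of $X^*\backslash U^*$ at which $|g^\beta|\ge\epsilon>0$, a contradiction, so $\mathrm{cl}_{\beta X}K_\epsilon\cap X^*=\emptyset$, $K_\epsilon$ is compact, and $g\in I_U$.

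The one genuinely delicate point is the factorization in the first paragraph, namely that the induced map $h$ on $\beta Y$ is continuous; this rests on $q$ being a quotient map and on the identification of $\beta Y$ with the quotient of $\beta X$ that collapses $\mu(Y)$, both recorded after Theorem \ref{YUS}. Everything else is bookkeeping with closures in $\beta X$, using the clopenness of $\mathrm{cl}_{\beta X}U$ and the characterization of compactness of closed subsets of $X$ via $X^*$.
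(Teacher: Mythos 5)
Your proof is correct, and it takes a genuinely different route from the paper's. The paper argues directly with the extension $f$ of $g$ over $Y$: for $\gamma(Y)\subseteq I_U$ it assumes some $G=|g|^{-1}([\epsilon,\infty))\backslash U$ is non-compact, picks $x\in G^*$, and derives a contradiction from the continuity of $f$ at $p$ by pulling a neighborhood of $p$ back through the quotient map $q$; for $I_U\subseteq\gamma(Y)$ it builds $f$ by hand and checks continuity at $p$ using an open set of $\beta X$ separating the compact sets $X^*\backslash U^*$ and $G$. You instead prove once and for all the characterization $\gamma(Y)=\{g\in C^*(X):g^\beta|_{\mu(Y)}\equiv 0\}$ (the factorization of $g^\beta$ through the quotient $q$ is legitimate, since $q$ is a closed quotient map and the paper's remark after Theorem \ref{YUS} identifies $\beta Y$ with the quotient of $\beta X$ collapsing $\mu(Y)$), and then reduce the lemma to locating the zero set of $g^\beta$ on $X^*$, which is handled by closure bookkeeping with the clopen set $\mathrm{cl}_{\beta X}U$ and the fact that a closed $A\subseteq X$ is compact iff $A^*=\emptyset$. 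All the steps check out: the inclusion $\{|g^\beta|>\epsilon\}\subseteq\mathrm{cl}_{\beta X}\{|g|>\epsilon\}$ via density of $X$, and the compactness of $K_\epsilon$ from $\mathrm{cl}_{\beta X}K_\epsilon\cap X^*=\emptyset$, are both sound. What your route buys is a reusable, conceptual description of $\gamma(Y)$ (which indeed re-proves half of Lemma \ref{YSDG} and would also streamline Lemma \ref{LLG}); what the paper's route buys is self-containedness at the level of this one lemma, without invoking the quotient factorization of Stone extensions.
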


\begin{proof}
Suppose that $g\in \gamma(Y)$. Then $g=f|X$ for some
$f\in C^*(Y)$ with $f(p)=0$. Suppose that there exists an
$\epsilon>0$ such that $G=|g|^{-1}([\epsilon, \infty))\backslash U$
is not compact, and let $x\in G^*$. By continuity of $f$ there
exists an open neighborhood $W$ of $p$ in $\beta Y$ such that
$f(W\cap Y)\subseteq (-\epsilon, \epsilon)$. Since $p\in W$,
$X^*\backslash U^*=q^{-1}(p)\subseteq q^{-1}(W)$, where $q:\beta
X\rightarrow \beta Y$ is the quotient map contracting $X^*\backslash
U^*$ to the point  $p$. Now since $x\in \mbox {cl}_{\beta
X}G\subseteq \mbox {cl}_{\beta X}(X\backslash U) $ and $U$ is clopen
in $X$, where $x\notin \mbox {cl}_{\beta X}U$ and thus $x\in
X^*\backslash U^*\subseteq q^{-1}(W)$, which implies that $G\cap
q^{-1}(W)\neq\emptyset$.  Let $t\in G\cap q^{-1}(W)$. Then since
$t\in G$, we have $|f(t)| =|g(t)|\geq\epsilon$. But on the other
hand, $t=q(t)\in W$ and by the way we chose $W$, we have
$|f(t)|<\epsilon$. This is a contradiction, therefore
$|g|^{-1}([\epsilon, \infty))\backslash U$ is compact for any $\epsilon>0$, and thus $g\in I_U$. This shows that
$\gamma(Y)\subseteq I_U$.

Conversely, let $g\in I_U$ and
define a function $f:Y\rightarrow {\bf R}$ such that $f| X=g$ and
$f(p)=0$. We verify that $f$ is continuous.  So suppose that
$\epsilon>0$. Since $U$ is clopen in $X$, the set  $X^*\backslash
U^*$ is compact and it is disjoint from the compact subset
$G=|g|^{-1}([\epsilon, \infty))\backslash U$ of $X$. Let $W$ be an
open neighborhood of $X^*\backslash U^*$ in $\beta X$ disjoint from
$G$. Then $V=W\backslash \mbox {cl}_{\beta X} U$ is an open set of
$\beta X$ containing $X^*\backslash U^*$. Clearly $T=(V\backslash
(X^*\backslash U^*))\cup\{p\}$ is open in $\beta Y$. Now $T\cap Y$
is an open neighborhood of $p$ in $Y$ such that $f(T\cap Y)\subseteq
(-\epsilon, \epsilon)$. This is because if $t\in T\cap X$, then
since $T\cap X\subseteq V$ and $W\cap G=\emptyset$, we have $t\notin
G$ and $t\notin \mbox {cl}_{\beta X} U$, and therefore $t\notin
|g|^{-1}([\epsilon, \infty))$. Thus $|f(t)|=|g(t)|<\epsilon$. This
shows that $f$ is continuous and therefore $g=f| X\in \gamma(Y)$,
i.e., $I_U\subseteq \gamma (Y)$.
\end{proof}

For a Tychonoff space $X$, let
\[\Sigma_X=\{I_U: U \mbox { is a $\sigma$-compact clopen  subset of }
X\}.\]

\begin{lemma}\label{FGTTG}
Let $X$ be  a zero-dimensional
locally compact paracompact non-$\sigma$-compact space. Then
\[\gamma\big({\mathcal T}_{KL}(X)\big)=\Sigma_X.\]
\end{lemma}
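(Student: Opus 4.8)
The plan is to combine the order-anti-isomorphism $\mu$ with the order-isomorphism $\gamma$ by tracking one element at a time through both pictures, and to use the already-established description of $\mu\big({\mathcal T}_{KL}(X)\big)$ together with Lemma~\ref{FFDG}. First I would recall from the proof of Theorem~\ref{GHCG} (via Theorem~\ref{TODJ} and Lemma~\ref{ERWRPJ}) that, since $X$ is locally compact paracompact non-$\sigma$-compact,
\[
\mu\big({\mathcal T}_{KL}(X)\big)=\big\{C\in{\mathcal B}(X^*):C\supseteq\beta X\backslash\sigma X\big\}.
\]
So take $Y\in{\mathcal T}_{KL}(X)$ and set $C=\mu(Y)$; then $C$ is clopen in $X^*$ and contains $\beta X\backslash\sigma X$. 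Since $X$ is strongly zero-dimensional (Theorem~\ref{EEFR}.10 of [4], as $X$ is zero-dimensional locally compact paracompact), Lemma~\ref{ASDG} gives a clopen $W\subseteq X$ with $X^*\backslash C=W^*$. Arguing as in the proof of Theorem~\ref{GHCG}, $X^*\backslash C$ is a compact subset of $\sigma X$, hence contained in $M^*$ for $M=\bigcup_{i\in J}X_i$ with $J\subseteq I$ countable; intersecting $W$ with the clopen set $M$ of $X$ we may take $W\subseteq M$, so that $W$ is $\sigma$-compact. Then $\mu(Y)=C=X^*\backslash W^*$, and Lemma~\ref{FFDG} yields $\gamma(Y)=I_W\in\Sigma_X$. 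This shows $\gamma\big({\mathcal T}_{KL}(X)\big)\subseteq\Sigma_X$.

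For the reverse inclusion, let $U$ be a $\sigma$-compact clopen subset of $X$; I want $I_U=\gamma(Y)$ for some $Y\in{\mathcal T}_{KL}(X)$. Set $C=X^*\backslash U^*$. Since $U$ is clopen in $X$, $U^*$ is clopen in $X^*$, hence so is $C$; and because $U$ is $\sigma$-compact, $\mbox{cl}_{\beta X}U\subseteq\sigma X$, so $\beta X\backslash\sigma X\subseteq\beta X\backslash\mbox{cl}_{\beta X}U\subseteq C$. Thus $C\in\big\{C\in{\mathcal B}(X^*):C\supseteq\beta X\backslash\sigma X\big\}=\mu\big({\mathcal T}_{KL}(X)\big)$, so there is $Y\in{\mathcal T}_{KL}(X)$ with $\mu(Y)=C=X^*\backslash U^*$. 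By Lemma~\ref{FFDG}, $\gamma(Y)=I_U$. Hence $\Sigma_X\subseteq\gamma\big({\mathcal T}_{KL}(X)\big)$, and equality follows.

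The two directions are essentially symmetric, so the main obstacle is the bookkeeping in the first inclusion: one must pass from the clopen set $W$ furnished abstractly by $X^*\backslash C=W^*$ to a $\sigma$-compact one, which is where strong zero-dimensionality and the structure of $\sigma X$ in Proposition~\ref{RAWDJ} enter. Everything else is a direct substitution into Lemma~\ref{FFDG} together with the identification of $\mu\big({\mathcal T}_{KL}(X)\big)$ carried out inside the proof of Theorem~\ref{GHCG}. I would also remark in passing that the $\sigma$-compact case is not relevant here since $X$ is assumed non-$\sigma$-compact, so no separate case analysis is needed.
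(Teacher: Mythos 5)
Your proposal is correct and follows essentially the same route as the paper: identify $\mu\big({\mathcal T}_{KL}(X)\big)$ as the clopen subsets of $X^*$ containing $\beta X\backslash\sigma X$ via Theorem~\ref{TODJ} and Lemma~\ref{ERWRPJ}, use strong zero-dimensionality with Lemma~\ref{ASDG} to write the complement as $U^*$ for a clopen $U\subseteq X$, and apply Lemma~\ref{FFDG}. The only (harmless) deviation is in extracting $\sigma$-compactness of $U$: the paper notes directly that $\mbox{cl}_{\beta X}U\subseteq\sigma X$ forces $U$ to be $\sigma$-compact, whereas you replace $U$ by $U\cap M$; both work since $(U\cap M)^*=U^*$ for clopen $U,M$ with $U^*\subseteq M^*$.
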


\begin{proof}
Suppose that  $Y\in {\mathcal T}_{KL}(X)$  and let
$C=\mu(Y)$. Then by Theorem \ref{TODJ} and Lemma \ref{ERWRPJ} we have  $C$ is
clopen in $X^*$ and contains $\beta X\backslash \sigma X$. Now $X$,
being zero-dimensional, locally compact and paracompact, is strongly
zero-dimensional (see Theorem  \ref{EEFR}.10 of [4]) and thus by Lemma \ref{ASDG}
we have  $X^*\backslash C=U^*$, for some clopen $U\subseteq X$. By
Lemma \ref{FFDG} we have$\gamma(Y)=I_U$. But since $X^*\backslash
U^*=C\supseteq\beta X\backslash\sigma X$, we have $\mbox {cl}_{\beta
X}U\subseteq\sigma X$ and thus $U$ is $\sigma$-compact. This shows
that $\gamma(Y)\in \Sigma_X$, i.e., $\gamma({\mathcal
T}_{KL}(X))\subseteq\Sigma_X$.

Conversely, if $U$ is a $\sigma$-compact clopen subset of $X$, then
$\mbox {cl}_{\beta X}U$ is clopen in $\beta X$ and is contained in
$\sigma X$, therefore $C=X^*\backslash U^*\supseteq \beta
X\backslash\sigma X$. If we let $\mu (Y) =C$, then $Y\in{\mathcal
T}_{KL}(X)$, and by Lemma \ref{FFDG} we have $\gamma(Y)=I_U$, which shows
that $\Sigma_X\subseteq \gamma({\mathcal
T}_{KL}(X))$.
\end{proof}

For a Tychonoff space $X$, let
\[\Delta_X=\{I_{X\backslash U}: U \mbox{ is a $\sigma$-compact non-compact  clopen  subset of }X\}.\]

\begin{lemma}\label{OOG}
Let $X$ be a zero-dimensional
locally compact paracompact non-$\sigma$-compact space. Then
\[\gamma\big({\mathcal T}_{K}^*(X)\big)=\Delta_X.\]
\end{lemma}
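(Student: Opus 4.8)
The plan is to reduce the whole statement to Lemma~\ref{FFDG}, which computes $\gamma(Y)=I_U$ whenever $U$ is clopen in $X$ and $\mu(Y)=X^*\backslash U^*$. Since $X$ is zero-dimensional, locally compact and paracompact it is strongly zero-dimensional, so by Lemma~\ref{ASDG} the clopen subsets of $X^*$ are exactly the sets $U^*$ with $U$ clopen in $X$; also, for a clopen $U\subseteq X$ the sets $\mbox{cl}_{\beta X}U$ and $\mbox{cl}_{\beta X}(X\backslash U)$ are complementary clopen subsets of $\beta X$, whence $U^*=X^*\backslash(X\backslash U)^*$. Consequently it suffices to prove that $\mu({\mathcal T}^*_K(X))$ is precisely the set of all $U^*$ with $U$ a $\sigma$-compact non-compact clopen subset of $X$: once this is established, applying Lemma~\ref{FFDG} to the clopen set $X\backslash U$ converts each such $U^*=\mu(Y)$ into $\gamma(Y)=I_{X\backslash U}$, and this correspondence runs in both directions.

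For the inclusion $\gamma({\mathcal T}^*_K(X))\subseteq\Delta_X$ I would take $Y=X\cup\{p\}\in{\mathcal T}^*_K(X)$ and put $C=\mu(Y)$. By Lemma~\ref{RAJ} the set $C$ is a non-empty zero-set of $\beta X$ that is clopen in $X^*$, and by Lemma~\ref{RWRPJ} we have $C\subseteq\sigma X$. Lemma~\ref{ASDG} gives $C=V^*$ for a clopen $V\subseteq X$, so $\mbox{cl}_{\beta X}V=V\cup C\subseteq\sigma X$. Using the notations of Proposition~\ref{RAWDJ}, $\sigma X$ is the union of the clopen sets $\mbox{cl}_{\beta X}(\bigcup_{i\in J}X_i)$ taken over countable $J\subseteq I$, and this family is directed by inclusion; hence compactness of $\mbox{cl}_{\beta X}V$ forces $\mbox{cl}_{\beta X}V\subseteq\mbox{cl}_{\beta X}(\bigcup_{i\in J}X_i)$ for a single countable $J$, so $V\subseteq\bigcup_{i\in J}X_i$. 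Being a closed subset of the $\sigma$-compact set $\bigcup_{i\in J}X_i$, the clopen set $V$ is $\sigma$-compact, and it is non-compact because $V^*=C\neq\emptyset$. Now Lemma~\ref{FFDG} applied to the clopen set $X\backslash V$ yields $\gamma(Y)=I_{X\backslash V}\in\Delta_X$.

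For the reverse inclusion I would start from a $\sigma$-compact non-compact clopen $U\subseteq X$ and show $U^*\in\mu({\mathcal T}^*_K(X))$. As $U$ is clopen in $X$ it is $C^*$-embedded, so $\mbox{cl}_{\beta X}U=\beta U$, which is clopen in $\beta X$; and $U$, being open in $X$, is locally compact, hence locally compact and $\sigma$-compact, so by 1B of [13] $U^*=\beta U\backslash U\in{\mathcal Z}(\beta U)$, and since $\beta U$ is clopen in $\beta X$ this makes $U^*\in{\mathcal Z}(\beta X)$. Moreover $U^*$ is clopen in $X^*$ and non-empty (because $U$ is non-compact), so $U^*\in\mu({\mathcal T}^*_K(X))$ by Lemma~\ref{RAJ}. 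Choosing $Y\in{\mathcal T}^*_K(X)$ with $\mu(Y)=U^*=X^*\backslash(X\backslash U)^*$ and invoking Lemma~\ref{FFDG} for the clopen set $X\backslash U$ gives $\gamma(Y)=I_{X\backslash U}$, so $I_{X\backslash U}\in\gamma({\mathcal T}^*_K(X))$. The one place I expect real care to be needed is the $\sigma$-compactness of $V$ in the first inclusion — specifically, trapping the compact set $\mbox{cl}_{\beta X}V$ inside a single $\mbox{cl}_{\beta X}(\bigcup_{i\in J}X_i)$; the passages of the zero-set and clopen conditions between $U$, $\beta U$, $\beta X$ and $X^*$ are routine but must be checked.
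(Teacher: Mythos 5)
Your proposal is correct and follows essentially the same route as the paper: forward direction via Lemmas \ref{RAJ}, \ref{RWRPJ} and \ref{ASDG} to get $\mu(Y)=V^*$ with $V$ clopen and $\sigma$-compact, then Lemma \ref{FFDG} applied to $X\backslash V$; reverse direction by showing $U^*\in{\mathcal Z}(\beta X)$ (via $\operatorname{cl}_{\beta X}U=\beta U$ and 1B of [13]) and clopen in $X^*$, then Lemma \ref{RAJ} and Lemma \ref{FFDG} again. You merely spell out two steps the paper leaves implicit, namely the directedness argument trapping $\operatorname{cl}_{\beta X}V$ in a single $\operatorname{cl}_{\beta X}(\bigcup_{i\in J}X_i)$ and the verification that $U^*$ is a zero-set of $\beta X$.
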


\begin{proof}
Suppose that  $Y\in {\mathcal T}_{K}^*(X)$. By Lemma \ref{RAJ} the set  $\mu(Y)$ is clopen in $X^*$. Now $X$ being
zero-dimensional, locally compact and paracompact,  is strongly
zero-dimensional (see Theorem \ref{EEFR}.10 of [4]) therefore by Lemma \ref{ASDG}
we have  $\mu(Y)=U^*$, for some clopen subset $U$ of $X$. By Lemma \ref{RWRPJ} we have $U^*=\mu (Y)\subseteq\sigma X$, which implies that $U$
is $\sigma$-compact and thus by Lemma \ref{FFDG}, we have
$\gamma(Y)=I_{X\backslash U}\in \Delta_X$.

For the converse, let $U$
be a $\sigma$-compact non-compact clopen subset of $X$. Then by
Lemma 16 we have  $U^*=\mu (Y)$, for some $Y\in{\mathcal
T}_{K}^*(X)$. Now since $\mu (Y)=X^*\backslash(X\backslash U)^*$, by
Lemma \ref{FFDG}, we have  $I_{X\backslash U}=\mu (Y)\in \gamma({\mathcal
T}_{K}^*(X))$.
\end{proof}

\begin{theorem}\label{FGG}
For zero-dimensional locally
compact paracompact non-$\sigma$-compact spaces $X$ and $Y$ the
following conditions are equivalent.
\begin{itemize}
\item[\rm(1)] $(\Sigma_X, \subseteq)$ and $(\Sigma_Y, \subseteq)$ are order-isomorphic;
\item[\rm(2)] $(\Delta_X, \subseteq)$ and $(\Delta_Y, \subseteq)$ are order-isomorphic;
\item[\rm(3)] $\sigma X\backslash X$ and $\sigma Y\backslash Y$ are homeomorphic.
\end{itemize}
\end{theorem}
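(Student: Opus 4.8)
The plan is to transport the whole statement into the language of one-point extensions by means of the order-isomorphism $\gamma$ of Lemma \ref{YSDG}, and then invoke the structure theorems already proved for ${\mathcal T}_{KL}$ and ${\mathcal T}^*_K$. First I would note that since $\gamma\colon({\mathcal T}(X),\leq)\to({\mathcal I}(C^*(X)),\subseteq)$ is an order-isomorphism onto its image, its restriction to any subcollection of ${\mathcal T}(X)$ is again an order-isomorphism onto the image of that subcollection. Combining this with Lemma \ref{FGTTG} gives that $({\mathcal T}_{KL}(X),\leq)$ is order-isomorphic to $(\Sigma_X,\subseteq)$, and combining it with Lemma \ref{OOG} gives that $({\mathcal T}^*_K(X),\leq)$ is order-isomorphic to $(\Delta_X,\subseteq)$; the same applies to $Y$. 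Here the hypotheses that $X$ and $Y$ are zero-dimensional, locally compact, paracompact and non-$\sigma$-compact are precisely what Lemmas \ref{FGTTG} and \ref{OOG} require, so they are available. Consequently $(\Sigma_X,\subseteq)$ and $(\Sigma_Y,\subseteq)$ are order-isomorphic if and only if ${\mathcal T}_{KL}(X)$ and ${\mathcal T}_{KL}(Y)$ are order-isomorphic, and similarly $(\Delta_X,\subseteq)$ and $(\Delta_Y,\subseteq)$ are order-isomorphic if and only if ${\mathcal T}^*_K(X)$ and ${\mathcal T}^*_K(Y)$ are order-isomorphic.

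Next I would close the argument by quoting the two relevant equivalences. By Corollary \ref{FGGK}, ${\mathcal T}_{KL}(X)$ and ${\mathcal T}_{KL}(Y)$ are order-isomorphic if and only if $\sigma X\backslash X$ and $\sigma Y\backslash Y$ are homeomorphic; combined with the previous paragraph this yields (1)$\Leftrightarrow$(3). By Theorem \ref{PSDJ}, ${\mathcal T}^*_K(X)$ and ${\mathcal T}^*_K(Y)$ are order-isomorphic if and only if $\sigma X\backslash X$ and $\sigma Y\backslash Y$ are homeomorphic; this yields (2)$\Leftrightarrow$(3). Stringing the two chains together gives the full equivalence of (1), (2) and (3). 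One could alternatively obtain (1)$\Leftrightarrow$(2) directly, bypassing $\sigma X\backslash X$, by using Theorem \ref{GHCG} to make ${\mathcal T}_{KL}(X)\backslash\{\omega X\}$ order-anti-isomorphic to ${\mathcal T}^*_K(X)$ together with the fact that adjoining the minimum $\omega X$ is preserved and reflected by order-isomorphisms; but routing everything through (3) keeps the bookkeeping simplest.

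I do not anticipate a substantive obstacle: once Lemmas \ref{YSDG}, \ref{FGTTG}, \ref{OOG}, Corollary \ref{FGGK} and Theorem \ref{PSDJ} are in hand, the proof is a purely formal manipulation of order-isomorphisms. The only points needing a moment's attention are, first, checking that the restriction of $\gamma$ to ${\mathcal T}_{KL}(X)$ and to ${\mathcal T}^*_K(X)$ genuinely surjects onto $\Sigma_X$ and $\Delta_X$ respectively — but that is exactly the content of Lemmas \ref{FGTTG} and \ref{OOG} — and, second, making sure the non-$\sigma$-compactness assumption is present wherever those lemmas are applied, which it is by the standing hypothesis on $X$ and $Y$.
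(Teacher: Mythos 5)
Your proposal is correct and follows essentially the same route as the paper, whose proof is precisely the citation of Lemma \ref{YSDG} (restricting the order-isomorphism $\gamma$ to ${\mathcal T}_{KL}(X)$ and ${\mathcal T}^*_K(X)$), Lemmas \ref{FGTTG} and \ref{OOG} (identifying the images as $\Sigma_X$ and $\Delta_X$), and then Corollary \ref{FGGK} and Theorem \ref{PSDJ} to pass to $\sigma X\backslash X$. The only difference is that you spell out the bookkeeping the paper leaves implicit.
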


\begin{proof}
This follows from Lemmas \ref{YSDG}, \ref{FGTTG} and \ref{OOG}, Corollary
\ref{UPJ} and Theorem \ref{PSDJ}.
\end{proof}

\begin{definition}\label{FFGA}
Let $X$ be a Tychonoff space. A
sequence $\{U_n\}_{n<\omega}$ is called a {\em  $\sigma$-regular
sequence of open sets} in $X$, if for each $n<\omega$, the set $U_n$
is open in $X$ and is  such that $\mbox {cl}_XU_n $ is
$\sigma$-compact and non-compact, and $U_n \supseteq\mbox
{cl}_XU_{n+1} $.

If $ {\mathcal U}=\{U_n\}_{n<\omega}$ is  a $\sigma$-regular
sequence of open sets in $X$, we let $I_{\mathcal U}$ denote the set
\[\big\{ g\in C^*(X):\mbox {for any $\epsilon>0$, $|g|^{-1}\big([\epsilon, \infty)\big)\cap\mbox
{cl}_XU_n $ is compact for some $n<\omega$}\big\}\]
and let
\[\Omega_X=\{I_{\mathcal U}:\mbox{${\mathcal U}$ is  a
$\sigma$-regular sequence of open sets in $X$} \}.\]
\end{definition}

\begin{lemma}\label{LLG}
Let $X$ be a  locally compact
paracompact non-$\sigma$-compact space and let  $ {\mathcal
U}=\{U_n\}_{n<\omega}$ be  a $\sigma$-regular sequence of open sets
in $X$. Suppose that  $ \{f_n\}_{n<\omega}$ is  a sequence in
$C(\beta X, \mathbf{I})$ such that for each $n<\omega$, we have
$f_n(U_{n+1})\subseteq\{0\}$ and $f_n(X\backslash
U_{n})\subseteq\{1\}$. Then $C=\bigcap_{n<\omega}Z(f_n)\backslash
X\in \mu ({ \mathcal T}(X))$, and if $Y\in { \mathcal T}(X)$ is such
that $\mu (Y)=C$, then we have $\gamma(Y)=I_{\mathcal U}$.
\end{lemma}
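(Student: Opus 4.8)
The plan is to first verify the structural claim that $C = \bigcap_{n<\omega} Z(f_n) \setminus X$ belongs to $\mu(\mathcal{T}(X))$, that is, $C$ is a non-empty closed subset of $X^*$. Closedness is immediate since each $Z(f_n)$ is closed in $\beta X$. For non-emptiness, observe that $Z(f_n) \supseteq \mathrm{cl}_{\beta X} U_{n+1}$ (since $f_n$ vanishes on $U_{n+1}$, hence on its closure), so $\bigcap_{n<\omega} Z(f_n) \supseteq \bigcap_{n<\omega} \mathrm{cl}_{\beta X} U_{n+1}$, which is a decreasing intersection of non-empty compact sets and hence non-empty; moreover, since $\mathrm{cl}_X U_n$ is non-compact its closure in $\beta X$ meets $X^*$, and using that $f_n = 0$ on $U_{n+1}$ while $f_{n+1} = 1$ off $U_{n+1}$ one checks the intersection actually lands in $X^*$ (a point of $\bigcap \mathrm{cl}_{\beta X} U_n$ cannot lie in $X$, as the $U_n$'s shrink with $\bigcap \mathrm{cl}_X U_n = \emptyset$ — here I would invoke that each $\mathrm{cl}_X U_n$ is $\sigma$-compact and paracompactness of $X$ via Proposition~\ref{RAWDJ}, so that $\mathrm{cl}_X U_n \cap \mathrm{cl}_X U_m$ for the relevant indices forces the intersection into $X^*$). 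So $C \in \mathcal{C}(X^*) \setminus \{\emptyset\} = \mu(\mathcal{T}(X))$ by Theorem~\ref{YUS}.

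Next, fix $Y = X \cup \{p\} \in \mathcal{T}(X)$ with $\mu(Y) = C$, and let $q: \beta X \to \beta Y$ be the quotient map contracting $C$ to $p$, as described after Theorem~\ref{YUS}. The plan is to prove the two inclusions $\gamma(Y) \subseteq I_{\mathcal{U}}$ and $I_{\mathcal{U}} \subseteq \gamma(Y)$ separately, mirroring the proof of Lemma~\ref{FFDG}. For $\gamma(Y) \subseteq I_{\mathcal{U}}$: take $g = f|X$ with $f \in C^*(Y)$, $f(p) = 0$, and $\epsilon > 0$; suppose for contradiction that $|g|^{-1}([\epsilon,\infty)) \cap \mathrm{cl}_X U_n$ is non-compact for every $n$. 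Then for each $n$ the set $G_n = |g|^{-1}([\epsilon,\infty)) \cap \mathrm{cl}_X U_n$ has a cluster point in $X^*$; using that these are decreasing and $\sigma$-compact, one produces a point $x \in \bigcap_n \mathrm{cl}_{\beta X} G_n \subseteq \bigcap_n \mathrm{cl}_{\beta X} U_n$. One then checks $x \in C$: indeed $x \in Z(f_n)$ for each $n$ because $x \in \mathrm{cl}_{\beta X} U_{n+1}$ on which $f_n$ vanishes. Hence $q(x) = p$, and by continuity of $f$ there is an open $W \ni p$ in $\beta Y$ with $f(W \cap Y) \subseteq (-\epsilon,\epsilon)$; then $q^{-1}(W)$ is an open neighborhood of $x$, so it meets $G_1 \cap X$ at some $t$ with $|g(t)| \geq \epsilon$, while $t = q(t) \in W$ forces $|f(t)| < \epsilon$ — a contradiction. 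Thus $g \in I_{\mathcal{U}}$.

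For the reverse inclusion $I_{\mathcal{U}} \subseteq \gamma(Y)$: take $g \in I_{\mathcal{U}}$, define $f: Y \to \mathbf{R}$ by $f|X = g$, $f(p) = 0$, and verify continuity at $p$. Given $\epsilon > 0$, pick $n$ with $G = |g|^{-1}([\epsilon,\infty)) \cap \mathrm{cl}_X U_n$ compact. The key point is that $C \subseteq \bigcap_m \mathrm{cl}_{\beta X} U_m \subseteq \mathrm{cl}_{\beta X} U_{n+1}$, so $C$ is disjoint from $\mathrm{cl}_{\beta X}(X \setminus U_{n+1})$, which contains $\mathrm{cl}_{\beta X} G$ away from... — more carefully: $G \subseteq \mathrm{cl}_X U_n$ is compact and $C \cap G = \emptyset$, so choose an open $W \supseteq C$ in $\beta X$ with $W \cap G = \emptyset$ and also, shrinking $W$, with $W \subseteq f_{n+1}^{-1}([0,1/2))$ (possible since $C \subseteq Z(f_{n+1})$), which ensures $W \cap X \subseteq U_{n+1} \subseteq U_n$. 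Then $T = (W \setminus C) \cup \{p\}$ is open in $\beta Y$, and for $t \in T \cap X = W \cap X$ we have $t \notin G$ and $t \in \mathrm{cl}_X U_n$, so $t \notin |g|^{-1}([\epsilon,\infty))$, giving $|f(t)| = |g(t)| < \epsilon$. Hence $f$ is continuous and $g = f|X \in \gamma(Y)$.

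The main obstacle I anticipate is the non-emptiness and $X^*$-membership argument for $C$ in the first paragraph, and relatedly pinning down exactly which neighborhood of $C$ in $\beta X$ to use in the reverse inclusion so that its intersection with $X$ lands inside $\mathrm{cl}_X U_n$ while avoiding the compact set $G$ — balancing these two requirements via the functions $f_n$ is the delicate bookkeeping step. Paracompactness (through Proposition~\ref{RAWDJ}, which guarantees $\sigma$-compact pieces behave well in $\beta X$) is what makes the cluster-point extraction legitimate, and I would cite it explicitly at that point.
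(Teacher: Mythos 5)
Your two inclusions $\gamma(Y)\subseteq I_{\mathcal U}$ and $I_{\mathcal U}\subseteq\gamma(Y)$ follow essentially the paper's own argument: the contradiction obtained by pushing a point of $\bigcap_n A_n^*$ into the preimage of a small neighborhood of $p$ is exactly right, and in the reverse direction the paper uses the neighborhood $(f_k^{-1}([0,1))\backslash C)\cup\{p\}$ with the compact set $S$ removed afterwards, where you instead shrink $W$ inside $f_{n+1}^{-1}([0,1/2))$ before removing $C$; both work. The genuine problem is in your first paragraph. You assert that $\bigcap_{n<\omega}\mbox{cl}_XU_n=\emptyset$ and try to deduce that $\bigcap_{n<\omega}\mbox{cl}_{\beta X}U_n$ lies entirely in $X^*$. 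Neither statement is part of Definition \ref{FFGA}, and both are false in general: taking $U_n=X_0$ for every $n$, where $X_0$ is a single $\sigma$-compact non-compact clopen summand of $X$ as in Proposition \ref{RAWDJ}, gives a perfectly good $\sigma$-regular sequence with $\bigcap_n\mbox{cl}_XU_n=X_0$ and $\bigcap_n\mbox{cl}_{\beta X}U_n\supseteq X_0$, which is not contained in $X^*$. So the intermediate claim you are aiming at cannot be proved, and the appeal to paracompactness does not rescue it.

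Fortunately you do not need it. All that is required is that $C=\bigcap_nZ(f_n)\cap X^*$ be non-empty, and for that you should intersect with $X^*$ \emph{before} intersecting over $n$: the sets $U_n^*=\mbox{cl}_{\beta X}U_n\cap X^*$ are compact, decreasing, and non-empty (a closed non-compact subset of $X$ has non-empty trace on $X^*$, since otherwise it would coincide with its own $\beta X$-closure and hence be compact), so $\emptyset\neq\bigcap_nU_n^*\subseteq\bigcap_n\big(\mbox{cl}_{\beta X}U_{n+1}\cap X^*\big)\subseteq\bigcap_nZ(f_n)\cap X^*=C$. This is precisely the one-line argument the paper gives. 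The same remark tightens the ``cluster point'' step in your second paragraph: take $x\in\bigcap_nA_n^*$ (each $A_n$ is closed in $X$ and assumed non-compact, so $A_n^*\neq\emptyset$, and these are compact and decreasing), rather than merely $x\in\bigcap_n\mbox{cl}_{\beta X}A_n$, so that $x\in X^*$ is automatic. No use of Proposition \ref{RAWDJ} is needed at either point.
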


\begin{proof}
First note that since for each $n<\omega$, we have
$\mbox{cl}_{\beta X}U_{n+1}\subseteq Z(f_n)$, and
$\emptyset\neq\bigcap_{n<\omega}U_n^*\subseteq C$ and so $C\in\mu ({
\mathcal T}(X))$.  Suppose that $C=\mu (Y)$, for some
$Y=X\cup\{p\}\in { \mathcal T}(X)$. Let  $g\in\gamma(Y)$. Then $g=f|
X$ for some  $f\in C^*(Y)$  with $f(p)=0$. Suppose that $g\notin
I_{\mathcal U}$. Then there exists an $\epsilon>0$ such that for
each $n<\omega$, the set  $A_n=|g|^{-1}([\epsilon,
\infty))\cap\mbox {cl}_XU_n $ is not compact. By compactness of
$X^*$ we have $\bigcap_{n<\omega}A_n^*\neq\emptyset$. Let $x\in
\bigcap_{n<\omega}A_n^*$. Then $x\in C$. Let  $Z$ be the space
obtained from $\beta X$ by contracting $C$ to the point $p$, and let
$q:\beta X\rightarrow Z=\beta Y$ be its natural quotient mapping. By
continuity of $f$ there exists an open neighborhood $V$ of $p$ in
$Y$ such that $f(V)\subseteq (-\epsilon, \epsilon)$. Let $W$ be an
open subset of $\beta Y$ with $W\cap Y=V$. Then since $p\in W$, the
set  $C\subseteq q^{-1}(W)$. Now since
$\bigcap_{n<\omega}A_n^*\subseteq C$, the set $ q^{-1}(W)$ is an
open neighborhood of $x$ in $\beta X$, and therefore since $x\in
\mbox{cl}_{\beta X}A_1$, we have $A_1\cap q^{-1}(W)\neq\emptyset $.
Let $t\in A_1\cap q^{-1}(W)$. Then $t=q(t)\in W$ and thus
$|g(t)|=|f(t)|<\epsilon$. But since $t\in A_1$, we have $|g(t)|\geq
\epsilon$, which is a contradiction. This shows that $g\in
I_{\mathcal U}$. Thus $\gamma(Y)\subseteq I_{\mathcal U}$.

To show the reverse inclusion, let $g\in I_{\mathcal U}$. Define a
function  $f:Y\rightarrow\mathbf{R}$ such that $f| X=g$ and
$f(p)=0$. We show that $f$ is continuous at $p$. So let $\epsilon
>0$. Then by assumptions  there exists a $k<\omega$ such that
$S=|g|^{-1}([\epsilon, \infty))\cap\mbox {cl}_XU_k $ is compact.
Since $C\subseteq f_k^{-1}([0, 1))$, the set
$T=(f_k^{-1}([0,1))\backslash C)\cup\{p\}$ is open in $\beta Y$.
Consider the open neighborhood $(T\cap Y)\backslash S$ of $p$ in
$Y$. If $t\in (T\cap Y)\backslash S$, then since $t\in T$, we have
$f_k(t)
<1$ and so $t\in U_k$. But $t\notin S$ and therefore
$|f(t)|=|g(t)|<\epsilon$,
i.e., $f((T\cap Y)\backslash S)\subseteq(-\epsilon,\epsilon)$. This
shows the continuity of $f$ and thus $g\in \gamma (Y)$. Therefore
$I_{\mathcal U}\subseteq\gamma(Y)$, which together with the previous
part of the proof proves the lemma.
\end{proof}

\begin{lemma}\label{GG}
Let $X$ be a  locally compact
paracompact non-$\sigma$-compact space. Then
\[\gamma\big({\mathcal T}^*_S(X)\big)= \Omega_X.\]
\end{lemma}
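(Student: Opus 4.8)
The plan is to prove the two inclusions $\gamma({\mathcal T}^*_S(X))\subseteq\Omega_X$ and $\Omega_X\subseteq\gamma({\mathcal T}^*_S(X))$ separately, using Lemma~\ref{LLG} as a bridge in both directions. First I would record a preliminary observation: if ${\mathcal U}=\{U_n\}_{n<\omega}$ is a $\sigma$-regular sequence and $f_n\in C(\beta X,{\bf I})$ satisfies $f_n\equiv 0$ on $\mbox{cl}_XU_{n+1}$ and $f_n\equiv 1$ on $X\setminus U_n$ (such $f_n$ exist since these are disjoint closed sets in the normal space $X$, so completely separated, and bounded functions extend over $\beta X$), then $\mbox{cl}_{\beta X}U_{n+1}\subseteq Z(f_n)\subseteq\mbox{cl}_{\beta X}U_n$, and consequently $\bigcap_{n<\omega}Z(f_n)\setminus X=\bigcap_{n<\omega}U_n^*$.

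\emph{The inclusion $\gamma({\mathcal T}^*_S(X))\subseteq\Omega_X$.} Given $Y=X\cup\{p\}\in{\mathcal T}^*_S(X)$, by Lemma~\ref{UPPJ} the point $p$ has a $\sigma$-compact neighborhood $N$ in $Y$, and since $Y$ is first countable at $p$ I would choose a decreasing open base $\{G_n\}_{n<\omega}$ at $p$ with $\mbox{cl}_YG_{n+1}\subseteq G_n$ and $\mbox{cl}_YG_1\subseteq N$, and set $U_n=G_n\cap X$. Then ${\mathcal U}=\{U_n\}_{n<\omega}$ is a $\sigma$-regular sequence: arguing as in the proof of Lemma~\ref{UPPJ}, $\mbox{cl}_XU_n=\mbox{cl}_YG_n\setminus\{p\}$ is a countable union of closed-in-$Y$ (hence $\sigma$-compact) pieces obtained by chopping $\mbox{cl}_YG_n$ along the $G_m$'s, so it is $\sigma$-compact; it is non-compact, for otherwise $\mbox{cl}_XU_n$ would be closed in $Y$ and miss $p$, contradicting $p\in\mbox{cl}_YU_n$; and $\mbox{cl}_XU_{n+1}\subseteq U_n$. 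Next I would verify $\mu(Y)=\bigcap_{n<\omega}U_n^*$: the inclusion $\subseteq$ is the half of the argument in Remark~\ref{GGJ} that uses no metrizability, while for $\supseteq$ one notes that $F_Y$ carries $\mbox{cl}_{\beta X}U_n$ into $\mbox{cl}_{\beta Y}G_n$ and that $\bigcap_{n<\omega}\mbox{cl}_{\beta Y}G_n=\{p\}$, because $\{p\}\in{\mathcal Z}(\beta Y)$ (Theorem~\ref{AADJ}, as $Y\in{\mathcal T}^*(X)$) and $\{G_n\}$ is a base at $p$; hence $F_Y\big(\bigcap_n U_n^*\big)\subseteq\{p\}$. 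Now, with $f_n$ chosen as above, Lemma~\ref{LLG} applies to the extension $Y$ itself and gives $\gamma(Y)=I_{\mathcal U}\in\Omega_X$.

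\emph{The inclusion $\Omega_X\subseteq\gamma({\mathcal T}^*_S(X))$.} Given a $\sigma$-regular sequence ${\mathcal U}$, pick $f_n$ as above; Lemma~\ref{LLG} produces $Y=X\cup\{p\}\in{\mathcal T}(X)$ with $\mu(Y)=C:=\bigcap_{n<\omega}Z(f_n)\setminus X=\bigcap_{n<\omega}U_n^*$ and $\gamma(Y)=I_{\mathcal U}$, so it remains to show $Y\in{\mathcal T}^*_S(X)$. Since each $\mbox{cl}_XU_n$ is $\sigma$-compact, $\mbox{cl}_{\beta X}U_n\subseteq\sigma X$, hence $C\subseteq\sigma X\setminus X$; by Lemma~\ref{UPPJ} it is therefore enough to show $C\in{\mathcal Z}(\beta X)$ — and this is the main obstacle, since at the level of $\beta X$ it is not transparent ($X^*\notin{\mathcal Z}(\beta X)$ as $X$ is non-$\sigma$-compact). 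The resolution is to localize: because $\mbox{cl}_XU_1$ is $\sigma$-compact, Proposition~\ref{RAWDJ} gives a countable $J\subseteq I$ with $U_n\subseteq M:=\bigcup_{i\in J}X_i$ for all $n$, where $M$ is clopen in $X$ and $\sigma$-compact; then $\beta M=\mbox{cl}_{\beta X}M$ is clopen in $\beta X$ and, $M$ being locally compact and $\sigma$-compact, $M^*\in{\mathcal Z}(\beta M)\subseteq{\mathcal Z}(\beta X)$ by 1B of [13]. Since $\bigcap_{n<\omega}Z(f_n)\in{\mathcal Z}(\beta X)$ and $\bigcap_{n<\omega}Z(f_n)\subseteq\mbox{cl}_{\beta X}U_1\subseteq\beta M$ while $\beta M\cap X^*=M^*$, we obtain $C=\big(\bigcap_{n<\omega}Z(f_n)\big)\cap M^*$, an intersection of two zero-sets of $\beta X$, so $C\in{\mathcal Z}(\beta X)$. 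Hence $\mu(Y)=C$ satisfies the conditions of Lemma~\ref{UPPJ} (equivalently, $C\in\mu({\mathcal T}^*(X))\cap\mu({\mathcal T}_S(X))$ by Theorem~\ref{AADJ} and Lemma~\ref{EPPJ}), so $Y\in{\mathcal T}^*_S(X)$, which completes the proof.

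In short, everything except the zero-set verification $C\in{\mathcal Z}(\beta X)$ is routine bookkeeping with Lemma~\ref{LLG} and the identification $\mu(Y)=\bigcap_nU_n^*$; the one genuinely delicate point is passing to the $\sigma$-compact clopen hull $M$ of the sequence so as to realize $C$ as an intersection with the zero-set $M^*$.
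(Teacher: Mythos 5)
Your proof is correct, and the two inclusions split cleanly into ``same as the paper'' and ``genuinely different.'' For $\Omega_X\subseteq\gamma({\mathcal T}^*_S(X))$ you do essentially what the paper does: separate $\mbox{cl}_XU_{n+1}$ from $X\backslash U_n$ by functions extended over $\beta X$, invoke Lemma \ref{LLG}, and then certify $C=\bigcap_n Z(f_n)\backslash X$ as a zero-set of $\beta X$ contained in $\sigma X\backslash X$ by cutting with the zero-set $M^*$ of the $\sigma$-compact clopen hull $M$ (the paper's $P$); you correctly identify this localization as the one non-routine step. For the forward inclusion your route differs from the paper's. The paper starts from the conclusion of Lemma \ref{UPPJ} that $C=\mu(Y)\in{\mathcal Z}(\beta X)$ with $C\subseteq\sigma X$, writes $C=Z(h)$ for an $h$ that is identically $1$ off $\mbox{cl}_{\beta X}M$, takes $U_n=h^{-1}([0,1/n))\cap X$, and then gets the crucial identity $\mu(Y)=\bigcap_n Z(f_n)$ for free (it is just $Z(h)$), at the cost of writing down the $f_n$ explicitly by lattice operations on $h$. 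You instead build the $\sigma$-regular sequence intrinsically, as the traces $U_n=G_n\cap X$ of a shrinking neighborhood base sitting inside a $\sigma$-compact neighborhood of $p$, and then must earn the identity $\mu(Y)=\bigcap_nU_n^*$ by the extension-trace argument of Remark \ref{GGJ} (whose relevant half indeed uses no metrizability) plus the observation $\bigcap_n\mbox{cl}_{\beta Y}G_n=\{p\}$. Both arguments are sound; the paper's is shorter because the zero-set structure is handed to it by Lemma \ref{UPPJ}, while yours is more self-contained on the $Y$-side and makes visible why the $\sigma$-regular sequences of Definition \ref{FFGA} are exactly the ``extension traces'' of elements of ${\mathcal T}^*_S(X)$. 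One small presentational point: when you call the pieces $\mbox{cl}_YG_m\backslash G_{m+1}$ ``closed-in-$Y$ (hence $\sigma$-compact),'' the parenthetical should say ``closed in $Y$ and contained in the $\sigma$-compact set $\mbox{cl}_YG_1$, hence $\sigma$-compact''; closedness alone does not suffice, though your setup ($\mbox{cl}_YG_1\subseteq N$) already provides what is needed.
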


\begin{proof}
Assume the notations of Proposition \ref{RAWDJ}. Suppose that
$Y=X\cup\{p\}\in{\mathcal T}^*_S(X)$. By Lemma \ref{UPPJ}, we have
$\mu(Y)=C\in {\mathcal Z}(\beta X)$ and $C\subseteq \sigma X$.
Therefore $C\subseteq \mbox {cl}_{\beta X} M$, where
$M=\bigcup_{i\in J} X_i$, for some countable $J\subseteq I$. Let
$h\in C(\beta X, \mathbf{I})$ be such that $Z(h)=C$ and $h(\beta
X\backslash \mbox {cl}_{\beta X} M)\subseteq\{1\}$. For each
$n<\omega$ let $U_n =h^{-1}([0,1/n))\cap X$. Then since for each
$n<\omega$, $C\subseteq h^{-1}([0,1/n))$, we have
$U_n\neq\emptyset$, and since $U_n\subseteq M$ and $C\subseteq \mbox
{cl}_{\beta X}U_n$, $\mbox {cl}_XU_n$ is $\sigma$-compact and
non-compact. Clearly for each  $n<\omega$, we have $U_n
\supseteq\mbox {cl}_XU_{n+1} $, which shows that $ {\mathcal
U}=\{U_n\}_{n<\omega}$ is  a $\sigma$-regular sequence of open sets
in $X$.

For each $n<\omega$ define $f_n:\beta X\rightarrow \mathbf{I}$ by
\[ f_n=\Big(\Big(\Big(h\wedge
\frac{1}{n}\Big)\vee\frac{1}{n+1}\Big)-\frac{1}{n+1}\Big)\Big(\frac{1}{n}-\frac{1}{n+1}\Big)^{-1}.\]
Then the  sequence $ \{f_n\}_{n<\omega}$ satisfies the
requirements of Lemma \ref{LLG}, and therefore since
$\mu(Y)=Z(h)=\bigcap_{n<\omega} Z(f_n)$, we have $ \gamma
(Y)=I_{\mathcal U}$. This shows that  $\gamma ({\mathcal
T}^*_S(X))\subseteq \Omega_X$.

To complete the proof we need to show that $\Omega_X\subseteq\gamma
({\mathcal T}^*_S(X))$. So let $ {\mathcal U}=\{U_n\}_{n<\omega}$ be
a $\sigma$-regular sequence of open sets in $X$. We verify that
$I_{\mathcal U}\in \gamma ({\mathcal T}^*_S(X))$. For each
$n<\omega$ since $U_n \supseteq\mbox {cl}_XU_{n+1} $, by normality
of $X$, there exists an $f_n\in C(X, \mathbf{I})$ such that $f_n(
\mbox{cl}_XU_{n+1})\subseteq\{0\}$ and $f_n(X\backslash
U_n)\subseteq \{1\}$. Let $F_n\in C(\beta X, \mathbf{I})$ be the
extension of $f_n$. Since each $\mbox{cl}_XU_{n+1}$ is
$\sigma$-compact, for each $n<\omega$ we have $U_n\subseteq P$,
where $P=\bigcup_{i\in L} X_i$ and $L\subseteq I$ is countable.
Since $ X\backslash P\subseteq X\backslash U_n\subseteq
F_n^{-1}(1)$, it follows that $Z(F_n)\subseteq\beta
X\backslash\mbox{cl}_{\beta X} (X\backslash P)=\mbox{cl}_{\beta X}
P$ and therefore since $P^*\in {\mathcal Z} (\beta X)$ we have
$D=\bigcap_{n<\omega}Z(F_n)\backslash X\in {\mathcal Z} (\beta X)$.
But $\emptyset\neq U_n^*\subseteq Z(F_n)$ and $D\subseteq \sigma X$,
which by Lemma \ref{UPPJ} implies that $D=\mu (Y)$, for some
$Y=X\cup\{p\}\in{\mathcal T}^*_S(X)$. Now the sequence $
\{F_n\}_{n<\omega}$  satisfies the requirements of Lemma \ref{LLG} and
therefore $\gamma (Y)=I_{\mathcal U}$. This shows that
$\Omega_X\subseteq\gamma ({\mathcal T}^*_S(X))$, which
together with the first part of the proof  give the
result.
\end{proof}

Now from Theorem \ref{ROEJ} and the above lemmas we obtain the following
result.

\begin{theorem}\label{IUUG}
For  locally compact paracompact
non-$\sigma$-compact spaces $X$ and $Y$ the following conditions are
equivalent.
\begin{itemize}
\item[\rm(1)] $(\Omega_X, \subseteq)$ and $(\Omega_Y, \subseteq)$ are order-isomorphic;
\item[\rm(2)] $\sigma X\backslash X$ and $\sigma Y\backslash Y$ are homeomorphic.
\end{itemize}
\end{theorem}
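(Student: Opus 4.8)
The plan is to obtain Theorem~\ref{IUUG} as a purely formal consequence of the order-isomorphism $\gamma$ together with the identification of $\Omega_X$ carried out above. The essential point is that $\gamma$ translates the partial order on one-point extensions faithfully into set-theoretic inclusion of ideals, and that $\Omega_X$ is exactly the $\gamma$-image of ${\mathcal T}^*_S(X)$, so that the abstract order structure of $\Omega_X$ is nothing other than that of ${\mathcal T}^*_S(X)$.

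First I would apply Lemma~\ref{YSDG}, according to which $\gamma\colon({\mathcal T}(X),\leq)\rightarrow({\mathcal I}(C^*(X)),\subseteq)$ is an order-isomorphism onto its image. Restricting $\gamma$ to ${\mathcal T}^*_S(X)$ and invoking Lemma~\ref{GG}, which states that $\gamma({\mathcal T}^*_S(X))=\Omega_X$, I conclude that $({\mathcal T}^*_S(X),\leq)$ and $(\Omega_X,\subseteq)$ are order-isomorphic. The same argument applied to $Y$ gives that $({\mathcal T}^*_S(Y),\leq)$ and $(\Omega_Y,\subseteq)$ are order-isomorphic. Here I would remark that the standing hypotheses on $X$ and $Y$ (locally compact, paracompact, non-$\sigma$-compact) are precisely those needed by Lemma~\ref{GG}, so no extra assumptions are introduced.

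Next I would chain these two identifications with Theorem~\ref{ROEJ}, in the form asserting that ${\mathcal T}^*_S(X)$ and ${\mathcal T}^*_S(Y)$ are order-isomorphic if and only if $\sigma X\backslash X$ and $\sigma Y\backslash Y$ are homeomorphic. Combining the three equivalences, $(\Omega_X,\subseteq)$ and $(\Omega_Y,\subseteq)$ are order-isomorphic if and only if $({\mathcal T}^*_S(X),\leq)$ and $({\mathcal T}^*_S(Y),\leq)$ are order-isomorphic, if and only if $\sigma X\backslash X$ and $\sigma Y\backslash Y$ are homeomorphic, which is exactly the asserted equivalence of (1) and (2).

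Since every ingredient is already in place, I do not expect any substantive obstacle; the proof is a two-step deduction. The only mild point of care is to confirm that "order-isomorphism onto its image" in Lemma~\ref{YSDG} genuinely yields a poset isomorphism between $\Omega_X$ and ${\mathcal T}^*_S(X)$ — that is, that the inverse correspondence $\gamma^{-1}$, restricted to $\Omega_X$, is also order-preserving — but this is immediate from the definition of order-isomorphism onto the image, so the argument goes through without difficulty.
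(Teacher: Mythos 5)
Your proposal is correct and follows exactly the paper's own route: the paper derives Theorem \ref{IUUG} directly from Theorem \ref{ROEJ} (the equivalence for ${\mathcal T}^*_S$) together with Lemma \ref{YSDG} and Lemma \ref{GG}, which is precisely the two-step chain you describe. Nothing is missing.
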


\noindent {\bf  Acknowledgments.} The author would like to express
his deep gratitude to Professor R. Grant Woods for his invaluable
comments during this work. The author also thanks the referee for
reading the manuscript and his comments.

\end{document}